\newtheorem{theorem}{Theorem}[section]
\newtheorem{lemma}[theorem]{Lemma}
\newtheorem{coro}[theorem]{Corollary}
\newtheorem{prop}[theorem]{Proposition}
\theoremstyle{definition}
\newtheorem{defn}[theorem]{Definition}
\newtheorem{remark}[theorem]{Remark}
\newtheorem{exam}[theorem]{Example}
\newcommand{\delete}[1]{}
\newcommand {\Ker}{{\rm Ker\,}}
\newcommand {\Ima}{{\rm Im\,}}
\newcommand {\End}{{\rm End\,}}
\newcommand {\Aut}{{\rm Aut\,}}
\begin{document}
\title[Post  Clifford semigroups and the Yang-Baxter equation]{Post Clifford semigroups, the Yang-Baxter equation, relative Rota--Baxter Clifford semigroups and dual weak left braces}

\author{Xiaoqian Gong}
\address{School of Mathematics, Yunnan Normal University, Kunming, Yunnan 650500, China}
\email{3417259364@qq.com}

\author{Shoufeng Wang$^{\ast}$}\thanks{*Corresponding author}
\address{School of Mathematics, Yunnan Normal University, Kunming, Yunnan 650500, China}
\email{wsf1004@163.com}


\begin{abstract}As generalizations of Rota--Baxter groups,  Rota--Baxter Clifford semigroups have been  introduced  by Catino,  Mazzotta and Stefanelli in 2023. Based on their pioneering results, in this paper we first continue to study Rota--Baxter Clifford semigroups.
Inspired by the corresponding results in Rota--Baxter groups, we firstly obtain some properties and construction methods for Rota--Baxter Clifford semigroups, and then study the substructures and quotient structures of these semigroups.  On the other hand, as generalizations of post-groups, Rota--Baxter Clifford semigroups and braided groups, in this paper we introduce and investigate post Clifford semigroups, relative Rota--Baxter Clifford semigroups and braided Clifford semigroups, respectively. We prove  that the categories of strong post Clifford semigroups, dual weak left braces, bijective strong relative Rota-Baxter Clifford semigroups and braided Clifford semigroups are mutually pairwise equivalent, and  the category  of post Clifford semigroups  is equivalent to the category  of  bijective  relative Rota--Baxter Clifford semigroups, respectively. As a consequence,  we prove that both post Clifford semigroups, relative Rota--Baxter Clifford semigroups and braided Clifford semigroups can provide set-theoretical solutions for the Yang--Baxter equation. The substructures and quotient structures of relative Rota--Baxter Clifford semigroups are also considered.
\end{abstract}
\makeatletter
\@namedef{subjclassname@2020}{\textup{2020} Mathematics Subject Classification}
\makeatother
\subjclass[2020]{16T25, 17B38, 20M18, 16Y99}
\keywords{The Yang-Baxter equation, Rota--Baxter Clifford semigroup, Post Clifford semigroup, Relative Rota--Baxter Clifford semigroup, Dual weak left brace}

\maketitle


\tableofcontents

\vspace{-0.8cm}

\section{Introduction}

The Yang-Baxter equation first appeared in Yang's study of exact solutions of many-body problems in \cite{Yang} and Baxter's work of the eight-vertex model in \cite{Baxter}. The equation is closely related to many fields in mathematics and physics. Due to the complexity in solving the Yang--Baxter
equation, Drinfeld suggested to study set-theoretical solutions of the Yang--Baxter equation in \cite{Drinfeld}.  Earlier studies of set-theoretical solutions of the Yang--Baxter equation  can be found in \cite{Etingof-Schedler-Soloviev,Gateva-Ivanova-Van den Bergh,Lu-Yan-Zhu,Soloviev} and the review article \cite{Takeuchi}.

To construct set-theoretical solutions of the Yang--Baxter equation by the view of algebra, several kinds of algebraic structures have been introduced in literature,
see for example \cite{Catino-Colazzo-Stefanelli,Catino-Colazzo-Stefanelli3,Catino-Mazzotta-Miccoli-Stefanelli,Catino-Mazzotta-Stefanelli,Cedo-Jespers-Okninski,
Colazzo-Jespers-Van Antwerpen-Verwimp,Guarnieri-Vendramin,Miccoli1,Liu-Wang,Miccoli2,Rump1,Rump2} and the  references therein.
In particular,  Rump introduced left braces in \cite{Rump2} as a generalization of Jacobson radical rings, and a few years later, Ced$\acute{\rm o}$, Jespers, and Okni$\acute{\rm n}$ski \cite{Cedo-Jespers-Okninski} reformulated
Rump's definition of left braces.   Guarnieri and Vendramin \cite{Guarnieri-Vendramin} introduced skew left braces as generalization of left braces.
Observe that (skew) left braces can provide (involutive) non-degenerate  set-theoretical solutions of the Yang--Baxter equation. Recently, Catino-Mazzotta-Miccoli-Stefanelli \cite{Catino-Mazzotta-Miccoli-Stefanelli} and Catino-Mazzotta-Stefanelli \cite{c1} introduced weak left braces and dual weak left braces by using Clifford semigroups, respectively, and proved that weak left braces and dual weak left braces can provide ceratin special degenerate  set-theoretical solutions of the Yang--Baxter equation. More recent information on dual weak left braces  can be found in  \cite{Catino-Mazzotta-Stefanelli3} and \cite{Mazzotta}.

In 2021 Guo, Lang and Sheng \cite{g2} introduced the notion of Rota--Baxter groups and gave some basic examples and properties of these groups.  Soon after the intial breakthrough in \cite{g2}, quite a few studies were carried out in this direction (see \cite{b2,b1,Caranti,c1,d1,Galt,Gao1,Gao2,Gao3,l1,Rathee}). In particular, Bardakov and Gubarev \cite{b2} have investigated the relationship between skew left braces and  Rota--Baxter groups, and shown that every Rota--Baxter group gives rise to a skew left brace and every left brace can be  embedded into a Rota--Baxter group.   In 2023, Catino,  Mazzotta and Stefanelli \cite{c1} defined Rota--Baxter operators for Clifford semigroups, extended some results in \cite{b2} to Clifford semigroups and connected Rota--Baxter Clifford semigroups with dual weak left  braces.
On the other hand, in 2024 Jiang,  Sheng and Zhu \cite{Jiang} established a local Lie theory for relative Rota--Baxter operators of weight 1 and generalized the idea
of Rota-Baxter operators to relative Rota-Baxter operators on Lie groups. Since then, many authors have devoted themselves to the study of relative   Rota--Baxter groups (see \cite{Bardakov1,Bardakov2,Belwal1,Belwal2,Belwal3,l2}). In particular, Rathee and Singh \cite{l2} have explored the connections of relative Rota--Baxter groups
with skew left braces and shown that the category of bijective relative Rota--Baxter groups is equivalent to the category of skew left braces.

Motivated by the notion of post-Lie algebras, Bai, Guo, Sheng and Tang have put forward  the notions of post-groups recently in \cite{Bai} by which relative Rota--Baxter groups, braided groups and skew left braces are tied together. The defining axioms of  post-groups have been relaxed by S. Wang in \cite{Wang}, leading to the notion of weak twisted post-group. More recently,   Al-Kaabi, Ebrahimi-Fard and Manchon \cite{Al-Kaabi-Ebrahimi-Fard-Manchon} considered free post-groups and post-groups from group actions.

In this paper, motivated by  the  results given in \cite{Bai,b1,c1,Jiang,l2}, we consider more properties of Rota--Baxter Clifford semigroups and initiate the study of post Clifford semigroups, relative Rota--Baxter Clifford semigroups and braided Clifford semigroups, and associate these Clifford semigroups with dual weak left braces and the Yang--Baxter equation. In particular, we show that both post Clifford semigroups, relative Rota--Baxter Clifford semigroups and braided Clifford semigroups can provide set-theoretical solutions for the Yang--Baxter equation.

The paper is organized as follows. After giving some preliminaries on Clifford semigroups, Rota-Baxter Clifford semigroups and dual weak left braces in Section 2,
we consider  Rota--Baxter Clifford semigroups in Section 3. More specifically, we give some construction methods and study the substructures and quotient structures for Rota--Baxter Clifford semigroups. In section 4 we introduce the notion of post Clifford semigroups, prove that the category  of strong post Clifford semigroups  is isomorphic to the category of dual weak left braces. Section 5 introduces the notion of relative Rota--Baxter Clifford semigroups and shows that the category of bijective  relative Rota-Baxter Clifford semigroups is equivalent to the category  of post Clifford semigroups and the category  of bijective strong  relative Rota--Baxter Clifford semigroups is equivalent to the category  of strong post Clifford semigroups, respectively. Moreover, substructures and quotient structures  for relative Rota--Baxter Clifford semigroups are also considered. The final section is devoted to braided Clifford semigroups. Among other things, we prove that the category of strong post Clifford semigroups  is isomorphic to the category of braided Clifford semigroups.  The results obtained in the present paper can be regarded as the enrichment and extension of the corresponding results in  the texts \cite{Bai,b1,c1,Jiang,l2}.

\section{Preliminaries}
In this section, we recall some notions and known results on Clifford semigroups, Rota--Baxter Clifford semigroups and dual weak left braces.
Let $(S, +)$ be a semigroup. As usual,  we denote the set of all idempotents in $(S, +)$ by $E(S,+)$ and the center of $S$ by $C(S,+)$, respectively.  That is,
$$E(S, +)=\{x\in S\mid x+x=x\},\,\, C(S,+)=\{x\in S\mid x+a=a+x \mbox{ for all } a\in S\}.$$
According to \cite{A10}, a semigroup $(S,+)$ is called an {\em inverse semigroup} if for all $x\in S$, there exists a unique element $-a\in S$ such that $a=a-a+a$ and $-a+a-a=-a$. Such an element $-a$ is called {\em the  von Neumann inverse} of $a$ in $S$. Let $(S, +)$ be an inverse semigroup. It is well known that $-(a+b)=-b-a$ and $-(-a)=a$ for all $a,b\in S$. Moreover, $E(S,+)$ is a commutative subsemigroup of $(S, +)$, and $$E(S, +)=\{-x+x\mid x\in S\}=\{x-x\mid x\in S\} \mbox{ and }-e=e \mbox{ for all }e\in E(S,+).$$

 The following lemma  gives a characterization of inverse semigroups. Recall that a semigroup $(S,+)$ is {\em regular} if or each $a\in S$, there exists $a'$ such that $a+a'+a=a$.
\begin{lemma}[\cite{A10}]\label{inverse} A semigroup $(S,+)$ is an inverse semigroup if and only if $(S,+)$ is  regular and $e+f=f+e$ for all $e,f\in E(S, +)$.
\end{lemma}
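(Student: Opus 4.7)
The plan is to handle the two directions separately. The forward implication is essentially already recorded in the paragraph preceding the lemma: in an inverse semigroup each $a$ has $-a$ as a regular inverse, so $(S,+)$ is regular, and the excerpt already cites from \cite{A10} that $E(S,+)$ is a commutative subsemigroup of $(S,+)$. Thus no further argument is needed in this direction.

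For the converse, assume $(S,+)$ is regular and that its idempotents commute. Given $a\in S$, existence of an element $a'$ with $a+a'+a=a$ and $a'+a+a'=a'$ is obtained by picking any $b$ with $a+b+a=a$ (regularity) and setting $a':=b+a+b$; a direct associativity computation verifies both identities, and this step uses only regularity. The content of the lemma is the uniqueness part. Suppose $a'$ and $a''$ both satisfy the two defining equations. I would first note that each of the four products $a+a'$, $a+a''$, $a'+a$, $a''+a$ is idempotent; for instance $(a+a')+(a+a')=a+(a'+a+a')=a+a'$. Then I would append $a'$ on the right of the identity $a+a'+a=a=a+a''+a$ to obtain $a+a'=a+a''+a+a'$, and append $a''$ on the right to obtain $a+a''=a+a'+a+a''$. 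Since $a+a'$ and $a+a''$ are idempotents that commute by hypothesis, the two right-hand sides coincide, forcing $a+a'=a+a''$. The symmetric manoeuvre, appending on the left rather than the right, yields $a'+a=a''+a$. With both equalities available, the chain
\[
a' = a'+a+a' = a'+(a+a'') = (a'+a)+a''=(a''+a)+a''=a''+a+a''=a''
\]
closes the argument.

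The only delicate point is the uniqueness step, where one must insert the commuting-idempotents hypothesis in exactly the right place. The strategy that makes it work is to split the desired equality $a'=a''$ into the two one-sided equalities $a+a'=a+a''$ and $a'+a=a''+a$, each of which reduces, by a single rebracketing, to the commutation of one specific pair of idempotents; existence, by contrast, is a straightforward regularity calculation.
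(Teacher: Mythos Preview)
Your argument is correct and is the standard proof (as in Howie \cite{A10}): existence of an inverse from regularity via $a'=b+a+b$, and uniqueness by showing $a+a'=a+a''$ and $a'+a=a''+a$ using commutation of the four idempotents $a+a'$, $a+a''$, $a'+a$, $a''+a$. The paper itself does not supply a proof of this lemma; it is quoted directly from \cite{A10} as background, so there is nothing further to compare.
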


An inverse semigroup $(S, +)$ is called a {\em Clifford semigroup} if $-a+a=a-a$ for all $a\in S$.
In this case, we denote $a^{0}=-a+a=a-a$ for all $a\in S$. To give some characterizations of Clifford semigroups, we need some additional notations.
Recall that a {\em lower semilattice} $(Y, \leq)$ is a partial ordered set  in which any two elements $\alpha$ and $\beta$ have a greatest lower bound $\alpha \wedge \beta$. Let $(Y, \leq)$ be a lower semilattice and $(G_{\alpha},+) (\alpha\in Y)$ be a family of groups. Assume that for all $\alpha,\beta\in Y$ with $\alpha\geq\beta$, there exists a group homomorphism $\varphi_{\alpha,\beta}: (G_{\alpha},+)\rightarrow (G_{\beta},+)$ such that the following conditions hold:
\begin{itemize}
\item  $\varphi_{\alpha,\alpha}$ is the identity isomorphism on $(G_{\alpha},+)$ for each $\alpha\in Y$.
\item $\varphi_{\beta,\gamma}\varphi_{\alpha,\beta}=\varphi_{\alpha,\gamma}$ for all $\alpha,\beta,\gamma\in Y$ with $\alpha\geq \beta \geq \gamma$.
\end{itemize}
Define a multiplication on $S=\underset{\alpha\in Y}{\bigcup}G_{\alpha}$ as follows:
$$a+b=\varphi_{\alpha,\alpha\wedge \beta}(a)+\varphi_{\beta,\alpha\wedge \beta}(b) \mbox{ for all } a\in G_{\alpha} \mbox{ and } b\in G_{\beta}.$$
Then $S$ forms a Clifford semigroup with respect to the above addition  and is called {\em the strong semilattice of the family of groups $(G_{\alpha},+)$ with respect to the family of group homomorphisms $\varphi_{\alpha,\beta}$}. In this case, we denote  $S=(Y,G_{\alpha},\varphi_{\alpha,\beta})$.
The following lemma  gives some characterizations of Clifford semigroups.

\begin{lemma}[\cite{A10,lawson}]\label{Clifford}For a semigroup $(S, +)$,  the following statements are equivalent:
\begin{itemize}
\item[(1)] $(S,+)$ is a  Clifford semigroup
\item[(2)]$(S,+)$ is a strong semilattice of a family of groups.
\item[(3)] $(S,+)$ is a regular semigroup and $E(S, +) \subseteq C(S,+)$.
\end{itemize}
\end{lemma}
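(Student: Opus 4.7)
The plan is to establish the equivalences via the cycle $(2)\Rightarrow(1)\Rightarrow(3)\Rightarrow(1)\Rightarrow(2)$, with $(1)\Rightarrow(3)$ and $(1)\Rightarrow(2)$ carrying the real content. For $(2)\Rightarrow(1)$, if $a \in G_\alpha$ then the group inverse $-a \in G_\alpha$ serves as a von Neumann inverse; and if another $b \in G_\beta$ satisfies $a=a+b+a$ and $b=b+a+b$, then $a \in G_{\alpha\wedge\beta}$ forces $\alpha = \beta$, so uniqueness drops into the group $G_\alpha$, where the Clifford identity $-a+a = a-a$ reduces to the group identity. For $(3)\Rightarrow(1)$, Lemma \ref{inverse} already gives that $(S,+)$ is inverse, so only the Clifford identity remains. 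I would set $e = a-a$ and $e' = -a+a$, both idempotent and hence central by hypothesis, and exploit $a+e' = e'+a$ and $e+(-a) = (-a)+e$ together with the von Neumann relations; a short computation yields $e+e' = e'$ and $e'+e = e$, and since idempotents in an inverse semigroup commute, $e+e' = e'+e$, forcing $e = e'$.

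The implication $(1)\Rightarrow(3)$ amounts to showing that every idempotent $e \in E(S,+)$ commutes with every $a \in S$. First I would deduce $a+a^0 = a^0+a = a$ directly from the Clifford identity $a^0 = -a+a = a-a$ and the von Neumann equations. Next I would put $f = e+a^0 = a^0+e$ (the latter equality by the standard commutativity of idempotents in an inverse semigroup), observe that $e+a = f+a$ and $a+e = a+f$, and so reduce the problem to showing $f+a = a+f$ for the idempotent $f \le a^0$. This last identity follows from the inverse semigroup bookkeeping $f+a = a+(-a+f+a)$ once one verifies $-a+f+a = f$ using $f \le a^0$.

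The principal obstacle is $(1)\Rightarrow(2)$, the strong semilattice decomposition itself. I would take $Y = E(S,+)$ under its natural partial order $e \le f \iff e+f = e$ (so $e \wedge f = e+f$), set $G_e = \{a \in S : a^0 = e\}$, and check that each $G_e$ is a group with identity $e$ and inverse $a \mapsto -a$. For $e \ge f$ the transition map should be $\varphi_{e,f}(a) = a+f$, and I would verify in turn that $\varphi_{e,f}(a) \in G_f$, that $\varphi_{e,f}$ is a homomorphism satisfying $\varphi_{e,e} = \mathrm{id}$ and $\varphi_{f,g}\varphi_{e,f} = \varphi_{e,g}$, and finally that the original addition is recovered as $a+b = \varphi_{\alpha,\alpha\wedge\beta}(a) + \varphi_{\beta,\alpha\wedge\beta}(b)$ for $a \in G_\alpha$, $b \in G_\beta$. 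Every one of these checks relies on repeatedly shuffling terms of the form $e+a$ past $a+e$ using the centrality of idempotents established in $(1)\Rightarrow(3)$; this intricate but routine bookkeeping is where the argument demands the most care.
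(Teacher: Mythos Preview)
The paper does not prove this lemma at all: it is quoted directly from the cited textbooks \cite{A10,lawson} as a standard structural characterisation of Clifford semigroups, so there is no ``paper's own proof'' to compare against. Your outline is essentially the classical textbook argument (semilattice of groups via the idempotent order, transition maps $a\mapsto a+f$, etc.), and it is correct in substance.

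One point deserves a sharper justification. In $(1)\Rightarrow(3)$ you assert that $-a+f+a=f$ follows ``from $f\le a^{0}$''. The relation $f\le a^{0}$ together with generic inverse-semigroup bookkeeping is \emph{not} enough for this (it fails, e.g., in the symmetric inverse monoid $I_{2}$); you must invoke the Clifford identity on the element $f+a$ itself. Concretely, applying $x^{0}=x-x=-x+x$ to $x=f+a$ and using $-(f+a)=-a+f$ gives
\[
f=f+a^{0}+f=(f+a)+(-a+f)=(-a+f)+(f+a)=-a+f+a,
\]
which is exactly what you need. With that one clarification your cycle $(2)\Rightarrow(1)\Rightarrow(3)\Rightarrow(1)\Rightarrow(2)$ is complete and matches the standard proofs in Howie and Lawson.
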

Let $(S, +)$ and $(S', +')$ be  Clifford semigroups. A map (respectively, bijection) $\phi: S\rightarrow S'$ is called a
{\em   semigroup homomorphism} (respectively, {\em   semigroup isomorphism}) from $(S, +)$ to $(S', +')$ if $\phi(x+y)=\phi(x)+'\phi(y)$ for all $x,y\in S$.
A {\em   semigroup homomorphism} (respectively,  {\em   semigroup isomorphism}) from $(S, +)$ to itself is called a {\em   semigroup endomorphism} (respectively,  {\em   semigroup automorphism}) on $(S, +)$. The set of all {\em   semigroup endomorphisms} (respectively,  {\em   semigroup automorphisms}) on $(S, +)$ are denoted by $\End (S,+)$  and $\Aut (S,+)$, respectively. Obviously, $\End (S,+)$  and $\Aut (S,+)$ forms  a semigroup and a group with respect to the compositions of maps, respectively.
In the sequel we shall use the following lemma throughout the paper without further mention.
\begin{lemma}\label{basicproperties}\label{jichu} Let $(S, +)$ and $(S',+')$ be two Clifford semigroups, $a,b\in S$, $e\in E(S, +)$ and $\phi: S\rightarrow S'$ be a semigroup homomorphism. Then
$$a=a-a+a,\,\, -a+a-a=-a,\,\,\, -(a+b)=-b-a,\,\, -(-a)=a,\,\, e+a=a+e,  $$$$e^0=e=-e,\,\,(-a)^0=-a^0=a^{00}=a^0=-a+a=a-a,\,\,a^0+a=a+a^0=a,$$$$ a^0 +b=b+a^0, (a+b)^0=a^0+b^0,\,\ \phi(-a)=-\phi(a),\,\,\, \phi(a^0)=\phi(a)^0.$$
\end{lemma}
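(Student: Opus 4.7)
The plan is to deduce every assertion in the lemma from three ingredients already recorded in the preceding paragraphs: the standard identities valid in any inverse semigroup, the central fact $-a+a=a-a$ defining a Clifford semigroup, and the observation (part (3) of Lemma \ref{Clifford}) that $E(S,+)\subseteq C(S,+)$. I would group the twelve claims into three blocks and treat them in order.

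The first block collects the pure inverse-semigroup identities $a=a-a+a$, $-a+a-a=-a$, $-(a+b)=-b-a$, $-(-a)=a$, together with $-e=e$ for $e\in E(S,+)$; each of these is already asserted verbatim in the paragraph preceding Lemma \ref{inverse} and so needs no new argument. The identity $e+a=a+e$ then follows from Lemma \ref{Clifford}(3), which places every idempotent in the center.

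The second block concerns the element $a^0=-a+a=a-a$. I would first observe that $a^0$ is itself idempotent, since $(-a+a)+(-a+a)=-a+(a-a+a)=-a+a$ by the first-block identities, so in particular $a^{00}=a^0$ and $-a^0=a^0$. The equality $(-a)^0=-(-a)+(-a)=a+(-a)=a-a=a^0$ is then immediate, which assembles the whole chain in item~(7). For $a^0+a=a+a^0=a$ one simply rewrites $a^0$ as $a-a$ (respectively $-a+a$) and applies $a-a+a=a$. The centrality $a^0+b=b+a^0$ is a special case of the block-one identity $e+a=a+e$ applied to the idempotent $a^0$. Finally, for $(a+b)^0=a^0+b^0$ I would expand $(a+b)^0=-(a+b)+(a+b)=-b-a+a+b=-b+a^0+b$, slide $a^0$ past $b$ using centrality, and collapse $-b+b=b^0$ to arrive at $a^0+b^0$.

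The third block treats the homomorphism $\phi$. Applying $\phi$ to the two inverse identities satisfied by $a$ gives $\phi(a)+'\phi(-a)+'\phi(a)=\phi(a)$ and $\phi(-a)+'\phi(a)+'\phi(-a)=\phi(-a)$, so that $\phi(-a)$ meets both defining conditions of the von~Neumann inverse of $\phi(a)$ in $(S',+')$; uniqueness then forces $\phi(-a)=-\phi(a)$. Combining this with the definition of the bracket gives $\phi(a^0)=\phi(-a+a)=-\phi(a)+'\phi(a)=\phi(a)^0$. There is no serious obstacle here; the only mildly nontrivial step is the chain in item~(7), which must be sequenced so that both $a^0\in E(S,+)$ and $-e=e$ for idempotents are invoked at the correct moment.
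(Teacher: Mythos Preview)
Your argument is correct in every block; each step follows from the inverse-semigroup identities recorded before Lemma~\ref{inverse}, the centrality of idempotents in Lemma~\ref{Clifford}(3), and the uniqueness of the von~Neumann inverse. The paper itself supplies no proof of Lemma~\ref{basicproperties}---it is stated as a catalogue of routine facts ``to be used throughout without further mention''---so there is nothing to compare against; your write-up is an entirely reasonable way to fill in that gap.
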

Now, we state the notion of  Rota--Baxter operators on   Clifford semigroups which is introduced by Catino Mazzotta and Stefanelli \cite{c1} as a generalization of the notion of Rota--Baxter operators on groups introduced by  Guo, Lang and Sheng in \cite{g2}.

\begin{defn}[\cite{c1}]\label{luobasuanzi} Let $(S,+)$ be a  Clifford semigroup and $R: S\rightarrow S$ be a map. Then $R$ is called a {\em Rota--Baxter operator on $(S,+)$ (with weight $1$)}
if  $$R(a)+R(b)=R(a+R(a)+b-R(a)),\, a+R(a)^{\circ}=a \mbox{ for all } a,b\in S.$$ In this case, we also say that $(S, +, R)$  is a {\em Rota--Baxter Clifford semigroup}.
\end{defn}

\begin{lemma}\label{cwang}  Let $(S,+,R)$ be a  Rota--Baxter  Clifford semigroup and $a,b\in S$. The the following statements are true:
\begin{itemize}
\item[(1)] $R(a^{0})=R(a)^{0}$.
\item[(2)] $R(a)+R(-a)=R(a+R(a)-a-R(a))$.
\item[(3)] $R(a)+R(R(a))=R(a+R(a))$.
\item[(4)] $-R(a)=R(-R(a)-a+R(a))$.
\item[(5)]$a^{0}+R(a)^{0}=a^{0}$.
\item[(6)]  $-R(a)+R(R(a)+b-R(a))=R(-R(a)-a+R(a)+b)$.
\end{itemize}
\end{lemma}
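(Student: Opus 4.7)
The plan is to derive all six identities from the two defining axioms, $R(a)+R(b)=R(a+R(a)+b-R(a))$ and $a+R(a)^{0}=a$, together with the basic Clifford-semigroup facts recorded in Lemma \ref{jichu} (centrality of idempotents, $-e=e$ for $e\in E(S,+)$, and $(x+y)^{0}=x^{0}+y^{0}$). Three identities are direct substitutions and the others cascade from identity (1).

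For the easy cases: (5) follows by applying $(\cdot)^{0}$ to both sides of $a+R(a)^{0}=a$ and using $R(a)^{00}=R(a)^{0}$; (2) is the defining relation with $b=-a$; (3) is the defining relation with $b=R(a)$, where the bracket collapses via $R(a)+R(a)-R(a)=R(a)+R(a)^{0}=R(a)$.

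Identity (1) is the core step. I would substitute $b=-R(a)-a+R(a)$ in the defining relation; the bracket simplifies by pairing the two factors $R(a)\pm R(a)$ into central idempotents $R(a)^{0}$, commuting them past $\pm a$, and invoking $a-a=a^{0}$ together with part (5) in the form $a^{0}+R(a)^{0}=a^{0}$, yielding the key identity
\[
R(a)+R(-R(a)-a+R(a))=R(a^{0}).
\]
A parallel substitution $a=b=a^{0}$ shows $R(a^{0})$ is itself an idempotent, and applying $(\cdot)^{0}$ to the easily verified $R(a)+R(a^{0})=R(a)$ yields $R(a^{0})\geq R(a)^{0}$ in the semilattice of idempotents. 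Now I place everything in the Clifford decomposition $S=\bigsqcup_{\alpha\in Y}G_{\alpha}$: write $R(a)\in G_{\beta}$ with $\beta=R(a)^{0}$, $R(-R(a)-a+R(a))\in G_{\gamma}$, and $R(a^{0})=e_{\delta}\in G_{\delta}$. The displayed identity forces $\beta\wedge\gamma=\delta$, and combining $\delta\geq\beta$ with the trivial $\beta\wedge\gamma\leq\beta$ pins down $\delta=\beta$, hence $R(a^{0})=R(a)^{0}$.

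With (1) in hand, (4) and (6) follow. For (4): applying (1) to $c:=-R(a)-a+R(a)$ yields $R(c)^{0}=R(c^{0})=R(a^{0})=R(a)^{0}$, placing $R(c)$ in the same group $G_{\beta}$ as $R(a)$; the displayed identity then reads as a group equation $R(a)+R(c)=R(a)^{0}$ in $G_{\beta}$, giving $R(c)=-R(a)$. For (6): apply the defining relation with $a$ replaced by $c$ and $b$ replaced by $R(a)+b-R(a)$, use (4) to write $R(c)=-R(a)$ and $-R(c)=R(a)$, and simplify the resulting bracket using $(-R(a)-a+R(a)+b)^{0}=a^{0}+b^{0}\leq R(a)^{0}$ (by part (5)) to absorb the leftover $R(a)^{0}$ term. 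The principal obstacle is the semilattice pinch in (1), where three a~priori inequalities must be played off against each other to produce an equality rather than an inequality.
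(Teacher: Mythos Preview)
Your argument is correct. For items (2), (3), (5) and (6) it is essentially the paper's approach: the paper proves (5) by the same one-line application of $(\cdot)^{0}$ to the axiom $a+R(a)^{0}=a$, and proves (6) by the same substitution into the Rota--Baxter identity after invoking (4) to rewrite $-R(a)$ as $R(-R(a)-a+R(a))$; your bookkeeping for absorbing the stray $R(a)^{0}$ term is slightly more elaborate than needed (one can simply slide the central idempotent $R(a)^{0}$ next to the existing $R(a)$ and use $R(a)+R(a)^{0}=R(a)$), but it is valid.

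Where you genuinely add something is items (1) and (4). The paper does not argue these at all; it cites Proposition~10 and the Remark following it in \cite{c1}. Your route is self-contained: you first extract the identity $R(a)+R\bigl(-R(a)-a+R(a)\bigr)=R(a^{0})$, then establish $R(a^{0})\in E(S,+)$ and the semilattice comparison $R(a)^{0}\leq R(a^{0})$ from $R(a)+R(a^{0})=R(a)$, and finally pinch the indices in the strong semilattice decomposition $S=(Y,G_{\alpha},\varphi_{\alpha,\beta})$ via $\delta=\beta\wedge\gamma\leq\beta\leq\delta$ to force $R(a^{0})=R(a)^{0}$. This is a clean structural argument that avoids appealing to the external reference, and (4) then drops out immediately as a group equation in the single component $G_{\beta}$. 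The trade-off is that the paper's citation is shorter, while your argument makes the lemma independent of \cite{c1} and exposes exactly where the Clifford structure (as opposed to mere inverse-semigroup structure) is used.
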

\begin{proof}Items (1)--(4) follow from Proposition 10 and Remark in {\cite{c1}}.  By Definition \ref{luobasuanzi},   $$a^{0}=(a+R(a)^{0})^{0}=a^{0}+R(a)^{00}=a^{0}+R(a)^{0}.$$ This gives (5).  Finally, by (4) and  Definition \ref{luobasuanzi},
$$-R(a)+R(R(a)+b-R(a))=R(-R(a)-a+R(a))+R(R(a)+b-R(a))$$$$=R(-R(a)-a+R(a)-R(a)+R(a)+b-R(a)+R(a))
=R(-R(a)-a+R(a)+b).$$ Thus (6) holds.
\end{proof}
By \cite[Theorem 8]{c1}, we can formulate the following definition.
\begin{defn}
Let $(S,+)$ be a Clifford semigroup, where $S=(Y,G_{\alpha},\varphi_{\alpha,\beta})$.  A Rota--Baxter operator $R$  on $(S,+)$  is called {\em strong} if $R(G_\alpha)\subseteq G_\alpha$ for all $\alpha\in Y$.
\end{defn}
\begin{lemma} \label{strong}Let $R$ be a Rota--Baxter operator on a Clifford semigroup $(S, +)$, where $S=(Y,G_{\alpha},\varphi_{\alpha,\beta})$. Then $R$ is strong if and only if $R(e)=e$ for all $e\in E(S,+)$.
\end{lemma}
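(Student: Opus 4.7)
The plan is to exploit the explicit description of a Clifford semigroup as a strong semilattice of groups: the idempotents of $S$ are exactly the identities $e_\alpha$ of the groups $G_\alpha$, and membership in $G_\alpha$ is detected by the assignment $a\mapsto a^{0}$, since for $a\in G_\alpha$ one has $a^{0}=e_\alpha$. The whole proof is then a short manipulation using Lemma \ref{cwang}(1), which says $R(a^{0})=R(a)^{0}$.

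For the forward direction, assume $R$ is strong. Fix $\alpha\in Y$ and consider the identity $e_\alpha\in G_\alpha$. Since $R$ is strong, $R(e_\alpha)\in G_\alpha$. Because $e_\alpha$ is idempotent we have $e_\alpha^{0}=e_\alpha$, so Lemma \ref{cwang}(1) yields $R(e_\alpha)=R(e_\alpha^{0})=R(e_\alpha)^{0}$. Thus $R(e_\alpha)$ is itself an idempotent of $S$ lying in the group $G_\alpha$, and the only such idempotent is $e_\alpha$ itself. Hence $R(e_\alpha)=e_\alpha$ for every $\alpha$, which means $R(e)=e$ for all $e\in E(S,+)$.

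For the converse, assume $R(e)=e$ for every $e\in E(S,+)$. Take $\alpha\in Y$ and $a\in G_\alpha$, so $a^{0}=e_\alpha$. Using Lemma \ref{cwang}(1) together with the hypothesis,
\begin{equation*}
R(a)^{0}=R(a^{0})=R(e_\alpha)=e_\alpha.
\end{equation*}
On the other hand, if $R(a)$ belongs to some $G_\beta$, then $R(a)^{0}=e_\beta$. Therefore $e_\beta=e_\alpha$, forcing $\beta=\alpha$ and hence $R(a)\in G_\alpha$. This gives $R(G_\alpha)\subseteq G_\alpha$, so $R$ is strong.

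There is no real obstacle here: the only ingredient beyond the definitions is the identity $R(a^{0})=R(a)^{0}$ from Lemma \ref{cwang}(1), and the rest is the standard dictionary between a Clifford semigroup and its underlying semilattice-of-groups decomposition. The argument is essentially symmetric in the two directions, both reducing to the observation that an element lies in $G_\alpha$ precisely when its associated idempotent equals $e_\alpha$.
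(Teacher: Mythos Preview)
Your proof is correct and follows essentially the same approach as the paper's own proof: both directions hinge on Lemma~\ref{cwang}(1) together with the fact that each $G_\alpha$ has exactly one idempotent, and that an element lies in $G_\alpha$ precisely when its associated idempotent $a^{0}$ equals $e_\alpha$. The only cosmetic difference is that you phrase things in terms of the explicit identities $e_\alpha$ while the paper works directly with generic idempotents $e\in E(S,+)$.
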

\begin{proof}Let $e\in E(S,+)$. Then $e\in G_{\alpha}$ for some $\alpha\in Y$. By Lemma \ref{cwang} (1), we have $R(e)\in E(S,+)$. Since $R$ is strong, it follows that
$R(e)\in E(S,+)\cap G_{\alpha}$. Observe that $G_{\alpha}$ is a group, $G_{\alpha}$ contains exactly one idempotent $e$. This shows that $R(e)=e$.
Conversely, let $R(e)=e$ for all $e\in E(S,+)$. Assume that $a\in G_{\alpha},\alpha\in Y$. Then $a^{0}\in G_{\alpha}$. Let $R(a)\in G_{\beta},\beta\in Y$.  Then $R(a)^{0}\in G_{\beta}$. By Lemma \ref{cwang}, we have $G_{\beta}\ni R(a)^{0}=R(a^{0})=a^{0}\in G_{\alpha},$
as $R$ is strong and $a^{0}\in E(S,+)$. This implies that $\alpha=\beta$ and so $R(a)\in G_{\alpha}$. Thus $R(G_{\alpha})\subseteq G_{\alpha}$ for all $\alpha\in Y$. So $R$ is strong.
\end{proof}
\begin{defn}
Let $(S, +, R)$ and $(S', +',  R')$ be two Rota--Baxter Clifford semigroups. A map $\phi: S\rightarrow S'$ is called a
{\em  Rota--Baxter homomorphism} if $\phi$ is a semigroup homomorphism such that $\phi R=R'\phi$.
\end{defn}
According to \cite{c1}, Rota--Baxter Clifford semigroups are  closely related to so-called {\em dual weak  left braces}  closely.
\begin{defn}[\cite{c1}]  A triple $(S, +, \circ)$ is called a {\em dual weak  left brace} if $(S, +)$ and $(S, \circ)$ are  Clifford  semigroups and the following axioms hold:
 $$
x\circ(y+z)=x\circ y-x+x\circ z,\,\,\,\,\,\, x\circ x^{-1}=-x+x.$$
By the second axiom, we have $E(S, +)=E(S, \circ)$.  We  denote this set by $E(S)$ sometimes.
\end{defn}
For dual weak  left braces, we have the following key lemmas.
\begin{lemma}[Lemma 2 in \cite{Catino-Mazzotta-Stefanelli3}]\label{wang5}
Let $(T, +, \circ)$ be a dual weak left  brace.  Then $$e\circ a=a\circ e=e+a=a+e \mbox{ for all } a\in S \mbox{ and } e\in E(T).$$
\end{lemma}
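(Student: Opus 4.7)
The two equalities $e + a = a + e$ and $e \circ a = a \circ e$ are immediate, since idempotents are central in any Clifford semigroup by Lemma \ref{Clifford}(3), applied to $(T,+)$ and $(T,\circ)$ respectively. So the plan is to prove $e \circ a = e + a$ in two stages.

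\emph{Stage 1: the two semilattice structures on $E(T)$ coincide, i.e.\ $e \circ f = e + f$ for all $e,f \in E(T)$.} Applying the first defining identity of dual weak left brace with $x = e$ and $y = z = f$, together with $f + f = f$, $-e = e$, and the fact that $e \circ f$ is an idempotent (hence $+$-central), I would derive $e \circ f + e = e \circ f$, i.e.\ $e \circ f$ lies below $e$ in the $+$-semilattice; by $\circ$-centrality $e \circ f = f \circ e$, so symmetrically $e \circ f$ lies below $f$. Next, since $e + f$ is also a $\circ$-idempotent, expand $(e + f) \circ (e + f) = e + f$ via the first defining identity and use $-(e + f) = e + f$ together with the preceding inequalities to reduce the right-hand side to $(e + f) \circ e + (e + f) \circ f$. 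Both summands are $+$-idempotents lying below $e + f$, and their $+$-sum (which is their semilattice meet) equals $e + f$, so each summand must equal $e + f$. In particular $(e + f) \circ e = e + f$, which by $\circ$-centrality of $e$ becomes $e \circ (e + f) = e + f$. A separate expansion of $e \circ (e + f)$ via the first defining identity yields $e + e \circ f$, so $e + f = e + e \circ f = e \circ f$, the last step using the already established $e + (e \circ f) = e \circ f$.

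\emph{Stage 2: reduce the general case to Stage 1.} For arbitrary $a \in T$ and $e \in E(T)$, the first defining identity with $x = a$ and $y = z = e$ produces $u = u - a + u$, where $u := a \circ e$. Using the second defining identity and $\circ$-centrality of $e$, I would compute $u^{0} = u \circ u^{-1} = (a \circ e) \circ (e \circ a^{-1}) = a \circ a^{-1} \circ e = a^{0} \circ e$, which by Stage 1 equals $a^{0} + e$. Hence both $u$ and $a + e$ belong to the $+$-group $G_{a^{0} + e}$ of the strong-semilattice decomposition of $(T,+)$. Writing $\tilde a$ for the image of $a$ under the structure map $\varphi_{a^{0}, a^{0} + e}$, the equation $u = u - a + u$ becomes the genuine group equation $u = u - \tilde a + u$ inside $(G_{a^{0} + e}, +)$, whose unique solution is $u = \tilde a$. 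A direct computation also gives $a + e = \varphi_{a^{0}, a^{0} + e}(a) = \tilde a$, since $e$ projects to the identity of $G_{a^{0} + e}$. Therefore $a \circ e = u = \tilde a = a + e$, completing the proof.

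The principal obstacle is the identification $(e + f) \circ e = e + f$ in Stage 1: one must spot that after the axiom-expansion of $(e + f) \circ (e + f)$ and simplification using the idempotent properties, the equation collapses to a semilattice-meet relation among $+$-idempotents below $e + f$, which pins down each summand to equal $e + f$. Once Stage 1 is in place, Stage 2 is just translating the fixed-point equation $u = u - a + u$ into a single group component of the strong-semilattice decomposition.
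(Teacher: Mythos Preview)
The paper does not supply its own proof of this lemma: it is stated with a citation to \cite{Catino-Mazzotta-Stefanelli3} and no proof environment follows. So there is nothing in the present paper to compare your argument against.

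That said, your proof is correct and self-contained. Stage~1 is the substantive part, and the key semilattice observation you make is sound: from the brace identity with $x=e$, $y=z=f$ one obtains $e\circ f = e + (e\circ f)$, hence $e\circ f \le e$ and, by symmetry, $e\circ f \le f$ in the natural order on $E(T,+)$. Expanding $(e+f)\circ(e+f)$ via the brace identity with $y=e$, $z=f$, using $-(e+f)=e+f$, and absorbing the middle term via the inequality just established yields $e+f = (e+f)\circ e + (e+f)\circ f$; since both summands lie below $e+f$ and their meet equals $e+f$, each must equal $e+f$. The final rewriting $e\circ(e+f) = e + e\circ f = e\circ f$ then finishes Stage~1 cleanly.

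Stage~2 is also fine. The point worth noting is that the two notions of $u^{0}$ (in $(T,+)$ and in $(T,\circ)$) agree for every element by the second brace axiom, so your computation $u^{0}=a^{0}\circ e = a^{0}+e$ (using Stage~1) legitimately places $u$ and $a+e$ in the same group component $G_{a^{0}+e}$, after which the group equation $u = u - \tilde a + u$ forces $u = \tilde a = a+e$.
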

\begin{lemma}[\cite{Catino-Mazzotta-Miccoli-Stefanelli,Catino-Mazzotta-Stefanelli}]\label{wangkangcccc}
Let $(T, +, \circ)$ be a dual weak left  brace. Define $$\lambda^T_a: T\rightarrow T, b\mapsto -a+a\circ b$$ for all $a\in T$.  Then $\lambda^T_a\in \End (T,+)$.
Furthermore, define $$\lambda^T: (T, \circ)\rightarrow \End (T,+), \,\, a\mapsto \lambda^T_a.$$ Then $\lambda^T$ is a semigroup homomorphism.
\end{lemma}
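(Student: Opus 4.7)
The plan is to handle the two assertions of the lemma in turn, noting that the second will require considerably more work than the first.

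For the first assertion, that $\lambda^T_a$ is an endomorphism of $(T,+)$, I would simply unfold the definition and apply the left distributivity axiom $x\circ(y+z)=x\circ y-x+x\circ z$. The computation $\lambda^T_a(b+c)=-a+a\circ(b+c)$ splits cleanly as $(-a+a\circ b)+(-a+a\circ c)=\lambda^T_a(b)+\lambda^T_a(c)$, with no further input required.

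For the second assertion, that $a\mapsto\lambda^T_a$ is a semigroup homomorphism into $\End(T,+)$, I would evaluate both $\lambda^T_{a\circ b}$ and $\lambda^T_a\lambda^T_b$ on a test element $c$ and compare. Using distributivity once more together with associativity of $\circ$, the value $\lambda^T_a\lambda^T_b(c)$ rewrites as $-a+a\circ(-b)-a+(a\circ b)\circ c$, so the desired equality with $\lambda^T_{a\circ b}(c)=-(a\circ b)+(a\circ b)\circ c$ reduces to the single identity
$$-a+a\circ(-b)-a=-(a\circ b)\qquad(\ast)$$
in the Clifford semigroup $(T,+)$.

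Establishing $(\ast)$ will be the main obstacle, since it genuinely couples the two semigroup structures on $T$. My plan is to use a Clifford-semigroup cancellation argument. Left-adding $a\circ b$ to the left-hand side of $(\ast)$, reversing distributivity collapses it through $a\circ(b+(-b))=a\circ b^0$, and Lemma~\ref{wang5} converts $a\circ b^0$ to $b^0+a$; the resulting expression simplifies to the idempotent $a^0+b^0=(a\circ b)^0$, where I use that $E(T,+)=E(T,\circ)$ and that the Clifford idempotent $(a\circ b)^0$ equals $a^0\circ b^0=a^0+b^0$. The same sum applied to the right-hand side $-(a\circ b)$ yields $(a\circ b)^0$ trivially. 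Both sides of $(\ast)$ have the common idempotent $a^0+b^0$ in $(T,+)$, so a final left-addition of $-(a\circ b)$ together with the identity $x^0+x=x$ from Lemma~\ref{basicproperties} recovers $(\ast)$ from the equality of the two absorbed sums, completing the argument.
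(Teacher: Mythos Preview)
Your argument is correct. The paper itself does not give a proof of this lemma; it is simply cited from the references \cite{Catino-Mazzotta-Miccoli-Stefanelli,Catino-Mazzotta-Stefanelli}, so there is no in-paper argument to compare against.

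Your reduction of the homomorphism claim to the single identity $(\ast)$ and the Clifford-semigroup cancellation via matching idempotents is clean and self-contained. The only point worth making explicit in a final write-up is the verification that the left-hand side of $(\ast)$ indeed has associated idempotent $a^0+b^0$: one computes $(-a+a\circ(-b)-a)^0=a^0+(a\circ(-b))^0$, and $(a\circ(-b))^0=a^0\circ(-b)^0=a^0\circ b^0=a^0+b^0$ using that the two notions of $x^0$ coincide and Lemma~\ref{wang5}. With that spelled out, your absorption step $(a\circ b)^0+\text{LHS}=\text{LHS}$ is fully justified and the proof is complete.
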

Let $(S, +,R)$ be a Rota--Baxter Clifford semigroup. Define a new binary operation $\circ_R$ on $S$ as follows:
For all $a,b\in S$, $a\circ_R b=a+R(a)+b-R(a).$ Then by \cite[Lemma 15, Remark 3 and Theorem 16]{c1}, we have the following lemma.
\begin{lemma}[\cite{c1}]\label{wang9}Let $(S, +,R)$ be a Rota--Baxter Clifford semigroup.
\begin{itemize}
\item[\rm (1)] $(S, +, \circ_R)$ forms a dual weak  left brace such that $E(S, +)=E(S, \circ_R)$ and the inverse   of $a$ in $(S, \circ_R)$ is $a^{-1}=-R(a)-a+R(a)$.
\item[\rm (2)] $(S, \circ_R, R)$ is also a Rota--Baxter  Clifford semigroup.
\item[\rm (3)] $R$ is a Rota--Baxter homomorphism  from $(S, \circ_R, R)$ to $(S, +, R)$.
\end{itemize}
\end{lemma}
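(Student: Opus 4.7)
The plan is to verify the three parts in turn. The Rota--Baxter identity $R(a)+R(b)=R(a+R(a)+b-R(a))$ is the engine of every calculation, while idempotent bookkeeping is governed by Lemma \ref{cwang} and the centrality of $E(S,+)$ recorded in Lemma \ref{jichu}.

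For part (1), I would first check that $\circ_R$ is associative. Expanding $(a\circ_R b)\circ_R c$ according to the definition and replacing $R(a\circ_R b)$ by $R(a)+R(b)$ via the Rota--Baxter identity yields $a+R(a)+b-R(a)+R(a)+R(b)+c-R(b)-R(a)$; the central idempotent $-R(a)+R(a)=R(a)^{0}$ may be moved next to the left $R(a)$, where it is absorbed via $R(a)+R(a)^{0}=R(a)$, producing the expansion $a+R(a)+b+R(b)+c-R(b)-R(a)$, which coincides with $a\circ_R(b\circ_R c)$. Next I would verify that $a^{-1}:=-R(a)-a+R(a)$ satisfies $a\circ_R a^{-1}=a^{-1}\circ_R a=a^{0}$: the first equality is a direct cancellation of $R(a)$-terms, while the second uses Lemma \ref{cwang}(4) to rewrite $R(a^{-1})$ as $-R(a)$ and Lemma \ref{cwang}(5) to absorb $R(a)^{0}+a^{0}=a^{0}$. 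The identity $a\circ_R a^{-1}=a^{0}$, combined with the standard description of idempotents in a Clifford semigroup, gives $E(S,\circ_R)\subseteq E(S,+)$; the reverse inclusion follows from the computation $e\circ_R e=e+R(e)+e-R(e)=e$, where $R(e)=R(e)^{0}$ by Lemma \ref{cwang}(1) and $e+R(e)^{0}=e$ by the second Rota--Baxter axiom. Commutativity of $E(S,\circ_R)$ is then inherited from $E(S,+)$, and Lemma \ref{inverse} yields the Clifford property of $(S,\circ_R)$. Finally, the two dual weak left brace axioms are verified by direct expansion: the left distributivity reduces to the identity $-R(a)+a^{0}+R(a)=a^{0}$, which follows from centrality of $a^{0}$ together with Lemma \ref{cwang}(5), and the second axiom is the computation $a\circ_R a^{-1}=a^{0}=-a+a$ already established.

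For part (2), I would verify the Rota--Baxter axioms for the triple $(S,\circ_R,R)$. The second axiom $a\circ_R R(a)^{0}=a$ collapses to the original axiom $a+R(a)^{0}=a$ via $R(a)+R(a)^{0}=R(a)$. The substantive step is the multiplicative axiom $R(a)\circ_R R(b)=R\bigl(a\circ_R R(a)\circ_R b\circ_R (R(a))^{-1_{\circ_R}}\bigr)$: one expands the right-hand side using the explicit $\circ_R$-inverse $(R(a))^{-1_{\circ_R}}=-R(R(a))-R(a)+R(R(a))$, iterates the original Rota--Baxter identity on the nested parentheses, and applies Lemma \ref{cwang}(3), namely $R(a)+R(R(a))=R(a+R(a))$, to collapse the double applications of $R$ so that the result matches the left-hand side $R(a)+R(R(a))+R(b)-R(R(a))$.

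Part (3) is then essentially immediate: the equality $R(a\circ_R b)=R(a)+R(b)$ is the Rota--Baxter identity itself, so $R$ is a semigroup homomorphism from $(S,\circ_R)$ to $(S,+)$, and $RR=RR$ is tautological. The principal obstacle lies in part (2), where the interaction between nested applications of $R$ and the explicit $\circ_R$-inverse formula demands careful bookkeeping; throughout the argument, the absorption law $a^{0}+R(a)^{0}=a^{0}$ of Lemma \ref{cwang}(5) is what prevents spurious idempotent contributions from surviving at each stage.
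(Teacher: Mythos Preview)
The paper does not give its own proof of this lemma; it simply cites \cite[Lemma 15, Remark 3 and Theorem 16]{c1}. Your direct verification is therefore more than the paper offers, and the overall strategy is sound. Two points deserve tightening.

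First, your argument for $E(S,\circ_R)\subseteq E(S,+)$ invokes ``the standard description of idempotents in a Clifford semigroup'' before you have established that $(S,\circ_R)$ is Clifford (or even inverse), so the logic is circular as written. A clean direct fix: if $e\circ_R e=e$ then applying $R$ and the Rota--Baxter identity gives $R(e)=R(e)+R(e)$, so $R(e)\in E(S,+)$ is central and $R(e)=R(e)^0$; hence $e=e\circ_R e=e+R(e)+e-R(e)=e+e+R(e)$, and since $e+R(e)=e+R(e)^0=e$, centrality yields $e+e+R(e)=e+R(e)+e=e+e$, so $e\in E(S,+)$. Note also that Lemma~\ref{inverse} only delivers \emph{inverse}, not Clifford; you must add either the observation $a\circ_R a^{-1}=a^{-1}\circ_R a$ (which you did compute) or a direct check that $\circ_R$-idempotents are central, e.g.\ $e\circ_R b=e+b=b\circ_R e$ for $e\in E(S,+)$.

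Second, your sketch for part~(2) is workable but laborious. A shorter route is to establish part~(3) first: the Rota--Baxter identity \emph{is} the statement $R(a\circ_R b)=R(a)+R(b)$, so $R:(S,\circ_R)\to(S,+)$ is a homomorphism. Then the multiplicative axiom in part~(2) follows in one line:
\[
R\bigl(a\circ_R R(a)\circ_R b\circ_R (R(a))^{-1_{\circ_R}}\bigr)=R(a)+R(R(a))+R(b)+R\bigl((R(a))^{-1_{\circ_R}}\bigr)=R(a)+R(R(a))+R(b)-R(R(a))=R(a)\circ_R R(b),
\]
using Lemma~\ref{cwang}(4) for $R((R(a))^{-1_{\circ_R}})=-R(R(a))$. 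This avoids the nested expansion entirely.
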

\begin{prop}
Let $(S, +, R)$ be a Rota--Baxter  Clifford semigroup. Then $+$ is the same as $\circ_R$ if and only if $a^{0}+R(a)\in C(S)$ for all $a\in S$.
\end{prop}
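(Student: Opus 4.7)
The plan is to unfold the definition $a\circ_R b = a+R(a)+b-R(a)$ so that the hypothesis $+ = \circ_R$ becomes the identity
\[
a+b = a+R(a)+b-R(a) \quad \text{for all } a,b\in S,
\]
and then manipulate this identity using the centrality of idempotents in a Clifford semigroup together with the absorption rules $a^0+R(a)^0 = a^0$ (Lemma \ref{cwang}(5)) and $R(a)+R(a)^0 = R(a)$.

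For the forward direction, assuming $+ = \circ_R$, I would first add $-a$ on the left of $a+b = a+R(a)+b-R(a)$ to obtain the simpler identity $a^0 + b = a^0 + R(a) + b - R(a)$, valid for all $a,b\in S$. Then I would specialize by setting $b = c+R(a)$ for an arbitrary $c\in S$, so that the right-hand side becomes $a^0 + R(a) + c + R(a) - R(a) = a^0 + R(a) + c + R(a)^0$. Using centrality of the idempotent $R(a)^0$ together with $R(a)+R(a)^0 = R(a)$, this collapses to $a^0 + R(a) + c$, while the left-hand side is $a^0 + c + R(a)$. Centrality of $a^0$ then rewrites $a^0+c+R(a) = c+(a^0+R(a))$ and $a^0+R(a)+c = (a^0+R(a))+c$, so $a^0+R(a)$ commutes with the arbitrary element $c$.

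For the backward direction, assume $a^0+R(a)\in C(S)$ for all $a$. Using $a+a^0=a$, I would rewrite $a\circ_R b = a+(a^0+R(a))+b-R(a)$, then push the central factor $a^0+R(a)$ past $b$ to get $a+b+(a^0+R(a))-R(a) = a+b+a^0+R(a)^0$. Lemma \ref{cwang}(5) replaces $a^0+R(a)^0$ by $a^0$, and centrality of $a^0$ together with $a+a^0=a$ turns the tail $+a^0$ into the identity operation on $a+b$, giving $a\circ_R b = a+b$.

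The argument is essentially a bookkeeping exercise: the only nontrivial ingredients are the two idempotent-absorption identities $a^0+R(a)^0 = a^0$ and $R(a)+R(a)^0 = R(a)$ together with centrality of the idempotents $a^0$ and $R(a)^0$. The main pitfall to watch for is the temptation to cancel $R(a)$ or $a$ directly; because we are in a Clifford semigroup rather than a group, cancellation must always be replaced by adding an inverse and absorbing the resulting $x^0$ using Lemma \ref{cwang}.
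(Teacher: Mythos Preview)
Your proposal is correct and follows essentially the same approach as the paper's proof. The only cosmetic difference is the order of steps in the forward direction: the paper first adds $R(a)$ on the right and then $-a$ on the left, whereas you add $-a$ on the left and then specialize $b=c+R(a)$; both routes rely on the same ingredients (centrality of $a^{0}$ and $R(a)^{0}$, and the absorption $a^{0}+R(a)^{0}=a^{0}$), and the backward direction is identical to the paper's.
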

\begin{proof}Let $a,b\in S$. If  $+$ is the same as $\circ_R$,  then  $a\circ_R b=a+b$. Since
\begin{equation*}
\begin{aligned}
&a\circ b=a+b\\
&\Rightarrow a+R(a)+b-R(a)=a+R(a)-R(a)+b\\
&\Rightarrow a+R(a)+b-R(a)+R(a)=a+R(a)-R(a)+b+R(a)\\
&\Rightarrow a+R(a)+b=a+b+R(a)\\
&\Rightarrow -a+a+R(a)+b=-a+a+b+R(a)=b-a+a+R(a)\\
&\Rightarrow a^{0}+R(a)+b=b+a^{0}+R(a),\\
\end{aligned}
\end{equation*}
it follows that $a^{0}+R(a)\in C(S)$.
Conversely, if $a^{0}+R(a)\in C(S)$, then
\begin{equation*}
\begin{aligned}
a\circ b&=a+R(a)+b-R(a)=a+a^{0}+R(a)+b-R(a)\\
&=a+b+a^{0}+R(a)-R(a)=a+a^{0}+R(a)-R(a)+b=a+b.
\end{aligned}
\end{equation*}
This shows that $+$ is the same as $\circ_R$.
\end{proof}

A map $\psi$ from a dual weak left  brace $(T,+_{T},\circ_{T})$ to a dual weak  left brace$(S,+_{S},\circ_{S})$ is called  {\em   brace homomorphism} if
$$\psi(a+_{T} b)=\psi(a)+_{S}\psi(b) \mbox{ and } \psi(a\circ_{T} b)=\psi(a)\circ_{S}\psi(b) \mbox{ for all } a,b\in T.$$

\begin{lemma}\label{k} Let $(T,+_{T},\circ_{T})$ and $(S,+_{S},\circ_{S})$ be two dual weak left  braces and $\psi: (T, +_T)\rightarrow (S, +_S)$ be a semigroup homomorphism. Then $\psi$ is a  brace homomorphism if and only if  $\psi\lambda_{a}^{T}=\lambda_{\psi(a)}^{S}\psi$ for all $a\in T$.
\end{lemma}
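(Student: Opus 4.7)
The forward direction is immediate. Assuming $\psi$ is a brace homomorphism, $\psi$ preserves both $+$ and $\circ$, and by Lemma \ref{jichu} it also sends $-a$ to $-\psi(a)$. Expanding
\[\psi\lambda^T_a(b) = \psi(-a +_T a\circ_T b) = -\psi(a) +_S \psi(a)\circ_S \psi(b) = \lambda^S_{\psi(a)}\psi(b)\]
gives the intertwining relation directly.

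For the converse, my plan is to reduce it to a single identity valid in any dual weak left brace, namely $a \circ b = a + \lambda_a(b)$. Once this identity is in hand in both $T$ and $S$, the hypotheses (additivity of $\psi$ and $\psi\lambda^T_a = \lambda^S_{\psi(a)}\psi$) give a short chain
\[\psi(a\circ_T b) = \psi(a +_T \lambda^T_a(b)) = \psi(a) +_S \psi\lambda^T_a(b) = \psi(a) +_S \lambda^S_{\psi(a)}\psi(b) = \psi(a)\circ_S \psi(b),\]
so $\psi$ also preserves $\circ$.

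The main obstacle is therefore the identity $a\circ b = a + \lambda_a(b)$. The strategy I would adopt is to first observe that the brace axiom $x\circ x^{-1} = -x+x$ forces the additive idempotent $-a+a$ and the multiplicative idempotent $a\circ a^{-1}$ of $a$ to coincide; write $a^0$ for this common element of $E(T)$. Then
\[a + \lambda_a(b) = a +_T (-a +_T a\circ_T b) = a^0 +_T (a\circ_T b),\]
and invoking Lemma \ref{wang5} with $e = a^0 \in E(T)$ rewrites the right-hand side as $a^0 \circ_T (a\circ_T b) = (a^0\circ_T a)\circ_T b$, which collapses to $a \circ_T b$ since $a^0$ is the $\circ_T$-identity of $a$. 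Identifying the two Clifford idempotents of $a$ and then sliding between the two operations via Lemma \ref{wang5} is really the heart of the argument; everything else is bookkeeping.
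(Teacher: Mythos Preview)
Your argument is correct and is essentially the same as the paper's: both hinge on the identity $a^{0}+_{T}(a\circ_{T}b)=a\circ_{T}b$, obtained via Lemma~\ref{wang5}, together with additivity of $\psi$. The only cosmetic difference is that the paper packages both directions into a single chain of equivalences (adding $\psi(a)$ to both sides and then invoking Lemma~\ref{wang5}), whereas you first isolate the identity $a\circ b = a + \lambda_a(b)$ and then treat the two directions separately.
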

\begin{proof}
By using Lemma \ref{wang5}, we can obtain
\begin{equation*}
\begin{aligned}
&\psi(-a+_{T}a\circ_{T}b)=\psi\lambda_{a}^{T}(b)=\lambda_{\psi(a)}^{S}\psi(b)
=-\psi(a)+_{S}(\psi(a)\circ_{S}\psi(b))\\
&\Longleftrightarrow \psi(-a+_{T}a\circ_{T}b)=-\psi(a)+_{S}(\psi(a)\circ_{S}\psi(b))\\
&\Longleftrightarrow \psi(a)+_{S}\psi(-a+_{T}a\circ_{T}b)=\psi(a)+_{S}(-\psi(a))+_{S}(\psi(a)\circ_{S}\psi(b))\\
&\Longleftrightarrow \psi(a^{0}+_{T}a\circ_{T}b)=\psi(a)^{0}+_{S}(\psi(a)\circ_{S}\psi(b))\\
&\Longleftrightarrow \psi(a^{0}\circ_{T}a\circ_{T}b)=\psi(a)^{0}\circ_{S}(\psi(a)\circ_{S}\psi(b))\\
&\Longleftrightarrow \psi(a\circ_{T}b)=\psi(a)\circ_{S}\psi(b).
\end{aligned}
\end{equation*}
for all $a,b\in T$, and so the result follows.
\end{proof}

To consider the quotient structures of dual weak  left braces, we need to recall  normal subsemigroups of inverse semigroups and give the congruences determined by them. Let $(S, +)$ be an inverse semigroup and $N$ be a nonempty set of $S$.  Then $N$ is called an {\em inverse subsemigroup} of $S$ if
$-m, m+n\in N$ for all $m, n\in N$.  Moreover, an inverse subsemigroup $N$ of $(S,+)$ is called {\em normal} if $E(S, +)\subseteq N$ and  $-a+n+a\in N$ for all $a\in S$ and $n\in N$.
By \cite[Theorem 5.3.3]{A10},  we have the following lemma.
\begin{lemma}\label{quotient}
 Let $(S, +)$ be an inverse semigroup and $N$   a normal subsemigroup of $S$ such that $-a+a=a-a$ for all $a\in N$.
Define a binary relation $\rho_N$ as follows: For all $a, b\in S$,
$$a~\rho_N~ b \mbox{ if and only if } a-a=b-b \mbox{ and } -a+b\in N.$$
Then $\rho_N$ is a congruence on $S$. In this case, the quotient inverse semigroup  $S/\rho_N$ is denoted simply by $S/N$.
Moreover, we denote the $\rho_N$-class containing $a\in S$ by $a+N$. Then $S/N=\{a+N\mid a\in S\}$, and
\begin{center}
$a+N=b+N$  if and only if $ a-a=b-b$ and $-a+b\in N$.
\end{center}
In particular,
if $(S, +)$ is a Clifford semigroup, then $-a+a=a-a=a^{0}$ for all $a\in N$ and
$$a~\rho_N~ b \mbox{ if and only if } a^{0}=b^{0} \mbox{ and } -a+b\in N \mbox{ for all } a, b\in S.$$
In this case, $(S/N, +)$ also forms a Clifford semigroup with the rule that $$(a+N)+(b+N)=(a+b)+N \mbox{ for all } a+N, b+N\in S/N.$$
\end{lemma}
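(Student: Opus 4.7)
The plan is to verify in turn that $\rho_N$ is an equivalence relation, that it is compatible with $+$, and then to specialize to the Clifford case to read off the remaining assertions. Since the statement is attributed to Theorem 5.3.3 of \cite{A10}, I would follow the same template of argument, relying only on the defining axioms of an inverse semigroup, the closure and normality of $N$, and the hypothesis $-a+a = a-a$ for $a\in N$.

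For the equivalence relation, reflexivity is immediate because $-a+a = a^0 \in E(S,+)\subseteq N$ by normality, and $a-a=a-a$ trivially. Symmetry follows from $-(-a+b) = -b+a \in N$, using that $N$ is closed under inverses. For transitivity, given $a\,\rho_N\,b$ and $b\,\rho_N\,c$, the element $(-a+b)+(-b+c)$ lies in $N$ since $N$ is a subsemigroup; rewriting it as $-a+(b-b)+c = -a+(a-a)+c$ via the hypothesis $b-b=a-a$ and then applying the inverse semigroup identity $-a+a-a=-a$, one shows that it collapses to $-a+c$, giving $-a+c \in N$. The equality $a-a=c-c$ is transitive.

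The main obstacle is compatibility with $+$. I would verify right and left compatibility separately and then compose them. For right compatibility, $-(a+c)+(b+c) = -c+(-a+b)+c$ belongs to $N$ directly by the normality condition $-x+n+x \in N$ for $x\in S$ and $n \in N$. The subtler case is left compatibility, where $-(c+a)+(c+b) = -a + c^0 + b$ involves the idempotent $c^0$, which need not commute with $-a$ in a general inverse semigroup. Here the strategy is to use that $-a+b \in N$ together with the hypothesis $-x+x=x-x$ for $x\in N$ (applied to $-a+b$) and the commutativity of idempotents in $E(S,+)$, so that the extra idempotent $c^0$ can be absorbed by an idempotent already associated with $-a+b$, keeping the result inside $N$. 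The parallel equalities of the idempotents $(c+a)^0=(c+b)^0$ and $(a+c)^0=(b+c)^0$ follow from $a^0=b^0$ by routine idempotent manipulations available in any inverse semigroup.

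Once $\rho_N$ is known to be a congruence, the remaining assertions fall out with little work. In the Clifford case, $-a+a = a-a = a^0$ holds for every $a\in S$, so the hypothesis on $N$ is automatic and the simplified description of $\rho_N$ is immediate. The quotient $(S/N,+)$ is then Clifford because regularity and the inclusion $E(S,+)\subseteq C(S,+)$ pass to surjective homomorphic images, and Lemma \ref{Clifford} then applies. Finally, the well-definedness of $(a+N)+(b+N)=(a+b)+N$ is nothing more than the statement that $\rho_N$ is a congruence, and the description of $\rho_N$-classes as cosets $a+N$ justifies the notation.
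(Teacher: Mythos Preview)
The paper gives no detailed proof of this lemma, deferring entirely to \cite[Theorem~5.3.3]{A10}. Your outline follows the standard template and is mostly sound, but you misplace the one genuinely delicate step.

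You assert that the idempotent equalities $(a+c)-(a+c)=(b+c)-(b+c)$ and $(c+a)-(c+a)=(c+b)-(c+b)$ ``follow from $a^0=b^0$ by routine idempotent manipulations available in any inverse semigroup.'' The second is indeed immediate from $a-a=b-b$. The first is not: from $a-a=b-b$ alone one cannot conclude $a+(c-c)-a=b+(c-c)-b$ in an arbitrary inverse semigroup (in $I_{\{1,2,3\}}$, take $a=\mathrm{id}_{\{1,2\}}$, $b$ the transposition on $\{1,2\}$, and $c-c=\mathrm{id}_{\{1\}}$; then $a-a=b-b$ but $a+(c-c)-a\ne b+(c-c)-b$). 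What makes it work is precisely the hypothesis $-n+n=n-n$ on $N$: writing $n=-a+b$ and $b=a+n$, one gets $b+(c-c)-b=a+n+(c-c)-n-a$; since $E(S,+)=E(N,+)$ is central in the Clifford semigroup $N$, the element $n$ commutes with $c-c$, so $n+(c-c)-n=(c-c)+(n-n)=(c-c)+(-a+a)$ and the equality follows.

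Conversely, your ``subtler case'' for left compatibility---showing $-a+(-c+c)+b\in N$---needs no Clifford hypothesis. Using $a-a=b-b$ one rewrites it as $(-a+(-c+c)+a)+(-a+b)$, a product of a conjugate of an idempotent (in $N$ by normality) with $-a+b\in N$. So the hypothesis on $N$ is essential, but for the idempotent half of right compatibility rather than where you invoke it.
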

\begin{defn}[\cite{c1}]
Let $(S,+, \circ)$ be a dual weak left brace and  $I$ be a non-empty subset of $S$. Then $I$ is called an {\em ideal} if the following conditions holds:
\begin{itemize}
\item[(1)] $I$ is a normal subsemigroup of $(S,+)$.
\item[(2)] $I$ is a normal subsemigroup of  $(S,\circ)$.
\item[(3)] $\lambda^S_{a}(I)\subseteq I$ for all $a\in S$.
\end{itemize}
\end{defn}
\begin{lemma}[\cite{c1}]Let $(S,+,\cdot)$ be a dual weak left brace and  $I$  an ideal of $(S,+,\cdot)$. Then $(S/I, +, \circ)$ forms a dual weak left brace with respect to the following operations:   $$(a+I)+ (b+I)=(a+b)+I,\,\, (a+I)\circ (b+I)=(a\circ b)+I$$   $\mbox{ for all } a+I, b+I\in S/I.$ In this case, $a+I=a\circ I$ for all $a\in S$.
\end{lemma}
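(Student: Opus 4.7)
The plan is to reduce the result to Lemma \ref{quotient} applied to both Clifford semigroup structures on $S$, after first showing that the additive and multiplicative quotient congruences coincide. More precisely, since $I$ is a normal subsemigroup of $(S,+)$, Lemma \ref{quotient} yields a congruence $\sim_+$ on $(S,+)$ with classes $a+I$, and $(S/\sim_+,+)$ is again a Clifford semigroup. Symmetrically, since $I$ is a normal subsemigroup of $(S,\circ)$, the same lemma applied to $(S,\circ)$ yields a congruence $\sim_\circ$ with classes $a\circ I$, and the quotient is a Clifford semigroup under $\circ$. The whole argument pivots on proving that these two equivalence relations on $S$ coincide, i.e.\ $a+I=a\circ I$ for every $a\in S$; once this is in place the two quotient semigroups live on the same underlying set and both quotient operations are automatically well-defined.

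To establish $\sim_+\,=\,\sim_\circ$, I would argue that both relations amount to ``$a^0=b^0$ together with a single element lying in $I$'', and translate between the two distinguished elements $-a+b$ and $a^{-1}\circ b$ using the brace axiom and the map $\lambda^S_a$ from Lemma \ref{wangkangcccc}. Concretely, assume $a^0=b^0$ and $-a+b\in I$. Since $a^0=b^0$, Lemma \ref{basicproperties} gives $a+(-a+b)=b$, so
\begin{equation*}
a^{-1}\circ b=a^{-1}\circ\bigl(a+(-a+b)\bigr)=a^{-1}\circ a-a^{-1}+a^{-1}\circ(-a+b)=-a^{-1}+a^{-1}\circ(-a+b)=\lambda^S_{a^{-1}}(-a+b),
\end{equation*}
where I used the brace axiom, the identity $a^{-1}\circ a=(a^{-1})^0=a^0$, and absorption of $a^0$. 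By the ideal condition $\lambda^S_{a^{-1}}(I)\subseteq I$, this element lies in $I$. The reverse implication is entirely dual: write $b=a\circ(a^{-1}\circ b)$ via $a\circ a^{-1}=a^0$ together with $a^0=b^0$, and apply $\lambda^S_a$ to $a^{-1}\circ b\in I$ to get $-a+b\in I$. This step is the genuine content of the proof, and it is precisely here that axiom (c) in the definition of an ideal is needed; I expect this to be the main technical obstacle.

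Once $a+I=a\circ I$ is proved, well-definedness of both $(a+I)+(b+I):=(a+b)+I$ and $(a+I)\circ(b+I):=(a\circ b)+I$ follows immediately from Lemma \ref{quotient} applied to the two Clifford structures, and $(S/I,+)$ and $(S/I,\circ)$ are Clifford semigroups. It remains to verify the two defining axioms of a dual weak left brace on $S/I$. The distributivity axiom transfers directly from $S$:
\begin{equation*}
(x+I)\circ\bigl((y+I)+(z+I)\bigr)=\bigl(x\circ(y+z)\bigr)+I=\bigl(x\circ y-x+x\circ z\bigr)+I=(x+I)\circ(y+I)-(x+I)+(x+I)\circ(z+I).
\end{equation*}
For the second axiom, the inverse of $x+I$ in $(S/I,\circ)$ is $x^{-1}+I$, and then
\begin{equation*}
(x+I)\circ(x+I)^{-1}=(x\circ x^{-1})+I=(-x+x)+I=-(x+I)+(x+I),
\end{equation*}
as required. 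This completes the verification, and the final identity $a+I=a\circ I$ is exactly what was established in the crucial step. Hence $(S/I,+,\circ)$ is a dual weak left brace.
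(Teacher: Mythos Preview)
Your proof is correct. The paper itself does not prove this lemma at all; it simply cites it from \cite{c1}, so there is no in-paper argument to compare against. Your reconstruction is sound: the key step---showing that the additive congruence $\sim_+$ and the multiplicative congruence $\sim_\circ$ determined by $I$ coincide---is handled correctly via the identity $a^{-1}\circ b=\lambda^S_{a^{-1}}(-a+b)$ (and its inverse $-a+b=\lambda^S_a(a^{-1}\circ b)$), which is precisely where condition (3) in the definition of an ideal enters. The remaining verifications are routine, as you indicate.
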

Let $S$ be a non-empty set and $r_S: S\times S\longrightarrow S\times S$ be a map. Then for any $a\in S$, $r$ induces the following maps:
$$\lambda_{a}: S\rightarrow S, \,b\mapsto \mbox{the first component of }r(a,b),$$$$
\rho_{b}: S\rightarrow S,\, a\mapsto \mbox{the second component of }r(a,b).$$
Thus, for all $a,b\in S$, we have $r(a,b)=(\lambda_{a}(b),\rho_{b}(a))$.
If $r$ satisfies the following equality $$(r\times {\rm id}_{S})({\rm id}_{S}\times r)(r\times {\rm id}_{S})=({\rm id}_{S}\times r)(r\times
{\rm id}_{S})({\rm id}_{S}\times r)$$ in the set of maps from $S\times S\times S$ to $S\times S\times S$, where  ${\rm id}_{S}$ is the identity map on $S$, then  $r_S$ is called a {\em set-theoretic  solution} of the Yang-Baxter equation, or briefly a {\em solution}.
\begin{lemma}[Theorem 2.3 in \cite{Mazzotta} and Theorem 11 in \cite{Catino-Mazzotta-Miccoli-Stefanelli}]\label{jie}Let $(T, +, \circ)$ be a dual weak left  brace.  Then the map $r: S\times S\rightarrow S\times S$ defined by
$$r(a,b)=(-a+(a\circ b), (-a+(a\circ b))^{-1}\circ a\circ b) \mbox{ for all } a,b\in S$$
is a solution of the Yang-Baxter equation.
\end{lemma}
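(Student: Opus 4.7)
The plan is to decompose $r$ via the maps introduced in Lemma~\ref{wangkangcccc}: write $r(a, b) = (\lambda_a(b), \rho_b(a))$, where $\lambda_a(b) = -a + (a\circ b)$ and $\rho_b(a) = \lambda_a(b)^{-1}\circ a \circ b$. Recall from Lemma~\ref{wangkangcccc} that $a \mapsto \lambda_a$ is a semigroup homomorphism $(T, \circ) \to \End(T,+)$, so in particular $\lambda_{a\circ b} = \lambda_a\lambda_b$ and each $\lambda_a$ preserves $+$.

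The key preparatory identity is the braiding decomposition $a\circ b = \lambda_a(b) \circ \rho_b(a)$. By the definition of $\rho_b(a)$ one has $\lambda_a(b) \circ \rho_b(a) = \lambda_a(b)^0 \circ (a\circ b)$, so it suffices to show $\lambda_a(b)^0 \circ (a\circ b) = a\circ b$. Using Lemma~\ref{jichu}, $\lambda_a(b)^0 = a^0 + (a\circ b)^0$; since $E(T, +) = E(T, \circ)$ and Lemma~\ref{wang5} gives $e \circ f = e + f$ for idempotents $e, f$, this reduces to an absorption of $a^0$ into $(a\circ b)^0$ in the semilattice of idempotents, and then the Clifford identity $(a\circ b)^0 \circ (a\circ b) = a\circ b$ closes the step.

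Next I would expand both sides of the Yang--Baxter equation applied to a triple $(x, y, z)$ and read off the three component-wise conditions that must be verified: (YB1) $\lambda_{\lambda_x(y)} \lambda_{\rho_y(x)} = \lambda_x \lambda_y$; (YB2) $\rho_{\lambda_{\rho_y(x)}(z)}(\lambda_x(y)) = \lambda_{\rho_{\lambda_y(z)}(x)}(\rho_z(y))$; (YB3) $\rho_z \rho_y = \rho_{\rho_z(y)}\rho_{\lambda_y(z)}$. Condition (YB1) is immediate from the homomorphism property of $\lambda$ combined with the braiding decomposition, since $\lambda_{\lambda_x(y)}\lambda_{\rho_y(x)} = \lambda_{\lambda_x(y)\circ\rho_y(x)} = \lambda_{x\circ y} = \lambda_x\lambda_y$.

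The main obstacle lies in (YB2) and (YB3). For (YB3), the plan is to expand both sides via the definition of $\rho$, apply the homomorphism property $\lambda_{a\circ b} = \lambda_a\lambda_b$ to the inverse factors, and collapse the identity to the associativity statement for the triple $x\circ y\circ z$. For (YB2) I would exploit the braiding decomposition twice on the associative product $x\circ y \circ z$, namely via $(x\circ y)\circ z$ and via $x\circ(y\circ z)$, and then match the middle entries in the resulting length-three factorization of $x\circ y\circ z$. The delicate point is tracking idempotent parts carefully across the semilattice of maximal subgroups, so that $\lambda$-values and $\rho$-values living in different groups combine consistently; once this bookkeeping is in place, each of the three identities reduces to a finite sequence of manipulations using Lemma~\ref{jichu}, Lemma~\ref{wang5}, and the two defining axioms of a dual weak left brace.
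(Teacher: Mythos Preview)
The paper does not give its own proof of this lemma; it is quoted verbatim as a known result from \cite{Mazzotta} and \cite{Catino-Mazzotta-Miccoli-Stefanelli}, so there is no in-paper argument to compare against. Your outline is precisely the standard route taken in those references: write $r=(\lambda,\rho)$, establish the braiding identity $a\circ b=\lambda_a(b)\circ\rho_b(a)$, reduce (YB1) to the homomorphism property of $\lambda$ via that identity, and then handle (YB2)--(YB3) by unwinding the two associativity bracketings of $x\circ y\circ z$ through repeated braiding.

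The pieces you actually carried out (the braiding identity and (YB1)) are correct, including the idempotent computation $\lambda_a(b)^0=(a\circ b)^0$ via Lemma~\ref{wang5}. For (YB2) and (YB3) you only state a plan; that plan is the right one, but be aware that in the Clifford-semigroup setting the cancellations that are automatic for groups must be replaced by the absorption $e^0\circ x=x$ for the specific idempotents that appear, exactly as you did for the braiding identity. If you want a complete self-contained proof you will need to write those steps out; otherwise your sketch matches the cited literature and there is no genuine gap in the strategy.
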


\section{Rota--Baxter Clifford semigroups}
In this section, we shall give some properties of Rota--Baxter Clifford semigroups whence the corresponding  results on Rota--Baxter groups given in \cite{b1,g2} are generalized to Rota--Baxter Clifford semigroups.
\subsection{Constructions  of Rota--Baxter operators }
In this subsection, we give some construction methods of Rota--Baxter operators on Clifford semigroups.
\begin{prop} Let $(S,+, R)$ be a Rota--Baxter Clifford semigroup  and $\varphi\in \Aut (S,+)$. Then  $R^{(\varphi)}=\varphi^{-1}R\varphi$ is  a  Rota--Baxter operator on $(S,+)$. If $R$ is strong, so is $R^{(\varphi)}$.
\end{prop}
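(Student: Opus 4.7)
The plan is to verify the two defining axioms of a Rota--Baxter operator for $R^{(\varphi)}=\varphi^{-1}R\varphi$ directly, using only the facts that $\varphi$ is a semigroup automorphism (so both $\varphi$ and $\varphi^{-1}$ commute with $+$, with $a\mapsto -a$ and with $a\mapsto a^{0}$, by Lemma \ref{jichu}) together with the Rota--Baxter identities satisfied by $R$. The argument is essentially symbol-pushing; the conceptual content is merely that conjugation by an automorphism transports the Rota--Baxter structure.

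For the multiplicative axiom I would compute
\[ R^{(\varphi)}(a)+R^{(\varphi)}(b)=\varphi^{-1}R\varphi(a)+\varphi^{-1}R\varphi(b)=\varphi^{-1}\bigl(R\varphi(a)+R\varphi(b)\bigr), \]
then apply the Rota--Baxter identity for $R$ to the pair $\varphi(a),\varphi(b)$ to rewrite this as
\[ \varphi^{-1}R\bigl(\varphi(a)+R\varphi(a)+\varphi(b)-R\varphi(a)\bigr). \]
Inserting $\varphi\varphi^{-1}$ in front of every occurrence of $R\varphi(a)$ and using that $\varphi$ is a homomorphism (commuting with $+$ and with $-$), this becomes
\[ \varphi^{-1}R\varphi\bigl(a+R^{(\varphi)}(a)+b-R^{(\varphi)}(a)\bigr)=R^{(\varphi)}\bigl(a+R^{(\varphi)}(a)+b-R^{(\varphi)}(a)\bigr), \]
as required.

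For the idempotent axiom, Lemma \ref{jichu} applied to the homomorphism $\varphi^{-1}$ gives $R^{(\varphi)}(a)^{0}=\varphi^{-1}\bigl(R\varphi(a)^{0}\bigr)$. Applying $\varphi^{-1}$ to the identity $\varphi(a)+R(\varphi(a))^{0}=\varphi(a)$, which holds because $R$ is Rota--Baxter, then yields the desired $a+R^{(\varphi)}(a)^{0}=a$.

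Finally, for the strong part, I would invoke Lemma \ref{strong}: if $R(e)=e$ for every $e\in E(S,+)$, then for any such $e$ the element $\varphi(e)$ remains idempotent (as $\varphi$ preserves $+$), so $R\varphi(e)=\varphi(e)$, whence $R^{(\varphi)}(e)=\varphi^{-1}\varphi(e)=e$, and Lemma \ref{strong} concludes that $R^{(\varphi)}$ is strong. I do not expect any genuine obstacle here; the only thing requiring slight care is the bookkeeping of $\varphi$ and $\varphi^{-1}$ in the multiplicative axiom, so that the Rota--Baxter identity for $R$ is applied at the right level before it is translated back into an identity for $R^{(\varphi)}$.
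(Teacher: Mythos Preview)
Your proposal is correct and follows essentially the same approach as the paper: both verify the two Rota--Baxter axioms for $R^{(\varphi)}$ by transporting the identities for $R$ through the automorphism $\varphi$, and both deduce the strong case from Lemma~\ref{strong}. The only cosmetic difference is that the paper first applies $\varphi$ to both sides and then cancels by bijectivity, whereas you work directly with $\varphi^{-1}$; the content is identical.
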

\begin{proof}Let $a,b\in S$. Then
\begin{equation*}
\begin{aligned}
&\varphi(R^{(\varphi)}(a)+R^{(\varphi)}(b))=\varphi(\varphi^{-1}(R(\varphi(a)))+\varphi^{-1}(R(\varphi(b)))) \\
&=R(\varphi(a))+R(\varphi(b))=R(\varphi(a)+R(\varphi(a))+\varphi(b)-R(\varphi(a)))\\
&=R\varphi(a+\varphi^{-1}(R(\varphi(a)))+b+\varphi^{-1}(-R(\varphi(a))))\\
&=\varphi\varphi^{-1}R\varphi(a+\varphi^{-1}(R(\varphi(a)))+b-\varphi^{-1}(R(\varphi(a))))\\
&=\varphi[R^{(\varphi)}(a+R^{(\varphi)}(a)+b-R^{(\varphi)}(a))].
\end{aligned}
\end{equation*}
Since $\varphi$ is bijective, we have $$R^{(\varphi)}(a)+R^{(\varphi)}(b)=R^{(\varphi)}(a+R^{(\varphi)}(a)+b-R^{(\varphi)}(a)).$$
On the other hand,
\begin{equation*}
\begin{aligned}
a+R^{(\varphi)}(a)^{0}&=a+R^{(\varphi)}(a)-R^{(\varphi)}(a)\\
&=a+\varphi^{-1}(R(\varphi(a)))-\varphi^{-1}(R(\varphi(a)))\\
&=a+\varphi^{-1}(R(\varphi(a)))+\varphi^{-1}(-R(\varphi(a)))\\
&=\varphi^{-1}(\varphi(a)+R(\varphi(a))-R(\varphi(a)))=\varphi^{-1}(\varphi(a))=a.
\end{aligned}
\end{equation*}
Thus $R^{(\varphi)}$ is  a  Rota--Baxter operator. The final statement follows from Lemma \ref{strong}.
\end{proof}

Let $(S,+)$ be a  Clifford semigroup and $R: S\rightarrow S$   a map. Define a new map $\widetilde{R}$ as  follows: $\widetilde{R}: S\rightarrow S, a\mapsto -a+R(-a).$
\begin{prop} Let $(S,+, R)$ be a Rota--Baxter Clifford semigroup.
Then $\widetilde{R}$ is a Rota--Baxter operator on $(S,+)$ and $(\widetilde{R})^{(\varphi)}=\widetilde{R^{(\varphi)}}$ for every $\varphi\in \Aut (S,+)$. If $R$ is strong, so is $\widetilde{R}$.
\end{prop}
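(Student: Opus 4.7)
My plan is to verify the two Rota--Baxter axioms for $\widetilde{R}$ by direct computation, and then read off the $\varphi$-conjugation identity and the strong property as short consequences.

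For the idempotent axiom $a + \widetilde{R}(a)^{0} = a$, I would first note that by Lemma \ref{jichu},
\[
\widetilde{R}(a)^{0} = (-a + R(-a))^{0} = (-a)^{0} + R(-a)^{0} = a^{0} + R(-a)^{0},
\]
and then apply Lemma \ref{cwang}(5) with $-a$ in place of $a$ to collapse this to $(-a)^0 = a^{0}$; the axiom then follows from $a + a^{0} = a$.

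For the mixing axiom, the key idea is to plug $-a,-b$ into the Rota--Baxter identity for $R$, giving $R(-a) + R(-b) = R(d)$ where $d := -a + R(-a) - b - R(-a)$. A short rearrangement in the Clifford semigroup yields
\[
\widetilde{R}(a) + \widetilde{R}(b) = -a + R(-a) - b + R(-b) = d + R(-a) + R(-b) = d + R(d).
\]
Setting $c := a + \widetilde{R}(a) + b - \widetilde{R}(a)$ and using $-\widetilde{R}(a) = -R(-a) + a$, I would expand
\[
c = a^{0} + R(-a) + b - R(-a) + a,
\]
so that inverting (and moving the central idempotent $a^{0}$ to the front via Lemma \ref{jichu}) yields $-c = a^{0} + d$. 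The essential normalization is the absorption $a^{0}+d = d$, which holds because $d$ begins with $-a$ and $a^{0}-a = -a$. Therefore $\widetilde{R}(c) = -c + R(-c) = d + R(d) = \widetilde{R}(a) + \widetilde{R}(b)$, as required.

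The remaining two statements require only unwinding the definitions. For $(\widetilde{R})^{(\varphi)} = \widetilde{R^{(\varphi)}}$, writing out both sides and using that $\varphi$ is a semigroup isomorphism (and hence preserves von Neumann inverses by Lemma \ref{jichu}) yields the common value $\varphi^{-1}(-\varphi(a) + R(-\varphi(a)))$ at each $a\in S$. For the strong statement, Lemma \ref{strong} reduces the claim to checking $\widetilde{R}(e) = e$ for every $e \in E(S,+)$; but $\widetilde{R}(e) = -e + R(-e) = e + R(e) = e + e = e$, using $-e = e$ together with the assumption that $R$ is strong. The main obstacle I foresee is the algebraic normalization $-c = d$ in the mixing axiom, which, although elementary, requires careful bookkeeping of inverses and of the centrality of $a^0$; once it is in hand, the rest is mechanical.
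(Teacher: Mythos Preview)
Your proof is correct. The only difference from the paper is that the paper does not reprove the first part (that $\widetilde{R}$ is a Rota--Baxter operator) but simply cites \cite[statements before Example~2]{c1}, whereas you supply a self-contained direct verification; your computation of the mixing axiom via $d=-a+R(-a)-b-R(-a)$ and the identification $-c=a^{0}+d=d$ is clean and the absorption step is justified exactly as you describe. For the conjugation identity $(\widetilde{R})^{(\varphi)}=\widetilde{R^{(\varphi)}}$ and the strong property you proceed exactly as the paper does (unwinding definitions and invoking Lemma~\ref{strong}), so there is nothing to add there.
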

\begin{proof}The first part follows from the statements before Example 2 in \cite{c1}. To show the second part, let $a\in S$.  Then
$$
(\widetilde{R})^{(\varphi)}(a)=\varphi^{-1}(\widetilde{R}(\varphi(a)))=\varphi^{-1}(-\varphi(a)+R(-\varphi(a)))
=\varphi^{-1}(\varphi(-a)+R(\varphi(-a)))$$$$=-a+\varphi^{-1}R\varphi(-a)
=-a+R^{(\varphi)}(-a)=\widetilde{R^{(\varphi)}}(a).$$
This gives the desired result. The final statement follows from Lemma \ref{strong}.
\end{proof}

\begin{lemma}\label{b} Let $(S,+, R)$ be a Rota--Baxter Clifford semigroup and $a,b\in S$. If $R(a)\in E(S,+)$, then
$R(a+b)=R(a)+R(b).$
\end{lemma}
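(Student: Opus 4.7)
The plan is to reduce $R(a+b)$ directly to $R(a)+R(b)$ via the first defining identity $R(x)+R(y)=R(x+R(x)+y-R(x))$ of a Rota--Baxter operator. Setting $x=a$ and $y=b$ gives
\[R(a)+R(b)=R(a+R(a)+b-R(a)),\]
so the entire task reduces to showing that $a+R(a)+b-R(a)=a+b$ under the hypothesis $R(a)\in E(S,+)$.

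For this I would exploit three elementary Clifford--semigroup facts collected in Lemma~\ref{jichu}: (i) every idempotent $e$ satisfies $-e=e$ and $e^0=e$; (ii) idempotents are central, so $R(a)$ commutes with $b$; and (iii) the second Rota--Baxter axiom $a+R(a)^0=a$ from Definition~\ref{luobasuanzi}. Using (i) and (ii), the expression $a+R(a)+b-R(a)$ becomes $a+R(a)+R(a)+b = a+R(a)+b$ (since $-R(a)=R(a)$ and $R(a)+R(a)=R(a)$). Then, rewriting $R(a)$ as $R(a)^0$ and applying (iii) collapses $a+R(a)^0+b$ to $a+b$, which is exactly what we need.

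There is no real obstacle here: the hypothesis $R(a)\in E(S,+)$ forces $R(a)$, $-R(a)$ and $R(a)^0$ to coincide, which trivialises the conjugation-like expression appearing in the first Rota--Baxter identity. The whole argument is a short chain of substitutions in the standard Clifford-semigroup calculus, requiring no ingredient beyond Lemma~\ref{jichu} and Definition~\ref{luobasuanzi}.
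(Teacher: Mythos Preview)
Your proposal is correct and is essentially the same argument as the paper's proof, which writes the one-line chain $R(a+b)=R(a+R(a)-R(a)+b)=R(a+R(a)+b-R(a))=R(a)+R(b)$; you simply run the same chain in the reverse direction. The key observation in both is that $R(a)\in E(S,+)$ makes $-R(a)=R(a)=R(a)^0$ central and absorbable via $a+R(a)^0=a$.
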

\begin{proof}Let $a,b\in S$. Then
$$R(a+b)=R(a+R(a)-R(a)+b)=R(a+R(a)+b-R(a))=R(a)+R(b).$$
Thus the results follows.
\end{proof}

\begin{lemma}
Let $(S,+, R)$ be a Rota--Baxter Clifford semigroup with $R\in \Aut (S,+)$ and $a\in S$. Then $R(a^{0}+R(a))\in C(S,+).$
\end{lemma}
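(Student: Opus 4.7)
The strategy is to prove the stronger statement that $a^{0}+R(a)$ already lies in $C(S,+)$, and then apply $R$, using that as a semigroup automorphism $R$ preserves the center. This reduces the whole proposition to a purely inverse-semigroup computation.

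First, I would exploit the fact that $R$ is simultaneously a Rota--Baxter operator and a semigroup homomorphism. On one hand $R(a+b)=R(a)+R(b)$, and on the other the Rota--Baxter axiom gives $R(a)+R(b)=R(a+R(a)+b-R(a))$. Since $R$ is injective (being an automorphism), these force the key identity
$$a+b=a+R(a)+b-R(a)\qquad\text{for all }a,b\in S.$$
Prepending $-a$ on the left and using $-a+a=a^{0}$ rewrites this as $a^{0}+b=a^{0}+R(a)+b-R(a)$.

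Next, I would verify $a^{0}+R(a)\in C(S,+)$ by checking $(a^{0}+R(a))+b=b+(a^{0}+R(a))$ for every $b\in S$. Appending $R(a)$ on the right of the identity above and using $-R(a)+R(a)=R(a)^{0}$ produces $a^{0}+b+R(a)=a^{0}+R(a)+b+R(a)^{0}$. The term $R(a)^{0}$ is idempotent, hence lies in $C(S,+)$, so it can be moved next to $R(a)$ where it is absorbed via $R(a)+R(a)^{0}=R(a)$; this collapses the right-hand side to $a^{0}+R(a)+b$. Combined with the fact that the idempotent $a^{0}$ commutes with $b$, this gives the centrality of $a^{0}+R(a)$. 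The bookkeeping with the idempotents, drawing on Lemma \ref{jichu} and Lemma \ref{cwang}(5), is the only slightly delicate part of the argument.

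Finally, I would invoke the general observation that any semigroup automorphism $\varphi$ of $(S,+)$ sends $C(S,+)$ onto itself: for $x\in C(S,+)$ and any $z\in S$, writing $z=\varphi(y)$ gives $\varphi(x)+z=\varphi(x+y)=\varphi(y+x)=z+\varphi(x)$, and a symmetric argument with $\varphi^{-1}$ yields $\varphi(C(S,+))=C(S,+)$. Applying this to $\varphi=R$ together with the previous step yields $R(a^{0}+R(a))\in C(S,+)$, as required. As a byproduct, combined with the preceding proposition, the hypothesis $R\in\Aut(S,+)$ in fact forces $+$ to coincide with $\circ_{R}$.
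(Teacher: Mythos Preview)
Your argument is correct. The key identity you derive, $a+b=a+R(a)+b-R(a)$, is precisely what the paper exploits as well; both proofs rest on equating the Rota--Baxter axiom with the homomorphism property of $R$ and then cancelling. The manipulations with the idempotents $a^{0}$ and $R(a)^{0}$ are fine, and your final appeal to the fact that automorphisms preserve the center is a clean way to conclude.

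The organization differs from the paper in one respect worth noting. The paper works entirely in the image of $R$: it expands $R(a+R(a)+b-R(a))$ via the homomorphism property to get $R(a)+R(b)=R(a)+R(R(a))+R(b)-R(R(a))$, and then shows directly that $R(a)^{0}+R(R(a))$ commutes with every $x=R(b)$. You instead pull the identity back through injectivity first, prove the \emph{stronger} statement that $a^{0}+R(a)$ itself is central, and then push forward by $R$. Your route yields a sharper intermediate result (indeed, as you remark, it immediately feeds into the preceding proposition to show $+$ coincides with $\circ_{R}$ under the hypothesis $R\in\Aut(S,+)$), at the modest cost of isolating the ``automorphisms preserve the center'' step as a separate observation. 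The computations are otherwise the same up to whether one applies $R$ before or after the semigroup manipulations.
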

\begin{proof}Let $x\in S$.  Since $R\in \Aut (S,+)$, we can let $x=R(b)$ for some $b\in S$. Observe that $R$ is a Rota--Baxter operator  and is an automorphism on $S$,  it follows that
$$R(a)+R(b)=R(a+R(a)+b-R(a))=R(a)+R(R(a))+R(b)-R(R(a)).$$
Adding  $R(R(a))$ on both sides in the above identity from the right, we have
$$R(a)+R(b)+R(R(a))=R(a)+R(R(a))+R(b).$$
Further adding $-R(a)$ on both sides  from the left, we get
$$-R(a)+R(a)+R(b)+R(R(a))=-R(a)+R(a)+R(R(a))+R(b).$$
Since $R$ is a homomorphism and $x=R(b)$,  and $R(a)^{0}=R(a^{0})$ by Lemma \ref{cwang} (1), it follows that
$$x+R(a^{0}+R(a))=x+R(a^{0})+R(R(a))=x+R(a)^{0}+R(R(a))$$$$=R(a)^{0}+R(R(a))+x=R(a^{0})+R(R(a))+x=R(a^{0}+R(a))+x.$$
This implies that $R(a^{0}+R(a))=R(a)^{0}+R(R(a))\in C(S)$.
\end{proof}

\begin{prop}
Let $(S,+)$ be a Clifford semigroup and $n$ be a positive integer.  Then $$R:S\rightarrow S,a\mapsto na=\underset{n}{\underbrace{a+a+\cdots +a}}$$
is a Rota-Baxter operator if and only if $n(a+b)=nb+na$ for all $ a,b\in S$. In this case, $R$ is strong.
\end{prop}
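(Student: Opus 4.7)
The plan is to isolate the ``automatic'' parts of the statement and then reduce the nontrivial content to a calculation in a single group.

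First I would observe that $R(a)=na$ sends each maximal subgroup $G_\alpha$ into itself, since $na$ is the $n$-fold sum of $a$ inside that group; equivalently $R(e)=ne=e$ for every $e\in E(S,+)$, so by Lemma~\ref{strong}, $R$ is automatically strong. Also $R(a)^0=(na)^0=a^0$, so the axiom $a+R(a)^0=a$ is automatic. Hence the equivalence reduces to comparing
$$na+nb=n(a+na+b-na)\qquad\text{with}\qquad n(a+b)=nb+na.$$
Using the strong semilattice structure $S=(Y,G_\alpha,\varphi_{\alpha,\beta})$, every term appearing in either identity at a pair $(a,b)$ lies in the single group $G_e$ with $e=a^0+b^0$, and the connecting $\varphi$'s are group homomorphisms; so both identities at $(a,b)$ are equivalent to the corresponding identities inside $G_e$. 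I therefore assume $(S,+)$ is a group when performing the main calculations.

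For $(\Rightarrow)$, I would substitute $b\leftarrow -na+b+na$ into the Rota--Baxter identity. The inner expression telescopes (via $na+(-na)=0$) to $a+b$, yielding $na+n(-na+b+na)=n(a+b)$. Because conjugation in a group is an automorphism, it commutes with the $n$-fold sum, so $n(-na+b+na)=-na+nb+na$. Substituting and cancelling $na+(-na)=0$ then gives the desired $n(a+b)=nb+na$.

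For $(\Leftarrow)$, I would repeatedly apply the hypothesis $n(x+y)=ny+nx$ to compute
$$n(a+na+b-na)=-n^2a+nb+n^2a+na.$$
The remaining task is thus to show $na+nb=-n^2a+nb+n^2a+na$, which in a group amounts to $C_{na}(nb)=C_{-n^2a}(nb)$, i.e., to $n(n+1)a$ commuting with $nb$. For this I would establish the key lemma: \emph{if $n(x+y)=ny+nx$ holds throughout a group $G$, then $(n+1)x$ commutes with $ny$ for all $x,y$}. The proof applies the hypothesis to $(x,y-x)$ to obtain $n(x+y-x)=-nx+ny+nx$, and compares with the group-theoretic identity $n(x+y-x)=x+ny-x$; this forces $C_x(ny)=C_{-nx}(ny)$, so the element $x-(-nx)=(n+1)x$ commutes with $ny$, and the $n$-th multiple $n(n+1)a$ inherits this commutativity. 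The main hurdle is proving this commutator lemma cleanly and handling the Clifford-semigroup-to-group reduction carefully: in $S$, an ``obvious'' cancellation like $na+(-na)=0$ must be read as $na+(-na)=a^0$, with $a^0$ serving as the identity only after transport into the common group $G_e$. Once this bookkeeping is organised, the whole argument collapses to the group-theoretic calculation outlined above.
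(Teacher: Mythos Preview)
Your argument is correct, but it takes a noticeably longer route than the paper, particularly for the $(\Leftarrow)$ direction.

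The paper's proof rests on a single telescoping computation that holds in any Clifford semigroup without invoking the hypothesis: writing out the $n$-fold sum and using $-na+(n+1)a=a^{0}+a=a$ between consecutive blocks, one gets
\[
n(a+na+b-na)=na+n(a+b)-na.
\]
With this identity in hand, the Rota--Baxter equation $na+nb=n(a+na+b-na)$ becomes $na+nb=na+n(a+b)-na$, and a short chain of additions of $\pm na$ (with the resulting idempotents $a^{0}$ absorbed because $(nb+na)^{0}=(n(a+b))^{0}=a^{0}+b^{0}$) shows this is equivalent to $nb+na=n(a+b)$. Both directions fall out at once, and no reduction to the group case is needed.

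Your approach instead reduces to the maximal subgroups via the strong-semilattice structure, then handles the two directions separately. Your $(\Rightarrow)$ by substitution is essentially the same cancellation as the paper's (read backwards). Your $(\Leftarrow)$, however, applies the hypothesis three times to obtain $n(a+na+b-na)=-n^{2}a+nb+n^{2}a+na$, and then needs the auxiliary commutator lemma that $(n+1)x$ commutes with $ny$ whenever $n(x+y)=ny+nx$ holds globally. This lemma is correct and its proof (comparing $n(x+y-x)=x+ny-x$ with $n(x+y-x)=-nx+ny+nx$) is nice, but it is machinery the paper avoids entirely by computing $n(a+na+b-na)$ \emph{before} using the hypothesis. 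The trade-off: your detour yields the extra structural fact about commutators, while the paper's telescoping is shorter, works uniformly in the Clifford semigroup, and sidesteps the semilattice bookkeeping you flag as a hurdle.
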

\begin{proof}
Let $a,b\in S$. Then $$ a+R(a)^{0}=a+R(a)-R(a)=a+na-na=a+a^{0}=a,$$
\begin{equation*}
\begin{aligned}
&R(a+R(a)+b-R(a))=n(a+na+b-na)\\
&=(n+1)a+b-na+(n+1)a+b-na+\cdots +(n+1)a+b-na\\
&=na+a+b+a+b+\cdots +a+b-na=na+n(a+b)-na.
\end{aligned}
\end{equation*}
On the other hand,
$$ R(a)+R(b)=R(a+R(a)+b-R(a))$$
\begin{equation*}
\begin{aligned}
&\Longleftrightarrow na+nb=na+n(a+b)-na\\
&\Longleftrightarrow -na+na+nb=n(a+b)-na\\
&\Longleftrightarrow -na+na+nb+na=n(a+b)-na+na\\
&\Longleftrightarrow nb+na=n(a+b).
\end{aligned}
\end{equation*}
Thus  the first part holds. Since $R(e)=ne=e$ for all $e\in E(S,+)$, it follows that $R$ is strong by Lemma \ref{strong}.
\end{proof}

\begin{prop}Let $(S,+)$ be a Clifford semigroup and  $b \in S$.  Then $R: S\rightarrow S,\, a\mapsto-b-a+b$ is a Rota-Baxter operator
if and only if $b^{0}$ is the identity in $S$ and $\{-b-x+b+x\mid x\in S\} \subseteq C(S).$  In this case, $R$ is strong.
\end{prop}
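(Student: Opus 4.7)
The plan is to reduce the proposition to two preliminary identities that hold unconditionally for this $R$. The first is $R(a)^{0}=a^{0}+b^{0}$, obtained by expanding $R(a)-R(a)=(-b-a+b)+(-b+a+b)$ and collapsing using centrality of idempotents together with $b-b=b^{0}$, $-a+a=a^{0}$ and $b^{0}+b=b$. The second is the direct computation $-R(a)-a=-b+a+b-a=c(-a)$, where $c(x):=-b-x+b+x$; hence $a+R(a)=-c(-a)$, and since $C(S,+)$ is closed under negation and $a\mapsto -a$ is a bijection of $S$,
\begin{equation*}
\{a+R(a)\mid a\in S\}\subseteq C(S,+)\ \Longleftrightarrow\ \{c(x)\mid x\in S\}\subseteq C(S,+).
\end{equation*}

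By the first identity, the second Rota--Baxter axiom $a+R(a)^{0}=a$ becomes $a+b^{0}=a$ for all $a\in S$; centrality of $b^{0}$ makes this exactly the statement that $b^{0}$ is the (two-sided) identity of $S$. Under the standing assumption $b^{0}=\epsilon$ I then analyse the first Rota--Baxter axiom. For the backward direction, the equivalence above gives $a+R(a)\in C(S,+)$, so $a+R(a)+b'-R(a)=b'+(a+R(a))-R(a)=b'+a+R(a)^{0}=b'+a$, and applying $R$ yields $R(b'+a)=-b-a-b'+b=R(a)+R(b')$, the latter equality obtained by cancelling the internal $b-b=b^{0}$. For the forward direction I expand $R(a)+R(b')=R(a+R(a)+b'-R(a))$, cancel the outer $-b\ldots+b$ (legal because $b^{0}=\epsilon$) to reach $-a-b'=R(a)-b'-R(a)-a$, and then by standard Clifford manipulations (adding $a$ on the left and on the right, absorbing $a^{0}$-terms using $R(a)^{0}=a^{0}$ and $(R(a)-b'-R(a))^{0}=a^{0}+b'^{0}\leq a^{0}$, and finally adding $R(a)$ on the right) deduce $(a+R(a))+b'=b'+(a+R(a))$ for all $b'$, so $a+R(a)\in C(S,+)$; the equivalence then delivers $c(x)\in C(S,+)$ for all $x$.

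Strongness is a one-line check: for $e\in E(S,+)$ one has $-e=e$ and $e$ is central, so $R(e)=-b-e+b=-b+b-e=b^{0}-e=e$, and Lemma~\ref{strong} applies. The main obstacle will be the forward direction, where extracting centrality of $a+R(a)$ from the Rota--Baxter equation requires a careful double rearrangement combined with absorption steps comparing idempotents in the semilattice; this is a phenomenon without analogue in the group case of \cite{b1,g2}, and the bridge between ``$a+R(a)\in C(S,+)$'' and the stated condition on $\{-b-x+b+x\}$ is then supplied by the second identity.
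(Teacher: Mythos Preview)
Your argument is correct and follows essentially the same route as the paper. Both proofs reduce the second Rota--Baxter axiom to $b^{0}$ being the identity via the computation $R(a)^{0}=a^{0}+b^{0}$, and both extract the centrality condition from the first axiom by expanding, cancelling the outer $-b\ldots+b$ (using $b^{0}=\epsilon$), and then rearranging. The only organisational difference is that you isolate the quantity $a+R(a)$ and prove its centrality first, then pass to $c(x)=-b-x+b+x$ via the bijection $a+R(a)=-c(-a)$, whereas the paper manipulates the equation directly to reach centrality of $-b+x+b-x=c(-x)$ without naming an intermediate object; the underlying computations are the same.
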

\begin{proof}Assume that $R$ is a Rota-Baxter operator and  $x,y\in S$. Then
$$x+R(x)^{0}=x-b-x+b-b+x+b$$$$=x-b-x+x+b=x-b+b=x+b^{0}=x.$$
 This shows that $b^{0}$ is the identity in $S$.  Moreover,
\begin{equation*}
\begin{aligned}
&R(x)+R(y)=R(x+R(x)+y-R(x))\\
&\Longleftrightarrow -b-x+b-b-y+b=-b-(x-b-x+b+y-b+x+b)+b\\
&\Longleftrightarrow -b-(y+x)+b=-b-b-x+b+x-x-y-b+x+b-x+b\\
&\Longleftrightarrow -x-y=(-b-x+b+x)-x-y+(-b+x+b-x)\\
&\Longleftrightarrow -(-b-x+b+x)-x-y=-x-y+(-b+x+b-x)\\
&\Longleftrightarrow -x+(-b+x+b-x)-y=-x-y+(-b+x+b-x)\\
&\Longleftrightarrow x-x+(-b+x+b-x)-y=x-x-y+(-b+x+b-x)\\
&\Longleftrightarrow (-b+x+b-x)-y=-y+(-b+x+b-x).\\
\end{aligned}
\end{equation*}
Since $x$ and $y$ are arbitrary, we have
\begin{equation} \label{wang10}
\{-b-x+b+x\mid x\in S\} \subseteq C(S).
\end{equation}
Conversely, suppose that $b^{0}$ is the identity in $S$ and (\ref{wang10}) holds.  Then
$$a+R(a)^{0}=a-b-a+b-b+a+b=a-b-a+a+b=a-b+b=a+b^{0}=a,$$
$$R(x)+R(y)=-b-x+b-b-y+b=-b-x-y+b,$$
\begin{equation*}
\begin{aligned}
&R(x+R(x)+y-R(x))\\
&=-b-(x-b-x+b+y-b+x+b)+b\\
&=-b-b-x+b-y-b+x+b-x+b\\
&=-b-b-x+b-b+x+b-x-y+b\\
&=-b-b-x+x+b-x-y+b\\
&=-b-b+b-x-y+b\\
&=-b-x-y+b=R(x)+R(y).
\end{aligned}
\end{equation*}
  Thus $R$ is a  Rota-Baxter operator. Since $$R(e)=-b-e+b=e-b+b=e+b^{0}=e$$ for all $e\in E(S,+)$, it follows that $R$ is strong by Lemma \ref{strong}.
\end{proof}
\begin{prop}Let $(S, +)$ be a Clifford semigroup and $a,b\in S$. Then $R: S\rightarrow S,\, x\mapsto a+x+b$ is a Rota-Baxter operator if and only if $S$ is a commutative monoid with identity $a^{0}$ and $b=-a$. In this case, $R$ is the identity map on $S$ and is strong by Lemma \ref{strong}.
\end{prop}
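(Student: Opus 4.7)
The plan is to handle the two implications separately. The reverse direction is direct: if $S$ is a commutative monoid with identity $a^{0}$ and $b=-a$, then commutativity gives $R(x)=a+x+b=a+x-a=x+a-a=x+a^{0}=x$ for every $x\in S$, so $R$ is the identity map, which is immediately a Rota--Baxter operator on $(S,+)$ and is strong by Lemma~\ref{strong}.

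For the forward direction, the first step is to extract the identity of $S$ from the axiom $x+R(x)^{0}=x$. Computing $R(x)^{0}=(a+x+b)^{0}=a^{0}+x^{0}+b^{0}$ and using the centrality of idempotents in a Clifford semigroup, this axiom reduces to $a^{0}+b^{0}+x=x$ for every $x\in S$. Specialising $x=a^{0}$ and $x=b^{0}$ forces $a^{0}=b^{0}$; writing $u:=a^{0}=b^{0}$, the axiom becomes $u+x=x$ for every $x$, so $u$ is the identity of $S$, and $a,b$ both lie in the maximal-index group $G_{\gamma}$ of the strong semilattice decomposition $S=(Y,G_{\alpha},\varphi_{\alpha,\beta})$. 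Next I would apply the multiplicative Rota--Baxter axiom with $y=u$: since $R(x)+u-R(x)=R(x)^{0}$, the inner argument collapses by the second axiom to $x$, giving $R(x)+R(u)=R(x)$. Taking $x=u$ forces $R(u)=a+b$ to be idempotent; since $(a+b)^{0}=a^{0}+b^{0}=u$, we deduce $a+b=u$, and as $a,b\in G_{\gamma}$ this group identity yields $b=-a$.

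With $R(x)=a+x-a$ in hand, I would feed this into the multiplicative axiom and, using $-a+a=a-a=u$ and $w+u=w$ to cancel $a$ from both ends, and then adding $-x$ on the left to cancel $x$ (producing the idempotent $x^{0}$), reduce the axiom, after projecting to the group component $G_{\beta\wedge\beta'}$ containing $x+y$ for $x\in G_{\beta}$ and $y\in G_{\beta'}$ via the structure homomorphisms, to
\begin{equation*}
\tilde{y}=(\tilde{a}+\tilde{x}-\tilde{a})+\tilde{y}-(\tilde{a}+\tilde{x}-\tilde{a})
\end{equation*}
in the group $G_{\beta\wedge\beta'}$, where tildes denote the images under the appropriate $\varphi_{\cdot,\beta\wedge\beta'}$. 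Taking $\beta=\beta'$ and letting $x$ range over $G_{\beta}$, the conjugating factor $\tilde{a}+\tilde{x}-\tilde{a}$ sweeps out all of $G_{\beta}$, so $G_{\beta}$ is abelian; since a strong semilattice of abelian groups is commutative, $S$ is commutative, and therefore $R(x)=a+x-a=x+a-a=x+u=x$, as required.

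The main obstacle will be this cancellation step. Because $S$ is only a Clifford semigroup rather than a group, I cannot cancel $a$ or $x$ directly as in a group, but must add inverses on the correct side, exploit the centrality of idempotents, and only then transport the resulting identity into a single group component via the structure maps of the strong semilattice. Once the identity is read inside $G_{\beta\wedge\beta'}$, the commutativity of each $G_{\beta}$, and hence of $S$, follows from the standard remark that a group in which every inner automorphism is trivial on every element must be abelian.
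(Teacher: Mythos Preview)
Your proposal is correct. The reverse direction and the extraction of the identity $u=a^{0}=b^{0}$ and of $b=-a$ are essentially the same as in the paper, only with a slightly different instantiation to reach $R(u)\in E(S,+)$.

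Where you diverge is in proving commutativity once $R(x)=a+x-a$ is known. The paper's route is purely equational and avoids the strong semilattice decomposition: it first substitutes the fixed element $a$ in the first slot of the Rota--Baxter identity to obtain $a+a+x-a=a+a+a+x-a-a$, cancels to $x=a+x-a$, so $a$ is central and hence $R$ is the identity map; then, feeding $R=\mathrm{id}$ back into the Rota--Baxter identity for arbitrary $x,y$ gives $x+y=x+x+y-x$, which after cancellation yields $y+x=x+y$. Your route instead reduces the Rota--Baxter identity to a conjugation relation, transports it into the group $G_{\beta\wedge\beta'}$ via the structure maps, and concludes each $G_{\beta}$ is abelian.

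Both arguments are sound; the paper's has the advantage of never invoking the structure theorem for Clifford semigroups and reaching $R=\mathrm{id}$ before commutativity, while yours makes the role of the abelian group components explicit at the cost of a little more bookkeeping with the $\varphi_{\alpha,\beta}$.
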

\begin{proof}Assume that $R$ is a Rota-Baxter operator. Then $a=a+R(a)^{0}$, and so
$$a^{0}=(a+R(a)^{0})^{0}=a^{0}+(R(a)^{0})^{0}=a^{0}+R(a)^{0}=a^{0}+(a+a+b)^{0}=a^{0}+a^{0}+a^{0}+b^{0}=a^{0}+b^{0}.$$
Similarly,  $b^{0}=b^{0}+a^{0}$. This implies that $a^{0}=a^{0}+b^{0}=b^{0}+a^{0}=b^{0}$. Let
$x\in S$. Then
$$x=x+R(x)^{0}=x+(a+x+b)^{0}=x+a^{0}+x^{0}+b^{0}=x+x^{0}+a^{0}+a^{0}=x+a^{0}=a^{0}+x.$$
So $a^{0}$ is the identity in $S$. Moreover,
$$a^{0}=(a+R(a)^{0})^{0}=a^{0}+(R(a)^{0})^{0}=a^{0}+R(a)^{0}=a^{0}+R(a^{0})=a^{0}+a+a^{0}+b
=a+b,$$
and so $-a=-a+a^{0}=-a+a+b=a^{0}+b=b^{0}+b=b.$ Thus $R(x)=a+x+b=a+x-a$ for all $x\in S$. In particular, $R(a)=a+a-a=a+a^{0}=a$. Let $x\in S$. Then
$$a+a+x-a=R(a)+R(x)=R(a+R(a)+x-R(a))=R(a+a+x-a)=a+a+a+x-a-a.$$
This implies that
$$
x=x+a^{0}=-a-a+(a+a+x-a)+a
$$$$=-a-a+(a+a+a+x-a-a)+a=a^{0}+a+x-a+a^{0}
=a+x-a,$$
and so $$x+a=a+x-a+a=a+x+a^{0}=a+a^{0}+x=a+x.$$ Therefore, $$R(x)=a+x-a=x+a-a=x+a^{0}=x$$ for all $x\in S$ as $a^{0}$ is the identity. Let
$x,y\in S$. Then
$$x+y=R(x)+R(y)=R(x+R(x)+y-R(x))=x+x+y-x.$$
This implies that
$$
y+x=y+x^{0}+x=x^{0}+y+x=-x+x+y+x
$$$$=-x+x+x+y-x+x=x^{0}+x+y+x^{0}=x+y+x^{0}
=x+y.
$$
Thus $S$ is commutative.

Conversely, assume the given conditions hold. Let $x,y\in S$. Then
$$R(x)=a+x+b=a+x-a=a-a+x=a^{0}+x=x,$$
$$
R(x)+R(y)=x+y=x+x^{0}+y=x+x-x+y=x+x+y-x$$$$=x+R(x)+y-R(x)=R(x+R(x)+y-R(x)).
$$
Moreover, $x+R(x)^{0}=x+x^{0}=x$. Thus $R$ is a Rota-Baxter operator.
\end{proof}
Now, we introduce Rota--Baxter operator with weight $-1$ on Clifford semigroups.
\begin{defn}Let $(S,+)$ be a Clifford semigroup and $L: S\rightarrow S$  be a map. Then $L$ is called a {\em Rota--Baxter operator with weight $-1$} on $(S,+)$ if
$$L(a)+L(b)=L(L(a)+b-L(a)+a),\,\,\, a+L(a)^{0}=a  \mbox{ for all } a,b\in S.$$
In addition,  a Rota--Baxter operator with weight $-1$ on $(S,+)$ is called {\em strong} if  $L(e)=e$ for all $e\in E(S,+)$.
\end{defn}
\begin{prop}Let $L$ be a Rota--Baxter operator with weight $-1$ on a Clifford semigroup $(S,+)$. Then $\widetilde{L}: S\rightarrow S,\, a\mapsto a+L(-a)$ is also a Rota--Baxter operator with weight $-1$. If $L$ is strong, so is $\widetilde{L}$.
\end{prop}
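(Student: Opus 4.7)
The plan is to verify the two defining identities for weight $-1$ directly, after first collecting a few idempotent facts from the original axiom $-a+L(-a)^{0}=-a$ and from the Clifford-semigroup lemma that every idempotent is central and that $a+a^{0}=a$. Adding $a$ on the right to $-a+L(-a)^{0}=-a$ yields $a^{0}+L(-a)^{0}=a^{0}$, i.e.\ in the semilattice $E(S,+)$ we have $a^{0}\le L(-a)^{0}$. Consequently
\[
\widetilde{L}(a)^{0}=(a+L(-a))^{0}=a^{0}+L(-a)^{0}=a^{0},
\]
so the first axiom reduces to $a+\widetilde{L}(a)^{0}=a+a^{0}=a$, which is automatic.

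For the multiplicative identity, I would compute $\widetilde{L}(a)+\widetilde{L}(b)$ and $\widetilde{L}\bigl(\widetilde{L}(a)+b-\widetilde{L}(a)+a\bigr)$ side by side. First write $\widetilde{L}(a)=a+L(-a)$ and $-\widetilde{L}(a)=-L(-a)-a$, and let
\[
c=\widetilde{L}(a)+b-\widetilde{L}(a)+a=a+L(-a)+b-L(-a)-a+a.
\]
Using $-a+a=a^{0}$ and the fact that $a^{0}$ is central, I would shuffle $a^{0}$ leftward past $-L(-a)$ and $L(-a)$ and absorb it into $a+a^{0}=a$, obtaining the clean expression $c=a+L(-a)+b-L(-a)$. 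Then $-c=L(-a)-b-L(-a)-a$, which is precisely the expression $L(x)+y-L(x)+x$ appearing in the weight $-1$ axiom with $x=-a$ and $y=-b$; hence
\[
L(-c)=L\bigl(L(-a)+(-b)-L(-a)+(-a)\bigr)=L(-a)+L(-b).
\]

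Plugging this back, $\widetilde{L}(c)=c+L(-c)=a+L(-a)+b-L(-a)+L(-a)+L(-b)=a+L(-a)+b+L(-a)^{0}+L(-b)$. Since $L(-a)^{0}$ is central, I can slide it leftward past $b$ to sit next to $L(-a)$, and $L(-a)+L(-a)^{0}=L(-a)$ eliminates it, leaving $a+L(-a)+b+L(-b)=\widetilde{L}(a)+\widetilde{L}(b)$, as required. Finally, when $L$ is strong, $\widetilde{L}(e)=e+L(-e)=e+L(e)=e+e=e$ for every $e\in E(S,+)$, so $\widetilde{L}$ is strong.

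The only mildly delicate point is the bookkeeping that keeps the auxiliary idempotent $L(-a)^{0}$ from obstructing equality; once one notices that centrality of idempotents lets it be transported freely and that $L(-a)+L(-a)^{0}=L(-a)$, the argument collapses. Everything else is a direct substitution into the weight $-1$ axiom applied to the pair $(-a,-b)$, so no further lemma is needed.
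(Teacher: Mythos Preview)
Your proof is correct and follows essentially the same approach as the paper. Both arguments hinge on the same two moves: (i) recognizing that $-c=L(-a)-b-L(-a)-a$ is exactly the pattern $L(x)+y-L(x)+x$ with $x=-a,\ y=-b$, so that the weight $-1$ axiom gives $L(-c)=L(-a)+L(-b)$; and (ii) shuffling central idempotents (here $a^{0}$ and $L(-a)^{0}$) to absorb spurious terms. The only cosmetic difference is direction: the paper starts from $\widetilde{L}(a)+\widetilde{L}(b)$ and works toward $\widetilde{L}(\widetilde{L}(a)+b-\widetilde{L}(a)+a)$, whereas you first simplify $c=\widetilde{L}(a)+b-\widetilde{L}(a)+a$ and then compute $\widetilde{L}(c)$ — but the manipulations are the same.
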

\begin{proof}Let $a,b\in S$. Then
\begin{equation*}
\begin{aligned}
&\widetilde{L}(a)+\widetilde{L}(b)=a+L(-a)+b+L(-b)\\
&=a+L(-a)-L(-a)+L(-a)+b+L(-b) \quad
 (\mbox{since }{L(-a)=L(-a)-L(-a)+L(-a)})\\
&=a+L(-a)+b-L(-a)+L(-a)+L(-b)  \quad  (\mbox{since } -L(-a)+L(-a)\in E(S))\\
&=a+L(-a)+b-L(-a)+L(L(-a)-b-L(-a)-a)\\
& (\mbox{since } L \mbox{ is a Rota--Baxter operator with weight}-1)\\
&=a+L(-a)+b-L(-a)+L(-(a+L(-a)+b-L(-a))) \\
&=\widetilde{L}(a+L(-a)+b-L(-a)) \quad (\mbox{by the definition of }\text{$\widetilde{L}$})\\
&=\widetilde{L}(a+L(-a)+b-L(-a)-a+a)  \quad\quad\quad (\mbox{since }a=a-a+a,-a+a\in E(S))\\
&=\widetilde{L}(\widetilde{L}(a)+b-\widetilde{L}(a)+a).  \quad (\mbox{by the definition of }\text{$\widetilde{L}$})\\
\end{aligned}
\end{equation*}
On the other hand,
$$a+\widetilde{L}(a)^{0}=a+\widetilde{L}(a)-\widetilde{L}(a)=a+a+L(-a)-L(-a)-a$$$$=a+a-a+L(-a)-L(-a)
=a+a-a=a-a+a=a.$$
Thus  $\widetilde{L}$ is  a Rota--Baxter operator with weight $-1$ on $(S,+)$.  If $L$ is strong, then we have $\widetilde{L}(e)=e+L(-e)=e-e=e$ for all $e\in E(S,+)$, and so $\widetilde{L}$ is also strong.
\end{proof}
\begin{prop}Let $(S, +, R)$ be a Rota--Baxter Clifford semigroup. Then $L: S\rightarrow S, \, a\mapsto R(-a)$ is a Rota--Baxter operator with weight $-1$. If $R$ is strong, so is $L$.
\end{prop}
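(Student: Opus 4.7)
The plan is to verify the two defining axioms of a Rota--Baxter operator of weight $-1$ directly from the definition $L(a)=R(-a)$, using only the Rota--Baxter axiom for $R$, Lemma \ref{cwang}(1), and the basic identities of Lemma \ref{jichu}. The whole argument is an exercise in bookkeeping for the involution $-(\cdot)$; there is no substantive conceptual obstacle, only care with the order-reversal rule $-(x+y)=-y-x$.

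For the idempotency-type axiom $a+L(a)^{0}=a$, I first reduce $L(a)^{0}$ to $R(a)^{0}$. By Lemma \ref{jichu}, $(-a)^{0}=a^{0}$, so Lemma \ref{cwang}(1) applied to $-a$ gives
\[
L(a)^{0}=R(-a)^{0}=R((-a)^{0})=R(a^{0})=R(a)^{0}.
\]
The axiom $a+R(a)^{0}=a$ for $R$ then yields $a+L(a)^{0}=a$. For the weight--$(-1)$ multiplicativity axiom
$L(a)+L(b)=L(L(a)+b-L(a)+a)$,
I expand both sides. The left-hand side is $R(-a)+R(-b)$, and applying the weight--$1$ axiom for $R$ to the pair $(-a,-b)$ produces
\[
L(a)+L(b)=R\bigl(-a+R(-a)+(-b)-R(-a)\bigr)=R\bigl(-a+R(-a)-b-R(-a)\bigr).
\]
For the right-hand side, the inverse-of-sum rule together with $-(-a)=a$ gives
\[
-(L(a)+b-L(a)+a)=-a+R(-a)-b-R(-a),
\]
so $L(L(a)+b-L(a)+a)=R\bigl(-a+R(-a)-b-R(-a)\bigr)$, matching the left-hand side.

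Finally, for the strong case, by Lemma \ref{strong} the assumption that $R$ is strong is equivalent to $R(e)=e$ for every $e\in E(S,+)$. Since $-e=e$ for every such $e$ (Lemma \ref{jichu}), we obtain $L(e)=R(-e)=R(e)=e$, and a second application of Lemma \ref{strong} shows that $L$ is strong.
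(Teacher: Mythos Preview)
Your proof is correct and follows essentially the same route as the paper's: both verify the weight-$(-1)$ axiom by applying the weight-$1$ axiom for $R$ to the pair $(-a,-b)$ and unpacking $-(L(a)+b-L(a)+a)$ via the order-reversal rule, and both handle the strong case by $L(e)=R(-e)=R(e)=e$. Your treatment of $a+L(a)^{0}=a$ via $L(a)^{0}=R(a)^{0}$ (Lemma~\ref{cwang}(1)) is slightly cleaner than the paper's direct computation; one small citation point: your ``second application of Lemma~\ref{strong}'' is not needed, since for weight~$-1$ operators ``strong'' is \emph{defined} by $L(e)=e$ for all $e\in E(S,+)$, so the conclusion follows immediately from what you have shown.
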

\begin{proof}Let $a,b\in S$.  Then  we have
\begin{equation*}
\begin{aligned}
L(a)+L(b)&=R(-a)+R(-b)=R(-a+R(-a)-b-R(-a))\\
&=R(-(R(-a)+b-R(-a)+a))\\
&=L(R(-a)+b-R(-a)+a)=L(L(a)+b-L(a)+a),
\end{aligned}
\end{equation*}
$$
a+L(a)^{0}=a+L(a)-L(a)=a+R(-a)-R(-a) =a-a+a+R(-a)-R(-a)$$$$
 =a-a+R(-a)-R(-a)+a=a-a+a=a.
$$
Thus  ${L}$ is  a Rota--Baxter operator with weight $-1$ on $(S,+)$.  If $R$ is strong, then we have ${L}(e)=R(-e)=R(e)=e$ for all $e\in E(S,+)$, and so $L$ is also strong.
\end{proof}

\begin{lemma}\label{a}Let $(S, +, R)$ be a Rota--Baxter Clifford semigroup.  Then $L: S\rightarrow S, \, a\mapsto a+ R(a)$ is a Rota--Baxter operator with weight $-1$.  If $R$ is strong, so is $L$.
\end{lemma}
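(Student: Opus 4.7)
The plan is to verify directly, from the definition of $L$ and the two Rota--Baxter axioms satisfied by $R$, the two defining identities for a Rota--Baxter operator of weight $-1$, namely $L(a)+L(b)=L(L(a)+b-L(a)+a)$ and $a+L(a)^0=a$. The strategy is purely computational: unfold $L(a)=a+R(a)$, use the centrality of idempotents (in particular $a^0,R(a)^0\in E(S,+)\subseteq C(S,+)$) to push simplifying terms past each other, and then collapse the resulting expression by invoking the Rota--Baxter identity for $R$. The strong case will follow at once from Lemma \ref{strong}.

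For the first axiom I would first simplify the inner expression $L(a)+b-L(a)+a$. Writing $L(a)=a+R(a)$ and using $-L(a)=-R(a)-a$, this becomes $a+R(a)+b-R(a)-a+a$; since $-a+a=a^0$ is central, it collapses to $a+R(a)+b-R(a)$. Applying $L$ and using the Rota--Baxter identity $R(a+R(a)+b-R(a))=R(a)+R(b)$ gives
\[
L(L(a)+b-L(a)+a)=a+R(a)+b-R(a)+R(a)+R(b)=a+R(a)+b+R(a)^0+R(b).
\]
Then the centrality of $R(a)^0$ together with $R(a)+R(a)^0=R(a)$ reduces the right-hand side to $a+R(a)+b+R(b)=L(a)+L(b)$, as required.

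For the second axiom, I would compute $L(a)^0=(a+R(a))^0=a^0+R(a)^0$ using $(x+y)^0=x^0+y^0$ from Lemma \ref{jichu}, and then invoke Lemma \ref{cwang}(5) to get $a^0+R(a)^0=a^0$. Therefore $a+L(a)^0=a+a^0=a$. Finally, if $R$ is strong then by Lemma \ref{strong} we have $R(e)=e$ for every $e\in E(S,+)$, so $L(e)=e+R(e)=e+e=e$, and hence $L$ is strong by the same lemma.

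The only mild obstacle I anticipate is bookkeeping with the inverse-semigroup arithmetic: one must repeatedly exploit that idempotents of $(S,+)$ are central and satisfy $x+x^0=x$ in order to legally slide terms of the form $a^0$, $R(a)^0$ across products. Once this is handled carefully in the expansion of $L(L(a)+b-L(a)+a)$, the verification of both axioms is routine and the strong statement is immediate.
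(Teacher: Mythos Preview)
Your proof is correct and follows essentially the same approach as the paper: both unfold $L(a)=a+R(a)$, insert and cancel terms of the form $-R(a)+R(a)$ and $-a+a$ using centrality of idempotents, and invoke the Rota--Baxter identity for $R$; the only cosmetic difference is that the paper runs the first chain of equalities from $L(a)+L(b)$ toward $L(L(a)+b-L(a)+a)$ rather than in your direction, and verifies $a+L(a)^0=a$ by direct expansion rather than via Lemma~\ref{cwang}(5). One minor remark: for the strong case, once you have $L(e)=e$ you are done by the \emph{definition} of ``strong'' for weight~$-1$ operators, not by Lemma~\ref{strong} (which is stated for weight~$1$).
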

\begin{proof}Let $a,b\in S$. Then we have
\begin{equation*}
\begin{aligned}
&L(a)+L(b)=a+R(a)+b+R(b)=a+R(a)+b-R(a)+R(a)+R(b)\\
&=a+R(a)+b-R(a)+R(a+R(a)+b-R(a))=L(a+R(a)+b-R(a))\\
&=L(a+R(a)+b-R(a)-a+a)=L(L(a)+b-L(a)+a),
\end{aligned}
\end{equation*}
\begin{equation*}
\begin{aligned}
a+L(a)^{0}&=a+L(a)-L(a)=a+a+R(a)-R(a)-a=a+a-a=a.
\end{aligned}
\end{equation*}
Thus  ${L}$ is  a Rota--Baxter operator with weight $-1$ on $(S,+)$.  If $R$ is strong, then we have ${L}(e)=e+R(e)=e+e=e$ for all $e\in E(S,+)$, and so $L$ is also strong.
\end{proof}
Dually, we can prove the following lemma.
\begin{lemma}\label{adual} Let $L$ be a (strong) Rota--Baxter operator with weight $-1$ on a Clifford semigroup $(S,+)$. Then $R: S\rightarrow S, \, a\mapsto -a+ L(a)$ is a (strong) Rota--Baxter operator.
\end{lemma}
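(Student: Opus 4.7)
\medskip

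The plan is to verify the two defining axioms of a weight $+1$ Rota--Baxter operator directly for $R(a)=-a+L(a)$, mirroring the computation of Lemma \ref{a} but running backwards. Throughout I will use heavily that idempotents are central and that the weight $-1$ axiom $a+L(a)^0=a$ forces $a^0\le L(a)^0$ in the semilattice of idempotents (equivalently, $a^0+L(a)^0=a^0$).

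First I would dispatch the easy axiom $a+R(a)^0=a$. Since $R(a)^0=(-a+L(a))^0=a^0+L(a)^0$, and the previous remark gives $a^0+L(a)^0=a^0$, we get $a+R(a)^0=a+a^0=a$. The strongness statement is equally short: if $L(e)=e$ for every $e\in E(S,+)$, then $R(e)=-e+L(e)=-e+e=e^0=e$, so $R$ is strong by Lemma \ref{strong}.

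The substantive step is the multiplicative axiom $R(a)+R(b)=R(a+R(a)+b-R(a))$. I would first simplify the inner expression. A direct computation gives
\[
a+R(a)+b-R(a)=a^0+L(a)+b-L(a)+a,
\]
and then centrality of $a^0$ together with $a+a^0=a$ lets me rewrite this as
\[
c:=a+R(a)+b-R(a)=L(a)+b-L(a)+a.
\]
This is precisely the element appearing in the weight $-1$ axiom for the pair $(a,b)$, so
\[
L(c)=L(L(a)+b-L(a)+a)=L(a)+L(b).
\]
Expanding $R(c)=-c+L(c)$ and using $-(L(a)+b-L(a)+a)=-a+L(a)-b-L(a)$, we obtain
\[
R(c)=-a+L(a)-b-L(a)+L(a)+L(b)=-a+L(a)-b+L(a)^0+L(b).
\]
Because $L(a)^0$ is central and is the identity of the maximal subgroup containing $L(a)$, the idempotent $L(a)^0$ slides past $-b$ and merges with $L(a)$ via $L(a)+L(a)^0=L(a)$ (equivalently, one uses $L(a)^0\le L(a)^0$ trivially), yielding $R(c)=-a+L(a)-b+L(b)=R(a)+R(b)$, as required.

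I do not expect any real obstacle here; the only subtlety is that, unlike in a group, one cannot freely cancel elements, so the bookkeeping of idempotents must be done carefully. The two small identities that do the work are $a^0+L(a)^0=a^0$ (giving axiom (2) and allowing the simplification of $c$) and $L(a)+L(a)^0=L(a)$ (closing the final chain), both of which are immediate consequences of the Clifford semigroup structure and the weight $-1$ axiom.
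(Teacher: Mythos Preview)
Your proof is correct and is exactly the argument the paper has in mind: the paper's own proof of this lemma is simply the sentence ``Dually, we can prove the following lemma,'' referring back to the verification in Lemma~\ref{a}, and your write-up spells out precisely that dual computation.
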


\begin{theorem}
Let $(S, +)$ be a Clifford semigroup. Denote
$$\mathcal{R}=\{R\mid R \mbox{ is a strong Rota--Baxter operator on } S\},$$
$$\mathcal{L}=\{L\mid L \mbox{ is a strong Rota--Baxter operator on } S \mbox{ with weight}-1\}.$$
Then there is a one-one correspondence between $\mathcal R$ and $\mathcal L$.
\end{theorem}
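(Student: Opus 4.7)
The plan is to exhibit explicit mutually inverse maps between $\mathcal{R}$ and $\mathcal{L}$ by packaging the constructions already produced by Lemmas \ref{a} and \ref{adual}. Concretely, set
$$\Phi:\mathcal{R}\to\mathcal{L},\quad R\mapsto \Phi(R),\ \Phi(R)(a):=a+R(a),$$
$$\Psi:\mathcal{L}\to\mathcal{R},\quad L\mapsto \Psi(L),\ \Psi(L)(a):=-a+L(a).$$
Lemma \ref{a} says $\Phi(R)\in\mathcal{L}$ (and is strong when $R$ is), while Lemma \ref{adual} says $\Psi(L)\in\mathcal{R}$ (and is strong when $L$ is). So both maps are well-defined and it suffices to check $\Psi\circ\Phi=\mathrm{id}_{\mathcal{R}}$ and $\Phi\circ\Psi=\mathrm{id}_{\mathcal{L}}$.

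The first composition is essentially immediate. For $R\in\mathcal{R}$ and $a\in S$,
$$\Psi(\Phi(R))(a)=-a+(a+R(a))=a^{0}+R(a).$$
Because $R$ is strong, Lemma \ref{strong} gives $R(a^0)=a^0$, and Lemma \ref{cwang}(1) gives $R(a)^{0}=R(a^{0})$; hence $R(a)^{0}=a^{0}$, and the standard identity $R(a)^0+R(a)=R(a)$ from Lemma \ref{jichu} yields $\Psi(\Phi(R))(a)=R(a)$.

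The second composition reduces similarly: $\Phi(\Psi(L))(a)=a+(-a+L(a))=a^{0}+L(a)$, so I need $a^{0}+L(a)=L(a)$, equivalently $L(a)^{0}=a^{0}$ when $L$ is strong. I expect this to be the main obstacle, since it is the analog for weight $-1$ of Lemmas \ref{strong} and \ref{cwang}(1). I plan to isolate it as a preparatory lemma, proved as follows. Writing $S=(Y,G_\alpha,\varphi_{\alpha,\beta})$, take $a\in G_\alpha$ and $L(a)\in G_\beta$. The axiom $a+L(a)^{0}=a$ rewritten in the strong semilattice of groups gives $\varphi_{\alpha,\alpha\wedge\beta}(a)+e_{\alpha\wedge\beta}=a$, which forces $\alpha\wedge\beta=\alpha$, i.e.\ $\alpha\leq\beta$. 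Now substitute $b=a^{0}$ in the weight $-1$ axiom:
$$L(a)+L(a^{0})=L\bigl(L(a)+a^{0}-L(a)+a\bigr).$$
Using that $a^0\in C(S,+)$ and the relation $a^{0}+L(a)^{0}=a^{0}$ (from $\alpha\leq\beta$), the right-hand side simplifies to $L(a^{0}+a)=L(a)$. Strongness gives $L(a^{0})=a^{0}$, so $L(a)+a^{0}=L(a)$; evaluating the left-hand side in the semilattice forces $\beta\leq\alpha$. Combining, $\alpha=\beta$, so $L(G_\alpha)\subseteq G_\alpha$ and therefore $L(a)^{0}=e_\alpha=a^{0}$.

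With this preparatory lemma in hand, $\Phi(\Psi(L))(a)=a^{0}+L(a)=L(a)^{0}+L(a)=L(a)$, completing the bijection. No further work is needed beyond these computations; the correspondence is the obvious one suggested by the passage from weight $1$ to weight $-1$ via $L(a)=a+R(a)$ and its formal inverse $R(a)=-a+L(a)$.
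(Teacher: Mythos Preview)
Your proposal is correct and follows exactly the same approach as the paper: define $\Phi(R)(a)=a+R(a)$ and $\Psi(L)(a)=-a+L(a)$, invoke Lemmas \ref{a} and \ref{adual} for well-definedness, and verify the two compositions collapse to the identity via $a^{0}+R(a)=R(a)$ (resp.\ $a^{0}+L(a)=L(a)$) using strongness. The paper simply writes ``Dually'' for the $\Phi\Psi=\mathrm{id}$ direction, whereas you spell out the needed fact $L(a)^{0}=a^{0}$; your semilattice argument is fine, though note it can be shortened purely algebraically: from $a+L(a)^{0}=a$ one gets $a^{0}+L(a)^{0}=a^{0}$, and from $L(a)+a^{0}=L(a)$ (which you derive) one gets $L(a)^{0}+a^{0}=L(a)^{0}$, so the two idempotents coincide.
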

\begin{proof}By Lemmas \ref{a} and \ref{adual}, we can define $\Phi: \mathcal{R}\rightarrow \mathcal{L}$ and $\Psi: \mathcal{L}\rightarrow \mathcal{R}$, where
$\Phi(R): S\rightarrow S, a\mapsto a+R(a)$ and $\Psi(L): S\rightarrow S,a\mapsto -a+L(a)$ for all $R\in {\mathcal R}$ and $L\in {\mathcal L}$.  Let $R\in \mathcal{R}$ and $L\in\mathcal{L}$. Then by Lemma \ref{cwang} and the fact that $R$ is strong, we have
$$
(\Psi \Phi(R))(a) =-a+\Phi(R)(a)=-a+a+R(a)=a^{0}+R(a)
$$$$ =R(a^{0})+R(a)=R(a)^{0}+R(a)
 =R(a).$$
So $(\Psi\Phi)(R)=R$. Dually, we have $(\Phi\Psi)(L)=L$ for all $L\in \mathcal{L}$.
\end{proof}
\subsection{Substructures and quotient structures}

In this subsection, we consider some substructures and quotient structures of Rota--Baxter Clifford semigroups. We first give some notions and notations.
Let $(S, +)$ and $(S',+')$ be two semigroups and $\phi: S\mapsto S'$ be a map.  Then  $$\Ker \phi=\{x\in S\mid \phi(x)\in E(S',+')\} \mbox{ and }
\Ima \phi=\{\phi(a)\mid a\in S\}$$ are called the {\em kernel} and   {\em image} of $\phi$, respectively.

\begin{lemma}\label{c}Let $(S, +, R)$  be a Rota--Baxter  Clifford semigroup. Then $\Ker R$ and $\Ima R$ are Clifford subsemigroups of $(S,+)$.
\end{lemma}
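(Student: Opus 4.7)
The plan is to verify, for each of $\Ker R$ and $\Ima R$, the two closure conditions for an inverse subsemigroup: closure under $+$ and closure under the unary operation $a \mapsto -a$. Once this is done, both sets automatically inherit the Clifford identity $-a+a=a-a$ from $(S,+)$, and hence are Clifford subsemigroups.

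I would begin with $\Ima R$, which is the easier case. Closure under $+$ is immediate from the defining Rota--Baxter identity: for any $a,b\in S$,
$$R(a)+R(b)=R(a+R(a)+b-R(a))\in \Ima R.$$
Closure under inverses is a direct application of Lemma~\ref{cwang}(4), which gives $-R(a)=R(-R(a)-a+R(a))\in \Ima R$.

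For $\Ker R$ the argument is longer. For closure under $+$, if $R(a),R(b)\in E(S,+)$, then Lemma~\ref{b} applies (since $R(a)\in E(S,+)$), giving $R(a+b)=R(a)+R(b)$. As $E(S,+)$ is a subsemigroup of $(S,+)$, we conclude $a+b\in\Ker R$. The main obstacle is showing that $-a\in\Ker R$ whenever $a\in\Ker R$. I would exploit Lemma~\ref{cwang}(2):
$$R(a)+R(-a)=R(a+R(a)-a-R(a)).$$
Assuming $R(a)\in E(S,+)$ and using that $E(S,+)\subseteq C(S,+)$ together with $-R(a)=R(a)$, the argument of $R$ on the right collapses to $a^0+R(a)$, which is an idempotent (as a sum of two elements of the commutative semilattice $E(S,+)$). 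Since Lemma~\ref{cwang}(1) shows $R(E(S,+))\subseteq E(S,+)$, we obtain $R(a)+R(-a)\in E(S,+)$. Finally, using the general identity $x=x^0+x$ in a Clifford semigroup and $R(-a)^0=R((-a)^0)=R(a^0)=R(a)^0=R(a)$, we get
$$R(-a)=R(-a)^0+R(-a)=R(a)+R(-a)\in E(S,+),$$
so $-a\in\Ker R$.

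The main obstacle is the last step of this chain, where the idempotent structure of $R(a)$ must be leveraged twice: first to simplify the Rota--Baxter expression into an idempotent, and then to rewrite $R(-a)$ via its natural idempotent prefix $R(-a)^0$, which happens to coincide with $R(a)$. After these closure conditions are in hand, the conclusion that both $\Ker R$ and $\Ima R$ are Clifford subsemigroups is automatic by Lemma~\ref{Clifford} and the fact that regularity, the inverse map, and the identity $-a+a=a-a$ all transfer to inverse subsemigroups.
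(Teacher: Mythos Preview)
Your proof is correct and follows essentially the same strategy as the paper: identical for $\Ima R$ and for closure of $\Ker R$ under $+$, and for closure under $-$ both arguments reduce to showing $R(a^{0}+R(a))\in E(S,+)$. The only minor difference is that the paper computes $R(-a)=R(a^{0}+R(a))$ directly (via $R(-a)=R(a-a-a)=R(a)+R(-a-a)=R(a+a)+R(-a-a)$ and the Rota--Baxter identity), whereas you obtain $R(a)+R(-a)=R(a^{0}+R(a))$ from Lemma~\ref{cwang}(2) and then use the observation $R(-a)^{0}=R(a)$ to conclude $R(-a)=R(-a)^{0}+R(-a)=R(a)+R(-a)$; this is a slightly cleaner variant of the same idea.
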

\begin{proof}  Let  $a,b\in \Ker R$. Then $R(a), R(b)\in E(S,+)$. By Lemma \ref {b},  we have
$$R(a+b)=R(a)+R(b),\,\,\,  R(a+a)=R(a)+R(a)=R(a).$$ This implies that
\begin{equation*}
\begin{aligned}
&R(-a)=R(a-a-a)=R(a)+R(-a-a)=R(a+a)+R(-a-a)  \quad(\mbox{by Lemma \ref{b}})\\
&=R(a+a+R(a+a)-a-a-R(a+a))=R(a+a+R(a)-a-a-R(a)) \\
&=R(a+a-a-a+R(a)) \quad\quad(\mbox{since }R(a)\in E(S,+))\\
&=R(a^{0}+R(a)).\\
\end{aligned}
\end{equation*}
Since $E(S,+)$ forms a subsemigroup of $(S,+)$,  and $ R(E(S,+))\subseteq E(S,+)$ by Lemma \ref{cwang} (1),
we have $R(a+b)=R(a)+R(b)\in E(S, +)$ and $R(-a)=R(a^{0}+R(a))\in E(S)$ as
 $R(a),R(b),a^{0}\in E(S, +)$.  This implies that $a+b,-a\in \Ker R$, and so $\Ker R$ is a Clifford subsemigroup of $S$.

Next, we consider $\Ima R$.  Let $a,b\in \Ima R$ and $a=R(x)$ and $b=R(y)$ for some $x, y\in S$. Then by Lemma \ref{cwang} (4), we have
$$-a=-R(x)=R(-R(x)-x+R(x))\in \Ima R,$$
$$a+b=R(x)+R(y)=R(x+R(x)+y-R(x))\in \Ima R.$$
This shows that $\Ima R$ is also a Clifford subsemigroup of $S$.
\end{proof}

\begin{lemma}\label{d} Let $(S, +, R)$  be a Rota--Baxter   Clifford semigroup. Then  $$R_{+}: (S,\circ_R)\rightarrow (S,+),\, \, a\mapsto a+R(a)$$ is a semigroup homomorphism and $R_{+}R=RR_{+}$.
\end{lemma}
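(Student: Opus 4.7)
The plan is to verify the two assertions separately. For the homomorphism claim, I would expand
\[
R_+(a \circ_R b) = (a \circ_R b) + R(a \circ_R b) = a + R(a) + b - R(a) + R(a + R(a) + b - R(a))
\]
and apply the defining Rota--Baxter axiom to rewrite the trailing $R$-term as $R(a) + R(b)$. Combined with the identity $-R(a) + R(a) = R(a)^0$, associativity then gives
\[
R_+(a \circ_R b) = a + R(a) + b + R(a)^0 + R(b).
\]
Using that idempotents of a Clifford semigroup lie in the center (Lemma \ref{jichu}), I would commute $R(a)^0$ leftward past $b$ to obtain $a + R(a) + R(a)^0 + b + R(b)$. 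The stray idempotent is then absorbed via $R(a) + R(a)^0 = R(a)$, yielding $a + R(a) + b + R(b) = R_+(a) + R_+(b)$, as required.

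The identity $R_+ R = R R_+$ is essentially immediate from Lemma \ref{cwang}(3): for every $a \in S$,
\[
R_+(R(a)) = R(a) + R(R(a)) = R(a + R(a)) = R(R_+(a)),
\]
where the middle equality is precisely the content of Lemma \ref{cwang}(3).

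I do not anticipate any genuine obstacle. The only delicate point is the bookkeeping of the idempotent $R(a)^0$ that appears after the cancellation $-R(a) + R(a)$ in the first computation; this is handled cleanly by the centrality of idempotents together with the identity $x + x^0 = x$, both of which are recorded in Lemma \ref{jichu}.
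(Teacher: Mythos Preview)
Your proof is correct and follows essentially the same approach as the paper's. The only cosmetic differences are that the paper runs the homomorphism computation from $R_+(a)+R_+(b)$ toward $R_+(a\circ_R b)$ (inserting $-R(a)+R(a)$ rather than cancelling it), and for $R_+R=RR_+$ the paper re-derives Lemma~\ref{cwang}(3) inline from the Rota--Baxter axiom instead of citing it.
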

\begin{proof}Let $a,b\in S$. Then
\begin{equation*}
\begin{aligned}
R_{+}(a)+R_{+}(b)&=a+R(a)+b+R(b)=a+R(a)+b-R(a)+R(a)+R(b)\\
&=a\circ b+R(a+R(a)+b-R(a))=(a\circ b)+R(a\circ b)=R_{+}(a\circ b),
\end{aligned}
\end{equation*}
$$R_{+}(R(a))=R(a)+R(R(a))=R(a+R(a)+R(a)-R(a))=R(a+R(a))=R(R_{+}(a)).$$
This gives the desired result.
\end{proof}

\begin{theorem}Let $(S, +, R)$ be a Rota--Baxter  Clifford semigroup and $(S, +, \circ_R)$ be the dual weak left brace induced by $R$ (see Lemma \ref{wang9}). Then $$\Ker R=\{x\in S\mid R(x)\in E(S)\} \mbox{ and } \Ima R=\{R(a)\mid a\in S\},$$
 $$\Ker R_+=\{x\in S\mid R_+(x)\in E(S, +)\}=\{x\in S \mid R_+(x)\in E(S)\}.$$

Moreover, we have the following results:

\begin{itemize}
\item[(1)] $(S,+)=(\Ima R_{+},+)+ (\Ima R,+)$.
\item[(2)]$(\Ker R, \circ_R)$ and $(\Ker R_{+}, \circ_R)$ are normal subsemigroups of $(S,\circ_R)$.
\item[(3)] $(\Ker R, +)$ is a normal subsemigroup of $(\Ima R_{+}, +)$.
\item[(4)] If $R$ is strong, then $(\Ker R_+, +)$ is a  normal subsemigroup of $(\Ima R,+)$.
\item[(5)] If $R$ is strong, $(\Ima R_{+},+)/(\Ker R,+)\cong (\Ima R,+)/(\Ker R_{+},+)$.
\end{itemize}
\end{theorem}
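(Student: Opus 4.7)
The plan is to verify the three set-level identifications first, then treat (1)--(5) in order, reserving the bulk of the effort for (5). The identifications are essentially definitional: the two formulae for $\Ker R$ and $\Ima R$ just rewrite the general notions, while the double description of $\Ker R_+$ uses $E(S,+)=E(S,\circ_R)$ from Lemma \ref{wang9}(1). Item (1) is immediate from the Rota--Baxter axiom $s+R(s)^0=s$: every $s\in S$ decomposes as $s=R_+(s)+(-R(s))$, where $R_+(s)\in\Ima R_+$ by definition and $-R(s)\in\Ima R$ by Lemma \ref{cwang}(4); the reverse containment is trivial. Item (2) is the standard fact that the kernel (in the paper's sense) of any Clifford semigroup homomorphism is a normal subsemigroup, applied to the two homomorphisms $R,R_+:(S,\circ_R)\to(S,+)$ supplied by Lemma \ref{wang9}(3) and Lemma \ref{d}.

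For (3), observe that $\Ker R\subseteq\Ima R_+$ because $x\in\Ker R$ gives $R(x)=R(x)^0$ and hence $R_+(x)=x+R(x)^0=x$; and $E(\Ima R_+)\subseteq E(S)\subseteq\Ker R$ handles the idempotent requirement. The key computation is normality: for $y=a+R(a)$ and $n\in\Ker R$, apply Lemma \ref{cwang}(6) with $c=-R(a)+n+a+R(a)$, use $a+R(a)^0=a$, $-a+R(a)^0=-a$, and Lemma \ref{b} (since $R(n)\in E$) to arrive at $R(-y+n+y)=R(n)+R(a)^0\in E(S)$. Item (4) proceeds in parallel under the assumption that $R$ is strong. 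For $\Ker R_+\subseteq\Ima R$: from $x+R(x)\in E$ together with $(x+R(x))^0=x^0$ and strongness $R(x)^0=R(x^0)=x^0$, a short manipulation in the Clifford structure forces $R(x)=-x$, whence $x=-R(x)\in\Ima R$ by Lemma \ref{cwang}(4). For normality, set $z=-R(a)+m+R(a)$ with $m\in\Ker R_+$ (so $R(m)=-m$); two applications of the Rota--Baxter identity give $R(a)+R(z)=R(a+m)$ and $R(m)+R(a)=-m+R(a)=R(a+m)$, which combine to yield $R(z)=-z$, and hence $z+R(z)=z^0\in E$, i.e.\ $z\in\Ker R_+$.

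For (5), the main obstacle, the plan is to realise both quotients as quotients of $(S,\circ_R)$ by one common congruence. Define surjective Clifford semigroup homomorphisms
\begin{align*}
F:(S,\circ_R)&\longrightarrow(\Ima R_+,+)/(\Ker R,+),\quad a\longmapsto R_+(a)+\Ker R,\\
G:(S,\circ_R)&\longrightarrow(\Ima R,+)/(\Ker R_+,+),\quad a\longmapsto R(a)+\Ker R_+.
\end{align*}
By Lemma \ref{quotient}, $F(a)=F(b)$ decomposes into $R_+(a)^0=R_+(b)^0$ and $-R_+(a)+R_+(b)\in\Ker R$; the former reduces to $a^0=b^0$ via Lemma \ref{cwang}(5), and similarly for $G$ using strongness to get $R(a)^0=a^0$. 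The heart of the argument is the identity
\[
R\bigl(-R_+(a)+R_+(b)\bigr)\;=\;R_+\bigl(-R(a)+R(b)\bigr),
\]
established by writing $-R_+(a)+R_+(b)=R_+(a^{-1}\circ_R b)$ (since $R_+$ is a Clifford homomorphism and hence preserves inverses), applying $R\circ R_+=R_+\circ R$ from Lemma \ref{d}, and finally using $R(a^{-1}\circ_R b)=-R(a)+R(b)$ by Lemma \ref{wang9}(3). This identity shows that $F$ and $G$ have identical fibres, so $G(a)\mapsto F(a)$ is a well-defined semigroup isomorphism $(\Ima R,+)/(\Ker R_+,+)\cong(\Ima R_+,+)/(\Ker R,+)$.
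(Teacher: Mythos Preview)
Your plan is correct and follows essentially the same route as the paper's proof: the same decomposition $a=R_+(a)+(-R(a))$ for (1), the kernel-of-homomorphism argument for (2), and above all the same key identity $R(-R_+(a)+R_+(b))=R_+(-R(a)+R(b))$ via $R_+R=RR_+$ for (5), with your ``equal fibres of $F$ and $G$'' formulation being a clean repackaging of the paper's direct map $R_+(a)+\Ker R\mapsto R(a)+\Ker R_+$.

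Two points of comparison are worth recording. In (3) the paper takes a more conceptual detour: it shows that when $R(n)\in E(S)$ one has $b\circ_R n\circ_R b^{-1}=R_+(b)+n-R_+(b)$, so additive conjugation by an element of $\Ima R_+$ coincides with $\circ_R$-conjugation, and then normality is immediate from the homomorphism $R:(S,\circ_R)\to(S,+)$. Your direct computation through Lemma~\ref{cwang}(6) and Lemma~\ref{b} reaches the same idempotent $R(n)+R(a)^0$ (the extra term $R(R(a))^0$ is absorbed by Lemma~\ref{cwang}(5)) but misses this structural link. Conversely, in (4) your normality argument---two symmetric Rota--Baxter expansions yielding $R(a)+R(z)=R(a+m)=-m+R(a)$ and hence $R(z)=-z$---is tidier than the paper's longer direct computation of $R_+(R(b)+a-R(b))$. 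One small omission in your outline of (4): you verify $\Ker R_+\subseteq\Ima R$ and conjugation-invariance, but not that $(\Ker R_+,+)$ is closed under $+$ and $-$; this does not follow from any prior lemma (Lemma~\ref{c} covers only $\Ker R$) and the paper checks it by hand using $R(m)=-m$, which you should add.
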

\begin{proof}
(1)  Since $R$ is a Rota-Baxter operator on $(S,+)$, $\Ima R$ is a Clifford subsemigroup of $(S,+)$ by Lemma \ref{c}. On the other hand, let $a=R_{+}(x),b=R_{+}(y)\in \Ima R_{+}$ where $x,y\in S$. Then
$-a=-R_{+}(x)=R_{+}(x^{-1})\in \Ima R_{+}$ and
$$a+b=R_{+}(x)+R_{+}(y)=R_{+}(x\circ_{R}y)\in \Ima R_{+}$$ by Lemma \ref{d}. This implies that $\Ima R_{+}$ is a Clifford subsemigroup of $(S,+)$.
Obviously, we have $\Ima R_{+}+\Ima R\subseteq S$. Moreover, let $a\in S$. Then
$$a=a+R(a)^{0}=a+R(a)-R(a)=R_{+}(a)+(-R(a))\in \Ima R_{+}+\Ima R.$$
So $S\subseteq \Ima R_{+}+\Ima R$. This implies that $S=\Ima R_{+}+\Ima R$.

(2) By Lemma \ref{wang9}, $R$ is a  Rota--Baxter operator on $(S, \circ_R)$. In view of Lemma \ref{cwang} (1), we have $R(E(S))\subseteq E(S)$, and so $E(S)\subseteq \Ker R$. By Lemmas \ref{wang9} and \ref{c}, $\Ker R$ is a Clifford subsemigroup of $(S, \circ_R)$. Let $a\in S$ and $n\in \Ker R$.
Then $R(n)\in E(S)$ and $$R(a^{-1}\circ_R n\circ_R a)=R(a^{-1})+R(n)+R(a)=R(a^{-1})+R(a)+R(n)$$$$=R(a^{-1}\circ_R a)+R(n)  \subseteq E(S)+R(n)\subseteq E(S)$$ by Lemma \ref{wang9} (3), which gives that $a^{-1}\circ_R n\circ_R a\in \Ker R$. Thus $(\Ker R, \circ_R)$ is a normal subsemigroup  of $(S,\circ_R)$.
On the other hand,
Since $R_+$ is a semigroup homomorphism between the Clifford semigroups $(S, \circ_R)$ and  $(S, +)$, we can show that $\Ker R_+$ is also a   normal subsemigroup  of $(S,\circ_R)$ by similar arguments as above.

(3)  If $a\in \Ker R$, then $R(a)\in E(S,+)$. This implies that $$a=a+R(a)^{0}=a+R(a)=R_{+}(a)\in \Ima R_{+}.$$
So $\Ker R\subseteq \Ima R_{+}$.  By item (1) in the present Theorem, we have $E(S)\subseteq \Ker R$. Since  $R$ is a Rota-Baxter operator on $(S,+)$, $(\Ker R,+)$ is a Clifford subsemigroup of $(S,+)$ by Lemma \ref{c}.
Now let $a\in \Ker R$ and $b\in S$. Then $R(a)\in E(S)$ and
\begin{equation*}
\begin{aligned}
&b\circ_R a\circ_R b^{-1}=(b+R(b)+a-R(b))\circ_R(-R(b)-b+R(b))\\
&=b+R(b)+a-R(b)+R(b\circ_R a)-R(b)-b+R(b)-R(b\circ_R a)\\
&=b+R(b)+a-R(b)+R(b)+R(a)-R(b)-b+R(b)-R(a)-R(b)\\
&=b+R(b)+a-R(b)-b+R(b)-R(b)=R_{+}(b)+a-R_{+}(b).
\end{aligned}
\end{equation*}
This implies that
$$R(R_{+}(b)+a-R_{+}(b)) =R(b\circ_R a\circ_R b^{-1})=R(b)+R(a)+R(b^{-1})
 =R(b)^{0}+R(a)\in E(S),$$
which gives  that $ R_{+}(b)+a-R_{+}(b)\in \Ker R$. Thus $(\Ker R,+)$ is a normal subsemigroup of $(\Ima R_{+},+)$.

(4) Let $a\in \Ker R_{+}$. Then $a+R(a)=R_{+}({a})\in E(S)=E(S,+)$. Since $R$ is a strong Rota-Baxter operator on $(S,+)$, we have $R(a)^{0}=R(a^{0})=a^{0}$ by Lemmas \ref{cwang} and \ref{strong}. As $a+R(a)\in E(S,+)$, we have
$$a+R(a)=(a+R(a))^{0}=a^{0}+R(a)^{0}=a^{0}+a^{0}=a^{0},$$
which implies
$$a=a+R(a)^{0}=a+R(a)-R(a)=a^{0}-R(a)=R(a)^{0}-R(a)=-R(a).$$
Moreover,
\begin{equation*}
\begin{aligned}
-R(a)&=R(-R(a)-a+R(a))=R(-(a+R(a))+R(a))=R(-a^{0}+R(a))\\
&=R(a^{0}+R(a))=R(R(a)^{0}+R(a))=R(R(a))=R(-a).
\end{aligned}
\end{equation*}
This yields that $a=R(-a)\in \Ima R$. By (1) in the present lemma, we have $E(S,\circ_{R})=E(S,+)\subseteq \Ker R_{+}$.

Let $a,b\in \Ker R_{+}$. Then $a+R(a),b+R(b)\in E(S,+)$, and so $a=R(-a)=-R(a)$ and $b=R(-b)=-R(b)$ by previous paragraph. This implies that
$$R_{+}(-a)=-a+R(-a)=-a+a=a^{0}\in E(S,+),$$ whence $-a\in \Ker R_{+}$.
Moreover,
\begin{equation*}
\begin{aligned}
&R_{+}(a+b)=a+b+R(a+b)=-R(a)-R(b)+R(a+b)\\
&=-(R(b)+R(a))+R(a+b)
=-R(b+R(b)+a-R(b))+R(a+b)\\
&=-R(b-b+a+b)+R(a+b)
 =-R(a+b)+R(a+b)\in E(S,+),
\end{aligned}
\end{equation*}
and so $a+b\in \Ker R_{+}$. Let $a\in \Ker R_{+}$ and $R(b)\in \Ima R, b\in S$.    Then
$a=R(-a)$ and
\begin{equation*}
\begin{aligned}
&R_{+}(R(b)+a-R(b))=R(b)+a-R(b)+R(R(b)+a-R(b))\\
&=R(b)+R(-a)+R(-R(b)-b+R(b)+a)\\
&=R(b)+R(-a+a-R(b)-b+R(b)+a-a)\\
&=R(b)+R(a^{0}-R(b)-b+R(b))\\
&=R(b+R(b)+a^{0}-R(b)-b+R(b)-R(b))\\
&=R(a^{0}+b^{0}+R(b)^{0}) =R(a^{0}+b^{0})=a^{0}+b^{0}\in E(S,+).
\end{aligned}
\end{equation*}
This shows  $R(b)+a-R(b)\in \Ker R_{+}$. Thus $(\Ker R_{+},+)$ is a normal subsemigroup of $(S,+)$.

(5) By items (2) and (3), the two quotient semigroups in question are meaningful. Define
$$\varphi: \Ima R_{+}/\Ker R\rightarrow \Ima R/\Ker R_{+},\quad R_{+}(a)+\Ker R\mapsto R(a)+\Ker R_{+}.$$
Let $a,b\in S$. Then
\begin{equation*}
\begin{aligned}
R_{+}(a)^{0}=R_{+}(b)^{0}&\Longleftrightarrow (a+R(a))^{0}=(b+R(b))^{0}
\Longleftrightarrow a^{0}+R(a)^{0}=b^{0}+R(b)^{0} \\
&\Longleftrightarrow a^{0}=b^{0}\Longleftrightarrow R(a^{0})=R(b^{0}) \Longleftrightarrow R(a)^{0}=R(b)^{0}.
\end{aligned}
\end{equation*}
On the other hard, we have $R(a^{-1})=-R(a)$, $R_{+}(a^{-1})=-R_{+}(a)$  and
$$
R(-R_{+}(a)+R_{+}(b))=R(R_{+}(a^{-1})+R_{+}(b))=R(R_{+}(a^{-1}\circ_{R}b))
$$$$=RR_{+}(a^{-1}\circ_{R}b)=R_{+}R(a^{-1}\circ_{R}b)
=R_{+}(R(a^{-1})+R(b))=R_{+}(-R(a)+R(b))
$$
 by Lemmas \ref{wang9} and \ref{d}.
This implies that $-R_{+}(a)+R_{+}(b)\in \Ker R$ if and only if $ -R(a)+R(b)\in \Ker R_{+}$.
Thus
$$ R_{+}(a)+\Ker R=R_{+}(b)+\Ker R\\
\Longleftrightarrow R_{+}(a)^{0}=R_{+}(b)^{0}, -R_{+}(a)+R_{+}(b)\in \Ker R
$$$$\Longleftrightarrow R(a)^{0}=R(b)^{0}, -R(a)+R(b)\in \Ker R_{+}
\Longleftrightarrow R(a)+\Ker R_{+}=R(b)+\Ker R_{+}.$$
This shows that $\varphi$ is an injection. Obviously, $\varphi$ is surjective.
Finally,
$$
\varphi((R_{+}(a)+\Ker R)+(R_{+}(b)+\Ker R))=\varphi((R_{+}(a)+R_{+}(b))+\Ker R)
$$$$=\varphi(R_{+}(a\circ_{R}b)+\Ker R)=R(a\circ_{R}b)+\Ker R_{+}
=(R(a)+R(b))+\Ker R_{+}
$$$$=(R(a)+\Ker R_{+})+(R(b)+\Ker R_{+})
=\varphi(R_{+}(a)+\Ker R)+\varphi(R_{+}(b)+\Ker R).
$$
Thus $\varphi$ is a  semigroup isomorphism.
\end{proof}

Let $(S, +)$ be a Clifford semigroup and $U$ and $V$   two Clifford subsemigroups of $S$.  Then $(U,V)$ is called an {\em exact factorization} of $S$ if any element $a$ in $S$ can be written in a unique way as $a=u_a+v_a$ with $u_a\in U$ and $v_a\in V$. In this case, define
$$R_{U,V}: S\rightarrow S,\,\, a=u_{a}+v_{a}\mapsto-v_{a}.$$ Then by \cite[Example 2]{c1}, $R_{U,V}$ is a Rota-Baxter operator on $(S,+)$.

\begin{prop}Let $(U, V)$ be an  exact factorization of a Clifford semigroup $(S,+)$ and $R=R_{U,V}$.
\begin{itemize}
\item[(1)] $R(a+R(a))\in E(S,+)$.
\item[(2)]$(S,\circ_R)\cong U\times V^{\rm op}$, where $V^{\rm op}$ is the dual semigroup of $V$.
\end{itemize}
\end{prop}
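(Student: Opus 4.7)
My plan is to handle both parts by computing the decomposition of $a+R(a)$ explicitly and then exploiting the uniqueness of the exact factorization. For part~(1), the key observation is that since $V$ is a Clifford subsemigroup of $(S,+)$, the idempotent $v_a^{0}=v_a-v_a$ lies in $V$. Therefore
\[
a+R(a)=u_a+v_a-v_a=u_a+v_a^{0}
\]
is already a decomposition with $U$-part $u_a\in U$ and $V$-part $v_a^{0}\in V$. By the uniqueness clause of the exact factorization, I can read off $u_{a+R(a)}=u_a$ and $v_{a+R(a)}=v_a^{0}$, which instantly gives
\[
R(a+R(a))=-v_{a+R(a)}=-v_a^{0}=v_a^{0}\in E(S,+).
\]

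For part~(2), I will define $\phi\colon (S,\circ_R)\to U\times V^{\rm op}$ by $\phi(a)=(u_a,v_a)$. The uniqueness of the decomposition makes $\phi$ a well-defined bijection with inverse $(u,v)\mapsto u+v$, and it remains to show that $\phi$ transports $\circ_R$ to the product operation of $U\times V^{\rm op}$, namely $(u_1,v_1)\cdot(u_2,v_2)=(u_1+u_2,\,v_2+v_1)$. Unwinding the definition,
\[
a\circ_R b=a+R(a)+b-R(a)=u_a+v_a^{0}+u_b+v_b+v_a.
\]
Since $v_a^{0}\in E(S,+)$ and idempotents are central in a Clifford semigroup, I will slide $v_a^{0}$ past $u_b$ to get $u_a+u_b+v_a^{0}+v_b+v_a$. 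Then, using that $v_a^{0}+v_b=v_b+v_a^{0}$ and $v_a^{0}+v_a=v_a$, the tail simplifies:
\[
v_a^{0}+v_b+v_a=v_b+v_a^{0}+v_a=v_b+v_a.
\]
Hence $a\circ_R b=(u_a+u_b)+(v_b+v_a)$ with $u_a+u_b\in U$ and $v_b+v_a\in V$. Uniqueness of the exact factorization then yields $u_{a\circ_R b}=u_a+u_b$ and $v_{a\circ_R b}=v_b+v_a$, so $\phi(a\circ_R b)=\phi(a)\cdot\phi(b)$ in $U\times V^{\rm op}$, proving that $\phi$ is a semigroup isomorphism.

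I do not expect a serious obstacle. The only subtle point is to keep track of whether it is $u_a^{0}$ or $v_a^{0}$ that appears: in part~(1) it is crucial that $v_a^{0}$ lies in $V$ (which follows simply from $V$ being a Clifford subsemigroup), and we do not need any further information about where $u_a^{0}$ lives. In part~(2), the appearance of $V^{\rm op}$ (rather than $V$) is dictated by the order-reversal in the $V$-component: conjugation by $R(a)=-v_a$ in the formula for $\circ_R$, together with the centrality of idempotents, forces the $V$-part of $a\circ_R b$ to be $v_b+v_a$ rather than $v_a+v_b$.
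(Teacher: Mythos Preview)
Your proposal is correct and follows essentially the same route as the paper's own proof: both define the map $a\mapsto(u_a,v_a)$, compute $a\circ_R b=u_a+v_a^{0}+u_b+v_b+v_a$, use centrality of the idempotent $v_a^{0}$ to rearrange this as $(u_a+u_b)+(v_b+v_a)$, and then invoke uniqueness of the exact factorization. Your write-up simply makes explicit the intermediate steps (in particular, why $v_a^{0}\in V$ and how $v_a^{0}$ slides past $u_b$ and is then absorbed by $v_a$) that the paper compresses into a single line.
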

\begin{proof}Let $a=u_{a}+v_{a}$ and $b=u_{b}+v_{b}\in S$ with $u_a, u_b\in U$ and $v_a,v_b\in V$. Item (1) follows from the equation
$$R(a+R(a))=R(u_{a}+v_{a}-v_{a})=-(v_{a}-v_{a})=v_{a}-v_{a}=v_{a}^{0}\in E(S,+).$$
Since $(U, V)$ is an  exact factorization of $S$, the map
$$\varphi:(S,\circ_R)\rightarrow U\times V^{\rm op},\,\, a=u_{a}+v_{a}\mapsto (u_{a},v_{a})$$
is a bijection.   Moreover,
\begin{equation*}
\begin{aligned}
&\varphi(a\circ_R b)=\varphi(a+R(a)+b-R(a))=\varphi(u_{a}+v_{a}-v_{a}+u_{b}+v_{b}+v_{a})\\
&=\varphi(u_{a}+u_{b}+v_{b}+v_{a})=(u_{a}+u_{b},v_{b}+v_{a})=(u_{a},v_{a})(u_{b},v_{b})=\varphi(a)\varphi(b).
\end{aligned}
\end{equation*}
This shows that $\varphi$ is also a homomorphism.  Thus (2) also holds.
\end{proof}

\begin{prop}
Let $(S, +)$ be a Clifford semigroup, $U,V,T$ be Clifford subsemigroups of $S$ and $L$ is a Rota-Baxter operator on $V$  such that any element $a$ in $S$ can be written in a unique way as $a=u_a+ v_a+ t_a$ with $u_a\in U, v_a\in V, t_a\in T$ and $uv=vu$ and $L(v)t=tL(x)$ for all $u\in U, v\in V$ and $t\in T$. Then $$R:S\rightarrow S:u_{a}+v_{a}+t_{a}\mapsto L(v_{a})-t_{a}$$ is a Rota-Baxter operator on  $S$. If this is the case, we have $(S,\circ_R)\cong U\times V_{L}\times T^{\rm op}$, where $V_L=(V, \circ_L)$ and where $T^{\rm op}$ is the dual semigroup of $T$.
\end{prop}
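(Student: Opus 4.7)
The plan is to verify the two Rota--Baxter axioms for $R$ directly from the decomposition $a = u_a + v_a + t_a$, and then to read off the isomorphism from the same computation. The axiom $a + R(a)^0 = a$ is straightforward: using $(x+y)^0 = x^0 + y^0$, the centrality of idempotents in a Clifford semigroup, and the Rota--Baxter axiom $v_a + L(v_a)^0 = v_a$ for $L$ on $V$, one quickly obtains $a + R(a)^0 = u_a + v_a + L(v_a)^0 + t_a = u_a + v_a + t_a = a$.

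The heart of the proof is the computation of $a + R(a) + b - R(a)$. After expanding and using $-R(a) = t_a - L(v_a)$, I would reorganize via three tools: the commutativity $u + v = v + u$ for $u \in U$, $v \in V$; the commutativity $L(v) + t = t + L(v)$ for $v \in V$, $t \in T$; and the centrality of all idempotents in $(S,+)$. First, $t_a + L(v_a) - t_a = L(v_a) + t_a^0$, and the stray $t_a^0$ can be absorbed against $t_a$ further to the right. Next, $-L(v_a)$ may be moved past $t_b$ and $t_a$, and $u_b$ moved leftward past $v_a$ and $L(v_a)$. This yields the clean expression
\begin{equation*}
a + R(a) + b - R(a) = (u_a + u_b) + (v_a + L(v_a) + v_b - L(v_a)) + (t_b + t_a),
\end{equation*}
whose three summands lie in $U$, $V$, $T$ respectively. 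By uniqueness of the decomposition, these are exactly the $U$-, $V$- and $T$-components of $a \circ_R b$. Applying $R$ and using the Rota--Baxter axiom for $L$ on $V$ to simplify $L(v_a + L(v_a) + v_b - L(v_a)) = L(v_a) + L(v_b)$ gives $R(a \circ_R b) = L(v_a) + L(v_b) - t_a - t_b$, which is readily seen to equal $R(a) + R(b)$ after commuting $L(v_b)$ past $-t_a$.

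For the isomorphism, I define $\varphi: (S, \circ_R) \to U \times V_L \times T^{\rm op}$ by $\varphi(a) = (u_a, v_a, t_a)$; bijectivity is immediate from the uniqueness of the decomposition. The middle component $v_a + L(v_a) + v_b - L(v_a)$ of $a \circ_R b$ is precisely $v_a \circ_L v_b$, and the third component $t_b + t_a$ is the product of $t_a$ and $t_b$ in $T^{\rm op}$, so the computation above directly shows $\varphi(a \circ_R b) = \varphi(a) \varphi(b)$ and $\varphi$ is the required isomorphism.

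The main obstacle I anticipate is the careful handling of commutativities involving inverses: the hypotheses are phrased for the elements themselves, but $\circ_R$ involves $-R(a) = t_a - L(v_a)$, so I need to justify that $-L(v)$ commutes with $t$ (and with $-t$) whenever $L(v)$ commutes with $t$. Using the strong semilattice decomposition $S = (Y, G_\alpha, \varphi_{\alpha,\beta})$, two commuting elements of $S$ land in a common component group after applying the connecting homomorphisms, and commutativity in a group is preserved under inversion; this transfers the commutativity to all the inverses that appear in the rearrangement.
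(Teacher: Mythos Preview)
Your proposal is correct and follows essentially the same route as the paper: both arguments hinge on rearranging $a+R(a)+b-R(a)$ into the form $(u_a+u_b)+(v_a+L(v_a)+v_b-L(v_a))+(t_b+t_a)$ via the given commutativities, then reading off both the Rota--Baxter identity and the isomorphism from this single decomposition. Your explicit concern about commutativity with inverses (and its resolution via the strong semilattice structure) is a point the paper uses tacitly without comment, so if anything you are being more careful.
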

\begin{proof} By hypothesis,  $R$ is well defined. Let $a=u_{a}+v_{a}+t_{a},\, b=u_{b}+v_{b}+t_{b}\in S$. Then
\begin{equation*}
\begin{aligned}
&R(a)+R(b)=R(u_{a}+v_{a}+t_{a})+R(u_{b}+v_{b}+t_{b})=L(v_{a})-t_{a}+L({v_{b}})-t_{b}\\
&=L(v_{a})+L(v_{b})-t_{a}-t_{b}=L(v_{a}+L(v_{a})+v_{b}-L(v_{a}))-(t_{b}+t_{a})\\
&=R(u_{a}+u_{b}+v_{a}+L(v_{a})+v_{b}-L(v_{a})+t_{b}+t_{a}),
\end{aligned}
\end{equation*}
\begin{equation*}
\begin{aligned}
&R(a+R(a)+b-R(a))=R(u_{a}+v_{a}+t_{a}+L(v_{a})-t_{a}+u_{b}+v_{b}+t_{b}+t_{a}-L(v_{a}))\\
&=R(u_{a}+v_{a}+L(v_{a})+t_{a}-t_{a}+u_{b}+v_{b}-L(v_{a})+t_{b}+t_{a})\\
&=R(u_{a}+v_{a}+L(v_{a})+u_{b}+v_{b}-L(v_{a})+t_{b}+t_{a})\\
&=R(u_{a}+u_{b}+v_{a}+L(v_{a})+v_{b}-L(v_{a})+t_{b}+t_{a})=R(a)+R(b),
\end{aligned}
\end{equation*}
\begin{equation*}
\begin{aligned}
a+R(a)^{0}&=a+R(a)-R(a)=u_{a}+v_{a}+t_{a}+L(v_{a})-t_{a}+t_{a}-L(v_{a})\\
&=u_{a}+v_{a}+t_{a}+L(v_{a})-L(v_{a})=u_{a}+v_{a}+t_{a}=a.
\end{aligned}
\end{equation*}
This shows that $R$ is a Rota-Baxter operator on  $S$. Define
$$\varphi:(S,\circ_R)\rightarrow U\times V_{L}\times T^{\rm op}, a=u_{a}+v_{a}+t_{a}\mapsto (u_{a},v_{a},t_{a}).$$
Then $\varphi$ is well-defined by hypothesis. Moreover,
\begin{equation*}
\begin{aligned}
&\varphi(a\circ_R b)=\varphi(a+R(a)+b-R(a))\\
&=\varphi(u_{a}+v_{a}+t_{a}+L(v_{a})-t_{a}+u_{b}+v_{b}+t_{b}+t_{a}-L(v_{a}))\\
&=\varphi(u_{a}+v_{a}+L(v_{a})+t_{a}-t_{a}+u_{b}+v_{b}-L(v_{a})+t_{b}+t_{a})\\
&=\varphi(u_{a}+v_{a}+L(v_{a})+u_{b}+v_{b}-L(v_{a})+t_{b}+t_{a})\\
&=\varphi(u_{a}+u_{b}+v_{a}+L(v_{a})+v_{b}-L(v_{a})+t_{b}+t_{a})\\
&=(u_{a}+u_{b},v_{a}+L(v_{a})+v_{b}-L(v_{a}),t_{b}+t_{a})\\
&=(u_{a}+u_{b},v_{a}\circ_{L}v_{b},t_{b}+t_{a})=(u_{a},v_{a},t_{a})(u_{b},v_{b},t_{b})=\varphi(a)\varphi(b).
\end{aligned}
\end{equation*}
This gives that $\varphi$ is an isomorphism.
\end{proof}

\begin{prop}\label{wang1}Let $R$ be a Rota--Baxter operator on a commutative Clifford semigroup $(S, +)$. Then $R$ is also an endomorphism on $(S,+)$.
\end{prop}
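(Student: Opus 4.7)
The plan is to simply rewrite the defining identity
\[
R(a)+R(b)=R(a+R(a)+b-R(a))
\]
using commutativity of $+$ until the argument of $R$ on the right collapses to $a+b$. First I would swap $b$ and $R(a)$ in $a+R(a)+b-R(a)$ to produce $a+b+R(a)-R(a)$, and then combine $R(a)-R(a)=R(a)^{0}$ so that the argument becomes $a+b+R(a)^{0}$. At this point the second Rota--Baxter axiom $a+R(a)^{0}=a$ will finish things off, but I need to pull $R(a)^{0}$ next to $a$; in a commutative Clifford semigroup this is immediate (even without commutativity, idempotents are central by Lemma~\ref{Clifford}). Thus $a+b+R(a)^{0}=a+R(a)^{0}+b=a+b$, giving $R(a)+R(b)=R(a+b)$ as desired.

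The whole proof is a single short chain of equalities and only invokes the two defining axioms of a Rota--Baxter operator together with commutativity of $+$; no results from Lemma~\ref{cwang} or later are needed. There is no real obstacle to overcome — the statement is essentially immediate once one notices that in the commutative setting the ``twist'' $+R(a)\cdots-R(a)$ in the Rota--Baxter identity collapses to an idempotent that is absorbed by $a$ via the second axiom.
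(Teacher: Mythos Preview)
Your proof is correct and essentially identical to the paper's: both rewrite $a+R(a)+b-R(a)$ using commutativity so that $R(a)-R(a)=R(a)^0$ sits next to $a$ and is absorbed by the second Rota--Baxter axiom. The paper swaps $b$ with $-R(a)$ (obtaining $a+R(a)-R(a)+b$ directly) rather than $b$ with $R(a)$, saving your final repositioning of $R(a)^0$, but this is a trivial variation.
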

\begin{proof}Let $a,b\in S$. Then
$$R(a)+R(b)=R(a+R(a)+b-R(a))=R(a+R(a)-R(a)+b)=R(a+b).$$
This shows that $R$ is also an endomorphism on $(S,+)$.
\end{proof}
\begin{remark}
The converse of Proposition \ref{wang1} is not true. Let  $(S, +)$ be a commutative Clifford semigroup,  where $S=\{e,f\}$ and $e+f=f+e=e$ and $ff=f$. Define $R(x)=e$ for all $x\in S$. Then it is easy to see that $R$ is an endomorphism on $(S,+)$. However, $R$ is not a  Rota--Baxter operator as $f+R(f)^{0}=f+e=e\not=f$.
\end{remark}

\section{Post Clifford semigroups and dual weak left braces}
In this section, we introduce post Clifford semigroups and show that the category  of strong post Clifford semigroups  is isomorphic to the category of dual weak left braces. Let $(T,+)$ be an Clifford semigroup and  $a,b\in T$. Denote $H_{a}=\{x\in T\mid x^{0}=a^{0}\}$. Then it is easy to see that $(H_{a},+)$ is a subgroup of $(T,+)$ and $a^{0}$ is the identity in $(H_{a},+)$ by Lemma \ref{jichu}. Obviously, $H_{a}=H_{b}$ if and only if $a^{0}=b^{0}$.

\begin{defn}
Let $(T,+)$ be a Clifford semigroup and $\rhd$ be a binary operation on $T$. For each $a\in T$, define $L_{a}^{\rhd}: T\rightarrow T,b\mapsto a\rhd b$. Then $(T,+,\rhd)$ is called a {\em post Clifford  semigroup} if the following conditions holds:  For all $a,b,c\in T$,
\begin{itemize}
\item[(P1)] $L_{a}^{\rhd}\in {\rm End}(T,+)$, that is, $a\rhd (b+c)=(a\rhd b)+(a\rhd c)$.
\item[(P2)] $(a+(a\rhd b))\rhd c=a\rhd(b\rhd c)$.
\item[(P3)] $a+(a\rhd b^{0})=b^{0}+(b^{0}\rhd a)$.
\item[(P4)] $L_{a}^{\rhd}(H_{a})\subseteq H_{a}$ and
$L_{a}^{\rhd}|_{H_{a}}: H_{a}\rightarrow H_{a},b\mapsto L_{a}^{\rhd}(b)=a\rhd b$
is an automorphism of $(H_{a},+)$.
\end{itemize}
A post Clifford semigroup $(T,+,\rhd)$ is called {\em strong} if
\begin{itemize}
\item[(P5)]$a^{0}+(a\rhd b)=a\rhd b$ for all $a,b\in T$.
\end{itemize}
\end{defn}
\begin{exam}Let $(T,+)$ be an idempotent commutative semigroup and let $a\rhd b= a+b$ for all $a,b\in T$.  Then $a^0=a$ for all $a\in S$. It is routine to check that
$(T, +, \rhd)$ forms a strong post Clifford  semigroup.
\end{exam}
\begin{exam}Let $(T,+)$ be a group with identity $e$ and $\rhd$ be a binary operation on $T$. Recall from \cite{Bai} that $(T, +, \rhd)$ is called a {\em post-group} if (P2) holds and
$L_{a}^{\rhd}: T\rightarrow T, b\mapsto a\rhd b$ is an automorphism of $(T,+)$ for all $a\in T$.
Observe that $(T,+)$ is a  Clifford semigroup, $a^0=e$ and $H_a=T$ for all $a\in T$ in this case. This implies that (P1) and (P4) hold. Moreover, for all $a,b\in T,$ by \cite[Lemma 2.2]{Bai} we have $a\rhd e=e$ and $e\rhd a=a$, and so $$a+(a\rhd  b^0)=a+(a\rhd e)=a+e=e+a=e+(e\rhd a)=b^0+(b^0\rhd a).$$
This gives (P3). Thus post-groups are all post Clifford  semigroups.
\end{exam}

\begin{lemma}\label{q2}
Let $(T,+,\rhd)$ be a post Clifford semigroup and $a\in T$. Then $$L_{a}^{\rhd}(a^{0})=a\rhd a^{0}=a^{0}=a^{0}\rhd a^{0}=L_{a^{0}}^{\rhd}(a^{0}),\, L_{a^{0}}^{\rhd}(a)=a^{0}\rhd a=a.$$
\end{lemma}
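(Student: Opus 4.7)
The plan is to treat the two chains separately, using (P4) for the identities involving $a^{0}$ on the right and (P3) combined with (P4) for the identity with $a$ on the right. The identities $L_a^{\rhd}(a^0)=a\rhd a^0$ and $a^0\rhd a^0=L_{a^0}^{\rhd}(a^0)$ are pure unfolding of the definition of $L_x^{\rhd}$, so the substantive claims to establish are $a\rhd a^{0}=a^{0}$, $a^{0}\rhd a^{0}=a^{0}$, and $a^{0}\rhd a=a$.

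First I would record the relevant structural facts about the subgroups $H_x$. Since $(a^{0})^{0}=a^{0}$ by Lemma~\ref{jichu}, we have $a^{0}\in H_{a}$, and $H_{a^{0}}=H_{a}$; in each case $a^{0}$ is the group identity. Then (P4) applied to $a$ tells us that $L_{a}^{\rhd}|_{H_{a}}$ is a group automorphism of $(H_{a},+)$, so it fixes the identity: $a\rhd a^{0}=L_{a}^{\rhd}(a^{0})=a^{0}$. The same argument applied with $a^{0}$ in place of $a$ gives $a^{0}\rhd a^{0}=a^{0}$. This settles the first chain.

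For the remaining identity $a^{0}\rhd a=a$, I would substitute $b=a$ into (P3) to obtain $a+(a\rhd a^{0})=a^{0}+(a^{0}\rhd a)$. Using the first chain we just proved and $a+a^{0}=a$ from Lemma~\ref{jichu}, the left-hand side collapses to $a$, giving $a^{0}+(a^{0}\rhd a)=a$. To finish, I would note that $a\in H_{a}=H_{a^{0}}$ and that (P4) forces $L_{a^{0}}^{\rhd}(H_{a^{0}})\subseteq H_{a^{0}}$, so $(a^{0}\rhd a)^{0}=a^{0}$. Then Lemma~\ref{jichu} yields $a^{0}+(a^{0}\rhd a)=(a^{0}\rhd a)^{0}+(a^{0}\rhd a)=a^{0}\rhd a$, and comparing with the previous display we conclude $a^{0}\rhd a=a$.

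The argument has no real obstacle; the only point worth care is recognising that (P4) is doing double duty: it gives the automorphism property that forces identities to identities (used in the first chain) and it also provides the invariance of $H_{a}$ under $L_{a^{0}}^{\rhd}$ that is needed to extract the idempotent of $a^{0}\rhd a$ in the last step.
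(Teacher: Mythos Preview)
Your proof is correct. The first chain is handled exactly as in the paper: (P4) makes $L_a^{\rhd}|_{H_a}$ a group automorphism, hence it fixes the identity $a^0$, and the same with $a^0$ in place of $a$.

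For $a^0\rhd a=a$ you take a different route from the paper. The paper uses (P2): from $a^0+a^0=a^0$ and $a^0\rhd a^0=a^0$ one gets
\[
L_{a^0}^{\rhd}(a)=(a^0+a^0)\rhd a=(a^0+(a^0\rhd a^0))\rhd a=a^0\rhd(a^0\rhd a)=L_{a^0}^{\rhd}\bigl(L_{a^0}^{\rhd}(a)\bigr),
\]
and then appeals to the injectivity of $L_{a^0}^{\rhd}|_{H_a}$ (again (P4)) to cancel and obtain $a=a^0\rhd a$. Your argument instead invokes (P3) with $b=a$ to reach $a^0+(a^0\rhd a)=a$, and then uses (P4) only for the containment $L_{a^0}^{\rhd}(H_a)\subseteq H_a$ to absorb the idempotent. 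Both are short and clean; the paper's version exercises (P2) and the bijectivity half of (P4), while yours exercises (P3) and only the invariance half of (P4), so together they illustrate that either of (P2) or (P3) already suffices for this particular identity once the first chain is in hand.
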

\begin{proof}
By (P4), $L_{a}^{\rhd}|_{H_{a}}\in {\rm Aut}(H_{a},+)$. Since $a,a^{0}\in H_{a}$ and $a^{0}$ is the identity in the group $(H_{a},+)$, we have
$a\rhd a^{0}=L_{a}^{\rhd}(a^{0})=L_{a}^{\rhd}|_{H_{a}}(a^{0})=a^{0}.$ Replacing $a$ by $a^{0}$ in the above equation, we have
$a^{0}\rhd a^{0}=a^{0}\rhd a^{00}=a^{00}=a^{0}.$ On the other hand,
\begin{equation} \label{q1}
L_{a^{0}}^{\rhd}(a)=a^{0}\rhd a=(a^{0}+a^{0})\rhd a=(a^{0}+(a^{0}\rhd a^{0}))\rhd a
=a^{0}\rhd (a^{0}\rhd a)=L_{a^{0}}^{\rhd}(a^{0}\rhd a)
\end{equation}
by (P2). Since $a,a^{0}\in {H}_a$, we get $a^{0}\rhd a=L_{a^{0}}^{\rhd}(a)\in H_{a}$ by (P4). Observe that $L_{a^{0}}^{\rhd}|_{H_{a}}\in {\rm Aut}(H_{a},+)$ by (P4) again, it follows that $a=a^{0}\rhd a$ by (\ref{q1}).
\end{proof}
A map $\psi$ from a post Clifford semigroup $(T,+_{T},\rhd_{T})$ to a  post Clifford semigroup $(S,+_{S},\rhd_{S})$ is called  {\em  post Clifford semigroup homomorphism} if
$$\psi(a+_{T} b)=\psi(a)+_{S}\psi(b) \mbox{ and } \psi(a\rhd_{T} b)=\psi(a)\rhd_{S}\psi(b) \mbox{ for all } a,b\in T.$$

\begin{theorem}\label{q3}
Let $(T,+,\rhd)$ be a post Clifford semigroup. Define a binary operation $``\circ"$ on $T$ as follows: For all $a,b\in T$,
$$a\circ b=a+(a\rhd b)=a+L_{a}^{\rhd}(b).$$
\begin{itemize}
\item[(1)] $(T,\circ)$ forms a Clifford semigroup, and for all $a\in T$, the inverse of $a$ in $(T,\circ)$ is $a^{-1}=(L_{a}^{\rhd}|_{H_{a}})^{-1}(-a)$ and $-a+a=a^{0}=a\circ a^{-1}$. Moreover, $E(T,+)=E(T,\circ)$.
\item[(2)] $L^{\rhd}: (T,\circ)\rightarrow {\rm End}(T,+), a\mapsto L_{a}^{\rhd}$ is a semigroup homomorphism.
\item[(3)] If $\psi:(T,+,\rhd)\rightarrow (S,+,\rhd)$ is a post Clifford semigroup homomorphism, then $\psi$ is a semigroup homomorphism  from $(T,\circ)$ to $(S,\circ)$.
\end{itemize}
\end{theorem}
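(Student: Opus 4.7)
The plan is to verify $(T,\circ)$ is a Clifford semigroup by applying Lemma \ref{Clifford}(3), that is, by checking regularity and that the idempotents are central. For associativity of $\circ$, I would unwind $(a\circ b)\circ c$ using (P2) and $a\circ(b\circ c)$ using (P1); both reduce to $a+(a\rhd b)+a\rhd(b\rhd c)$, so $\circ$ is associative.

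For regularity, set $a^{-1}:=(L_a^{\rhd}|_{H_a})^{-1}(-a)$; this lies in $H_a$ because $-a\in H_a$ and $L_a^{\rhd}|_{H_a}$ is a group automorphism by (P4). A direct computation gives $a\circ a^{-1}=a+L_a^{\rhd}(a^{-1})=a+(-a)=a^0$, while Lemma \ref{q2} combined with $a^{-1}\in H_a$ yields $a\circ a^0=a^0\circ a=a$ and $a^{-1}\circ a^0=a^{-1}$. Together these give $a\circ a^{-1}\circ a=a^0\circ a=a$ and $a^{-1}\circ a\circ a^{-1}=a^{-1}\circ a^0=a^{-1}$, which establishes regularity.

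The main obstacle is showing $E(T,\circ)=E(T,+)$ and proving centrality of these idempotents in $(T,\circ)$. For $E(T,\circ)\subseteq E(T,+)$, suppose $e\circ e=e$, that is, $e+e\rhd e=e$; adding $-e$ on the left gives $e^0+e\rhd e=e^0$. Since $e\rhd e\in H_e$ by (P4) and $e^0$ is the identity of this group, this forces $e\rhd e=e^0$. But $L_e^{\rhd}|_{H_e}$ is an automorphism and sends $e^0$ to $e^0$, so injectivity gives $e=e^0\in E(T,+)$. The reverse inclusion follows immediately from Lemma \ref{q2}, since when $e^0=e$ one has $e\rhd e=e\rhd e^0=e^0=e$ and hence $e\circ e=e+e=e$. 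For centrality, (P3) applied with the $b$ of (P3) taken to be an idempotent $e\in E(T,+)$ (so $e^0=e$) gives $a\circ e=a+a\rhd e=e+e\rhd a=e\circ a$. Invoking Lemma \ref{Clifford} concludes that $(T,\circ)$ is a Clifford semigroup, which automatically upgrades $a\circ a^{-1}=a^0$ to $a\circ a^{-1}=a^{-1}\circ a=a^0$; and $-a+a=a^0$ is part of the Clifford structure of $(T,+)$.

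Part (2) is then immediate: by (P2), $L_{a\circ b}^{\rhd}(c)=(a+a\rhd b)\rhd c=a\rhd(b\rhd c)=L_a^{\rhd}(L_b^{\rhd}(c))$, and each $L_a^{\rhd}\in\End(T,+)$ by (P1). Part (3) is a direct computation using that $\psi$ respects both operations: $\psi(a\circ_T b)=\psi(a+_T(a\rhd_T b))=\psi(a)+_S(\psi(a)\rhd_S\psi(b))=\psi(a)\circ_S\psi(b)$.
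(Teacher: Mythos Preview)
Your proof is correct and follows essentially the same approach as the paper's own proof: associativity via (P1)--(P2), regularity via the explicit inverse $a^{-1}=(L_a^{\rhd}|_{H_a})^{-1}(-a)$ together with Lemma \ref{q2}, the equality $E(T,\circ)=E(T,+)$ by exploiting that $L_a^{\rhd}|_{H_a}$ is a group automorphism, and centrality of idempotents from (P3), concluding via Lemma \ref{Clifford}(3). Parts (2) and (3) are handled identically.
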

\begin{proof}We first observe that $x^0=-x+x=x-x$ for all $x\in T$. Let $a,b,c\in T$.

(1)  By (P1) and (P2), we have
\begin{equation*}
\begin{aligned}
&(a\circ b)\circ c=(a\circ b)+((a\circ b)\rhd c)=a+(a\rhd b)+((a+(a\rhd b))\rhd c)\overset{\rm (P2)}{=}\\
&a+(a\rhd b)+(a\rhd(b\rhd c))\overset{\rm(P1)}{=}a+(a\rhd (b+(b\rhd c)))=a\circ (b+(b\rhd c))=a\circ (b\circ c).
\end{aligned}
\end{equation*}
Since $-a\in H_{a}$, we denote $a^{-1}=(L_{a}^{\rhd}|_{H_{a}})^{-1}(-a)$ by (P4). Then $a^{-1}\in H_{a}, (a^{-1})^{0}=a^{0}$ and
$$a\circ a^{-1}=a+(a\rhd a^{-1})=a+L_{a}^{\rhd}(a^{-1})=a+L_{a}^{\rhd}((L_{a}^{\rhd}|_{H_{a}})^{-1}(-a))=a-a=a^{0}.$$ This together with
Lemma \ref{q2} implies that
$$(a\circ a^{-1})\circ a=a^{0}\circ a=a^{0}+(a^{0}\rhd a)=a^{0}+a=a,$$
$$
a^{-1}\circ a\circ a^{-1}=a^{-1}\circ a^{0}=a^{-1}+(a^{-1}\rhd a^{0})
=a^{-1}+(a^{-1}\rhd (a^{-1})^{0})=a^{-1}+(a^{-1})^{0}
=a^{-1}.
$$
Let $a\in E(T,\circ)$. Then $a=a\circ a=a+(a\rhd a)=a+L_{a}^{\rhd}(a)$ and $L_{a}^{\rhd}(a)\in H_{a}$ by (P4), and so
$$a^{0}=-a+a=-a+a+L_{a}^{\rhd}(a)=a^{0}+L_{a}^{\rhd}(a)=L_{a}^{\rhd}(a)$$ by Lemma \ref{q2}. Since
$a\in H_{a}$ and $L_{a}^{\rhd}|_{H_{a}}\in {\rm Aut}(H_{a},+)$, we have $a=a^{0}\in E(T,+)$.  Conversely,  let $e\in E(T,+)$. Then $e=e^{0}$, and so
$$e\circ e=e+(e\rhd e)=e+(e\rhd e^{0})=e+e^{0}=e+e=e\in E(T,\circ)$$ by Lemma \ref{q2}. Thus $E(T,+)=E(T,\circ)$.
Now, let $e\in E(T,\circ)=E(T,+)$. Then $e=e^{0}$.  By (P3),
$$a\circ e=a+(a\rhd e)=a+(a\rhd e^{0})=e^{0}+(e^{0}\rhd a)=e^{0}\circ a=e\circ a.$$
This implies that $(T,\circ)$ is a Clifford semigroup and $a^{-1}=(L_{a}^{\rhd}|_{H_{a}})^{-1}(-a)$  for all $a\in T$.

(2) Observe that $$L_{a\circ b}^{\rhd}(c)=(a+(a\rhd b))\rhd c=a\rhd (b\rhd c)=L_{a}^{\rhd}(L_{b}^{\rhd}(c))=(L_{a}^{\rhd}L_{b}^{\rhd})(c),$$
it follows that $L^{\rhd}(a\circ b)=L_{a\circ b}^{\rhd}=L_{a}^{\rhd}L_{b}^{\rhd}=L^{\rhd}(a)L^{\rhd}(b)$. This gives that $L^{\rhd}$ is a semigroup homomorphism.
\item[(3)] Since $$\psi(a\circ b)=\psi(a+(a\rhd b))=\psi(a)+\psi(a\rhd b)=\psi(a)+(\psi(a)\rhd \psi(b))=\psi(a)\circ \psi(b),$$  the desired result holds.
\end{proof}
\begin{defn}
Let $(T,+,\rhd)$ be a post Clifford semigroup. Then the Clifford semigroup $(T,\circ)$ obtained in Theorem \ref{q3} is called the {\em  sub-adjacent Clifford semigroup} of $(T,+,\rhd)$.
\end{defn}
In the sequel, we shall consider the connection between  post Clifford semigroups and dual weak left braces. Recall that a map $\psi$ from a dual weak left  brace $(T,+_{T},\circ_{T})$ to a dual weak  left brace$(S,+_{S},\circ_{S})$ is called a {\em   dual weak  left brace homomorphism} if
$$\psi(a+_{T} b)=\psi(a)+_{S}\psi(b) \mbox{ and } \psi(a\circ_{T} b)=\psi(a)\circ_{S}\psi(b) \mbox{ for all } a,b\in T.$$
\begin{lemma}\label{q4}
Let $(T,+,\rhd)$ be a post Clifford semigroup with the sub-adjacent Clifford semigroup $(T,\circ)$. Then $(T,+,\circ)$ is a dual weak left brace. If $\psi: (T,+,\rhd) \rightarrow (S,+,\rhd)$ is a post Clifford semigroup homomorphism, then $\psi$ is a dual weak left brace homomorphism from $(T,+,\circ)$ to $(S,+,\circ)$.
\end{lemma}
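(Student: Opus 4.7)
The plan is to verify the two defining axioms of a dual weak left brace for $(T, +, \circ)$, namely
\begin{equation*}
x\circ(y+z)=x\circ y-x+x\circ z \quad \text{and}\quad x\circ x^{-1}=-x+x,
\end{equation*}
since the Clifford-semigroup structure on $(T,+)$ is assumed and the Clifford-semigroup structure on $(T,\circ)$, along with the formula $x^{-1}=(L_x^{\rhd}|_{H_x})^{-1}(-x)$ and the identity $x\circ x^{-1}=x^0$, has already been supplied by Theorem \ref{q3}(1). In particular $x\circ x^{-1}=x^0=-x+x$, so the second axiom is immediate.

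For the distributive law, I would start by expanding the left-hand side using (P1): $x\circ(y+z)=x+(x\rhd(y+z))=x+(x\rhd y)+(x\rhd z)$. Then I would rewrite the right-hand side by unfolding the definition of $\circ$ as $x\circ y-x+x\circ z=x+(x\rhd y)-x+x+(x\rhd z)=x+(x\rhd y)+x^0+(x\rhd z)$. The step that converts this to the left-hand side is the Clifford identity $E(T,+)\subseteq C(T,+)$ from Lemma \ref{Clifford}: since $x^0\in E(T,+)$ is central, it commutes with $(x\rhd y)$, yielding $x+x^0+(x\rhd y)+(x\rhd z)$; together with $x+x^0=x$ from Lemma \ref{jichu}, this collapses to $x+(x\rhd y)+(x\rhd z)$, matching the left-hand side.

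For the homomorphism part, I would simply appeal to Theorem \ref{q3}(3), which already shows that a post Clifford semigroup homomorphism $\psi$ satisfies $\psi(a\circ b)=\psi(a)\circ\psi(b)$; combined with the hypothesis $\psi(a+b)=\psi(a)+\psi(b)$, this is exactly the definition of a dual weak left brace homomorphism.

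The only place requiring any care is the distributive computation, where one must resist the temptation to invoke the strong axiom (P5); the identity $x^0+(x\rhd z)=x\rhd z$ is \emph{not} needed, because in the full expression $x+(x\rhd y)+x^0+(x\rhd z)$ the idempotent $x^0$ can be slid past $(x\rhd y)$ (central idempotents) and absorbed into $x$ via $x+x^0=x$, without ever moving through $(x\rhd z)$. This is the main subtlety, and it is what makes the lemma hold for arbitrary (not necessarily strong) post Clifford semigroups.
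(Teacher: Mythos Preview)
Your proposal is correct and follows essentially the same route as the paper's proof: the paper also invokes Theorem~\ref{q3}(1) for $a\circ a^{-1}=-a+a$, and for the distributive law it inserts $a^0=-a+a$ and commutes this central idempotent past $a\rhd b$ (going from LHS to RHS rather than RHS to LHS as you do). The only cosmetic difference is that the paper re-derives the $\psi(a\circ b)=\psi(a)\circ\psi(b)$ computation inline instead of citing Theorem~\ref{q3}(3); your observation that (P5) is unnecessary here is exactly the point, and the paper's manipulation confirms it.
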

\begin{proof}
Let $a,b,c\in T$. Then
$$
a\circ(b+c)=a+(a\rhd(b+c))\overset{\rm(P1)}=a+(a\rhd b)+(a\rhd c)
=a-a+a+(a\rhd b)+(a\rhd c)$$$$=a+(a\rhd b)-a+a+(a\rhd c)
=(a\circ b)-a+(a\circ c).$$
By Theorem \ref{q3} (1), we have $a\circ a^{-1}=-a+a$. Moreover, if $\psi: (T,+,\rhd) \rightarrow (S,+,\rhd)$ is a post Clifford semigroup homomorphism,
$$\psi(a\circ b)=\psi(a+(a\rhd b))=\psi(a)+\psi(a\rhd b)=\psi(a)+(\psi(a)\rhd \psi(b))=\psi(a)\circ \psi(b).$$
This gives that $\psi$ is a dual weak left brace homomorphism.
\end{proof}
\begin{theorem}\label{postjie}Let $(T,+,\rhd)$ be a post Clifford semigroup with sub-adjacent Clifford semigroup $(T,\circ)$. Then  the map $r: T\times T\rightarrow T\times T$ given  by
$$r(a,b)=(-a+a+(a\rhd  b),\,\, (a\rhd  b)^{-1}\circ  a\circ b)=(a^0+(a\rhd  b),\,\, (a\rhd  b)^{-1}\circ  a\circ b)
$$$$=(-a+a+(a\rhd  b),\,\, (L^\rhd_{a\rhd b}|_{H_{a\rhd b}})^{-1}(-(a\rhd b))+((L^\rhd_{a\rhd b}|_{H_{a\rhd b}})^{-1}(-(a\rhd b))\rhd (a+(a\rhd b))))$$
$\mbox{ for all } a,b\in T$, is a solution of the Yang-Baxter equation.
\end{theorem}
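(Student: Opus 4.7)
The plan is that the hard work has already been done. By Lemma \ref{q4}, the post Clifford semigroup $(T,+,\rhd)$ promotes to a dual weak left brace $(T,+,\circ)$ via the sub-adjacent operation $a\circ b=a+(a\rhd b)$; and by Lemma \ref{jie}, any dual weak left brace yields a set-theoretic solution of the Yang--Baxter equation, namely
\[
r'(a,b)=\bigl(-a+(a\circ b),\, (-a+(a\circ b))^{-1}\circ a\circ b\bigr).
\]
So what remains is purely a matching exercise: I have to verify that the three coordinate descriptions displayed in the theorem statement are each equivalent reformulations of $r'(a,b)$.

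For the first coordinate, $-a+(a\circ b)=-a+a+(a\rhd b)=a^{0}+(a\rhd b)$, which is already one of the two printed expressions in the first slot. For the second coordinate, I would proceed in two steps. First, since $a^{0}\in E(T,+)=E(T,\circ)$, Lemma \ref{wang5} applied to $(T,+,\circ)$ gives $a^{0}+(a\rhd b)=a^{0}\circ(a\rhd b)$; taking the $\circ$-inverse reverses the order of the factors, and $(a^{0})^{-1}=a^{0}$ since idempotents are self-inverse in any Clifford semigroup, so
\[
(-a+(a\circ b))^{-1}\circ a\circ b=(a\rhd b)^{-1}\circ a^{0}\circ a\circ b.
\]
By Lemma \ref{q2}, $a^{0}\rhd a=a$, whence $a^{0}\circ a=a^{0}+(a^{0}\rhd a)=a^{0}+a=a$; this collapses the expression to $(a\rhd b)^{-1}\circ a\circ b$, which is the middle form printed in the theorem. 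Second, I expand $(a\rhd b)^{-1}$ via Theorem \ref{q3}(1) as $(L^{\rhd}_{a\rhd b}|_{H_{a\rhd b}})^{-1}(-(a\rhd b))$ and then write the outer $\circ$-product using $x\circ y=x+(x\rhd y)$ with $y=a+(a\rhd b)$; this produces the third (fully unpacked) expression.

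No serious obstacle is anticipated; the only care-point is to keep the two inverse operations separated, namely $x^{-1}$ for the $\circ$-inverse supplied by Theorem \ref{q3}(1) versus $-x$ for the $+$-inverse, and to use Lemma \ref{wang5} judiciously to slide the idempotent $a^{0}$ between the operations $+$ and $\circ$. Once all three printed forms have been identified with $r'(a,b)$, the Yang--Baxter identity for $r$ follows immediately from Lemma \ref{jie}.
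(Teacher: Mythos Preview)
Your proposal is correct and follows essentially the same route as the paper: invoke Lemma~\ref{q4} to obtain the dual weak left brace $(T,+,\circ)$, then appeal to Lemma~\ref{jie} for the Yang--Baxter solution, and finally perform the same chain of identifications (using Lemma~\ref{wang5} to rewrite $a^{0}+(a\rhd b)$ as $a^{0}\circ(a\rhd b)$, collapsing $a^{0}\circ a$ to $a$, and unpacking $(a\rhd b)^{-1}$ via Theorem~\ref{q3}(1)). The paper's proof is exactly this, with the minor cosmetic difference that it justifies $a^{0}\circ a=a$ directly from the Clifford structure rather than via Lemma~\ref{q2}.
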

\begin{proof} Let $a,b\in T$.  Then $(T,+,\circ)$ is a dual weak left brace by Lemma \ref{q4}.
In view of Lemma  \ref{wang5}, we have
$-a+a\circ b=-a+a+(a\rhd  b)=a^0+(a\rhd  b)=a^0\circ (a\rhd  b),$ and so $$(-a+a\circ b)^{-1}\circ a\circ b=(a^0\circ (a\rhd  b))^{-1}\circ a\circ b=(a\rhd  b)^{-1} \circ(a^0)^{-1}\circ a\circ b$$$$=(a\rhd  b)^{-1} \circ  a^0 \circ a\circ b=(a\rhd  b)^{-1}\circ  a\circ b=(a\rhd  b)^{-1}\circ (a+(a\rhd b))$$$$=(a\rhd  b)^{-1}+ (a\rhd  b)^{-1}\rhd (a+(a\rhd b))=(L^\rhd_{a\rhd b})^{-1}(-(a\rhd b))+((L^\rhd_{a\rhd b})^{-1}(-(a\rhd b))\rhd (a+(a\rhd b)))$$ by Theorem \ref{q3}.
The result now follows from Lemma \ref{jie}.
\end{proof}
\begin{lemma}\label{q5}
Let $(T,+,\circ)$ be a dual weak left brace. Define a binary operation $``\rhd"$ on $T$ as follows: For all $a,b\in T$,
$a\rhd b=-a+(a\circ b).$
Then $(T,+,\rhd)$ is a strong post Clifford semigroup. If $\psi$ is a dual weak left brace homomorphism from $(T,+,\circ)$ to $(S,+,\circ)$, then $\psi$ is a post Clifford semigroup homomorphism from $(T,+,\rhd)$ to $(S,+,\rhd)$.
\end{lemma}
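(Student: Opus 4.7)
The plan is to recognize that the operation $a\rhd b=-a+(a\circ b)$ is precisely the map $\lambda^T_a$ from Lemma \ref{wangkangcccc}, so that several axioms come almost for free, and then to verify the remaining items by direct calculation using Lemma \ref{wang5} and the axioms of a dual weak left brace.

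First I would establish the key identity $a+(a\rhd b)=a\circ b$. Indeed, $a+(-a+a\circ b)=a^{0}+(a\circ b)$, and by Lemma \ref{wang5} applied to the idempotent $a^{0}$ one has $a^{0}+(a\circ b)=a^{0}\circ(a\circ b)=(a^{0}\circ a)\circ b=a\circ b$. This identity will be used silently throughout.

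Next I would check the axioms (P1)--(P5) in order. For (P1), I note that $L_{a}^{\rhd}=\lambda^T_a$, so $L^\rhd_a\in\End(T,+)$ by Lemma \ref{wangkangcccc}. For (P2), I rewrite $(a+(a\rhd b))\rhd c=(a\circ b)\rhd c=\lambda^T_{a\circ b}(c)$ and use the fact that $\lambda^T\colon(T,\circ)\to\End(T,+)$ is a semigroup homomorphism (Lemma \ref{wangkangcccc}) to get $\lambda^T_{a\circ b}=\lambda^T_a\lambda^T_b$, which equals $a\rhd(b\rhd c)$. For (P3), Lemma \ref{wang5} gives $a\rhd b^{0}=-a+a\circ b^{0}=-a+a+b^{0}=a^{0}+b^{0}$, hence $a+(a\rhd b^{0})=a+b^{0}$; by an entirely symmetric calculation the right-hand side $b^{0}+(b^{0}\rhd a)$ equals $b^{0}+a$, and the two coincide because $b^{0}$ is central in $(T,+)$. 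For (P5), I compute $a^{0}+(a\rhd b)=a^{0}-a+a\circ b=-a+a\circ b=a\rhd b$, using $a^{0}+(-a)=-a$ (since $(-a)^{0}=a^{0}$).

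The main obstacle is (P4). I would first check $L_{a}^{\rhd}(H_{a})\subseteq H_{a}$: for $b\in H_{a}$, using that $(x+y)^{0}=x^{0}+y^{0}$ together with the analogous identity $(x\circ y)^{0}=x^{0}+y^{0}$ (which follows because idempotents are central in $(T,\circ)$ and agree with those of $(T,+)$), one obtains $(a\rhd b)^{0}=(-a)^{0}+(a\circ b)^{0}=a^{0}+a^{0}+b^{0}=a^{0}$. Restricted to $H_{a}$, $L_{a}^{\rhd}|_{H_{a}}$ is a group homomorphism by (P1). For bijectivity I would exhibit an explicit inverse: given $c\in H_{a}$, set $b=a^{-1}\circ(a+c)$. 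Then $b^{0}=(a^{-1})^{0}+a^{0}+c^{0}=a^{0}\in H_{a}$, and
\[
a\rhd b=-a+a\circ(a^{-1}\circ(a+c))=-a+a^{0}\circ(a+c)=-a+a+c=a^{0}+c=c,
\]
where the last equality uses $c^{0}=a^{0}$. Injectivity follows by cancelling on the left by $a$ and then applying $a^{-1}\circ(-)$. This proves (P4) and shows $(T,+,\rhd)$ is a strong post Clifford semigroup.

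Finally, for the morphism statement, if $\psi$ is a dual weak left brace homomorphism, then
\[
\psi(a\rhd b)=\psi(-a+a\circ b)=-\psi(a)+\psi(a)\circ\psi(b)=\psi(a)\rhd\psi(b),
\]
so $\psi$ is a post Clifford semigroup homomorphism from $(T,+,\rhd)$ to $(S,+,\rhd)$.
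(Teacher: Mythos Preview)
Your proof is correct and follows essentially the same route as the paper, verifying (P1)--(P5) via the identification $L_a^{\rhd}=\lambda^T_a$ together with Lemmas~\ref{wang5} and~\ref{wangkangcccc}. The only minor difference is in (P4): the paper exhibits $L_{a^{-1}}^{\rhd}|_{H_a}$ directly as the two-sided inverse of $L_a^{\rhd}|_{H_a}$, whereas you construct an explicit preimage $a^{-1}\circ(a+c)$ and argue injectivity by cancellation---both are valid, though the paper's version makes the inverse's structure (another $\rhd$-action) a bit more transparent.
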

\begin{proof} Define $\lambda^T_a: T\rightarrow T, b\mapsto -a+a\,\circ\, b$ for all $a\in T$ and $$\lambda^T: (T, \circ)\rightarrow \End (T,+), \,\, a\mapsto \lambda^T_a.$$
By Lemma \ref{wangkangcccc}, $\lambda^T$ is a semigroup homomorphism.
Let $a,b,c\in T$.  Then $a\rhd(b+c)=\lambda_{a}^{T}(b+c)=\lambda_{a}^{T}(b)+\lambda_{a}^{T}(c).$
This gives (P1). By Lemma \ref{wang5}, we have $a^0+(a\circ b)=a^0\circ a\circ b=a\circ b$ and so
\begin{equation*}
\begin{aligned}
&(a+(a\rhd b))\rhd c=(a+\lambda_{a}^{T}(b))\rhd c=(a-a+a\circ b)\rhd c=(a^{0}+a\circ b)\rhd c\\
&=(a^{0}\circ a\circ b)\rhd c=(a\circ b)\rhd c=\lambda_{a\circ b}^{T}(c)
=\lambda_{a}^{T}\lambda_{b}^{T}(c)=a\rhd(b\rhd c).
\end{aligned}
\end{equation*}
This shows (P2) holds. Again by Lemma \ref{wang5}, we  obtain $a^{0}+(a\circ b^{0})=a^{0}\circ a\circ b^{0}=a\circ b^{0}$ and
$b^{0}\circ a = b^0\circ b^{0}\circ a=b^0+b^{0}\circ a=b^{0}-b^{0}+(b^{0}\circ a)$, whence
$$
a+(a\rhd b^{0})=a+\lambda_{a}^{T}(b^{0})=a-a+(a\circ b^{0})=a^{0}+(a\circ b^{0})
=a\circ b^{0}$$$$=b^{0}\circ a
=b^{0}-b^{0}+b^{0}\circ a=b^{0}+\lambda_{b^{0}}^{T}(a)
=b^{0}+(b^{0}\rhd a).
$$
Thus (P3) is true.  Denote  $H_{a}=\{x\in T\mid x^{0}=a^{0}\}$. Then $H_{a}=H_{a^{-1}}$. Assume that $b\in H_{a}$. Then $b^{0}=a^{0}$.
This implies that
$$
(L_{a}^{\rhd}(b))^{0} =(a\rhd b)^{0}=(-a+a\circ b)^{0}=(-a)^{0}+(a\circ b)^{0}
 $$$$=a^{0}+a^{0}\circ b^{0}=a^{0}+a^{0}+b^{0}=a^{0}+a^{0}+a^{0}
 =a^{0}.$$
Thus $L_{a}^{\rhd}(b)\in H_{a}$. Dually, we have $L_{a^{-1}}^{\rhd}(b)\in H_{a}$.  Moreover,
$$
L_{a}^{\rhd}L_{a^{-1}}^{\rhd}(b)=\lambda_{a}^{T}\lambda_{a^{-1}}^{T}(b)=\lambda_{a\circ a^{-1}}^{T}(b)=\lambda_{a^{0}}^{T}(b)
=-a^{0}+a^{0}\circ b$$$$=a^{0}+a^{0}\circ b=a^{0}\circ a^{0}\circ b
=a^{0}\circ b=b^{0}\circ b=b, $$
$$L_{a^{-1}}^{\rhd}L_{a}^{\rhd}(b)=\lambda_{a^{-1}}^{T}\lambda_{a}^{T}(b)=\lambda_{a^{0}}^{T}(b)=b.$$
This gives that $(L_{a}^{\rhd}|_{H_{a}})^{-1}=L_{a^{-1}}^{\rhd}|_{H_{a}}$ and so $L_{a}^{\rhd}|_{H_{a}}\in {\rm Aut}(H_{a},+)$ by (P1). Thus (P4) is valid.
(P5) follows from the fact that $a^{0}+(a\rhd b)=a^{0}-a+(a\circ b)=-a+(a\circ b)=a\rhd b.$ Then $(T,+,\rhd)$ is a strong post Clifford semigroup. Finally,
observe that $$\psi(a\rhd b)=\psi(-a+a\circ b)=\psi(-a)+\psi(a\circ b)=-\psi(a)+(\psi(a)\circ \psi(b))=\psi(a)\rhd \psi(b),$$ it follows that
$\psi$ is a post Clifford semigroup homomorphism.
\end{proof}

\begin{theorem}The category $\mathbb{SPCS}$ of strong post Clifford semigroups together with post Clifford semigroup homomorphisms is isomorphic to the category
$\mathbb{DWLB}$ of dual weak left braces together with dual weak left brace homomorphisms.
\end{theorem}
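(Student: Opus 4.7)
The plan is to package Lemmas \ref{q4} and \ref{q5} into mutually inverse functors. Define $F\colon\mathbb{SPCS}\to\mathbb{DWLB}$ on objects by $F(T,+,\rhd)=(T,+,\circ)$ with $a\circ b=a+(a\rhd b)$, and $G\colon\mathbb{DWLB}\to\mathbb{SPCS}$ on objects by $G(T,+,\circ)=(T,+,\rhd)$ with $a\rhd b=-a+(a\circ b)$. On morphisms both functors act as the identity on the underlying set-map; Lemmas \ref{q4} and \ref{q5} guarantee these assignments are well-defined in the respective target categories, and functoriality is then immediate since composition of set-maps is preserved on the nose.

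The core task is to verify that $F$ and $G$ are mutually inverse on objects. Starting from $(T,+,\circ)\in\mathbb{DWLB}$ and applying $F\circ G$, the recovered brace multiplication is
$$a\circ' b=a+(a\rhd b)=a+\bigl(-a+(a\circ b)\bigr)=a^{0}+(a\circ b)=a^{0}\circ(a\circ b)=(a^{0}\circ a)\circ b=a\circ b,$$
where the fourth equality invokes Lemma \ref{wang5} (since $a^{0}\in E(T)$) and the last uses $a^{0}\circ a=a^{0}+a=a$ from Lemma \ref{jichu}. Conversely, starting from $(T,+,\rhd)\in\mathbb{SPCS}$ and applying $G\circ F$, one computes
$$a\rhd' b=-a+(a\circ b)=-a+a+(a\rhd b)=a^{0}+(a\rhd b)=a\rhd b,$$
the final equality being precisely the strong axiom (P5).

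The only place where any real content lives is this second computation, and specifically the use of (P5): without the strong assumption one would only recover $a\rhd' b=a^{0}+(a\rhd b)$, so $G\circ F$ would fail to be the identity. This is exactly why the theorem demands strong post Clifford semigroups rather than arbitrary post Clifford semigroups. All remaining verifications---that the target operations satisfy the axioms of a dual weak left brace or of a strong post Clifford semigroup, and that homomorphisms transport correctly in both directions---are already absorbed into Lemmas \ref{q4} and \ref{q5}, so once the two displayed identities above are established the isomorphism of categories $F\circ G=\mathrm{Id}_{\mathbb{DWLB}}$ and $G\circ F=\mathrm{Id}_{\mathbb{SPCS}}$ follows by citation.
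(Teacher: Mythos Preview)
Your proposal is correct and follows essentially the same approach as the paper: both define the functors $F$ and $G$ via Lemmas \ref{q4} and \ref{q5} (acting as identity on morphisms), and both verify $GF=\mathrm{Id}_{\mathbb{SPCS}}$ and $FG=\mathrm{Id}_{\mathbb{DWLB}}$ by exactly the two displayed computations you give, with (P5) playing the decisive role in the first and Lemma \ref{wang5} in the second.
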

\begin{proof}
Define $\mathcal{F}: \mathbb{SPCS}\rightarrow \mathbb{DWLB}$ as follows: For all $(T,+,\rhd), (S,+,\rhd)\in {\rm Obj}(\mathbb{SPCS})$ and $\phi\in {\rm Hom}((T,+,\rhd), (S,+,\rhd))$, let $\mathcal{F}(T,+,\rhd)=(T,+,\circ)$ and $\mathcal{F}(\phi)=\phi$.
Define $\mathcal{G}: \mathbb{DWLB}\rightarrow \mathbb{SPCS}$ as follows:  For all $(T,+,\circ), (S,+,\circ)\in {\rm Obj}(\mathbb{DWLB})$ and $\phi\in {\rm Hom}((T,+,\circ), (S,+,\circ))$, let $\mathcal{G}(T,+,\circ)=(T,+,\rhd)$ and  $\mathcal{G}(\phi)=\phi$.
By Lemmas \ref{q4} and \ref{q5}, $\mathcal{F}$ and $\mathcal{G}$ are functors.

Let $(T,+,\rhd)$ be a post Clifford semigroup and denote $\mathcal{F}(T,+,\rhd)=(T,+,\circ)$ and $\mathcal{GF}(T,+,\rhd)=(T,+,\rhd')$. Let $a,b\in T$. Then
$$a\rhd'b=-a+a\circ b=-a+a+a\rhd b=a^{0}+a\rhd b=a\rhd b.$$
This shows that $\rhd'$ is equal to $\rhd$. Thus $\mathcal{GF}=\rm{id}_{\mathbb{SPCS}}$.

Let $(T,+,\circ)$ be a dual weak left brace and $\mathcal{G}(T,+,\circ)=(T,+,\rhd)$ and $\mathcal{FG}(T,+,\circ)=(T,+,\circ')$. Let $a,b\in T$. Then
$$a\circ'b=a+(a\rhd b)=a-a+a\circ b=a^{0}+a\circ b=a^{0}\circ a\circ b=a\circ b.$$
This shows that $\circ$ is equal to $\circ'$. Thus $\mathcal{FG}={\rm id}_{\mathbb{DWLB}}$.
\end{proof}

\section{Post Clifford semigroups and relative Rota-Baxter Clifford semigroups}
In this section, we introduce and study relative Rota-Baxter operators on Clifford semigroups and establish the relation between post Clifford semigroups and relative Rota-Baxter Clifford semigroups. Moreover, we also consider the substructures and quotient structures of relative Rota-Baxter Clifford semigroups.
\begin{defn}\label{wang11}
Let $(T,+)$ and $(S, +)$ be two Clifford semigroups,
$$\varphi: (S, +)\rightarrow (\End(T, +), \cdot),\,\, a\mapsto \varphi_a$$ be a semigroup homomorphism and $R: (T,+)\rightarrow (S,+)$ be a map. Then the quadruple
$((T,+), (S,+), \varphi, R)$ is called a {\em relative Rota--Baxter Clifford semigroup}  if the following conditions hold:  For all $a,b\in T$,
\begin{itemize}
\item[\rm (C1)] $R(a)+R(b)=R(a+\varphi_{R(a)}(b))$.
\item[\rm (C2)] $\varphi_{R(a)^{0}}(a)=a$.
\item[\rm (C3)] $R(a^{0})=R(a)^{0}$.
\item[\rm (C4)] $a+\varphi_{R(a)}(b^{0})=b^{0}+\varphi_{R(b^{0})}(a)$.
\end{itemize}
In this case, the map $R$ is referred to as the {\em relative Rota--Baxter operator} on $(T,+)$. If $R$ in additional  is a bijection, then $((T,+), (S,+), \varphi, R)$ is called a {\em bijective  relative Rota--Baxter Clifford semigroup.}
\end{defn}
\begin{lemma}\label{wang2}
Let $((T,+), (S,+), \varphi, R)$ be a relative Rota--Baxter Clifford semigroup with $a,b\in T$ and $x,y\in S$.
\begin{itemize}
\item[(1)]$\varphi_{x}\varphi_{y}(a)=\varphi_{x+y}(a)$ and $\varphi_{x}(a+b)=\varphi_{x}(a)+\varphi_{x}(b)$.
\item[(2)]$\varphi_{R(a)^{0}}(a^{0})=\varphi_{R(a^{0})}(a^{0})=a^{0}$.
\item[(3)]$-R(a)=R(\varphi_{-R(a)}(-a))$.
\item[(4)] $(\Ima R, +)$ is a Clifford subsemigroup of $(S,+)$.
\item[(5)] If $a\in E(T, +)$, then $R(a)\in E(S, +)$.
\end{itemize}
\end{lemma}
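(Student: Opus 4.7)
The plan is to prove items (1), (2) and (5) first, since they are almost immediate from the axioms, then use them to tackle (3), and finally deduce (4) formally from (3). Concretely, item (1) is a direct rereading of the two hypotheses on $\varphi$: it is a semigroup homomorphism into $(\End(T,+),\cdot)$, and its values are endomorphisms of $(T,+)$. For item (2) I would apply (C2) with $a$ replaced by $a^0$ to obtain $\varphi_{R(a^0)^0}(a^0)=a^0$, then invoke (C3), which gives $R(a^0)=R(a)^0\in E(S,+)$ and hence $R(a^0)^0=R(a^0)=R(a)^0$; both equalities of (2) drop out together. Item (5) is a one-line corollary of (C3): if $a\in E(T,+)$ then $a=a^0$, so $R(a)=R(a^0)=R(a)^0\in E(S,+)$. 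I will state (5) before (3) so that it can be freely used on $a^0$.

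Item (3) is the technical core. The plan is to apply (C1) with $b=\varphi_{-R(a)}(-a)$ and telescope the right-hand side using part (1), axiom (C2), and the compatibility of endomorphisms with von Neumann inverses:
\begin{equation*}
R(a)+R(\varphi_{-R(a)}(-a))=R\bigl(a+\varphi_{R(a)}\varphi_{-R(a)}(-a)\bigr)=R\bigl(a+\varphi_{R(a)^0}(-a)\bigr)=R(a-a)=R(a^0)=R(a)^0.
\end{equation*}
Pre-adding $-R(a)$ on the left, and using the centrality of idempotents in the Clifford semigroup $(S,+)$, this rearranges to $R(\varphi_{-R(a)}(-a))+R(a)^0=-R(a)$. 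The remaining step is to show that $R(\varphi_{-R(a)}(-a))$ lies in the same maximal subgroup as $R(a)$, i.e.\ $R(\varphi_{-R(a)}(-a))^0=R(a)^0$; once this matching is in hand the identity collapses inside the group $H_{R(a)^0}$ and yields $R(\varphi_{-R(a)}(-a))=-R(a)$.

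The main obstacle I anticipate is this $^0$-matching, since one inclusion $R(a)^0\leq R(\varphi_{-R(a)}(-a))^0$ in the idempotent semilattice is free (take $^0$ of the displayed identity), but the reverse requires more care. My plan is to rewrite $R(\varphi_{-R(a)}(-a))^0=R(\varphi_{-R(a)}(a^0))$ via (C3), and then invoke (C4) with a well-chosen second argument — first with $b=a$ to extract $\varphi_{R(a)}(a^0)=a^0$ modulo what (2) already controls, and then with $b=\varphi_{-R(a)}(-a)$ to pull the idempotent $R(\varphi_{-R(a)}(a^0))$ down below $R(a^0)=R(a)^0$ in the semilattice. This sandwiching, together with part (2), should force the two $^0$-values to coincide.

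Once (3) is established, item (4) follows formally: (C1) shows $\Ima R$ is closed under $+$, and (3) shows it is closed under the von Neumann inverse $x\mapsto -x$. A subsemigroup of a Clifford semigroup closed under inverses inherits the Clifford structure (idempotents automatically commute, being central in $S$), so $(\Ima R,+)$ is a Clifford subsemigroup of $(S,+)$.
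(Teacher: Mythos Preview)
Your plan for (1), (2), (5), and (4) matches the paper almost verbatim (for (5) you take the shorter route via (C3), while the paper recomputes $R(a)+R(a)=R(a)$ from (C1) and (C2); both are fine, and for (4) you correctly cite (C1) and part (3)).

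For (3) your approach diverges from the paper's. The paper avoids the $^0$-matching altogether: having shown $R(a)+y=R(a^0)$ with $y=R(\varphi_{-R(a)}(-a))$, it simply verifies the two von Neumann identities $R(a)+y+R(a)=R(a)$ and $y+R(a)+y=y$ directly. The first is immediate since $R(a^0)+R(a)=R(a)$; the second reduces to $R(a^0)+y=y$, which the paper handles by merging via (C1), factoring as $R\bigl(\varphi_{-R(a)}(-(a+\varphi_{R(a)}(a^0)))\bigr)$, and invoking (C4) with $b=a$ to collapse $a+\varphi_{R(a)}(a^0)$ to $a$. Uniqueness of inverses then gives $y=-R(a)$.

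Your route---show $y^0=R(a)^0$ separately, then absorb $R(a)^0$ into $y$---also works, but your description of the sandwich is slightly off. Writing $e=\varphi_{-R(a)}(a^0)$ so that $y^0=R(e)$: axiom (C4) with $b=a$ yields only $a^0+\varphi_{R(a)}(a^0)=a^0$ (not $\varphi_{R(a)}(a^0)=a^0$ on the nose), from which one gets $e+a^0=\varphi_{-R(a)}\bigl(a^0+\varphi_{R(a)}(a^0)\bigr)=\varphi_{-R(a)}(a^0)=e$, i.e.\ $e\le a^0$. Axiom (C4) with $b=\varphi_{-R(a)}(-a)$ gives $a=e+\varphi_{R(e)}(a)$, hence $a^0\le e$---the \emph{opposite} inequality from the one you said it would ``pull down''. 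Together they force $e=a^0$, whence $y^0=R(e)=R(a^0)=R(a)^0$, and your argument closes. So both applications of (C4) are genuinely needed, but each supplies the inequality opposite to what your sketch suggests; keep track of which is which when you write it out.
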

\begin{proof}
(1) This follows from the facts that $\varphi$ is a semigroup homomorphism and $\varphi_a\in \End(T, +)$.

(2) Replacing $a$ by $a^{0}$ in (C2) and then using (C3), we can obtain (2).

(3) Observe that
$$
R(a)+R(\varphi_{-R(a)}(-a))=R(a+\varphi_{R(a)}\varphi_{-R(a)}(-a))=R(a+\varphi_{R(a)^{0}}(-a))
$$$$=R(a-\varphi_{R(a)^{0}}(a))=R(a-a)
=R(a^{0}), $$
$$
R(a)+R(\varphi_{-R(a)}(-a))+R(a)=R(a^{0})+R(a)=R(a^{0}+\varphi_{R(a^{0})}(a))$$$$
=R(a^{0}+\varphi_{R(a)^{0}}(a))=R(a^{0}+a)=R(a),$$
\begin{equation*}
\begin{aligned}
&R(\varphi_{-R(a)}(-a))+R(a)+R(\varphi_{-R(a)}(-a)) \\
&=R(\varphi_{-R(a)}(-a))+R(a^{0})=R(a^{0})+R(\varphi_{-R(a)}(-a)) \\
&=R(a^{0}+\varphi_{R(a^{0})}\varphi_{-R(a)}(-a))=R(a^{0}+\varphi_{-R(a)}(-a)) \\
&=R(\varphi_{R(a^{0})^{0}}(a^{0})+\varphi_{-R(a)}(-a))=R(\varphi_{R(a)^{0}}(a^{0})+\varphi_{-R(a)}(-a)) \\
&=R(\varphi_{-R(a)}\varphi_{R(a)}(a^{0})+\varphi_{-R(a)}(-a))=R(\varphi_{-R(a)}(\varphi_{R(a)}(a^{0})-a)) \\
&=R(\varphi_{-R(a)}(-(a+\varphi_{R(a)}(a^{0}))))=R(\varphi_{-R(a)}(-(a^{0}+\varphi_{R(a^{0})}(a)))) \\
&=R(\varphi_{-R(a)}(-(a^{0}+a)))=R(\varphi_{-R(a)}(-a)),
\end{aligned}
\end{equation*}
it follows  that $-R(a)=R(\varphi_{-R(a)}(-a))$.

 (4) This follows from (C1) and item (2) in the present lemma.

 (5) Since $a\in E(T, +)$, we have $a^{0}=a$, and so $$R(a)+R(a)=R(a+\varphi_{R(a)}(a))=R(a+\varphi_{R(a^{0})}(a))$$$$=R(a+\varphi_{R(a)^{0}}(a))=R(a+a)=R(a)\in E(S,+)$$ by (C1) and (C2).
\end{proof}

\begin{defn} A relative Rota--Baxter Clifford semigroup $((T,+), (S,+), \varphi, R)$ is called  {\em strong} if it in additional satisfies the following condition: For all $a,b\in T$,
\begin{itemize}
\item[\rm (C5)] $a^{0}+\varphi_{R(a)}(b)=\varphi_{R(a)}(b)$.
\end{itemize}
In this case, the map $R$ is referred to as the {\em strong relative Rota--Baxter operator on $(T,+)$}.
\end{defn}

\begin{exam}Let $(T,+)$ be a Clifford semigroup and $R: T\rightarrow T$ be a map. Define $$\phi_a: T\rightarrow T, x\mapsto a+x-a$$ for each $a\in T$.
It is easy to see that  $\phi_a\in \End(T,+)$ for all $a\in T$.  Moreover,
$$\phi: (T, +)\rightarrow (\End(T,+),\cdot),\,\, a\mapsto \phi_a$$  is a semigroup homomorphism. Suppose that $((T,+), (T,+), \phi, R)$ is a relative Rota--Baxter Clifford semigroup and $a,b\in T$. Then (C1) gives that $$R(a)+R(b)=R(a+\varphi_{R(a)}(b))=R(a+R(a)+b-R(a)),$$ and (C2) gives that $$a=\varphi_{R(a)^{0}}(a)=R(a)^{0}+a-R(a)^{0}=a+R(a)^{0}.$$ This implies that $R$ is a Rota--Baxter operator on $(T, +)$. Assume  (C5) also holds.
Take $b=R(a)^{0}$ in (C5). Then we have $$a^{0}+R(a)^{0}=a^{0}+(R(a)+R(a)^{0}-R(a))=a^{0}+\varphi_{R(a)}(R(a)^{0})=\varphi_{R(a)}(R(a)^{0})=R(a)^{0}.$$
This together with Lemma \ref{wang2} (2) and (C3) gives that $$a^{0}=\varphi_{R(a)^{0}}(a^{0})=R(a)^{0}+a^{0}+R(a)^{0}=a^{0}+R(a)^{0}=R(a^{0}).$$
Thus $R$ is also strong in this case by Lemma \ref{strong}.

Conversely, let $R$ be a Rota--Baxter operator on $(T, +)$ and $a,b\in T$. Then by the above two equations, (C1) and (C2) are true. Lemma \ref{cwang} (1) gives (C3). Finally, we have  $$a+\varphi_{R(a)}(b^{0})=a+R(a)+b^{0} -R(a)=a+R(a)-R(a)+b^{0}=a+R(a)^{0}+b^{0}=a+b^{0} $$ by (C2).
By (C3), we have $R(b^{0})=R(b)^{0}\in E(T, +)$, and so  $R(b^{0})^{0}=R(b)^{00}=R(b)^{0}$. This together with (C2) implies that
$$b^{0}+\varphi_{R(b^{0})}(a)=b^{0}+R(b^{0})+a-R(b^{0})= b^{0}+R(b^{0})+a=b^{0}+R(b^{0})^{0}+a=b^{0}+a=a+b^{0}.$$
Thus (C4) holds. Therefore $((T,+), (T,+), \phi, R)$ is a  relative Rota--Baxter Clifford semigroup.
Assume that $R$ is also strong. Then $R(a^{0})=a^{0}$ by Lemma \ref{strong}, and so
$$a^{0}+\varphi_{R(a)}(b)=a^{0}+R(a)+b-R(a)=R(a^{0})+R(a)+b-R(a)$$$$=R(a)^{0}+R(a)+b-R(a)=R(a)+b-R(a)=\varphi_{R(a)}(b).$$
This gives (C5). Thus $(T, T, \phi, R)$ is also strong in this case.
\end{exam}
\begin{exam}Recall from \cite{l2} that a {\em relative Rota--Baxter group} is a quadruple $((H,+),\\ (G,+),\phi, R),$ where $(H,+)$ and $(G,+)$ are groups (not necessarily commutative), $\phi: G\rightarrow \Aut(H,+)$  a group homomorphism (where $\phi(g)$ is denoted by $\phi_g$ for all $g\in G$) and $R: H \rightarrow G$ is a map satisfying the condition
\begin{equation}\label{rbgroup}
R(h_1)+R(h_2)=R(h_1+\phi_{R(h_1)}(h_2))
\end{equation} for all $h_1, h_2\in H$. Obviously, $(H,+) $ and $(G,+)$ are Clifford semigroups and for all $h\in H$ and $g\in G$,  $h^{0}$  and $g^{0}$ are the identities $0_H$ and $0_G$ in $H$ and $G$, respectively. Clearly, (C1) is exactly (\ref{rbgroup}). Since $\phi$ is a group homomorphism and $\phi_g\in \Aut(H)$ for all $g\in G$, it follows that $\phi_{0_G}$ is the identity map on $H$ and $\phi_g(0_H)=0_H$ for all $g\in G$.
Let $a,b\in H$. Then $\phi_{R(a)^{0}}(a)=\phi_{0_G}(a)=a$, and so (C2) is valid. Take $h_1=h_2=0_H$ in (\ref{rbgroup}). Then  $$R(0_H)+R(0_H)=R(0_H+ \phi_{R(0_H)}(0_H))=R(0_H+ 0_H)=R(0_H),$$ which implies that $R(a^{0})=R(0_H)=0_G=R(a)^{0}$. This gives (C3). By similar argument, we can obtain (C4). The condition (C5) is trivially satisfied.
This yields that $((H,+), (G,+), \phi, R)$ forms a strong relative  Rota--Baxter Clifford semigroup. Thus, relative Rota--Baxter groups are strong relative  Rota--Baxter Clifford semigroups.
\end{exam}
\begin{prop}\label{5}Let $(T, S,\phi, R)$   be a relative Rota--Baxter Clifford semigroup with $\varphi\in \Aut (T,+)$, $\psi\in \End (S,+)$  and
$\varphi^{-1}\phi_{x}\varphi=\phi_{\psi(x)}$ for all $x\in S$. Then
$(T,S,\phi,\psi R\varphi)$ is also a relative Rota--Baxter Clifford semigroup. In particular,  if $\varphi$ lies in the center of $ \End (T,+)$, then $(T,S,\phi, R\varphi)$ is a relative Rota--Baxter Clifford semigroup.
\end{prop}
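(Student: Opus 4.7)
The plan is to set $R' := \psi\circ R\circ\varphi\colon T\to S$ and verify the four axioms (C1)--(C4) of Definition~\ref{wang11} for the quadruple $(T,S,\phi,R')$ one by one. The key tool is the rewritten form of the hypothesis,
\[
\phi_x\varphi = \varphi\,\phi_{\psi(x)} \qquad (x\in S),
\]
together with the standard facts that the automorphism $\varphi$ of the Clifford semigroup $(T,+)$ and the endomorphism $\psi$ of $(S,+)$ both commute with the idempotent map $(-)^0$ (by Lemma~\ref{jichu}), and that $\psi$ is additive.

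I would begin with (C3), since it is the lightest calculation and is used in the proof of (C2). Indeed
\[
R'(a^0)=\psi R\varphi(a^0)=\psi R(\varphi(a)^0)=\psi\bigl(R(\varphi(a))^0\bigr)=(\psi R\varphi(a))^0=R'(a)^0,
\]
using in order that $\varphi\in\Aut(T,+)$, (C3) for $R$, and that $\psi\in\End(S,+)$. For (C1), I expand $R'(a)+R'(b)$ using additivity of $\psi$, apply (C1) for $R$, and then use the conjugation relation to push $\varphi$ outside:
\[
R'(a)+R'(b)=\psi R\bigl(\varphi(a)+\phi_{R\varphi(a)}\varphi(b)\bigr)=\psi R\varphi\bigl(a+\phi_{\psi R\varphi(a)}(b)\bigr)=R'(a+\phi_{R'(a)}(b)).
\]

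For (C2), I combine the (C3) just established with the conjugation identity:
\[
\phi_{R'(a)^0}(a)=\phi_{R'(a^0)}(a)=\varphi^{-1}\phi_{R\varphi(a^0)}\varphi(a)=\varphi^{-1}\phi_{R(\varphi(a))^0}\varphi(a)=\varphi^{-1}\varphi(a)=a,
\]
where the penultimate equality uses (C3) for $R$ and then (C2) for $R$ applied at the element $\varphi(a)$. For (C4), I rewrite both summands through $\varphi^{-1}$: $a=\varphi^{-1}\varphi(a)$ and $\phi_{R'(a)}(b^0)=\varphi^{-1}\phi_{R\varphi(a)}\varphi(b^0)=\varphi^{-1}\phi_{R\varphi(a)}(\varphi(b)^0)$, pull $\varphi^{-1}$ outside the sum using additivity of $\varphi^{-1}$, apply (C4) for $R$ inside, and then push $\varphi^{-1}$ back through symmetrically; the same conjugation rewrite in the opposite direction produces $b^0+\phi_{R'(b^0)}(a)$.

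Finally, for the ``in particular'' clause, taking $\psi=\mathrm{id}_S$ makes the hypothesis $\varphi^{-1}\phi_x\varphi=\phi_{\psi(x)}$ say exactly that $\varphi$ commutes with every $\phi_x$, which follows if $\varphi$ lies in the centre of $\End(T,+)$; then $R\varphi=\mathrm{id}_S\circ R\circ\varphi=R'$ and the general statement gives the conclusion. I do not foresee any serious obstacle: all four axiom checks are direct pull-through computations, and (C4) is merely the longest because its two sides must be conjugated symmetrically. The only bookkeeping to be careful about is that $\psi$ is not assumed injective, but this is never needed, since in every step $\psi$ is used only through additivity and commutation with $(-)^0$.
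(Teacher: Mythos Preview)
Your proposal is correct and follows essentially the same approach as the paper: both proofs verify (C1)--(C4) by direct calculation, using the conjugation identity $\phi_x\varphi=\varphi\phi_{\psi(x)}$ to transfer each axiom for $R$ to the corresponding axiom for $\psi R\varphi$, and both handle the ``in particular'' clause by specialising to $\psi=\mathrm{id}_S$. The only cosmetic differences are the order in which the axioms are checked and that in (C2) you first invoke (C3) for $R'$ whereas the paper uses directly that $\psi$ commutes with $(-)^0$; these are equivalent bookkeeping choices.
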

\begin{proof}
Let $a,b\in T$. Then
$$\psi R\varphi(a^{0})=\psi R(\varphi(a)^{0})=\psi (R(\varphi(a))^{0})=\psi R\varphi(a)^{0},$$
$$\phi_{\psi R\varphi(a)^{0}}(a)=\phi_{\psi(R(\varphi(a))^{0})}(a)
=\varphi^{-1}\phi_{R(\varphi(a))^{0}}\varphi(a)=\varphi^{-1}\varphi(a)=a,$$
\begin{equation*}
\begin{aligned}
&\psi R\varphi(a)+\psi R\varphi(b)=\psi(R\varphi(a)+R\varphi(b))=\psi(R(\varphi(a)+\phi_{R\varphi(a)}\varphi(b)))\\
&=\psi R(\varphi(a)+\phi_{R\varphi(a)}\varphi(b))=\psi R\varphi(a+\varphi^{-1}\phi_{R\varphi(a)}\varphi(b)) =\psi R\varphi(a+\phi_{\psi R\varphi(a)}\varphi(b)),
\end{aligned}
\end{equation*}
\begin{equation*}
\begin{aligned}
a+\phi_{\psi R\varphi(a)}(b^{0})&=a+\varphi^{-1}\phi_{R\varphi(a)}\varphi(b^{0})
=\varphi^{-1}(\varphi(a)+\phi_{R\varphi(a)}\varphi(b^{0}))\\
&=\varphi^{-1}(\varphi(b)^{0}+\phi_{R(\varphi(b)^{0})}\varphi(a))
=\varphi^{-1}(\varphi(b)^{0}+\phi_{R\varphi (b^{0})}\varphi(a))\\
&=b^{0}+\varphi^{-1}\phi_{R\varphi(b^{0})}\varphi(a)=b^{0}+\phi_{\psi R\varphi(b^{0})}\varphi(a).
\end{aligned}
\end{equation*}
This shows   (C1)--(C4) are all true, and hence the desired result follows.
\end{proof}

\begin{lemma}\label{h}
Let $((T,+), (S,+), \varphi, R)$  be a relative Rota--Baxter Clifford semigroup. Define  a binary operation $\circ_R$ as follows:
$$a\circ_{R}b=a+\varphi_{R(a)}(b) \mbox { for all } a,b\in T.$$
Then $(T, \circ_{R})$ is a Clifford semigroup and the inverse  of $a$ in $(T, \circ_R)$ is $a^{-1}=\varphi_{-R(a)}(-a)$  for all $a\in T$.  Moreover,
the map $R: (T, \circ_{R})\rightarrow (S,+)$ is a semigroup homomorphism.
\end{lemma}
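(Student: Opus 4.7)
The plan is to verify the defining properties of a Clifford semigroup in the order: associativity of $\circ_R$, regularity with the proposed inverse $a^{-1}=\varphi_{-R(a)}(-a)$, equality $E(T,\circ_R)=E(T,+)$, and commutativity of those idempotents under $\circ_R$. The semigroup-homomorphism property of $R$ will then follow immediately from (C1).

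Associativity is a direct expansion. For $a,b,c\in T$, I would rewrite $R(a\circ_R b)$ using (C1) as $R(a)+R(b)$, so that $\varphi_{R(a\circ_R b)}=\varphi_{R(a)+R(b)}=\varphi_{R(a)}\varphi_{R(b)}$ by Lemma \ref{wang2}(1). Combined with each $\varphi_x$ being an endomorphism of $(T,+)$, this collapses both $(a\circ_R b)\circ_R c$ and $a\circ_R(b\circ_R c)$ to $a+\varphi_{R(a)}(b+\varphi_{R(b)}(c))$.

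The identity $a\circ_R a^{-1}=a^0$ follows from $\varphi_{R(a)}\varphi_{-R(a)}=\varphi_{R(a)^0}$ together with (C2) and the basic fact that a semigroup homomorphism between Clifford semigroups commutes with $-(\cdot)$, giving $\varphi_{R(a)}(a^{-1})=-\varphi_{R(a)^0}(a)=-a$. The symmetric equality $a^{-1}\circ_R a=a^0$ is the delicate step and is where I expect the main obstacle. By Lemma \ref{wang2}(3) we have $R(a^{-1})=-R(a)$, whence $a^{-1}\circ_R a$ simplifies through the endomorphism property of $\varphi_{-R(a)}$ to $\varphi_{-R(a)}(a^0)=(a^{-1})^0$. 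To identify this idempotent with $a^0$, I would apply (C4) twice---once with the pair $(a^{-1},a)$ and once with the pair $(a,a^{-1})$---using (C2) applied to $a^{-1}$ (so that $\varphi_{R(a)^0}(a^{-1})=a^{-1}$) together with (C3) and Lemma \ref{wang2}(2) to derive both $a^0+(a^{-1})^0=(a^{-1})^0$ and $(a^{-1})^0+a^0=a^0$, which in the natural meet order on $E(T,+)$ forces $(a^{-1})^0=a^0$.

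To upgrade regularity to Clifford, I would first show $E(T,\circ_R)=E(T,+)$. For the nontrivial inclusion, if $e\circ_R e=e$ then applying $R$ and using (C1) yields $R(e)+R(e)=R(e)$, so $R(e)\in E(S,+)$ and $R(e)=R(e)^0$; then (C2) forces $\varphi_{R(e)}(e)=\varphi_{R(e)^0}(e)=e$, and the original equation collapses to $e+e=e$. The reverse inclusion uses Lemma \ref{wang2}(5) and (C2) in the same way. For idempotents of $(T,\circ_R)$ to commute, (C4) applied with $a=e$, $b=f\in E(T,+)$ directly yields $e\circ_R f=f\circ_R e$ since $e^0=e$ and $f^0=f$. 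Lemma \ref{inverse} then implies $(T,\circ_R)$ is an inverse semigroup, and the coincidence $a\circ_R a^{-1}=a^{-1}\circ_R a=a^0$ promotes it to Clifford. Finally, $R(a\circ_R b)=R(a+\varphi_{R(a)}(b))=R(a)+R(b)$ is exactly (C1), so $R$ is a semigroup homomorphism from $(T,\circ_R)$ to $(S,+)$.
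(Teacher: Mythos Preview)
Your proposal is correct, but it diverges from the paper's route at the regularity and Clifford steps. The paper never computes $a^{-1}\circ_R a$ directly: it verifies the von Neumann axioms $a\circ_R a^{-1}\circ_R a=a$ and $a^{-1}\circ_R a\circ_R a^{-1}=a^{-1}$ by first showing $a\circ_R a^{-1}=a^{0}$ and then $a^{0}\circ_R a=a$, respectively $a^{-1}\circ_R a^{0}=a^{-1}$ via $R(a^{-1})=-R(a)$. It then applies (C4) with an arbitrary $a$ and an idempotent $e=b^{0}$ to get $a\circ_R e=e\circ_R a$, i.e.\ that idempotents are \emph{central}, and invokes Lemma~\ref{Clifford} (regular $+$ central idempotents $\Rightarrow$ Clifford). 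Your argument instead establishes the stronger equality $a^{-1}\circ_R a=a^{0}$ by identifying $(a^{-1})^{0}$ with $a^{0}$ through two applications of (C4), then uses only pairwise commutativity of idempotents and Lemma~\ref{inverse} to obtain an inverse semigroup, finally promoting to Clifford via $a\circ_R a^{-1}=a^{-1}\circ_R a$. Both are valid; the paper's path is more economical (one use of (C4), no need for $(a^{-1})^{0}=a^{0}$), while yours extracts the explicit fact $a^{-1}\circ_R a=a^{0}$ up front, which is informative in its own right.
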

\begin{proof}The operation $\circ_R$ is well-defined obviously.
Let $a,b\in T$.
\begin{equation*}
\begin{aligned}
&(a\circ_{R}b)\circ_{R}c=(a+\varphi_{R(a)}(b))\circ_{R}c =a+\varphi_{R(a)}(b)+\varphi_{R(a+\varphi_{R(a)}(b))}(c) \\
&=a+\varphi_{R(a)}(b)+\varphi_{R(a)+R(b)}(c) =a+\varphi_{R(a)}(b)+\varphi_{R(a)}\varphi_{R(b)}(c) \\
&=a+\varphi_{R(a)}(b+\varphi_{R(b)}(c)) =a+\varphi_{R(a)}(b\circ_{R}c)  =a\circ_{R}(b\circ_{R}c).
\end{aligned}
\end{equation*}
This gives that $(T,\circ_{R})$ is a semigroup. Denote $x=\varphi_{-R(a)}(-a)\in T$. Then
\begin{equation*}
\begin{aligned}
&a\circ_{R}x\circ_{R}a=a\circ_{R}\varphi_{-R(a)}(-a)\circ_{R}a=(a+\varphi_{R(a)}\varphi_{-R(a)}(-a))\circ_{R}a \\
&=(a+\varphi_{R(a)^{0}}(-a))\circ_{R}a=(a-\varphi_{R(a)^{0}}(a))\circ_{R}a \\
&=(a-a)\circ_{R}a=a^{0}\circ_{R}a =a^{0}+\varphi_{R(a)^{0}}(a)=a^{0}+a=a,
\end{aligned}
\end{equation*}
This together with Lemma \ref{wang2} (3) implies that
\begin{equation*}
\begin{aligned}
&x\circ_{R}a\circ_{R}x=x\circ_{R}a^{0}=\varphi_{-R(a)}(-a)\circ_{R}a^{0}  =\varphi_{-R(a)}(-a)+\varphi_{R(\varphi_{-R(a)}(-a))}(a^{0})\\  &=\varphi_{-R(a)}(-a)+\varphi_{-R(a)}(a^{0})  =\varphi_{-R(a)}(-a+a^{0}) =\varphi_{-R(a)}(-a)=x.
\end{aligned}
\end{equation*}
So $(T,\circ_{R})$ is a regular semigroup. Furthermore, let $e\in E(T,\circ_{R})$. Then
$$R(e)=R(e\circ_{R}e)=R(e+\varphi_{R(e)}(e))=R(e)+R(e),$$ and so $R(e)\in E(T,+).$ This implies that
$$e=e\circ_{R}e=e+\varphi_{R(e)}(e)=e+\varphi_{R(e)^{0}}(e)=e+e\in E(T,+).$$
Therefore $E(T,\circ_{R})\subseteq E(T,+)$.  So we can let $e=b^{0}$ for some $b\in T$. Then
$$a\circ_{R}e=a\circ_{R}b^{0}=a+\varphi_{R(a)}(b^{0})=b^{0}+\varphi_{R(b^{0})}(a)=e+\varphi_{R(e)}(a)=e\circ_{R}a.$$
We have shown that each idempotent in  $(T,\circ_{R})$ lies in the center of $(T,\circ_{R})$.  Thus $(T,\circ_{R})$ is a Clifford semigroup by Lemma \ref{Clifford}.
Moreover,  $a^{-1}=\varphi_{-R(a)}(-a)$ is the inverse of $a$ in $(T, \circ_R)$ by the above statements.
Obviously,
$R(a\circ_{R}b)=R(a+\varphi_{R(a)}(b))=R(a)+R(b).$
So $R$ is a semigroup homomorphism from $(T, \circ_{R})$ to $(S,+)$.
\end{proof}
\begin{defn}Let $((T,+),(S,+),\varphi,R)$   be a relative Rota--Baxter Clifford semigroup. The Clifford semigroup $(T, \circ_R)$ obtained in Lemma \ref{h} is called the
{\em descendent Clifford semigroup} of $R$.
\end{defn}

\begin{coro}\label{kangweiccc}
Let $((T,+),(S,+),\varphi,R)$    be a relative Rota--Baxter Clifford semigroup and $a\in S$. Denote $H_a=\{x\in S\mid x^{0}=a^{0}\}$. Then
$\varphi_{R(a)}(a^{0})=a^{0}$, $\varphi_{R(a)}(H_a)\subseteq H_a$ and $\varphi_{R(a)}|_{H_a}$ is an isomorphism on $H_a$ and $(\varphi_{R(a)}|_{H_a})^{-1}=\varphi_{-R(a)}|_{H_a}$.
\end{coro}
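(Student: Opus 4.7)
The plan is to first establish the pivotal identity $\varphi_{R(a)}(a^0)=a^0$, and then deduce the remaining assertions by routine manipulations. The crucial ingredient is the descendent Clifford semigroup $(T,\circ_R)$ from Lemma \ref{h}: its inverse of $a$ is $a^{-1}=\varphi_{-R(a)}(-a)$, and the idempotents of $(T,\circ_R)$ coincide with those of $(T,+)$, giving in particular $(a^{-1})^0=a^0$.

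Since $\varphi_{-R(a)}\in\End(T,+)$ preserves the $^0$-operation, I get
$$\varphi_{-R(a)}(a^0)=\varphi_{-R(a)}((-a)^0)=\varphi_{-R(a)}(-a)^0=(a^{-1})^0=a^0.$$
Applying $\varphi_{R(a)}$ to both sides and using Lemma \ref{wang2}(1) together with Lemma \ref{wang2}(2) then yields
$$\varphi_{R(a)}(a^0)=\varphi_{R(a)}\varphi_{-R(a)}(a^0)=\varphi_{R(a)+(-R(a))}(a^0)=\varphi_{R(a)^0}(a^0)=a^0,$$
as desired. From this, the inclusion $\varphi_{R(a)}(H_a)\subseteq H_a$ is immediate: for $x\in H_a$, that is, $x^0=a^0$, one has $\varphi_{R(a)}(x)^0=\varphi_{R(a)}(x^0)=\varphi_{R(a)}(a^0)=a^0$. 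Replacing $R(a)$ by $-R(a)$ throughout gives $\varphi_{-R(a)}(H_a)\subseteq H_a$ by the same argument.

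To identify the inverse on $H_a$, observe that for any $x\in H_a$, axiom (C3) yields $R(x)^0=R(x^0)=R(a^0)=R(a)^0$, and then (C2) applied to $x$ gives $\varphi_{R(a)^0}(x)=\varphi_{R(x)^0}(x)=x$. Hence $\varphi_{R(a)^0}|_{H_a}={\rm id}_{H_a}$, so both compositions $\varphi_{R(a)}|_{H_a}\circ\varphi_{-R(a)}|_{H_a}$ and $\varphi_{-R(a)}|_{H_a}\circ\varphi_{R(a)}|_{H_a}$ equal ${\rm id}_{H_a}$. Since $\varphi_{R(a)}$ is a semigroup endomorphism of $(T,+)$ restricting to a bijection of the subgroup $(H_a,+)$, this restriction is a group isomorphism, with inverse $\varphi_{-R(a)}|_{H_a}$.

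The main obstacle is the first identity $\varphi_{R(a)}(a^0)=a^0$: attempting to derive it directly from (C1)--(C4) only produces weaker statements such as $a+\varphi_{R(a)}(a^0)=a$ (via (C4) with $b=a$), which in Clifford semigroup language merely places $\varphi_{R(a)}(a^0)$ above $a^0$ in the underlying semilattice. The clean path goes through the descendent semigroup $(T,\circ_R)$ and the explicit form of its Clifford inverse, after which everything else is bookkeeping.
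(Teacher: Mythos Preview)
Your approach is the same as the paper's: both exploit the descendent Clifford semigroup $(T,\circ_R)$ from Lemma~\ref{h} and the explicit formula $a^{-1}=\varphi_{-R(a)}(-a)$. The rest of your argument (the inclusion $\varphi_{\pm R(a)}(H_a)\subseteq H_a$ and the mutual inverse computation on $H_a$) matches the paper essentially verbatim.

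There is, however, a gap in your justification of the pivotal identity. You write that ``the idempotents of $(T,\circ_R)$ coincide with those of $(T,+)$, giving in particular $(a^{-1})^0=a^0$''. The set equality $E(T,\circ_R)=E(T,+)$ does \emph{not} by itself imply that the two idempotent-assignment maps $x\mapsto x\circ_R x^{-1}$ and $x\mapsto x-x$ agree on each element. Unpacking your chain, $(a^{-1})^0=(\varphi_{-R(a)}(-a))^0=\varphi_{-R(a)}(a^0)$, so asserting $(a^{-1})^0=a^0$ is precisely asserting $\varphi_{-R(a)}(a^0)=a^0$, which is what you are trying to prove. What is actually needed is the identity $x\circ_R x^{-1}=x^0$ for every $x\in T$; this is established inside the proof of Lemma~\ref{h} (where one computes $a\circ_R a^{-1}=a-a=a^0$), and applying it to $x=a^{-1}$ together with the Clifford property yields $(a^{-1})^0=a^{-1}\circ_R a=a\circ_R a^{-1}=a^0$. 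The paper's proof sidesteps this by computing $a^{-1}\circ_R a$ directly: using Lemma~\ref{wang2}(3) to get $R(a^{-1})=-R(a)$, one finds
\[
a^{-1}\circ_R a=\varphi_{-R(a)}(-a)+\varphi_{-R(a)}(a)=\varphi_{-R(a)}(a^0),
\]
and since $(T,\circ_R)$ is Clifford this equals $a\circ_R a^{-1}=a^0$. Either route closes the gap; you should make one of them explicit.
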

\begin{proof} By Lemma \ref{h},  $(T, \circ_{R})$ forms a Clifford semigroup and $a^{-1}=\varphi_{-R(a)}(-a)$.  This implies that
$$a^{0}=a-a=a\circ_R  a^{-1}=a^{-1}\circ_R a=\varphi_{-R(a)}(-a)+\varphi_{R(\varphi_{-R(a)}(-a))}(a)$$$$=\varphi_{-R(a)}(-a)+\varphi_{-R(a)}(a)=\varphi_{-R(a)}(a^{0}).$$
So $\varphi_{R(a)}(a^{0})=\varphi_{R(a)}(\varphi_{-R(a)}(a^{0}))=\varphi_{R(a)^{0}}(a^{0})=a^{0}.$ If $x\in H_a$, then $x^{0}=a^{0}$, and so
$\varphi_{R(a)}(x)^{0}=\varphi_{R(a)}(x^{0})=\varphi_{R(a)}(a^{0})=a^{0}$. This shows that $\varphi_{R(a)}(x)\in H_a$. Thus $\varphi_{R(a)}(H_a)\subseteq H_a$.
Similarly, $\varphi_{-R(a)}(H_a)\subseteq H_a$. Moreover, for all $x\in H_a$, $$\varphi_{R(a)}\varphi_{-R(a)}(x)=\varphi_{-R(a)}\varphi_{R(a)}(x)=\varphi_{R(a)^{0}}(x)=\varphi_{R(a^{0})}(x)=\varphi_{R(x^{0})}(x)=x.$$ Therefore, $\varphi_{R(a)}|_{H_a}$ is an automorphism on $H_a$.
\end{proof}
To give more characterizations of relative Rora-Baxter Clifford semigroups, we need a construction method of inverse semigroups  which  is known as  {\em the $\lambda$-semidirect products} in literature (see \cite{lawson} for details). The following lemma is a special case of \cite[Proposition 5.3.1]{lawson}, but we also give a proof for  completeness.
\begin{lemma}\label{wang3} Let $(T,+)$ and $(S, +)$ be  Clifford semigroups and
$$\varphi: (S, +)\rightarrow (\End(T, +), \cdot),\,\, a\mapsto \varphi_a$$ be a semigroup homomorphism. Define a multiplication on
$$M=M(S,T, \varphi)=\{(x,a)\in S\times T\mid \varphi_{x^{0}}(a)=a\}$$ as follows: For all $(x,a), (y,b)\in M$,
$$(x,a)+(y,b)=(x+y, \varphi_{y^{0}}(a)+\varphi_x(b)).$$ Then $(M +)$ forms an inverse semigroup with  $$E(M,+)=\{(x,a)\in M\mid x\in E(S, +), a\in E(T, +)\}$$
and $-(x,a)=(-x, \varphi_{-x}(-a))$ for all $(x,a)\in M$.
\end{lemma}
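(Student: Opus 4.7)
The plan is to apply Lemma \ref{inverse}: after confirming that the multiplication is well-defined on $M$ and associative, I will verify regularity and commutativity of idempotents, which together characterize inverse semigroups. The workhorse of every calculation is that idempotents of the Clifford semigroups $(S,+)$ and $(T,+)$ lie in their centers, and consequently $\varphi_{u^0}$ commutes with every $\varphi_v$ while $\varphi$ itself is a homomorphism; this, together with the defining constraint $\varphi_{x^0}(a) = a$ for elements $(x,a)\in M$ and the identities $x^0 + x = x$ and $(x+y)^0 = x^0 + y^0$, will let me freely insert or cancel factors of $\varphi_{x^0}$ without leaving $M$.

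For closure, given $(x,a),(y,b)\in M$ I must check $\varphi_{(x+y)^0}\bigl(\varphi_{y^0}(a)+\varphi_x(b)\bigr) = \varphi_{y^0}(a)+\varphi_x(b)$: the first summand collapses via $\varphi_{x^0}(a)=a$, and the second via $\varphi_{y^0}(b)=b$ after rewriting $x^0+y^0+x$ as $y^0+x$ using centrality. Associativity is a similar routine expansion of both sides, with all discrepancies absorbed by the centrality of $z^0$ in $S$. For the idempotents, expanding $(x,a)+(x,a)=(x,a)$ forces $x\in E(S,+)$, and then $\varphi_x = \varphi_{x^0}$ reduces the second coordinate to $a+a=a$, giving $a\in E(T,+)$; the converse is immediate. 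Two such idempotents commute because $E(S,+)$ is commutative and the pieces $\varphi_y(a)$ and $\varphi_x(b)$ lie in $E(T,+)$ (as homomorphic images of idempotents), which is itself commutative.

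For regularity I will verify directly that $(x,a)' := (-x, \varphi_{-x}(-a))$ is an inverse of $(x,a)$. Membership in $M$ follows from $(-x)^0 = x^0$ and $-x + x^0 = -x$. Using $\varphi_{x^0}(-a) = -a$, $\varphi_{x^0}(a^0) = a^0$, and the centrality of $x^0$, I compute $(x,a)+(x,a)' = (x^0, a^0)$, whence $(x,a)+(x,a)'+(x,a) = (x,a)$, and symmetrically $(x,a)'+(x,a)+(x,a)' = (x,a)'$. Invoking Lemma \ref{inverse} then completes the proof that $(M,+)$ is inverse, and uniqueness of inverses in inverse semigroups identifies $-(x,a)$ with $(x,a)'$. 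The main obstacle is not depth but bookkeeping: every manipulation must respect the constraint $\varphi_{x^0}(a) = a$, and the verifications proceed cleanly only if one systematically exploits the centrality of idempotents in both $S$ and $T$ to shuffle and cancel the $\varphi_{u^0}$ factors that appear at each step.
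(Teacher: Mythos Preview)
Your proposal is correct and follows essentially the same route as the paper: establish closure and associativity, identify the candidate inverse $(-x,\varphi_{-x}(-a))$, characterize the idempotents, check that idempotents commute, and invoke Lemma~\ref{inverse}. The only trivial omission is that you do not note $M$ is nonempty (the paper observes $(x,\varphi_x(a))\in M$), and your $(x,a)+(x,a)'=(x^0,a^0)$ agrees with the paper's $(x^0,\varphi_{x^0}(a^0))$ since $\varphi_{x^0}(a^0)=a^0$.
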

\begin{proof} Let $x\in S$ and $a\in T$. Then $\varphi_{x^{0}}(\varphi_{x}(a))=\varphi_{x^{0}+x}(a)=\varphi_{x}(a).$
This implies that $(x,\varphi_{x}(a))\in M$, and so $M$ is nonempty. Let $(x,a),(y,b), (z, c)\in M$. Then $\varphi_{x^{0}}(a)=a$, $\varphi_{y^{0}}(b)=b$ and  $\varphi_{z^{0}}(c)=c$. On one hand,
$$
\varphi_{(x+y)^{0}}(\varphi_{y^{0}}(a)+\varphi_{x}(b))=\varphi_{x^{0}+y^{0}}
\varphi_{y^{0}}(a)+\varphi_{x^{0}+y^{0}}\varphi_{x}(b)
=\varphi_{x^{0}+y^{0}+y^{0}}(a)+\varphi_{x^{0}+y^{0}+x}(b)
$$$$=\varphi_{x^{0}+y^{0}}(a)+\varphi_{x+y^{0}}(b)
=\varphi_{x^{0}+y^{0}}(a)+\varphi_{x}\varphi_{y^{0}}(b)
=\varphi_{y^{0}}\varphi_{x^{0}}(a)+\varphi_{x}(b)
=\varphi_{y^{0}}(a)+\varphi_{x}(b).
$$
This implies that $(x+y, \varphi_{y^{0}}(a)+\varphi_{x}(b))\in M$,  and so the above binary operation $+$ is well-defined.
On the other hand,
\begin{equation*}
\begin{aligned}
&((x,a)+(y,b))+(z,c)=(x+y, \varphi_{y^{0}}(a)+\varphi_{x}(b))+(z,c)\\
&=((x+y)+z, \varphi_{z^{0}}(\varphi_{y^{0}}(a)+\varphi_{x}(b))+\varphi_{x+y}(c))\\
&=(x+y+z, \varphi_{y^{0}+z^{0}}(a)+\varphi_{x+z^{0}}(b)+\varphi_{x+y}(c)),
\end{aligned}
\end{equation*}
\begin{equation*}
\begin{aligned}
&(x,a)+((y,b)+(z,c))=(x,a)+(y+z, \varphi_{z^{0}}(b)+\varphi_{y}(c))\\
&=(x+(y+z), \varphi_{y^{0}+z^{0}}(a)+\varphi_{x}(\varphi_{z^{0}}(b)+\varphi_{y}(c)))\\
&=(x+y+z, \varphi_{y^{0}+z^{0}}(a)+\varphi_{x+z^{0}}(b)+\varphi_{x+y}(c)) =((x,a)+(y,b))+(z,c).
\end{aligned}
\end{equation*}
This shows   $(M,+)$ forms a semigroup. It is easy to see that  $(-x,\varphi_{-x}(-a))\in M$. Moreover,
$$
(x,a)+(-x,\varphi_{-x}(-a))
=(x-x, \varphi_{(-x)^{0}}(a)+\varphi_{x}(\varphi_{-x}(-a)))
$$$$=(x^{0}, \varphi_{x^{0}}(a)+\varphi_{x^{0}}(-a))
=(x^{0}, \varphi_{x^{0}}(a^{0})),
$$
$$
(x^{0}, \varphi_{x^{0}}(a^{0}))+(x,a)
=(x^{0}+x, \varphi_{x^{0}}(\varphi_{x^{0}}(a^{0}))+\varphi_{x^{0}}(a))
$$$$=(x, \varphi_{x^{0}}(a^{0})+\varphi_{x^{0}}(a))
=(x, \varphi_{x^{0}}(a))=(x,a),$$
Since  $$(x,a)+(x,a)=(x+x, \varphi_{x^{0}}(a)+\varphi_{x}(a))=(x+x,a+\varphi_{x}(a)),$$  it follows that
$$
(x,a)+(x,a)=(x,a)
\Longleftrightarrow x+x=x \mbox{ and } a+\varphi_{x}(a)=a
$$$$\Longleftrightarrow x+x=x\mbox{ and } a+\varphi_{x^{0}}(a)=a
\Longleftrightarrow x+x=x\mbox{ and } a+a=a.
$$
So $$E(M,+)=\{(x,a)\in M \mid x\in E(S,+), a\in E(T,+)\}.$$ Let $(x,a),(y,b)\in E(M,+)$. Then
$$(x,a)+(y,b)=(x+y, \varphi_{y^{0}}(a)+\varphi_{x}(b))=(x+y, \varphi_{y}(a)+\varphi_{x}(b)).$$
Similarly, $(y,b)+(x,a)=(y+x, \varphi_{x}(b)+\varphi_{y}(a))$. Since $a,b\in E(T)$, we have $\varphi_{x}(b),\varphi_{y}(a)\in E(T,+)$, and so $\varphi_{x}(b)+\varphi_{y}(a)=\varphi_{y}(a)+\varphi_{x}(b)$. This implies that $(x,a)+(y,b)=(y,b)+(x,a)$. Thus $(M,+)$ is an inverse semigroup by Lemma \ref{inverse}.
\end{proof}
\begin{theorem}Let $(T,+)$ and $(S, +)$ be  two Clifford semigroups,
$$\varphi: (S, +)\rightarrow (\End(T, +), \cdot),\,\, a\mapsto \varphi_a$$ be a semigroup homomorphism and $R:T\rightarrow S$ be a map. Then the quadruple
$((T,+),(S,+),\\\varphi,R)$  is  a  relative Rota--Baxter Clifford semigroup if and only if
${\rm Gr}R=\{(R(a),a)\mid a\in T\}$ is a Clifford subsemigroup of $(M,+)=(M(S, T, \varphi),+)$.  In this case, if $(T, \circ_R)$ is the  descendent Clifford semigroup of $R$, then
$$\phi: (T, \circ_R)\rightarrow ({\rm Gr}R,+),\,\, a\mapsto (R(a),a)$$
is a  semigroup isomorphism.
\end{theorem}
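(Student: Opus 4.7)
The strategy is to compute $(R(a), a) + (R(b), b)$ in $(M, +)$ using Lemma~\ref{wang3} and to match the result against $(R(c), c)$ for a suitable $c \in T$. The multiplication rule gives
\begin{equation*}
(R(a), a) + (R(b), b) = (R(a) + R(b),\ \varphi_{R(b)^{0}}(a) + \varphi_{R(a)}(b)),
\end{equation*}
so membership of this element in ${\rm Gr}R$ is equivalent to the existence of $c \in T$ with $R(c) = R(a) + R(b)$ and $c = \varphi_{R(b)^{0}}(a) + \varphi_{R(a)}(b)$. Guided by Lemma~\ref{h}, the natural candidate is $c = a \circ_{R} b = a + \varphi_{R(a)}(b)$, so the whole proof reduces to matching these two expressions against one another and verifying the first-component equality.

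For the forward direction, assume $((T,+), (S,+), \varphi, R)$ is a relative Rota--Baxter Clifford semigroup. Condition (C2) places $(R(a), a)$ in $M$, and (C1) identifies $R(a) + R(b)$ with $R(a \circ_{R} b)$. The pivotal step is the second-component identity
\begin{equation*}
\varphi_{R(b)^{0}}(a) + \varphi_{R(a)}(b) = a + \varphi_{R(a)}(b),
\end{equation*}
which I plan to derive by first using (C3) and the centrality of idempotents in Clifford semigroups to rewrite (C4) as $\varphi_{R(b)^{0}}(a) + b^{0} = a + \varphi_{R(a)}(b^{0})$, then adjoining $\varphi_{R(a)}(b)$ on the right of both sides and absorbing the stray $b^{0}$ via $\varphi_{R(a)}(b^{0}) + \varphi_{R(a)}(b) = \varphi_{R(a)}(b)$ together with the commutation of idempotents with arbitrary elements of $T$. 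This yields closure of ${\rm Gr}R$ under $+$. Closure under the $M$-inverse then follows from Lemma~\ref{wang2}(3), since $-(R(a), a) = (-R(a), \varphi_{-R(a)}(-a)) = (R(\varphi_{-R(a)}(-a)), \varphi_{-R(a)}(-a)) \in {\rm Gr}R$, and the Clifford property is transferred from $(T, \circ_{R})$ (Lemma~\ref{h}) through the bijection $\phi$ constructed in the last part.

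For the converse, ${\rm Gr}R \subseteq M$ immediately supplies (C2); closure under $+$ yields, for each $a, b \in T$, an element $c$ with $R(c) = R(a) + R(b)$ and $c = \varphi_{R(b)^{0}}(a) + \varphi_{R(a)}(b)$, from which (C1) and (C4) can be extracted by specializing at $b = b^{0}$ (resp.\ $a = a^{0}$) and invoking Lemma~\ref{wang2}(5) to locate $R(b^{0})$ inside $E(S,+)$, while (C3) falls out of closure under inverses. The final isomorphism assertion is then immediate: $\phi : (T, \circ_{R}) \to ({\rm Gr}R, +)$, $a \mapsto (R(a), a)$, is bijective by construction, and the homomorphism property $\phi(a \circ_{R} b) = \phi(a) + \phi(b)$ is exactly the closure computation above. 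The principal obstacle throughout is the second-component identity itself: since Clifford semigroups admit no free cancellation and idempotents may be absorbed, the proof must descend into the strong semilattice decomposition of $(T, +)$, verifying equality of $H$-classes via (C4) applied at $(a^{0}, b^{0})$ and of the group components via the functoriality of $\varphi$ together with Lemma~\ref{wang2}(2).
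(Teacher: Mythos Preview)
Your forward-direction argument for the second-component identity has a gap. After rewriting (C4) as $\varphi_{R(b)^{0}}(a) + b^{0} = a + \varphi_{R(a)}(b^{0})$ and adding $\varphi_{R(a)}(b)$ on the right, the left side becomes $\varphi_{R(b)^{0}}(a) + b^{0} + \varphi_{R(a)}(b)$. The stray term here is $b^{0}$, not $\varphi_{R(a)}(b^{0})$, so the identity you cite does not absorb it; in general $b^{0} + \varphi_{R(a)}(b) \neq \varphi_{R(a)}(b)$ in a Clifford semigroup. The paper avoids this by proving instead that $\varphi_{R(b)^{0}}\bigl(a + \varphi_{R(a)}(b)\bigr) = a + \varphi_{R(a)}(b)$, using (C4) twice together with Lemma~\ref{wang2}(2); only then does the factored form yield the desired equality.

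The converse direction is where the real problem lies. Closure under $+$ gives you $R(c) = R(a) + R(b)$ with $c = \varphi_{R(b)^{0}}(a) + \varphi_{R(a)}(b)$, but (C1) requires $R\bigl(a + \varphi_{R(a)}(b)\bigr) = R(a) + R(b)$, so you must identify $c$ with $a + \varphi_{R(a)}(b)$ for \emph{arbitrary} $a,b$; specializing at $b = b^{0}$ does not deliver this. More seriously, your plan never invokes the Clifford hypothesis on ${\rm Gr}R$, yet that hypothesis is exactly what yields (C4): in the paper, (C4) is obtained by computing both $(R(a),a) + (R(b^{0}),b^{0})$ and $(R(b^{0}),b^{0}) + (R(a),a)$, equating them because $(R(b^{0}),b^{0})$ is a central idempotent in ${\rm Gr}R$, and then simplifying the second components. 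Without that commutativity step there is no source for (C4). Finally, appealing to Lemma~\ref{wang2}(5) here is circular, as that lemma presupposes the relative Rota--Baxter axioms you are trying to establish.
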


\begin{proof} Assume that $(T, S, \varphi, R)$ is  a  relative Rota--Baxter Clifford semigroup and $a,b,c\in T$.
Firstly, Since $\varphi_{R(a)^{0}}(a)=a$ by (C2), we have $(R(a),a)\in M$. This implies that ${\rm Gr}R\subseteq M$. Secondly,
\begin{equation*}
\begin{aligned}
&(R(a),a)+ (R(b),b)=(R(a)+R(b),\varphi_{R(b)^{0}}(a)+\varphi_{R(a)}(b)) \\
&=(R(a)+R(b),\varphi_{R(b)^{0}}(a)+\varphi_{R(a)+R(b)^{0}}(b)) =(R(a)+R(b),\varphi_{R(b)^{0}}(a+\varphi_{R(a)}(b))),
\end{aligned}
\end{equation*}
\begin{equation*}
\begin{aligned}
&\varphi_{R(b)^{0}}(a+\varphi_{R(a)}(b))=\varphi_{R(b)^{0}}(a+\varphi_{R(a)}(b^{0})+\varphi_{R(a)}(b)) \\
&=\varphi_{R(b)^{0}}(b^{0}+\varphi_{R(b^{0})}(a)+\varphi_{R(a)}(b))  =\varphi_{R(b)^{0}}(b^{0})+\varphi_{R(b)^{0}}(a)+\varphi_{R(a)+R(b)^{0}}(b) \\
&=b^{0}+\varphi_{R(b)^{0}}(a)+\varphi_{R(a)}(b)  =a+\varphi_{R(a)}(b^{0})+\varphi_{R(a)}(b)  =a+\varphi_{R(a)}(b).
\end{aligned}
\end{equation*}
This implies that $$(R(a),a)+ (R(b),b)=(R(a)+R(b),a+\varphi_{R(a)}(b)).$$ Since $R(a)+R(b)=R(a+\varphi_{R(a)}(b))$ by (C1), we have $(R(a),a)+ (R(b),b)\in {\rm Gr}R$. Observe that
$$-(R(a),a)=(-R(a),\varphi_{-R(a)}(-a))\text{ and } -R(a)=R(\varphi_{-R(a)}(-a)).$$
it follows that $-(R(a),a)\in {\rm Gr}R$.  This shows that $({\rm Gr}R, +)$ is an inverse subsemigroup of $(M,+)$.
To prove $({\rm Gr}R, +)$ is a Clifford semigroup,  we first observe that $$E({\rm Gr}R, +)=\{(R(a),a)\mid a\in E(T, +)\}
=\{(R(b^{0}), b^{0})\mid b\in T\}$$ by Lemma \ref{wang2} (4) and Lemma \ref{wang3}.  Since
\begin{equation*}
\begin{aligned}
&(R(c),c)+ (R(b^{0}),b^{0})=(R(c)+R(b^{0}),\varphi_{R(b^{0})}(c)+\varphi_{R(c)}(b^{0})) \\
&=(R(c)+R(b^{0}),\varphi_{R(b^{0})}(c)+\varphi_{R(c)+R(b^{0})}(b^{0})) =(R(c)+R(b^{0}),\varphi_{R(b^{0})}(c+\varphi_{R(c)}(b^{0}))),
\end{aligned}
\end{equation*}
\begin{equation*}
\begin{aligned}
(R(b^{0}),b^{0})+ (R(c),c)&=(R(b^{0})+R(c),\varphi_{R(b^{0})+R(c)^{0}}(b^{0})+\varphi_{R(b^{0})}(c))\\
&=(R(c)+R(b^{0}),\varphi_{R(b^{0})}(\varphi_{R(c)^{0}}(b^{0})+\varphi_{R(b^{0})}(c))),
\end{aligned}
\end{equation*}
\begin{equation*}
\begin{aligned}
&\varphi_{R(b^{0})}(\varphi_{R(c)^{0}}(b^{0})+\varphi_{R(b^{0})}(c))
=\varphi_{R(b^{0})}(\varphi_{R(c)^{0}}(b^{0})+\varphi_{R(b^{0})+R(c)^{0}}(c))\\
&=\varphi_{R(b^{0})+R(c^{0})}(b^{0}+\varphi_{R(b^{0})}(c)) =\varphi_{R(b^{0})+R(c^{0})}(c+\varphi_{R(c)}(b^{0}))\\
&=\varphi_{R(b^{0})}(\varphi_{R(c^{0})}(c)+\varphi_{R(c^{0})+R(c)}(b^{0})) =\varphi_{R(b^{0})}(c+\varphi_{R(c)}(b^{0})),
\end{aligned}
\end{equation*}
we have $$(R(c),c)+ (R(b^{0}),b^{0})=(R(b^{0}),b^{0})+ (R(c),c).$$ This implies that each idempotent of $({\rm GrR}, +)$ lies in the center of  $({\rm GrR}, +)$, and so $({\rm GrR}, +)$ is a Clifford subsemigroup.

Conversely, assume that $({\rm GrR}, +)$ is a Clifford subsemigroup of $(M, +)$. Let $a,b\in T$. Then
$(R(a),a)\in {\rm Gr}R\subseteq M$ and so $\varphi_{R(a)^{0}}(a)=a.$
This implies that (C2) holds. Since
\begin{equation*}
\begin{aligned}
&(R(a),a)+ (R(b),b)=(R(a)+R(b),\varphi_{R(b)^{0}}(a)+\varphi_{R(a)}(b)) \\
&=(R(a)+R(b),\varphi_{R(b)^{0}}(a+\varphi_{R(a)}(b)))=(R(a)+R(b),a+\varphi_{R(a)}(b))\in {\rm  Gr}R,
\end{aligned}
\end{equation*}
and so $R(a)+R(b)=R(a+\varphi_{R(a)}(b))$. This gives (C1).   Since $(R(a),a)\in {\rm Gr}R$, we have $-(R(a),a)=(-R(a),\varphi_{-R(a)}(-a))\in  {\rm Gr}R$ and
\begin{equation*}
\begin{aligned}
(R(a),a)+ (-R(a),\varphi_{-R(a)}(-a))&=(R(a)^{0},\varphi_{(-R(a))^{0}}(a)+\varphi_{R(a)}\varphi_{-R(a)}(-a))\\
&=(R(a)^{0},a-a)=(R(a)^{0},a^{0})\in  {\rm Gr}R,
\end{aligned}
\end{equation*}
This gives $R(a)^{0}=R(a^{0})$. Thus (C3) is true.  Replacing $a$ by $a^{0}$ in (C2), we have
\begin{equation}\label{wang4}
a^{0}=\varphi_{R(a^{0})^{0}}(a^{0})=\varphi_{R(a)^{00}}(a^{0})=\varphi_{R(a)^{0}}(a^{0})
\end{equation} by (C3).

Finally, since $b\in T$, we have $(R(b^{0}),b^{0})\in E({\rm GrR}, +)$, and so
$$(R(a),a)+ (R(b^{0}),b^{0})=(R(a)+R(b^{0}),\varphi_{R(a)^{0}+R(b)^{0}}(a)+\varphi_{R(a)}(b^{0})),$$
$$(R(b^{0}),b^{0})+ (R(a),a)=(R(a)+R(b^{0}),\varphi_{R(b)^{0}+R(a)^{0}}(b^{0})+\varphi_{R(b)^{0}}(a)).$$
Observe that $({\rm GrR}, +)$ is a Clifford subsemigroup, we have
$$\varphi_{R(a)^{0}+R(b)^{0}}(a)+\varphi_{R(a)}(b^{0})=\varphi_{R(b)^{0}+R(a)^{0}}(b^{0})+\varphi_{R(b)^{0}}(a)).$$
By direct calculations using (C1)--(C3) and (\ref{wang4})), we have
\begin{equation*}
\begin{aligned}
&\varphi_{R(a)^{0}+R(b)^{0}}(a)+\varphi_{R(a)}(b^{0}) =\varphi_{R(a)^{0}}(\varphi_{R(b)^{0}}(a)+\varphi_{R(a)}(b^{0}))\\
&=\varphi_{R(a)^{0}}(\varphi_{R(b)^{0}}(a)+\varphi_{R(a)+R(b)^{0}}(b^{0}))
 =\varphi_{R(a)^{0}+R(b)^{0}}(a+\varphi_{R(a)}(b^{0}))\\
&=\varphi_{(R(a)+R(b)^{0})^{0}}(a+\varphi_{R(a)}(b^{0})) =\varphi_{R(a+\varphi_{R(a)}(b^{0}))^{0}}(a+\varphi_{R(a)}(b^{0})) =a+\varphi_{R(a)}(b^{0}),
\end{aligned}
\end{equation*}
and
\begin{equation*}
\begin{aligned}
&\varphi_{R(b)^{0}+R(a)^{0}}(b^{0})+\varphi_{R(b)^{0}}(a)
=\varphi_{R(b)^{0}}(\varphi_{R(a)^{0}}(b^{0})+\varphi_{R(b)^{0}}(a))\\
&=\varphi_{R(b)^{0}}(\varphi_{R(a)^{0}}(b^{0})+\varphi_{R(b)^{0}+R(a)^{0}}(a))\\
&=\varphi_{R(b)^{0}+R(a)^{0}}(b^{0}+\varphi_{R(b)^{0}}(a)) =\varphi_{(R(b)^{0}+R(a))^{0}}(b^{0}+\varphi_{R(b^{0})}(a))\\
&=\varphi_{R(b^{0}+\varphi_{R(b^{0})}(a))^{0}}(b^{0}+\varphi_{R(b^{0})}(a)) =b^{0}+\varphi_{R(b^{0})}(a).
\end{aligned}
\end{equation*}
This gives that $a+\varphi_{R(a)}(b^{0})=b^{0}+\varphi_{R(b^{0})}(a)$. Thus (C4) holds.

Obviously, the map $\phi$ in the theorem is an bijection. For all $a,b\in T$,
by Lemma \ref{h},
\begin{equation*}
\begin{aligned}
&\varphi_{a\circ_{R}b}=(R(a\circ_{R}b),a\circ_{R}b)=(R(a)+R(b),a+\varphi_{R(a)}(b))\\
&=(R(a)+R(b),\varphi_{R(b)^{0}}(a+\varphi_{R(a)}(b)))=(R(a)+R(b),\varphi_{R(b)^{0}+R(a)^{0}}(a)+\varphi_{R(a)}(b))\\
&=(R(a),a)+ (R(b),b)=\varphi(a)+ \varphi(b).
\end{aligned}
\end{equation*}
Thus $\varphi$ is a semigroup isomorphism.
\end{proof}

\begin{defn}\label{wang8} Let $((T,+),(S,+),\phi,R)$ and $((M,+),(N,+),\varphi,B)$ be two relative Rota-Baxter Clifford semigroups and $\psi: (T,+)\rightarrow (M, +)$ and $\eta: (S,+)\rightarrow (N,+)$ be semigroup homomorphisms (respectively, isomorphisms). Then $(\psi,\eta): (T,S,\phi,R)\rightarrow (M,N,\varphi,B)$  is called a {\em   Rota-Baxter  homomorphism (respectively, Rota-Baxter isomorphism)} if $\eta R=B\psi $ and $\psi\phi_{a}=\varphi_{\eta(a)}\psi$ for all $a\in S$.
\end{defn}
\begin{lemma}\label{liweia}
Let $(T,+,\rhd)$ be a post Clifford semigroup with the sub-adjacent Clifford semigroup $(T,\circ)$. Then $R(T)=((T,+),(T,\circ),L^{\rhd}, {\rm id}_{T})$ forms a relative Rota-Baxter Clifford semigroup. If $(T,+,\rhd)$ is strong, then $R(T)$ is also strong. Moreover, if $\psi$ is a post Clifford semigroup homomorphism from $(T,+,\rhd)$ to $(S,+,\rhd)$, then $(\psi,\psi)$ is a Rota-Baxter homomorphism from $R(T)$ to $R(S)$.
\end{lemma}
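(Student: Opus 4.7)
The plan is to unpack Definition \ref{wang11} applied to the quadruple $R(T) = ((T,+),(T,\circ), L^\rhd, {\rm id}_T)$ and observe that each of (C1)--(C4) translates directly into either a post Clifford semigroup axiom or a computation that is already packaged in Theorem \ref{q3} and Lemma \ref{q2}. The first preliminary remark to record is that by Theorem \ref{q3}(1) we have $E(T,+) = E(T,\circ)$, hence $a^0$ is the same element whether we compute it in $(T,+)$ or in $(T,\circ)$; this is what lets us move fluidly between the two structures throughout the verification. The second preliminary remark is that the required semigroup homomorphism $\varphi = L^\rhd \colon (T,\circ) \to (\End(T,+), \cdot)$ is exactly the one supplied by Theorem \ref{q3}(2).

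Next I would check the four axioms in turn. For (C1), with $R = {\rm id}_T$ the equality $R(a) \circ R(b) = R(a + \varphi_{R(a)}(b))$ reduces to $a \circ b = a + (a \rhd b)$, which is the very definition of $\circ$ from Theorem \ref{q3}. For (C2), $\varphi_{R(a)^0}(a) = L^\rhd_{a^0}(a) = a^0 \rhd a$, and Lemma \ref{q2} gives $a^0 \rhd a = a$. For (C3), we must show $R(a^0) = R(a)^0$, but since $R = {\rm id}_T$ and the idempotent $a^0$ agrees in $(T,+)$ and $(T,\circ)$ by Theorem \ref{q3}(1), this is immediate. For (C4), substituting gives $a + (a \rhd b^0) = b^0 + (b^0 \rhd a)$, which is exactly axiom (P3). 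If $(T,+,\rhd)$ is strong, (C5) becomes $a^0 + (a \rhd b) = a \rhd b$, which is axiom (P5). So $R(T)$ is a (strong, when applicable) relative Rota--Baxter Clifford semigroup.

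Finally, for the morphism statement, let $\psi \colon (T,+,\rhd) \to (S,+,\rhd)$ be a post Clifford semigroup homomorphism. By Theorem \ref{q3}(3), $\psi$ is also a semigroup homomorphism from $(T,\circ)$ to $(S,\circ)$, so the pair $(\psi,\psi)$ provides morphisms on both underlying Clifford semigroups as required by Definition \ref{wang8}. The first condition $\eta R = B \psi$ degenerates to $\psi \cdot {\rm id}_T = {\rm id}_S \cdot \psi$, which is trivial. The second condition $\psi \phi_a = \varphi_{\eta(a)} \psi$ becomes $\psi L^\rhd_a = L^\rhd_{\psi(a)} \psi$ for all $a \in T$; evaluating both sides at an arbitrary $b \in T$ gives $\psi(a \rhd b) = \psi(a) \rhd \psi(b)$, which holds because $\psi$ is a post Clifford semigroup homomorphism.

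There is no real obstacle here: the lemma is essentially a bookkeeping statement translating the axioms (P1)--(P5) into the axioms (C1)--(C5) under the identification $R = {\rm id}_T$ and $S = (T,\circ)$. The only mildly subtle point to flag is the need to invoke $E(T,+) = E(T,\circ)$ so that the symbol $a^0$ is unambiguous when it appears inside conditions like (C2) and (C3).
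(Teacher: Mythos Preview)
Your proposal is correct and follows essentially the same approach as the paper: both proofs verify (C1)--(C5) directly by unpacking them under $R={\rm id}_T$ and $\varphi=L^\rhd$, invoking Theorem~\ref{q3} for the definition of $\circ$ and the homomorphism $L^\rhd$, and Lemma~\ref{q2} for (C2), with (P3) and (P5) supplying (C4) and (C5). The morphism part is likewise handled identically, via Theorem~\ref{q3}(3) and the direct computation $\psi(a\rhd b)=\psi(a)\rhd\psi(b)$.
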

\begin{proof}
By Lemma \ref{q3} (2), $L^{\rhd}: (T,\circ)\rightarrow {\rm End}(T,+),a\mapsto L_{a}^{\rhd}$ is a semigroup homomorphism. Obviously, ${\rm id}_{T}:(T,+)\rightarrow (T,\circ),x\mapsto x$ is map. Let $a,b\in T$. Then
$${\rm id}_{T}(a)\circ {\rm id}_{T}(b)=a\circ b=a+(a\rhd b)=a+L_{a}^{\rhd}(b)={\rm id}_{T}(a+L_{{\rm id}_{T}(a)}^{\rhd}(b)),$$ which gives (C1).
Lemma \ref{q2} gives $L_{{\rm id}_{T}(a)^{0}}^{\rhd}(a)=L_{a^{0}}^{\rhd}(a)=a$ and
$${\rm id}_{T}(a^{0})=a^{0}=a\circ a^{-1}={\rm id}_{T}(a)\circ {\rm id}_{T}(a)^{-1}={\rm id}_{T}(a)^{0},$$ and so (C2) and (C3) hold.
(P3) implies that
$$a+L_{{\rm id}_{T}(a)}^{\rhd}(b^{0})=a+L_{a}^{\rhd}(b^{0})=a+(a\rhd b^{0})=b^{0}+(b^{0}\rhd a)=b^{0}+L_{{\rm id}_{T}(b^{0})}^{\rhd}(a),$$ which gives (C4).
If $(T,+,\rhd)$ is strong, then
$$a^{0}+L_{{\rm id}_{T}(a)}^{\rhd}(b)=a^{0}+(a\rhd b)=a\rhd b=L_{{\rm id}_{T}(a)}^{\rhd}(b),$$ which implies that $R(T)$ is also strong. Let $\psi$ be a post Clifford semigroup homomorphism from $(T,+,\rhd)$ to $(S,+,\rhd)$. Then $\psi$ preserves $``+"$ and $``\circ"$ by Theorem \ref{q3} (3). Obviously, $\psi{\rm id}_T={\rm id}_S\psi$.
Let $a,b\in T$. Then $$\psi L^{\rhd_T}_a(b)=\psi(a\rhd_T b)=\psi(a)\rhd_S \psi(b)= L^{\rhd_T}_{\psi(a)}(\psi(b))=L^{\rhd_T}_{\psi(a)}\psi(b).$$
This gives $\psi L^{\rhd_T}_a=L^{\rhd_T}_{\psi(a)}\psi$.
Thus $(\psi,\psi)$ is a Rota-Baxter homomorphism.
\end{proof}
\begin{lemma}\label{liweib}
Let $((T,+),(S,+),\varphi,R)$ be  a relative Rota-Baxter Clifford semigroup. Define a binary operation $``\rhd"$ on $T$ as follows: For all $a,b\in T$,
$a\rhd b=\varphi_{R(a)}(b)$. Then $(T,+,\rhd)$ is a post Clifford semigroup and $(L^\rhd_a|_{H_a})^{-1}=\varphi_{-R(a)}|_{H_a}$  for all $a\in T$. If $((T,+),(S,+),\varphi,R)$ is strong, then $(T,+,\rhd)$ is also strong.
Moreover, if $(\psi,\eta)$ is a Rota-Baxter homomorphism from $((T,+),(S,+),\varphi,R)$ to $((M,+),(N,+),\phi,B)$, then $\psi$ is
a post Clifford semigroup homomorphism from $(T,+,\rhd)$ to $(S,+,\rhd)$.
\end{lemma}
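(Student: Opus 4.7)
The plan is to verify each of the four axioms (P1)--(P4) for the binary operation $a\rhd b=\varphi_{R(a)}(b)$ by directly invoking the corresponding axioms (C1)--(C4) of the relative Rota--Baxter Clifford semigroup together with Lemma \ref{wang2} and Corollary \ref{kangweiccc}. Then (P5) in the strong case will be immediate from (C5), and the homomorphism statement will follow from unwinding the compatibility conditions $\eta R=B\psi$ and $\psi\varphi_a=\phi_{\eta(a)}\psi$ defining a Rota--Baxter homomorphism.

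In detail, axiom (P1) is immediate, since $L_a^{\rhd}=\varphi_{R(a)}$ and $\varphi_{R(a)}\in\End(T,+)$ by Lemma \ref{wang2}. For (P2), first compute $a+(a\rhd b)=a+\varphi_{R(a)}(b)$, then apply $\varphi$ to the image under $R$ and use (C1) to obtain $R(a+\varphi_{R(a)}(b))=R(a)+R(b)$; combined with the fact that $\varphi$ is a semigroup homomorphism from $(S,+)$ to $(\End(T,+),\cdot)$, this yields $L_{a+(a\rhd b)}^{\rhd}(c)=\varphi_{R(a)+R(b)}(c)=\varphi_{R(a)}\varphi_{R(b)}(c)=a\rhd(b\rhd c)$. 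Axiom (P3) is literally (C4) rewritten in the $\rhd$-notation: $a+(a\rhd b^0)=a+\varphi_{R(a)}(b^0)=b^0+\varphi_{R(b^0)}(a)=b^0+(b^0\rhd a)$. For (P4), Corollary \ref{kangweiccc} gives directly that $\varphi_{R(a)}(H_a)\subseteq H_a$, that $\varphi_{R(a)}|_{H_a}$ is an automorphism of $(H_a,+)$, and that its inverse is $\varphi_{-R(a)}|_{H_a}$, which simultaneously supplies (P4) and the formula $(L_a^{\rhd}|_{H_a})^{-1}=\varphi_{-R(a)}|_{H_a}$.

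If $((T,+),(S,+),\varphi,R)$ is strong, then (C5) reads $a^0+\varphi_{R(a)}(b)=\varphi_{R(a)}(b)$, which is precisely (P5) after the translation $a\rhd b=\varphi_{R(a)}(b)$. Finally, for a Rota--Baxter homomorphism $(\psi,\eta)\colon((T,+),(S,+),\varphi,R)\to((M,+),(N,+),\phi,B)$, the map $\psi$ is already a semigroup homomorphism from $(T,+)$ to $(M,+)$; it remains to check it preserves $\rhd$. For all $a,b\in T$, using successively $\psi\varphi_x=\phi_{\eta(x)}\psi$ with $x=R(a)$ and $\eta R=B\psi$, one gets
\[
\psi(a\rhd_T b)=\psi(\varphi_{R(a)}(b))=\phi_{\eta(R(a))}(\psi(b))=\phi_{B(\psi(a))}(\psi(b))=\psi(a)\rhd_S\psi(b),
\]
which finishes the proof.

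The main obstacle is not technical but organizational: one must be careful that axiom (P4) needs the invertibility of $\varphi_{R(a)}|_{H_a}$, which is not built directly into Definition \ref{wang11} but is a consequence worked out in Corollary \ref{kangweiccc} via the descendent Clifford semigroup $(T,\circ_R)$. Once this is correctly cited, every remaining verification is a short rewriting.
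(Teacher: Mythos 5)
Your proposal is correct and follows essentially the same route as the paper: (P1)--(P3) and (P5) are read off from the definition, (C1), (C4) and (C5) respectively, (P4) together with the inverse formula is delegated to Corollary \ref{kangweiccc}, and the homomorphism claim is the same two-line computation using $\psi\varphi_{R(a)}=\phi_{\eta(R(a))}\psi$ and $\eta R=B\psi$. No gaps.
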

\begin{proof}
Let $a,b,c\in T$. Since $\varphi_{R(a)}\in {\rm End}(T,+)$, we have
$$a\rhd (b+c)=\varphi_{R(a)}(b+c)=\varphi_{R(a)}(b)+\varphi_{R(a)}(c)=(a\rhd b)+(a\rhd c).$$
So (P1) holds. Since $\varphi$ is a semigroup homomorphism, by (C1) we obtain that
$$(a+(a\rhd b))\rhd c=(a+\varphi_{R(a)}(b))\rhd c=\varphi_{R(a+\varphi_{R(a)}(b))}(c)$$$$
=\varphi_{R(a)+R(b)}(c)=\varphi_{R(a)}\varphi_{R(b)}(c)
=a\rhd (b\rhd c).$$ This proves (P2).
Moreover, (C4) gives that $$a+(a\rhd b^{0})=a+\varphi_{R(a)}(b^{0})=b^{0}+\varphi_{R(b^{0})}(a)=b^{0}+(b^{0}\rhd a),$$  and so (P3) is true.
Observe that $L^\rhd_a (b)=a\rhd b=\varphi_{R(a)}(b)$, it follows that $L^\rhd_a=\varphi_{R(a)}$ for all $a,b\in T$.  By Corollary \ref{kangweiccc}, (P4) holds and $(L^\rhd_a|_{H_a})^{-1}=\varphi_{-R(a)}|_{H_a}$  for  all $a\in T$.
If $((T,+),(S,+),\varphi,R)$ is strong, then $a^{0}+(a\rhd b)=a^{0}+\varphi_{R(a)}(b)=\varphi_{R(a)}(b)=a\rhd b.$ This shows (P5) and so $(T,+,\rhd)$ is also strong. Let $(\psi,\eta)$ be a Rota-Baxter homomorphism from $((T,+),(S,+),\varphi,R)$ to $((M,+),(N,+),\phi,B)$. Then $\eta R=B\psi$ and $\eta\varphi_a=\phi_{\eta(a)}\psi$ for all $a\in T$. This implies that $$\psi(a\rhd b) =\psi(\varphi_{R(a)}(b))=(\psi\varphi_{R(a)})(b)=(\phi_{\eta(R(a))}\psi)(b)$$$$=\phi_{\eta(R(a))}(\psi(b))=\phi_{(\eta R)(a)}(\psi(b))=\phi_{(B\psi)(a)}(\psi(b))=\phi_{B(\psi(a))}(\psi(b))=\psi(a)\rhd \psi(b).$$ This shows that $\psi$ is
a post Clifford semigroup homomorphism.
\end{proof}
\begin{coro} In Definition \ref{wang11}, in the presence of (C1)--(C3), (C4) holds if and only if
$a+\phi_{R(a)}(b^0)=a+b^0$ and $b^0+\phi_{R(b^0)}(a)=b^0 +a$ for all $a,b\in T$.
\end{coro}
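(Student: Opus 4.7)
My plan is to treat the two implications asymmetrically, and to reduce the non-trivial direction to a standard fact about dual weak left braces by chaining several lemmas already proved in the paper.

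For the easy direction, I will assume the two pointwise identities. Since $b^{0}\in E(T,+)\subseteq C(T,+)$, one has $a+b^{0}=b^{0}+a$, so the chain
\[
a+\phi_{R(a)}(b^{0})=a+b^{0}=b^{0}+a=b^{0}+\phi_{R(b^{0})}(a)
\]
immediately recovers (C4). No use of (C1)--(C3) is needed in this direction.

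For the hard direction, I assume (C1)--(C4). My first step will be to use Lemma \ref{liweib} to promote the relative Rota--Baxter structure to a post Clifford semigroup $(T,+,\rhd)$ via the recipe $a\rhd b:=\phi_{R(a)}(b)$. I then invoke Lemma \ref{q4} to obtain a dual weak left brace $(T,+,\circ)$, where the operation $\circ$ coincides by construction with the descendent operation $a\circ_{R}b=a+\phi_{R(a)}(b)$ produced by Lemma \ref{h}. At this stage the target identities reduce to instances of Lemma \ref{wang5}: for every $e\in E(T,+)=E(T,\circ_{R})$ and every $a\in T$,
\[
a\circ_{R}e=e\circ_{R}a=a+e=e+a.
\]
Specialising $e=b^{0}$ and unfolding the two products as $a\circ_{R}b^{0}=a+\phi_{R(a)}(b^{0})$ and $b^{0}\circ_{R}a=b^{0}+\phi_{R(b^{0})}(a)$ delivers both pointwise identities in one stroke.

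The main obstacle is conceptual rather than computational: one must recognise that (C4) by itself only encodes the commutativity of $a$ with the idempotent $b^{0}$ \emph{after} it has been moved across $\phi$ on both sides, whereas the sharper pair of identities is precisely the absorption rule $a\circ_{R}e=a+e$ enjoyed by every dual weak left brace. Once this observation is made, the argument is mechanical and all the substantive content is packaged inside Lemmas \ref{h}, \ref{liweib}, \ref{q4}, and \ref{wang5}; I do not anticipate needing any further direct manipulation of (C1)--(C4).
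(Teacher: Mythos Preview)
Your proposal is correct and follows essentially the same route as the paper: for the forward direction you pass through Lemma~\ref{liweib} and Lemma~\ref{q4} (implicitly Theorem~\ref{q3}) to obtain the dual weak left brace $(T,+,\circ_R)$, and then read off both identities from Lemma~\ref{wang5} with $e=b^{0}$; the converse is handled by the centrality of idempotents in $(T,+)$, which the paper simply calls ``trivial.''
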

\begin{proof}
By Theorem \ref{q3} and Lemma \ref{q4} and \ref{liweib}, $(T,+,\circ)$ forms a dual weak left brace, where $a\circ b=a+\varphi_{R(a)}(b)$ for all $a,b\in T$. In view of Lemma \ref{wang5},
$$a+b^{0}=a\circ_R b^{0}=a+\phi_{R(a)}(b^{0}),\,\,\,\,b^{0}+\phi_{R(b^{0})}(a)=b^{0}\circ_{R} a=b^{0} +a.$$This gives the necessity. The sufficiency is trivial.
\end{proof}

\begin{coro}\label{rotajie}Let $((T,+),(S,+),\varphi,R)$ be  a relative Rota-Baxter Clifford semigroup.  Then  the map $r: T\times T\rightarrow T\times T$ defined by
$$r(a,b)=(-a+a+\varphi_{R(a)}(b),\,\, \varphi_{-R(\varphi_{R(a)}(b))}(-\varphi_{R(a)}(b)+a+\varphi_{R(a)}(b))$$
$\mbox{ for all } a,b\in T$, is a solution of the Yang-Baxter equation.
\end{coro}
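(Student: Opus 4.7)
The plan is to reduce this to Theorem~\ref{postjie} by using Lemma~\ref{liweib} to pass from the relative Rota--Baxter Clifford semigroup $((T,+),(S,+),\varphi,R)$ to the post Clifford semigroup structure on $(T,+)$ with operation $a\rhd b=\varphi_{R(a)}(b)$. Lemma~\ref{liweib} explicitly records that $(T,+,\rhd)$ is a post Clifford semigroup whose left-multiplication operators satisfy $L^{\rhd}_a=\varphi_{R(a)}$ and $(L^{\rhd}_a|_{H_a})^{-1}=\varphi_{-R(a)}|_{H_a}$. So a single invocation of Theorem~\ref{postjie} will yield a solution of the Yang--Baxter equation, and what remains is to rewrite that solution in the purely Rota--Baxter language used in the statement of the corollary.

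Comparing first components is immediate: Theorem~\ref{postjie} gives $-a+a+(a\rhd b)=-a+a+\varphi_{R(a)}(b)$, which matches the first component in the corollary. For the second component, Theorem~\ref{postjie} offers the fully expanded formula
\[
(L^{\rhd}_{a\rhd b}|_{H_{a\rhd b}})^{-1}(-(a\rhd b))+\bigl((L^{\rhd}_{a\rhd b}|_{H_{a\rhd b}})^{-1}(-(a\rhd b))\rhd(a+(a\rhd b))\bigr).
\]
Setting $c=\varphi_{R(a)}(b)=a\rhd b$, Lemma~\ref{liweib} turns the first summand into $\varphi_{-R(c)}(-c)$ and the second summand into $\varphi_{R(\varphi_{-R(c)}(-c))}(a+c)$. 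At this point Lemma~\ref{wang2}(3) kicks in: it gives $R(\varphi_{-R(c)}(-c))=-R(c)$, so the inner subscript collapses and the second summand simplifies to $\varphi_{-R(c)}(a+c)$. Using that $\varphi_{-R(c)}$ is a $(+)$-endomorphism of $T$, the two summands merge into $\varphi_{-R(c)}(-c+a+c)=\varphi_{-R(\varphi_{R(a)}(b))}(-\varphi_{R(a)}(b)+a+\varphi_{R(a)}(b))$, which is precisely the expression in the corollary.

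The main obstacle is not conceptual but bookkeeping: one has to chase carefully the nested applications of $\varphi$ and $R$ and invoke Lemma~\ref{wang2}(3) at exactly the step where $R(\varphi_{-R(c)}(-c))=-R(c)$ is needed, so that the index $R(\varphi_{-R(c)}(-c))$ collapses to $-R(c)$ and the two summands can then be merged via additivity of $\varphi_{-R(c)}$. No new identities are required beyond those already established in Sections~2 and~5, so once this identification is made, the Yang--Baxter property is inherited from Theorem~\ref{postjie}.
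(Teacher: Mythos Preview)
Your proof is correct and follows essentially the same route as the paper's own argument: pass to the post Clifford semigroup $(T,+,\rhd)$ via Lemma~\ref{liweib}, invoke Theorem~\ref{postjie}, identify the first component directly, and for the second component use Lemma~\ref{wang2}(3) to collapse the subscript $R(\varphi_{-R(c)}(-c))$ to $-R(c)$ and then the additivity of $\varphi_{-R(c)}$ (the paper cites this as Lemma~\ref{wang2}(1)) to merge the two summands. The only cosmetic difference is notation: the paper writes $u=\varphi_{-R(a\rhd b)}(-(a\rhd b))$ where you write $c=a\rhd b$, but the computations are step-for-step the same.
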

\begin{proof}
Define a binary operation $``\rhd"$ on $T$ as follows: For all $a,b\in T$, $a\rhd b=\varphi_{R(a)}(b)$. Then $(T,+,\rhd)$ is a post Clifford semigroup and $(L^\rhd_a|_{H_a})^{-1}=\varphi_{-R(a)}|_{H_a}$  for  all $a\in T$ by Lemma \ref{liweib}. Let $a,b\in T$. Observe that $-a+a+(a\rhd b)=-a+a+\varphi_{R(a)}(b)$. Denote $u=(L^\rhd_{a\rhd b}|_{H_{a\rhd b}})^{-1}(-(a\rhd b))$. Then $u=\varphi_{-R(a\rhd b)}(-(a\rhd b))$, whence  $R(u)=-R(a\rhd b)$ by Lemma \ref{wang2} (3).
This implies that  $$(L^\rhd_{a\rhd b}|_{H_{a\rhd b}})^{-1}(-(a\rhd b))+((L^\rhd_{a\rhd b}|_{H_{a\rhd b}})^{-1}(-(a\rhd b))\rhd (a+(a\rhd b)))=u+(u\rhd(a+(a\rhd b)))$$
$$=u+\varphi_{R(u)}(a+(a\rhd b))=\varphi_{-R(a\rhd b)}(-(a\rhd b))+\varphi_{-R(a\rhd b)}(a+(a\rhd b))\overset{\rm Lemma\, \ref{wang2}\, (1)}{=}$$$$\varphi_{-R(a\rhd b)}(-(a\rhd b)+a+(a\rhd b))=\varphi_{-R(\varphi_{R(a)}(b))}(-(\varphi_{R(a)}(b))+a+(\varphi_{R(a)}(b))).$$ Thus the result follows from Theorem \ref{postjie}.
\end{proof}

\begin{theorem}The category ${\mathbb{BRRBCS}}$ of bijective  relative Rota-Baxter Clifford semigroups together with   Rota--Baxter  homomorphisms is equivalent to the category ${\mathbb{PCS}}$ of post Clifford semigroups together with  post Clifford semigroups homomorphisms. As a consequence, the category   of bijective strong  relative Rota--Baxter Clifford semigroups   is equivalent to the category  of strong post Clifford semigroups.
\end{theorem}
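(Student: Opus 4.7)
The plan is to exhibit a pair of functors, one in each direction, whose compositions are naturally isomorphic to the respective identities; Lemmas~\ref{liweia} and \ref{liweib} supply the object-level constructions, and the bijectivity hypothesis on $R$ is exactly what makes the natural isomorphism from $\mathcal{G}\mathcal{F}$ to $\mathrm{id}_{\mathbb{BRRBCS}}$ available.

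I would define $\mathcal{F}\colon \mathbb{BRRBCS}\to \mathbb{PCS}$ on objects by $\mathcal{F}((T,+),(S,+),\varphi,R)=(T,+,\rhd)$ with $a\rhd b=\varphi_{R(a)}(b)$, and on morphisms by $\mathcal{F}(\psi,\eta)=\psi$; Lemma~\ref{liweib} guarantees well-definedness. Dually, $\mathcal{G}\colon \mathbb{PCS}\to \mathbb{BRRBCS}$ sends $(T,+,\rhd)$ to $((T,+),(T,\circ),L^{\rhd},\mathrm{id}_T)$ and a morphism $\psi$ to $(\psi,\psi)$; Lemma~\ref{liweia} supplies well-definedness, and $\mathrm{id}_T$ is automatically bijective so the target lies in $\mathbb{BRRBCS}$.

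The verification $\mathcal{F}\mathcal{G}=\mathrm{id}_{\mathbb{PCS}}$ is immediate: if $(T,+,\rhd')$ denotes $\mathcal{F}\mathcal{G}(T,+,\rhd)$, then $a\rhd' b=L^{\rhd}_{\mathrm{id}_T(a)}(b)=a\rhd b$, and $\mathcal{F}\mathcal{G}(\psi)=\psi$ on morphisms. For the other composition I would take $\Phi_{\mathcal{T}}:=(\mathrm{id}_T, R)$ for each object $\mathcal{T}=((T,+),(S,+),\varphi,R)$; the source $\mathcal{G}\mathcal{F}(\mathcal{T})$ equals $((T,+),(T,\circ_R),L^{\rhd},\mathrm{id}_T)$ with $a\rhd b=\varphi_{R(a)}(b)$. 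That $(\mathrm{id}_T,R)$ is a Rota--Baxter isomorphism in the sense of Definition~\ref{wang8} reduces to three points: $\mathrm{id}_T$ is a semigroup isomorphism of $(T,+)$; $R\colon (T,\circ_R)\to (S,+)$ is a semigroup isomorphism, by Lemma~\ref{h} combined with the bijectivity of $R$; and the compatibility conditions $R\circ\mathrm{id}_T=R\circ\mathrm{id}_T$ and $\mathrm{id}_T\circ L^{\rhd}_a=\varphi_{R(a)}\circ\mathrm{id}_T$ hold by construction.

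The main point requiring attention will be naturality of $\Phi$. Given a Rota--Baxter homomorphism $(\psi,\eta)\colon \mathcal{T}\to \mathcal{T}'=((M,+),(N,+),\phi,B)$, the naturality square collapses to the identity
\[
(\psi,\eta)\circ(\mathrm{id}_T,R)=(\psi,\eta R)=(\psi,B\psi)=(\mathrm{id}_M,B)\circ(\psi,\psi),
\]
where the middle equality is precisely the axiom $\eta R=B\psi$ of Definition~\ref{wang8}, so no further work is needed. Finally, the strong case is an immediate byproduct: the ``strong'' clauses of Lemmas~\ref{liweia} and \ref{liweib} ensure that $\mathcal{F}$ and $\mathcal{G}$ restrict to functors between the strong subcategories, $\Phi$ restricts as well, and one obtains the claimed equivalence between bijective strong relative Rota--Baxter Clifford semigroups and strong post Clifford semigroups.
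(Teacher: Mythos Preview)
Your proposal is correct and follows essentially the same approach as the paper: the same functors $\mathcal{F}$ and $\mathcal{G}$, the same verification that $\mathcal{F}\mathcal{G}=\mathrm{id}_{\mathbb{PCS}}$ on the nose, and the same natural isomorphism $(\mathrm{id}_T,R)\colon \mathcal{G}\mathcal{F}(\mathcal{T})\to \mathcal{T}$ witnessing $\mathcal{G}\mathcal{F}\cong\mathrm{id}_{\mathbb{BRRBCS}}$, with naturality reducing to the axiom $\eta R=B\psi$ exactly as you wrote.
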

\begin{proof}
Define ${\mathcal F}: \mathbb{BRRBCS}\rightarrow \mathbb{PCS}$ as follows:  For any $$(T,S,\phi, R), (M,N,\varphi,B)\in {\rm Obj}(\mathbb{BRRBCS}) \mbox{ and } (\psi,\eta)\in {\rm Hom}((T,S,\phi, R),(M,N,\varphi,B)),$$
$${\mathcal F}(T,S,\phi, R)=(T, +, \rhd_R),\,\,\,\,\, {\mathcal F}(\psi,\eta)=\psi:  (T, +, \rhd_R)\rightarrow  (M, +, \rhd_B).$$
On the other hand, define ${\mathcal G}:\mathbb{PCS}\rightarrow \mathbb{BRRBCS}$ as follows: For any  $(T, +, \rhd),  (S, +, \rhd)\in {\rm Obj}(\mathbb{PCS})$ and
$\psi\in {\rm Hom}((T, +, \rhd), (S, +, \rhd))$,
$${\mathcal G} (T, +, \rhd)=((T,+), (T,\circ), L^{\rhd_T}, {\rm id}_T),$$
$${\mathcal G}(\psi)=(\psi,\psi): ((T,+), (T,\circ), L^{\rhd_T},  {\rm id}_T)\rightarrow  ((S,+), (S,\circ), L^{\rhd_S},  {\rm id}_S).$$
By Lemmas \ref{liweia} and \ref{liweib}, one can show that  $\mathcal F$ and $\mathcal G$  are functors routinely.

Let $(T, +, \rhd)$ be a post Clifford semigroup denote $\mathcal{G}(T,+,\rhd)=((T,+), (T,\circ), L^{\rhd_T}, {\rm id}_T)$ and $\mathcal{FG}(T,+,\rhd)=(T,+,\rhd')$. Let $a,b\in T$. Then
$a\rhd'b=L^{\rhd_T}_{{\rm id}_T(a)}(b)=L^{\rhd_T}_a(b)=a\rhd b.$
This shows that $\rhd'$ is equal to $\rhd$. Thus  $\mathcal{FG}={\rm id}_{\mathbb{PCS}}$.

In the following, we shall prove that $\mathcal{G F}\cong{\rm id}_{\mathbb{BRRBCS}}$.
Let $(T, S, \phi, R)\in {\rm Obj}(\mathbb{BRRBCS})$.
Since $R$ is bijective, and is a  semigroup isomorphism from $(T, \circ_R)$ onto $(S, +)$ by Theorem \ref{h}. Obviously, ${\rm id}_T$ is a semigroup isomorphism
from $(T,+)$ onto $(T,+)$ and  $R\, {\rm id}_T=\, R{\rm id}_T$.  Let $a, b\in T$. Then
$L^{\rhd_{R}}_a(b)=a\rhd_{R} b=\phi_{R(a)}(b)$.
This gives $L^{\rhd_{R}}_a=\phi_{R(a)}$, and so ${\rm id}_TL^{\rhd_{R}}_a=\phi_{R(a)}{\rm id}_T$.
Thus in the category $\mathbb{BRRBCS}$,  $({\rm id}_T, R)$ is an isomorphism from $$(\mathcal{G}\mathcal{F})(T,S,\phi,R)=\mathcal{G}(T,+, \rhd_{R})=(T^{+},T^{\circ_{R}}, L^{\rhd_{R}},{\rm id}_T)$$
onto ${\rm id}_{\mathbb{BSRRBCS}}(T,S,\phi,R)=(T,S,\phi,R)$.
Denote $\Pi_{(T, S, \phi, R)}=({\rm id}_T, R)$ for each $(T, S, \phi, R)\in {\rm Obj}(\mathbb{BRRBCS})$ and let $$\Pi=\{\Pi_{(T,S,\phi,R)}\mid(T,S,\phi,R)\in  {\rm Obj}(\mathbb{BRRBCS})\}.$$
Assume that $$(T,S,\phi,R),(M,N,\varphi,B) \in {\rm Obj}(\mathbb{BRRBCS}), (\psi,\eta)\in {\rm Hom}((T,S,\phi, R),(M,N,\varphi,B)).$$
Then $\eta R=B\psi$ by Definition \ref{wang8}. Obviously,  $\psi {\rm id}_T=\psi= {\rm id}_M \psi$.
This implies that
$${\rm id}_{\mathbb{BRRBCS}}(\psi,\eta)\Pi_{(T,S,\phi,R)}=(\psi,\eta)({\rm id}_T,R)=(\psi {\rm id}_T,\eta R)
 $$$$=({\rm id}_M\psi, B \psi) )=({\rm id}_M, B)(\psi, \psi)
 =\Pi_{(M,N,\varphi,B)} (\mathcal{G}\mathcal{F})(\psi,\eta).$$
This shows that $\Pi$ is a natural isomorphism  from $\mathcal{G}\mathcal{F}$ to ${\rm id}_{\mathbb{BRRBCS}}$. So $\mathcal{G}\mathcal{F}\cong {\rm id}_{\mathbb{BRRBCS}}$. This together with $\mathcal{FG}={\rm id}_{\mathbb{PSC}}$ gives that $\mathbb{BRRBCS}$ is equivalent to  ${\mathbb{PSC}}$. The final statement follows from Lemmas \ref{liweia} and \ref{liweib}.
\end{proof}
The following proposition gives an answer of \cite[Question 2]{c1} for strong Rota--Baxter Clifford semigroups.
\begin{prop}\label{j} Let $(T, +, \circ_R)$ and $(T, +, \circ_B)$  be the dual weak left braces induced by strong relative Rota--Baxter Clifford semigroups $(T,S,\phi,R)$ and $(T,S,\varphi,B)$, respectively. Then $(T, +, \circ_R)\cong(T, +, \circ_B)$ if and only if there exists a semigroup isomorphism  $\psi_{T}: (T, {+})\rightarrow (T, {+})$ such that $\psi_{T}\phi_{R(a)}=\varphi_{B(\psi_{T}(a))}\psi_{T}$ for all $a\in T.$ In particular, if $(T,S,\phi,R)$ and $(T,S,\varphi,B)$ are  strong Rota--Baxter Clifford semigroups (that is, $T=S$ and $\phi_a(b)=a+b-a=\varphi_a(b)$ for all $a,b\in T$), then $(T, +, \circ_R)\cong(T, +, \circ_B)$ if and only if there exists a semigroup isomorphism  $\psi_{T}: (T, {+})\rightarrow (T, {+})$ such that $-\psi_T R(a)+B\psi_T(a)\in C(T).$
\end{prop}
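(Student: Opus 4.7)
The plan is to invoke Lemma \ref{k} to reduce the brace isomorphism condition to a statement about the $\lambda$-maps, and then to compute these $\lambda$-maps explicitly using the strongness axiom (C5). Since a brace isomorphism $\psi:(T,+,\circ_R)\to(T,+,\circ_B)$ is in particular a semigroup isomorphism of $(T,+)$, the candidate $\psi_T$ in the proposition will just be $\psi$ itself; conversely, given a semigroup isomorphism $\psi_T$ of $(T,+)$ satisfying the stated intertwining condition, I will show it is automatically a brace isomorphism. Lemma \ref{k} says this holds precisely when $\psi_T\lambda^{T_R}_a=\lambda^{T_B}_{\psi_T(a)}\psi_T$ for all $a\in T$.

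Unwinding the definition of $\circ_R$,
\[\lambda^{T_R}_a(b)=-a+a\circ_R b=-a+a+\phi_{R(a)}(b)=a^{0}+\phi_{R(a)}(b),\]
and (C5) collapses this to $\phi_{R(a)}(b)$. Hence $\lambda^{T_R}_a=\phi_{R(a)}$, and analogously $\lambda^{T_B}_a=\varphi_{B(a)}$. Substituting into the Lemma \ref{k} criterion yields exactly $\psi_T\phi_{R(a)}=\varphi_{B(\psi_T(a))}\psi_T$, which establishes the first equivalence in both directions.

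For the particular case, plug $\phi_a(b)=\varphi_a(b)=a+b-a$ into the intertwining condition and apply both sides to an arbitrary $b\in T$ to get $\psi_T R(a)+\psi_T(b)-\psi_T R(a)=B\psi_T(a)+\psi_T(b)-B\psi_T(a)$. Write $x=\psi_T R(a)$ and $y=B\psi_T(a)$; since $\psi_T$ is bijective, the condition reads $x+c-x=y+c-y$ for every $c\in T$. Strongness, via Lemma \ref{strong}, forces $R(a^{0})=a^{0}$ and $B(\psi_T(a)^{0})=\psi_T(a)^{0}$, so $x^{0}=\psi_T R(a^{0})=\psi_T(a^{0})=B(\psi_T(a^{0}))=y^{0}$. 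Given this equality of idempotents, the identity $x+c-x=y+c-y$ for all $c$ is equivalent to $-y+x\in C(T,+)$: left-multiply by $-y$ and right-multiply by $x$, using centrality of the idempotents $x^{0},y^{0}$, the identity $y^{0}-y=-y$, and $x+x^{0}=x$ (with $x^{0}=y^{0}$ needed for the reverse direction). Finally, $C(T,+)$ is closed under inversion in a Clifford semigroup, so $-y+x\in C(T,+)$ is equivalent to $-\psi_T R(a)+B\psi_T(a)\in C(T,+)$, exactly as claimed.

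The main obstacle is the careful Clifford-semigroup manipulation in the particular case: unlike the group setting, one cannot freely cancel, and the reverse direction of the equivalence $x+c-x=y+c-y\Leftrightarrow -y+x\in C(T,+)$ genuinely depends on the normalization $x^{0}=y^{0}$ provided by the strongness of $R$ and $B$. Once that identity is in place, the rest is a routine bookkeeping exercise using centrality of idempotents in $(T,+)$.
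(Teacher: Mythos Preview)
Your proof is correct and follows essentially the same route as the paper: for the first part you invoke Lemma~\ref{k} (whose content the paper's own proof of Proposition~\ref{j} simply unfolds inline), and the identification $\lambda^{T_R}_a=\phi_{R(a)}$ via (C5) is exactly what drives both arguments. For the particular case your chain of equivalences matches the paper's, with the welcome addition that you make explicit the role of the normalization $x^{0}=y^{0}$ (coming from strongness via Lemma~\ref{strong}) in the reverse implications---the paper's proof leaves this implicit in its string of $\Longleftrightarrow$'s.
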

\begin{proof} Let $\psi: (T, {+})\rightarrow (T, {+})$ be a semigroup isomorphism and let $a,b\in T$.  Then
\begin{equation*}
\begin{aligned}
&\psi(a\circ_{R}b)=\psi(a)\circ_{B}\psi(b)\\
&\Longleftrightarrow \psi(a+\phi_{R(a)}(b))=\psi(a)+\varphi_{B(\psi(a))}(\psi(b))\\
&\Longleftrightarrow \psi(a)+\psi\phi_{R(a)}(b)=\psi(a)+\varphi_{B(\psi(a))}(\psi(b))\\
&\Longleftrightarrow -\psi(a)+\psi(a)+\psi\phi_{R(a)}(b)=-\psi(a)+\psi(a)+\varphi_{B(\psi(a))}(\psi(b))\\
&\Longleftrightarrow \psi(a^{0}+\phi_{R(a)}(b))=\psi(a)^{0}+\varphi_{B(\psi(a))}(\psi(b))\\
&\Longleftrightarrow \psi(\phi_{R(a)}(b))=\varphi_{B(\psi(a))}(\psi(b))\\
&\Longleftrightarrow \psi\phi_{R(a)}(b)=\varphi_{B(\psi(a))}\psi(b).
\end{aligned}
\end{equation*}
This gives the first part. In the special case, we have
\begin{equation*}
\begin{aligned}
& \psi\phi_{R(a)}(b)=\varphi_{B(\psi(a))}\psi(b)\\
&\Longleftrightarrow  \psi(R(a)+b-R(a))=B(\psi(a))+\psi(b)-B(\psi(a))\\
&\Longleftrightarrow  \psi R(a)+\psi(b)-\psi R(a)=B(\psi(a))+\psi(b)-B(\psi(a))\\
&\Longleftrightarrow  \psi(b)-\psi R(a)=-\psi R(a)+B(\psi(a))+\psi(b)-B(\psi(a))\\
&\Longleftrightarrow  \psi(b)-\psi R(a)+B(\psi(a))=-\psi R(a)+B(\psi(a))+\psi(b)\\
&\Longleftrightarrow  -\psi R(a)+B\psi(a)\in C(T).
\end{aligned}
\end{equation*}
This shows the second part.
\end{proof}
\begin{prop}
Let $(T,S,\phi,R)$ and $(M,N,\varphi,B)$ be two strong  relative Rota--Baxter Clifford semigroups such that $R$ is a bijection. Then there is a one-to-one correspondence between ${\rm Hom}((T, +, \circ_R), (M, +, \circ_B))$ and ${\rm Hom}((T,S,\phi,R),(M,N,\varphi,B))$.
\end{prop}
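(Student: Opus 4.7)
The plan is to exhibit mutually inverse correspondences
$$\Phi: {\rm Hom}((T,+,\circ_R),(M,+,\circ_B)) \longrightarrow {\rm Hom}((T,S,\phi,R),(M,N,\varphi,B))$$
and $\Psi$ in the opposite direction, defined by $\Phi(\psi)=(\psi,\eta_\psi)$ with $\eta_\psi:=B\circ\psi\circ R^{-1}$ (well-defined since $R$ is a bijection from $T$ to $S$), and $\Psi(\psi,\eta)=\psi$.

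To show that $\Phi(\psi)$ is a Rota--Baxter homomorphism, I would first verify that $\eta_\psi:(S,+)\to(N,+)$ is a semigroup homomorphism. For $s_i=R(a_i)\in S$, Lemma \ref{h} yields $R(a_1\circ_R a_2)=R(a_1)+R(a_2)$, hence $R^{-1}(s_1+s_2)=a_1\circ_R a_2$; applying successively the brace homomorphism property of $\psi$ and the fact that $B:(M,\circ_B)\to(N,+)$ is a semigroup homomorphism gives $\eta_\psi(s_1+s_2)=\eta_\psi(s_1)+\eta_\psi(s_2)$. The identity $\eta_\psi R=B\psi$ is immediate from the definition. The remaining Rota--Baxter compatibility $\psi\phi_s=\varphi_{\eta_\psi(s)}\psi$ for all $s\in S$ is the main point, and this is where the strong hypothesis enters decisively: axiom (C5) together with $-a+a=a^0$ yields
$$\lambda_a^T(b)=-a+(a\circ_R b)=-a+a+\phi_{R(a)}(b)=a^0+\phi_{R(a)}(b)=\phi_{R(a)}(b),$$
so $\lambda_a^T=\phi_{R(a)}$, and analogously $\lambda_{\psi(a)}^M=\varphi_{B\psi(a)}$. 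Lemma \ref{k} then translates the brace homomorphism property of $\psi$ into $\psi\phi_{R(a)}=\varphi_{B\psi(a)}\psi=\varphi_{\eta_\psi R(a)}\psi$, and surjectivity of $R$ extends this identity to every $s\in S$.

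Conversely, given a Rota--Baxter homomorphism $(\psi,\eta)$, the map $\psi:(T,+)\to(M,+)$ is already a semigroup homomorphism, and the compatibility $\psi\phi_{R(a)}=\varphi_{\eta R(a)}\psi=\varphi_{B\psi(a)}\psi$, combined with the identities $\lambda_a^T=\phi_{R(a)}$ and $\lambda_{\psi(a)}^M=\varphi_{B\psi(a)}$ valid under the strong hypothesis, gives $\psi\lambda_a^T=\lambda_{\psi(a)}^M\psi$ for every $a\in T$; by Lemma \ref{k}, $\psi$ is a brace homomorphism from $(T,+,\circ_R)$ to $(M,+,\circ_B)$. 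Finally, $\Psi\Phi(\psi)=\psi$ is immediate, while for $\Phi\Psi$, given $(\psi,\eta)$ the condition $\eta R=B\psi$ together with bijectivity of $R$ forces $\eta=B\psi R^{-1}=\eta_\psi$, so $\Phi\Psi(\psi,\eta)=(\psi,\eta)$. The chief subtlety throughout is the use of axiom (C5) to reduce $\lambda_a^T$ to $\phi_{R(a)}$; without strongness, the idempotent $a^0$ persists in the formula for $\lambda_a^T(b)$ and the translation between the two Hom-sets via Lemma \ref{k} would break down.
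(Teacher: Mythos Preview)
Your proof is correct and follows essentially the same approach as the paper: both construct the map $\psi\mapsto(\psi,B\psi R^{-1})$, verify it lands in the Rota--Baxter homomorphism set by using strongness (C5) to identify $\lambda_a^T$ with $\phi_{R(a)}$, and exploit bijectivity of $R$ to recover $\eta$ uniquely from $\psi$. The only cosmetic difference is that you invoke Lemma~\ref{k} directly to pass between brace homomorphisms and the compatibility $\psi\phi_{R(a)}=\varphi_{B\psi(a)}\psi$, whereas the paper refers to the computation inside the proof of Proposition~\ref{j}; these are the same calculation.
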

\begin{proof} Let $\psi\in {\rm Hom}((T, +, \circ_R), (M, +, \circ_B))$.  Then $\psi\in {\rm Hom}((T,+), (M,+))$ obviously, and by the proof of the first part of Proposition \ref{j}, we have
\begin{equation}\label{wang7}
\psi\phi_{R(a)}=\varphi_{B(\psi(a))}\psi  \mbox{ for all } a\in T.
\end{equation}
Denote $\eta=B\psi R^{-1}$ and $x,y\in S$. Since $R$ is bijective, we can let $x=R(a)$ and $y=R(b)$ for some $a, b\in T$. Then
$x+y=R(a)+R(b)=R(a+\phi_{R(a)}(b)),$ and so
\begin{equation*}
\begin{aligned}
&\eta(x+y)=B\psi R^{-1}R(a+\phi_{R(a)}(b))=B\psi(a+\phi_{R(a)}(b))\\
&=B(\psi(a)+\psi\phi_{R(a)}(b))=B(\psi(a)+\varphi_{B(\psi(a))}\psi(b))\,\,\,\,\, (\mbox{by (\ref{wang7})})\\
&=B(\psi(a)\circ_{B}\psi(b))=B(\psi(a))+B(\psi(b))\,\,\,\,\, (\mbox{by  Lemma \ref{h}})\\
&=B\psi R^{-1}R(a)+B\psi R^{-1}R(b)=B\psi R^{-1}(x)+B\psi R^{-1}(y) =\eta(x)+\eta(y).
\end{aligned}
\end{equation*}
This shows  $\eta\in {\rm Hom}((S,+), (N,+))$. Since $\eta=B\psi R^{-1}$,  we have $\eta R=B\psi$. Moreover,
$$\psi\phi_{x}=\psi\phi_{R(a)}=\varphi_{B(\psi(a))}\psi=\varphi_{B\psi R^{-1}R(a)}\psi=\varphi_{\eta(R(a))}\psi=\varphi_{\eta(x)}\psi$$ by (\ref{wang7}).
This implies that $(\psi,\eta)\in {\rm Hom}((T,S,\phi,R),(M,N,\varphi,B))$. So we can define the following map:
$$\Theta: {\rm Hom}((T, +, \circ_R), (M, +, \circ_B))\rightarrow {\rm Hom}((T,S,\phi,R),(M,N,\varphi,B)),\,\, \psi\mapsto (\psi,B\psi R^{-1}).$$
Obviously, $\Theta$ is injective. Now let $(\psi, \eta)\in {\rm Hom}((T,S,\phi,R),(M,N,\varphi,B))$. Then by Definition \ref{wang8}, we have $\eta R=B\psi$. Since $R$ is bijective, we have $\psi\in {\rm Hom}((T, +, \circ_R), (M, +, \circ_B))$ and $\eta=B\psi R^{-1}$. This implies that $\Theta(\psi)=(\psi, B\psi R^{-1})=(\psi, \eta)$. Hence $\Theta$ is   surjective.
\end{proof}

In the end of this section, we consider the substructures and quotient structures of relative Rota--Baxter Clifford semigroups.
Let $(T,S,\varphi,R)$ be a relative Rota--Baxter Clifford semigroup and $M,N$ be Clifford subsemigroups of $T$ and $S$, respectively.  Assume that $R(M)\subseteq N$ and $\varphi_{n}(M)\subseteq M$ for all $n\in N$. Then we can define $$R|: M\rightarrow N,\, m\mapsto R(m) \mbox{ and } \varphi|: N\rightarrow \End M,\,\, n\mapsto \varphi_n|_M.$$ Then  $(M,N,\varphi|,R|)$ is also  a relative Rota--Baxter Clifford semigroup,  and is called {\em a relative Rota--Baxter Clifford subsemigroup of $(T,S,\varphi,R)$}.

\begin{defn}\label{4}
Let $(M,N,\varphi|,R|)$ be a relative Rota--Baxter Clifford subsemigroup of a relative Rota--Baxter Clifford  semigroup $(T,S,\varphi,R)$.   Then $(M,N,\varphi|,R|)$ is called {\em an ideal } of $(T,S,\varphi,R)$ if the following conditions hold:
\begin{itemize}
\item[\rm (I1)] $(M,+)$ and $(N, +)$ are normal subsemigroups of $T$ and $S$, respectively.
\item[\rm (I2)] $\varphi_{x}(M)\subseteq M$  for all $x\in S$.
\item[\rm (I3)] $\varphi_{n}(t)-t\in M$ for all $t\in T$ and $n\in N$.
\item[\rm (I4)] $\varphi_{n_{1}}(e)=\varphi_{n_{2}}(e)$ for all $e\in E(T,+)$ and $n_{1}, n_{2}\in N$ with $n^\circ_1=n^\circ_2$.
\end{itemize}
\end{defn}

\begin{theorem}\label{6}Let $(M,N,\varphi|,R|)$ be an ideal of a relative Rota--Baxter Clifford semigroup $(T,S,\varphi,R)$.
Define
$$\overline{\varphi}: S/N \rightarrow \End (T/M),\,\, a+N\mapsto \overline{\varphi}_{a+N},\,\,\,\,\,\, \overline{R}: T/M \rightarrow S/N,\,\,b+M \mapsto R(b)+N,$$ where $$\overline{\varphi}_{a+N}: T/M\rightarrow T/M,\,\, b+M\mapsto \varphi_{a}(b)+M.$$
Then $(T/M, S/N,\overline{\varphi}, \overline{R})$ is a relative Rota--Baxter Clifford semigroup.
\end{theorem}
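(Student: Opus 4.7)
The plan is to verify, in sequence, that $\overline{R}$ is well-defined, that each $\overline{\varphi}_{a+N}$ is a well-defined endomorphism of $(T/M,+)$ independent of the chosen representative $a$, that $\overline{\varphi}$ is a semigroup homomorphism $(S/N,+)\to(\End(T/M,+),\cdot)$, and finally that axioms (C1)--(C4) descend to the quotient. Throughout, Lemma~\ref{quotient} will be used to compare equivalence classes: $b+M=b'+M$ iff $b^0=(b')^0$ and $-b+b'\in M$, and analogously in $S/N$.

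For $\overline{R}$, suppose $b+M=b'+M$. Then (C3) gives $R(b)^0=R(b^0)=R((b')^0)=R(b')^0$. Setting $c := -b+b'$, the condition $b^0=(b')^0$ implies $b+c=b^0+b'=b'$, and since $c^0=b^0+(b')^0=b^0$ we have $c\in H_b$; Corollary~\ref{kangweiccc} then produces $d := \varphi_{-R(b)}(c)\in H_b$ with $\varphi_{R(b)}(d)=c$. Axiom (C1) gives $R(b')=R(b+\varphi_{R(b)}(d))=R(b)+R(d)$, whence $-R(b)+R(b')=R(b)^0+R(d)=R(b^0)+R(d)$. Both $b^0\in M$ (since $E(T,+)\subseteq M$ by normality) and $d\in\varphi_{-R(b)}(M)\subseteq M$ (by (I2)) map into $R(M)\subseteq N$, so $-R(b)+R(b')\in N$ as required.

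Well-definedness of $\overline{\varphi}_{a+N}$ as a map on $T/M$ is direct: if $b+M=b'+M$, then $\varphi_a(b)^0=\varphi_a(b^0)=\varphi_a((b')^0)=\varphi_a(b')^0$ and $-\varphi_a(b)+\varphi_a(b')=\varphi_a(-b+b')\in\varphi_a(M)\subseteq M$ by (I2). Independence of the representative of $a+N$ is the heart of the argument. Suppose $a+N=a'+N$, and set $n := -a+a'\in N$, so that $a'=a+n$. The key computation is $n^0=(-a+a')^0=a^0+(a')^0=a^0$; since $a^0\in E(S,+)\subseteq N$, axiom (I4) applies to the pair $n,a^0\in N$ (both having $0$-part $a^0$), yielding $\varphi_n(b^0)=\varphi_{a^0}(b^0)$, and hence $\varphi_{a'}(b^0)=\varphi_a(\varphi_n(b^0))=\varphi_a(\varphi_{a^0}(b^0))=\varphi_{a+a^0}(b^0)=\varphi_a(b^0)$, matching the $0$-parts in $T$. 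For the membership condition, $-\varphi_a(b)+\varphi_{a'}(b)=\varphi_a(-b+\varphi_n(b))$; writing $m := \varphi_n(b)-b\in M$ (by (I3)), the identity $-b+m+b=-b+\varphi_n(b)+b^0=-b+\varphi_n(b)$ --- the last equality because $(-b+\varphi_n(b))^0=b^0+\varphi_n(b^0)\leq b^0$ in the natural order on idempotents --- combined with the normality of $M$ in $(T,+)$ gives $-b+\varphi_n(b)\in M$, and (I2) finishes the step.

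The endomorphism property of $\overline{\varphi}_{a+N}$ and the homomorphism property of $\overline{\varphi}$ are inherited from $\varphi_a$ and $\varphi$ respectively, and axioms (C1)--(C4) for $(T/M,S/N,\overline{\varphi},\overline{R})$ then follow by direct computation from the corresponding axioms for $(T,S,\varphi,R)$ together with the definitions of the quotient operations (for instance, (C1) reduces to $R(a)+R(b)+N=R(a+\varphi_{R(a)}(b))+N$, which is immediate). The main obstacle will be the independence-of-representative step for $\overline{\varphi}$, in particular the matching of $0$-parts as equalities in $T$ rather than merely in $T/M$; this is precisely what motivates the otherwise technical axioms (I3) and (I4) in the definition of an ideal.
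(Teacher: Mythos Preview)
Your proof is correct and follows essentially the same route as the paper: verify well-definedness of $\overline{R}$ and of $\overline{\varphi}$ (first in the $b$-variable, then in the $a$-variable using (I3) and (I4)), and then let (C1)--(C4) descend by direct computation. The computations differ only cosmetically---for instance, the paper obtains $-R(b_1)+R(b_2)=R(\varphi_{-R(b_1)}(-b_1+b_2))\in N$ via Lemma~\ref{wang2}(3), while you reach $R(b^0)+R(d)\in N$ via Corollary~\ref{kangweiccc}; and for the membership step the paper takes the inverse of $\varphi_n(b)-b\in M$ directly, whereas you conjugate by $b$ and use normality---but the underlying ideas and the role of each ideal axiom are the same.
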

\begin{proof} By (I1), (I2) and Lemma \ref{quotient},  the Clifford semigroups $(T/M, +)$ and $(S/N, +)$  in the theorem are meaningful.
Let $a,a_{1},a_{2}\in S,b,b_{1},b_{2}\in T$, if $b_{1}+M=b_{2}+M$, then $b_{1}^{0}=b_{2}^{0},-b_{1}+b_{2}\in M$. By (I2),
$$(\varphi_{a}(b_{1}))^{0}=\varphi_{a}(b_{1}^{0})=\varphi_{a}(b_{2}^{0})=\varphi_{a}(b_{2})^{0}, \,\, -\varphi_{a}(b_{1})+\varphi_{a}(b_{2})=\varphi_{a}(-b_{1}+b_{2})\in M.$$
This implies  $\varphi_{a}(b_{1})+M=\varphi_{a}(b_{2})+M$, and so $\overline{\varphi}_{a+N}$ is well-defined.
It is easy to  see that $\overline{\varphi}_{a+N}$ is a homomorphism. Now let $a_{1}+N=a_{2}+N$. Then $a_{1}^{0}=a_{2}^{0},-a_{1}+a_{2}\in N$,
By (I3),
$$b+\varphi_{-a_{1}+a_{2}}(-b)=b-\varphi_{-a_{1}+a_{2}}(b)=-(\varphi_{-a_{1}+a_{2}}(b)-b)\in M.$$
By (I2),
\begin{equation*}
\begin{aligned}
&\varphi_{a_{1}}(b)-\varphi_{a_{2}}(b)=\varphi_{a_{1}}(b)+\varphi_{a_{2}}(-b)=\varphi_{a_{1}}(b)+\varphi_{a_{2}^{0}+a_{2}}(-b)\\
&=\varphi_{a_{1}}(b)+\varphi_{a_{1}^{0}+a_{2}}(-b)=\varphi_{a_{1}}(b)+\varphi_{a_{1}-a_{1}+a_{2}}(-b) =\varphi_{a_{1}}(b+\varphi_{-a_{1}+a_{2}}(-b))\in M,
\end{aligned}
\end{equation*}
Since $a_{1}^{0}=a_{2}^{0}$, we have
$$(a_{1}^{0})^{0}=a_{1}^{0}=a_{1}^{0}+a_{2}^{0}=(-a_{1})^{0}+a_{2}^{0}=(-a_{1}+a_{2})^{0}.$$
Observe that $a_{1}^{0},-a_{1}+a_{2}\in N$, and $b^{0}\in E(S)$, it follows that
$\varphi_{a_{1}^{0}}(b^{0})=\varphi_{-a_{1}+a_{2}}(b^{0}).$
This implies that
$$
\varphi_{a_{1}}(b)^{0}=\varphi_{a_{1}}(b^{0})=\varphi_{a_{1}}\varphi_{a_{1}^{0}}(b^{0})=\varphi_{a_{1}}\varphi_{-a_{1}+a_{2}}(b^{0})
=\varphi_{a_{1}-a_{1}+a_{2}}(b^{0})$$$$=\varphi_{a_{1}^{0}+a_{2}}(b^{0})=\varphi_{a_{2}^{0}+a_{2}}(b^{0})
=\varphi_{a_{2}}(b^{0})=\varphi_{a_{2}}(b)^{0}.
$$
Thus
$$\overline{\varphi}_{a_{1}+N}(b+M)=\varphi_{a_{1}}(b)+M
=\varphi_{a_{2}}(b)+M=\overline{\varphi}_{a_{2}+N}(b+M).$$
It is routine to check that $\overline{\varphi}$ is a homomorphism.

Let $b_{1}+M=b_{2}+M$. Then $b_{1}^{0}=b_{2}^{0},-b_{1}+b_{2}\in M$, and so $\varphi_{-R(b_{1})}(-b_{1}+b_{2})\in M$  by (I2). This implies that
\begin{equation*}
\begin{aligned}
&-R(b_{1})+R(b_{2})=R(\varphi_{-R(b_{1})}(-b_{1}))+R(b_{2})=R(\varphi_{-R(b_{1})}(-b_{1})+\varphi_{R(\varphi_{-R(b_{1})}(-b_{1}))}(b_{2}))\\
&=R(\varphi_{-R(b_{1})}(-b_{1})+\varphi_{-R(b_{1})}(b_{2}))=R(\varphi_{-R(b_{1})}(-b_{1}+b_{2}))\in N.\\
\end{aligned}
\end{equation*}
Observe that $R(b_{1})^{0}=R(b_{1}^{0})=R(b_{2}^{0})=R(b_{2})^{0}$, it follows that $R(b_{1})+N=R(b_{2})+N$. So $\overline{R}$ is well-defined.
Let $b,b_{1},b_{2}\in T$. Then
$$\overline{R}(b+M)^{0}=\overline{R}(b+M)-\overline{R}(b+M)=(R(b)+N)-(R(b)+N)
$$$$=(R(b)-R(b))+N=R(b)^{0}+N=R(b^{0})+N=\overline{R}(b^{0}+M),$$
$$\overline{\varphi}_{\overline{R}(b+M)^{0}}(b+M)
=\overline{\varphi}_{R(b^{0})+N}(b+M)
=\varphi_{R(b)^{0}}(b)+M=b+M,$$
$$
\overline{R}(b_{1}+M+\overline{\varphi}_{\overline{R}(b_{1}+M)}(b_{2}+M))
=\overline{R}(b_{1}+M+\overline{\varphi}_{R(b_{1})+N}(b_{2}+M))$$$$
=\overline{R}(b_{1}+M+\varphi_{R(b_{1})}(b_{2})+M)
=\overline{R}((b_{1}+\varphi_{R(b_{1})}(b_{2}))+M)=R(b_{1}+\varphi_{R(b_{1})}(b_{2}))+N$$$$
=(R(b_{1})+R(b_{2}))+N=R(b_{1})+N+R(b_{2})+N
=\overline{R}(b_{1}+M)+\overline{R}(b_{2}+M),
$$
$$
b_{1}+M+\overline{\varphi}_{\overline{R}(b_{1}+M)}((b_{2}+M)^{0})
=b_{1}+M+\overline{\varphi}_{R(b_{1})+N}(b_{2}^{0}+M)
=b_{1}+M+\varphi_{R(b_{1})}(b_{2}^{0})+M$$$$
=(b_{1}+\varphi_{R(b_{1})}(b_{2}^{0}))+M=(b_{2}^{0}+\varphi_{R(b_{2}^{0})}(b_{1}))+M
=b_{2}^{0}+M+\varphi_{R(b_{2}^{0})}(b_{1})+M$$$$
=(b_{2}+M)^{0}+\overline{\varphi}_{R(b_{2}^{0})+N}(b_{1}+M)
=(b_{2}+M)^{0}+\overline{\varphi}_{\overline{R}(b_{2}^{0}+M)}(b_{1}+M)$$$$
=(b_{2}+M)^{0}+\overline{\varphi}_{\overline{R}((b_{2}+M)^{0})}(b_{1}+M).
$$
This shows that $(T/M,S/N,\overline{\varphi},\overline{R})$ is a relative Rota-Baxter Clifford semigroup.
\end{proof}

\begin{prop}Let $(M,N,\varphi|,R|)$ be an ideal of  a relative Rota--Baxter Clifford semigroup $(T,S,\varphi,R)$. Then $(M,+,\circ_{R|})$ is an ideal of $(T,+,\circ_{R})$ and  the descendent dual weak left brace $(T/M, +, \circ_{{R|}})$  of  $(T/M, S/N,\overline{\varphi},\overline{R})$ is exactly the quotient dual weak left brace $(T,+,\circ_{R})/(M,+,\circ_{R|})$.
\end{prop}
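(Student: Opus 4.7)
The plan is to establish the two claims in turn: first that $(M, +, \circ_{R|})$ satisfies the three defining conditions of an ideal of the dual weak left brace $(T, +, \circ_R)$, and then that the descendent dual weak left brace of the quotient relative Rota--Baxter Clifford semigroup coincides with the quotient dual weak left brace. The first two ideal conditions are immediate: condition (1), that $(M, +)$ is normal in $(T, +)$, is exactly (I1); and condition (3), that $\lambda^T_a(M) \subseteq M$ for all $a \in T$, follows from the identity $\lambda^T_a(b) = -a + a \circ_R b = a^0 + \varphi_{R(a)}(b)$, since (I2) yields $\varphi_{R(a)}(b) \in M$ when $b \in M$, and $a^0 \in E(T,+) \subseteq M$ by normality.

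The substantive step is verifying condition (2), that $(M, \circ_R)$ is a normal subsemigroup of $(T, \circ_R)$. Closure of $M$ under $\circ_R$ and under $\circ_R$-inversion follows from Lemma \ref{h} combined with (I2), and $E(T, \circ_R) = E(T, +) \subseteq M$. For the conjugation condition, using that $R$ is a semigroup homomorphism $(T, \circ_R) \to (S, +)$ with $R(a^{-1}) = -R(a)$ together with the homomorphism property of $\varphi$, a direct computation produces
\[a^{-1} \circ_R m \circ_R a = \varphi_{-R(a)}\bigl(-a + m + \varphi_{R(m)}(a)\bigr).\]
By (I2) it suffices to show $-a + m + \varphi_{R(m)}(a) \in M$, and I decompose this as
\[-a + m + \varphi_{R(m)}(a) = (-a + m + a) + \bigl(-a + \varphi_{R(m)}(a)\bigr),\]
which is justified by associativity and the centrality of idempotents in a Clifford semigroup: expanding gives $-a + m + a - a + \varphi_{R(m)}(a) = -a + m + a^0 + \varphi_{R(m)}(a)$, then $m + a^0 = a^0 + m$ by centrality, and $-a + a^0 = -a$ since $-a$ lies in the maximal subgroup with identity $a^0$. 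The first summand is in $M$ by normality of $(M,+)$. For the second, (I3) yields $\varphi_{R(m)}(a) - a \in M$, and both $\varphi_{R(m)}(a) - a$ and $-a + \varphi_{R(m)}(a)$ lie in the common maximal subgroup $G_\gamma$ of the strong semilattice decomposition of $(T, +)$, where $\gamma = a^0 \wedge (\varphi_{R(m)}(a))^0$; since normality of $(M, +)$ in $(T,+)$ forces $M \cap G_\gamma$ to be a normal subgroup of $G_\gamma$, and the two expressions are conjugate within $G_\gamma$, the second summand also lies in $M$.

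For the second assertion, both DWLBs share the underlying set $T/M$ and the same additive operation, since each is a quotient by the same normal subsemigroup $(M, +)$ of $(T,+)$. Using the definitions of $\overline{R}$ and $\overline{\varphi}$ from Theorem \ref{6}, the $\circ$-operations also coincide:
\[(a + M) \circ_{\overline{R}} (b + M) = (a + M) + \overline{\varphi}_{R(a) + N}(b + M) = (a + \varphi_{R(a)}(b)) + M = (a \circ_R b) + M,\]
which is exactly the operation in the quotient DWLB $(T, +, \circ_R)/(M, +, \circ_{R|})$. The main obstacle throughout is the decomposition step: bridging the right-handed form $\varphi_{R(m)}(a) - a \in M$ provided by (I3) with the left-handed form $-a + \varphi_{R(m)}(a) \in M$ needed in the proof. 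This is resolved by passing to the strong semilattice decomposition and exploiting that normality of $(M, +)$ in $(T, +)$ automatically induces normality of $M \cap G_\gamma$ in each maximal subgroup $G_\gamma$, ensuring that left and right conjugates by $a$ within $G_\gamma$ lie together in $M$.
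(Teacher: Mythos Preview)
Your proof is correct and follows the same overall outline as the paper: verify the three ideal conditions for $(M,+,\circ_{R|})$ in $(T,+,\circ_R)$, with the main work in checking normality of $(M,\circ_R)$, and then identify the two quotient structures. The one substantive difference is the direction of conjugation. You compute $a^{-1}\circ_R m\circ_R a = \varphi_{-R(a)}\bigl(-a+m+\varphi_{R(m)}(a)\bigr)$, whereas the paper computes
\[
a\circ_R b\circ_R a^{-1} = \bigl(a+\varphi_{R(a)}(b)-a\bigr)+\bigl(a-\varphi_{R(a)+R(b)-R(a)}(a)\bigr).
\]
The paper's choice lands directly on $a-\varphi_n(a)=-(\varphi_n(a)-a)\in M$ by (I3) and closure under negation, while your choice requires passing from $\varphi_{R(m)}(a)-a\in M$ to $-a+\varphi_{R(m)}(a)\in M$. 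Your semilattice argument for this passage is valid but heavier than necessary: since $(M,+)$ is normal in $(T,+)$, one has $-a+(\varphi_{R(m)}(a)-a)+a\in M$, and a one-line Clifford computation (using that $(-a+\varphi_{R(m)}(a))+a^0=-a+\varphi_{R(m)}(a)$, as $a^0$ absorbs into the idempotent $a^0+\varphi_{R(m)}(a)^0$) shows this conjugate equals $-a+\varphi_{R(m)}(a)$. So the detour through the maximal subgroups $G_\gamma$ can be replaced by a direct conjugation inside $(T,+)$; alternatively, the paper's conjugation direction avoids the issue entirely.
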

\begin{proof}
By hypothesis, $(M,+)$ is a normal subsemigroup of $(T,+)$, and so $E(T,+)\subseteq (M,+)$. Let $a\in T,b\in M$. Then $\varphi_{R(a)}(b)\in M$  by(b), and so
$$\lambda_{a}^{M}(b)=-a+a\circ_{R}b=-a+a+\varphi_{R(a)}(b)=a^{0}+\varphi_{R(a)}(b)\in M,$$
This implies that $\lambda_{a}^{M}(M)\subseteq M$.

Let $a,b\in M$. Then $-a\in M$ by (I1), and so $a^{-1}=\varphi_{-R(a)}(-a)\in M$ by (I2).
Let $a\in T,b\in M$. Then
$$
a\circ_{R}b\circ_{R}a^{(-1)}=(a+\varphi_{R(a)}(b))\circ_{R}a^{(-1)}
=a+\varphi_{R(a)}(b)+\varphi_{(a+\varphi_{R(a)}(b))}\varphi_{-R(a)}(-a)
$$$$=a+\varphi_{R(a)}(b)+\varphi_{R(a)+R(b)}\varphi_{-R(a)}(-a)
=a+\varphi_{R(a)}(b)-a+a-\varphi_{R(a)+R(b)-R(a)}(a).
$$
By (I2),  $\varphi_{R(a)}(b)\in M$. This together with the fact that $(M,+)$ is a normal subsemigroup gives that $a+\varphi_{R(a)}(b)-a\in M$. Since $R(M)\subseteq N$ and $(N,+)$ is a normal subsemigroup of $S$, it follows that $R(b)\in N$ and $R(a)+R(b)-R(a)\in N$. In view of (I4), we have $a-\varphi_{R(a)+R(b)-R(a)}(a)\in M$. Thus
$$a\circ_{R}b\circ_{R}a^{(-1)}=a+\varphi_{R(a)}(b)-a+a-\varphi_{R(a)+R(b)-R(a)}(a)\in M.$$
Thus $(M,\circ_{R})$ is a normal subsemigroup of $(T,\circ_{R})$. So $(M,+,\circ_{R|})$ is an ideal of $(T,+,\circ_{R})$. The remaining part is obvious.
\end{proof}

\section{Post Clifford semigroups and braided Clifford semigroups}
In this section, we introduce  braided Clifford semigroups and prove that the categories of strong post Clifford semigroups and braided Clifford semigroups are isomorphic.
\begin{defn}
Let $(T,\circ)$ be a Clifford semigroup and $$\sigma:T\times T\rightarrow T\times T, (a,b)\mapsto (a\rightharpoonup b,a\leftharpoonup b)$$ be a map. Then $(T,\circ,\sigma)$ is called a {\em braided Clifford semigroup} if the following conditions are satisfied: For all $a,b,a_{1},a_{2},b_{1},b_{2}\in T$,
\begin{itemize}
\item[(B1)] $a^{0}\rightharpoonup a=a$,
\item[(B2)] $a_{1}\rightharpoonup (a_{2}\rightharpoonup b)=(a_{1}\circ a_{2})\rightharpoonup b$,
\item[(B3)] $(a_{1}\circ a_{2})\leftharpoonup b=(a_{1}\leftharpoonup(a_{2}\rightharpoonup b))\circ (a_{2}\leftharpoonup b)$,
\item[(B4)] $a\leftharpoonup a^{0}=a$,
\item[(B5)] $(a\leftharpoonup b_{1})\leftharpoonup b_{2}=a\leftharpoonup (b_{1}\circ b_{2})$,
\item[(B6)] $a\rightharpoonup (b_{1}\circ b_{2})=(a\rightharpoonup b_{1})\circ ((a\leftharpoonup b_{1})\rightharpoonup b_{2})$,
\item[(B7)] $(a\rightharpoonup b)\circ(a\leftharpoonup b)=a\circ b$,
\item[(B8)] $(a\rightharpoonup b)^{0}=(a\leftharpoonup b)^{0}$,
\item[(B9)] $a\circ(a^{0}\rightharpoonup b)=a\circ b$.
\end{itemize}
\end{defn}
\begin{exam}Let $(T,\circ)$ be a commutative and idempotent semigroup. Define $$\sigma:T\times T\rightarrow T\times T, (a,b)\mapsto (a\rightharpoonup b,a\leftharpoonup b)=(a\circ b, a\circ b).$$ Then $a^0=a$ for all $a\in S$.  It  is easy to see that $(T,\circ,\sigma)$ is a braided Clifford semigroup.
\end{exam}
\begin{exam}
Let $(G, \circ)$ be a group and $$\sigma: T\times T\rightarrow T\times T, (a,b)\mapsto (a\rightharpoonup b,a\leftharpoonup b)$$ be a map. Then $(G, \circ)$ is a Clifford semigroup and $a^0$ is the identity in $G$ for all $a\in G$ obviously. Recall from Definitions 3.8 and 3.11 in \cite{Bai} (also see \cite{Takeuchi}) that $(G,\circ,\sigma)$ is called a {\em braided group} if (B1)--(B7) hold for all $a,b,a_{1},a_{2},b_{1},b_{2}\in G$. Moreover, in this case, (B8) is satisfied trivially, and (B9) follows from (B1). Thus braided groups are necessarily braided Clifford semigroups.
\end{exam}

\begin{lemma}\label{x3}
Let $(T,\circ,\sigma)$ be a braided Clifford semigroup with $$\sigma:T\times T\rightarrow T\times T, (a,b)\mapsto (a\rightharpoonup b,a\leftharpoonup b).$$ Assume that $a,b,c\in T$. Then
\begin{itemize}
\item[\rm(1)]  $(a\rightharpoonup b)^{0}=(a\leftharpoonup b)^{0}=a^{0}\circ b^{0}$,
\item[\rm(2)] $a^{0}\circ (a\rightharpoonup b)=a\rightharpoonup b$,
\item[\rm(3)] $(a\rightharpoonup b)^{-1}\rightharpoonup (a\rightharpoonup c)=(a\leftharpoonup b)\rightharpoonup (b^{-1}\rightharpoonup c)$,
\item[\rm(4)] $a\rightharpoonup a^{0}=a^0$.
\end{itemize}
\end{lemma}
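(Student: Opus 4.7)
My plan is to first establish (1), then derive (2) and (4) as short consequences, and finally tackle (3) which looks like the only identity requiring a genuine manipulation of the axioms.

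For (1), I would start from axiom (B7), namely $(a\rightharpoonup b)\circ(a\leftharpoonup b)=a\circ b$, and apply the operation $x\mapsto x^0$ to both sides. Using the Lemma \ref{jichu} identity $(x\circ y)^0 = x^0\circ y^0$ (valid in any Clifford semigroup), the right-hand side becomes $a^0\circ b^0$ while the left becomes $(a\rightharpoonup b)^0\circ(a\leftharpoonup b)^0$. By (B8) these two idempotents are equal; call the common value $e$. Then $e\circ e=e=a^0\circ b^0$, and since idempotents of a Clifford semigroup are central and commute, we conclude $e=a^0\circ b^0$, which is exactly (1).

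Item (2) is then immediate: since $(a\rightharpoonup b)^0=a^0\circ b^0$, we have $a^0\circ(a\rightharpoonup b)^0=a^0\circ a^0\circ b^0=a^0\circ b^0=(a\rightharpoonup b)^0$, and multiplying on the right by $a\rightharpoonup b$ (and using that $x^0$ is the identity of the maximal subgroup $H_x$) yields $a^0\circ(a\rightharpoonup b)=a\rightharpoonup b$. For (4) I would plug $b_1=b_2=a^0$ into (B6) and use (B4) $a\leftharpoonup a^0=a$ together with the idempotence $a^0\circ a^0=a^0$; this gives $a\rightharpoonup a^0=(a\rightharpoonup a^0)\circ(a\rightharpoonup a^0)$, so $a\rightharpoonup a^0$ is an idempotent. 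Since by (1) its $0$-shadow is $a^0\circ(a^0)^0=a^0$, and each $H$-class in a Clifford semigroup contains a unique idempotent, $a\rightharpoonup a^0=a^0$.

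The main work is in (3), and my approach is to collapse both sides to a single $\rightharpoonup$-expression via (B2) and then show the two resulting ``prefixes'' agree. Applying (B2) rewrites the left-hand side as $\bigl((a\rightharpoonup b)^{-1}\circ a\bigr)\rightharpoonup c$ and the right-hand side as $\bigl((a\leftharpoonup b)\circ b^{-1}\bigr)\rightharpoonup c$. It therefore suffices to prove $(a\rightharpoonup b)^{-1}\circ a=(a\leftharpoonup b)\circ b^{-1}$. To get this, I would multiply (B7) on the left by $(a\rightharpoonup b)^{-1}$ and use $(a\rightharpoonup b)^{-1}\circ(a\rightharpoonup b)=(a\rightharpoonup b)^0=(a\leftharpoonup b)^0$ (by (B8)) to obtain $(a\rightharpoonup b)^{-1}\circ a\circ b=a\leftharpoonup b$. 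Multiplying on the right by $b^{-1}$ gives $(a\leftharpoonup b)\circ b^{-1}=(a\rightharpoonup b)^{-1}\circ a\circ b^0$, and finally I would absorb the stray $b^0$ using the Clifford centrality of idempotents together with the computation $\bigl((a\rightharpoonup b)^{-1}\circ a\bigr)^0=a^0\circ b^0$, which shows $b^0\circ\bigl((a\rightharpoonup b)^{-1}\circ a\bigr)=(a\rightharpoonup b)^{-1}\circ a$.

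The main obstacle I anticipate is precisely this last absorption in (3): unlike the group case (where $b^0$ is the true identity), here $b^0$ is only a partial identity, so one has to verify carefully that $b^0$ acts trivially on $(a\rightharpoonup b)^{-1}\circ a$ by checking the relevant idempotent inequality $a^0\circ b^0\geq b^0\circ a^0\circ b^0$, which is immediate from commutativity of $E(T,\circ)$. Beyond this, everything reduces to the standard Clifford-semigroup bookkeeping summarized in Lemma~\ref{jichu}.
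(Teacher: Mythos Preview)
Your argument is correct. For items (1), (2), and (4) your approach coincides with the paper's essentially line for line: both use (B7) and (B8) for (1), both derive (2) by absorbing $a^{0}$ into $(a\rightharpoonup b)^{0}=a^{0}\circ b^{0}$, and both obtain (4) by plugging $b_{1}=b_{2}=a^{0}$ into (B6), using (B4) to recognize $a\rightharpoonup a^{0}$ as idempotent, and then invoking (1).

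For (3) you take a genuinely different and somewhat cleaner route. The paper applies $(a\rightharpoonup b)\rightharpoonup(-)$ to both sides, checks via (B2) and (B7) that each becomes $(a\circ b^{0})\rightharpoonup c$, and then ``cancels'' by applying $(a\rightharpoonup b)^{-1}\rightharpoonup(-)$ and simplifying with (B8). You instead collapse each side directly via (B2) to $X\rightharpoonup c$ and $Y\rightharpoonup c$ with $X=(a\rightharpoonup b)^{-1}\circ a$ and $Y=(a\leftharpoonup b)\circ b^{-1}$, and then prove the stronger fact $X=Y$ inside $(T,\circ)$ using (B7), (B8), and a final absorption of $b^{0}$ justified by $X^{0}=a^{0}\circ b^{0}$. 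Your approach trades the paper's $\rightharpoonup$-calculus cancellation for a purely Clifford-semigroup computation, and it yields the extra identity $(a\rightharpoonup b)^{-1}\circ a=(a\leftharpoonup b)\circ b^{-1}$ as a byproduct. The absorption step you flag as a potential obstacle is indeed fine: since $X^{0}=a^{0}\circ b^{0}$, one has $X\circ b^{0}=X\circ X^{0}\circ b^{0}=X\circ a^{0}\circ b^{0}\circ b^{0}=X\circ X^{0}=X$, so no subtle order-theoretic argument is needed.
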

\begin{proof}
 (1) By (B7) and (B8), we have
\begin{equation*}
\begin{aligned}
a^{0}\circ b^{0}&=(a\circ b)^{0}=((a\rightharpoonup b)\circ(a\leftharpoonup b))^{0}=(a\rightharpoonup b)^{0}\circ (a\leftharpoonup b)^{0}\\
&=(a\rightharpoonup b)^{0}\circ (a\rightharpoonup b)^{0}=(a\rightharpoonup b)^{0}=(a\leftharpoonup b)^{0}.
\end{aligned}
\end{equation*}
(2) By item (1) above, we have
\begin{equation*}
\begin{aligned}
a^{0}\circ (a\rightharpoonup b)&=a^{0}\circ (a\rightharpoonup b)^{0} \circ (a\rightharpoonup b)
=a^{0}\circ a^{0}\circ b^{0}\circ (a\rightharpoonup b)\\
&=a^{0}\circ b^{0}\circ (a\rightharpoonup b)=(a\rightharpoonup b)^{0} \circ (a\rightharpoonup b)=a\rightharpoonup b.
\end{aligned}
\end{equation*}
(3) We observe that
\begin{equation*}
\begin{aligned}
&(a\rightharpoonup b)\rightharpoonup((a\rightharpoonup b)^{-1}\rightharpoonup (a\rightharpoonup c))
=((a\rightharpoonup b)\circ (a\rightharpoonup b)^{-1})\rightharpoonup (a\rightharpoonup c) \quad\quad(\mbox{by (B2)})\\
&=(a\rightharpoonup b)^{0}\rightharpoonup (a\rightharpoonup c)
=(a^{0}\circ b^{0})\rightharpoonup (a\rightharpoonup c) \quad(\mbox{by  Lemma \ref{x3} (1)})\\
&=(a^{0}\circ b^{0}\circ a)\rightharpoonup c
=(a\circ b^{0})\rightharpoonup c, \quad(\mbox{by (B2)})\\
\end{aligned}
\end{equation*}
\begin{equation*}
\begin{aligned}
&(a\rightharpoonup b)\rightharpoonup((a\leftharpoonup b)\rightharpoonup (b^{-1}\rightharpoonup c))
=((a\rightharpoonup b)\circ (a\leftharpoonup b))\rightharpoonup(b^{-1}\rightharpoonup c)  \quad\quad(\mbox{by  (B2)})\\
&=(a\circ b)\rightharpoonup (b^{-1}\rightharpoonup c)
=(a\circ b\circ b^{-1})\rightharpoonup c
=(a\circ b^{0})\rightharpoonup c.\quad\quad(\mbox{by (B7) and (B2)})\\
\end{aligned}
\end{equation*}
This implies that
$$(a\rightharpoonup b)^{-1}\rightharpoonup ((a\rightharpoonup b)\rightharpoonup ((a\rightharpoonup b)^{-1}\rightharpoonup (a\rightharpoonup c)))$$$$=(a\rightharpoonup b)^{-1}\rightharpoonup ((a\rightharpoonup b)\rightharpoonup((a\leftharpoonup b)\rightharpoonup(b^{-1}\rightharpoonup c))).$$
By (B2), we obtain
\begin{equation*}
\begin{aligned}
&(a\rightharpoonup b)^{-1}\rightharpoonup (a\rightharpoonup c)=((a\rightharpoonup b)^{-1}\circ (a\rightharpoonup b)\circ (a\rightharpoonup b)^{-1})\rightharpoonup (a\rightharpoonup c)\\
&=((a\rightharpoonup b)^{-1}\circ (a\rightharpoonup b)\circ (a\leftharpoonup b))\rightharpoonup (b^{-1}\rightharpoonup c)\\
&=((a\rightharpoonup b)^{0}\circ (a\leftharpoonup b))\rightharpoonup(b^{-1}\rightharpoonup c)
=(a\leftharpoonup b)\rightharpoonup (b^{-1}\rightharpoonup c). \quad\quad(\mbox{by  (B8)})
\end{aligned}
\end{equation*}
(4) Observe that
\begin{equation*}
\begin{aligned}
&(a\rightharpoonup a^{0})\circ (a\rightharpoonup a^{0})=(a\rightharpoonup a^{0})\circ ((a\leftharpoonup a^{0})\rightharpoonup a^{0}) \quad\quad(\mbox{by  (B4)})\\
&=a\rightharpoonup (a^{0}\circ a^{0})
=a\rightharpoonup a^{0}\in E(T,\circ). \quad\quad(\mbox{by (B6)})
\end{aligned}
\end{equation*}
This implies that $a\rightharpoonup a^{0}=(a\rightharpoonup a^{0})^{0}=a^{0}\circ a^{00}=a^{0}$ by item (1) in the present  lemma.
\end{proof}

\begin{lemma}\label{x8}
Let $(T,+,\rhd)$ be a post Clifford semigroup with sub-adjacent Clifford semigroup $(T,\circ)$. Define a map $$R_{T}:T\times T\rightarrow T\times T, (a,b)\mapsto (a\rightharpoonup b,a\leftharpoonup b),$$where $a\rightharpoonup b=a^{0}+(a\rhd b)=a^{0}+L_{a}^{\rhd}(b)$ and $a\leftharpoonup b=(a\rhd b)^{-1}\circ a\circ b$ for all $a,b\in T$. Then $(T,\circ,R_{T})$ forms a braided Clifford semigroup and $R_T$ is a solution of the Yang-Baxter equation on the set $T$. If $\psi:(T,+,\rhd)\rightarrow (S,+,\rhd)$ is a Post-Clifford semigroup homomorphism, then $\psi$ is also a braided Clifford semigroup homomorphism.
\end{lemma}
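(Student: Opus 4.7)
The plan is to verify axioms (B1)--(B9) by direct computation, then deduce the Yang--Baxter property from Theorem~\ref{postjie}, and finally check that post Clifford semigroup homomorphisms preserve both $\rightharpoonup$ and $\leftharpoonup$. The first observation is that, in the sub-adjacent Clifford semigroup $(T,\circ)$, one has $a^{0}+(a\rhd b)=-a+a+(a\rhd b)=-a+(a\circ b)$, so the defining formula for $R_{T}(a,b)$ coincides verbatim with the map $r(a,b)$ of Theorem~\ref{postjie}. This immediately handles the YBE part of the statement. For the homomorphism part, since $\psi$ preserves $+$ and $\rhd$, Theorem~\ref{q3}~(3) gives that $\psi$ also preserves $\circ$, and hence inverses in $(T,\circ)$; applying $\psi$ to the two components of $R_{T}(a,b)$ then yields $\psi(a\rightharpoonup b)=\psi(a)\rightharpoonup\psi(b)$ and $\psi(a\leftharpoonup b)=\psi(a)\leftharpoonup\psi(b)$.

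For (B1)--(B9), the stockpile of identities I will draw on is: (P1)--(P4); the identities $a^{0}\rhd a=a$ and $a\rhd a^{0}=a^{0}$ of Lemma~\ref{q2}; the sub-adjacent rules $a\circ b=a+(a\rhd b)$, $a^{-1}=(L^{\rhd}_{a}|_{H_{a}})^{-1}(-a)$, $a\circ a^{-1}=a^{0}$ from Theorem~\ref{q3}; and the dual weak left brace identities of Lemma~\ref{wang5} ($e\circ a=a\circ e=e+a=a+e$ for $e\in E(T)$), which apply since $(T,+,\circ)$ is a dual weak left brace by Lemma~\ref{q4}. A key preliminary lemma I would prove first is
\[
a_{1}^{0}+(a_{1}\rhd a_{2}^{0})=a_{1}^{0}+a_{2}^{0}=(a_{1}\circ a_{2})^{0},
\]
obtained from (P3) and Lemma~\ref{wang5} by writing $a_{1}^{0}+a_{1}\rhd a_{2}^{0}=-a_{1}+(a_{1}\circ a_{2}^{0})=-a_{1}+a_{1}+a_{2}^{0}$.

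With these in hand, the axioms fall into easy, medium, and hard groups. Easy: (B1) is $a^{0}+(a^{0}\rhd a)=a^{0}+a=a$; (B4) reduces to $a^{0}+(a\rhd a^{0})$ and then to $a$ via the preliminary lemma; (B7) is $(a^{0}+a\rhd b)\circ((a\rhd b)^{-1}\circ a\circ b)$ with the first factor being an idempotent plus $(a\rhd b)$, which collapses to $a\circ b$ using Lemma~\ref{wang5} in $(T,\circ)$; (B8) is the same preliminary lemma after computing $(\mbox{first component})^{0}$ and $(\mbox{second component})^{0}$; (B9) is $a\circ(a^{0}+a^{0}\rhd b)=a\circ(a^{0}+b)=a\circ b$ again by Lemma~\ref{wang5}. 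Medium: (B2) follows by expanding both sides, applying (P1), (P2), and the preliminary lemma to match the $a_{1}^{0}+(a_{1}\rhd a_{2}^{0})$ piece to $(a_{1}\circ a_{2})^{0}$; (B5) and (B6) unfold using (P1), (P2) and the multiplicative definition of $\circ$, with $\leftharpoonup$ in (B5) handled by expanding $(a\leftharpoonup b_{1})\leftharpoonup b_{2}$ as a triple product in $(T,\circ)$ and cancelling adjacent inverses.

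The main obstacle will be (B3), which mixes $\rightharpoonup$, $\leftharpoonup$, and the multiplication of $(T,\circ)$ on both sides. My plan there is to rewrite both sides as products in $(T,\circ)$ using $(a\leftharpoonup b)=(a\rhd b)^{-1}\circ a\circ b$, so that LHS becomes $((a_{1}\circ a_{2})\rhd b)^{-1}\circ(a_{1}\circ a_{2})\circ b$, while RHS becomes
\[
(a_{1}\rhd(a_{2}\rightharpoonup b))^{-1}\circ a_{1}\circ(a_{2}\rightharpoonup b)\circ(a_{2}\rhd b)^{-1}\circ a_{2}\circ b.
\]
Then I would reduce $a_{1}\circ(a_{2}\rightharpoonup b)=a_{1}\circ(a_{2}^{0}+a_{2}\rhd b)=a_{1}+a_{1}\rhd(a_{2}^{0}+a_{2}\rhd b)$ using (P1), collapse the $a_{2}^{0}$-contribution via the preliminary lemma, and use (P2) to rewrite $a_{1}\rhd(a_{2}\rhd b)=(a_{1}\circ a_{2})\rhd b$. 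The internal factor $(a_{2}\rhd b)^{-1}\circ a_{2}\circ b$ then recombines with the previous piece so that the $a_{1}\rhd(a_{2}\rightharpoonup b)$ prefactor cancels with its own inverse up to an idempotent, leaving exactly the LHS. This is essentially the post-group computation behind Theorem~3.12 in \cite{Bai}, adapted to the Clifford setting by tracking idempotents carefully via Lemma~\ref{wang5}.
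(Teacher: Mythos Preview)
Your proposal is correct and follows essentially the same route as the paper's proof: establish the preliminary idempotent identity $a^{0}+(a\rhd b^{0})=a^{0}+b^{0}$ (the paper records this as (\ref{x4}) together with (\ref{x5})), then verify (B1)--(B9) directly, invoke Theorem~\ref{postjie} for the Yang--Baxter equation, and use Theorem~\ref{q3}(3) for the homomorphism statement. One small slip to fix: your description of (B4) computes $a\rightharpoonup a^{0}=a^{0}+(a\rhd a^{0})$, which equals $a^{0}$, not $a$; the actual (B4) is $a\leftharpoonup a^{0}=(a\rhd a^{0})^{-1}\circ a\circ a^{0}=a^{0}\circ a\circ a^{0}=a$ via Lemma~\ref{q2}, which is just as easy.
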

\begin{proof} Let $a,b,c\in T$. Then we have
 \begin{equation}
(a\rightharpoonup b)^{0}=(a^{0}+(a\rhd b))^{0}=a^{00}+(a\rhd b)^{0}=a^{0}+(a\rhd b)^{0}=a^{0}+(a\rhd b^{0}),
\end{equation}
\begin{equation}\label{x4}
a^{0}+(a\rhd b^{0})=-a+a+(a\rhd b^{0})=-a+a\circ b^{0}=-a+a+b^{0}=a^{0}+b^{0},
\end{equation}
\begin{equation}\label{x5}
\begin{aligned}
&(a\rhd b)^{0}\circ a\circ b=(a\rhd b)^{0}\circ a^{0}\circ a\circ b=a^{0}\circ (a\rhd b)^{0}\circ a\circ b\\
&=(a^{0}+(a\rhd b)^{0})\circ a\circ b=(a+(a\rhd b))^{0}\circ a\circ b =(a\circ b)^{0}\circ a\circ b=a\circ b.
\end{aligned}
\end{equation}
Thus
\begin{equation*}
\begin{aligned}
(a\leftharpoonup b)^{0}&=((a\rhd b)^{-1}\circ a\circ b)^{0}=(a\rhd b)^{0}\circ a^{0}\circ b^{0}=(a\rhd b)^{00}\circ a^{0}\circ b^{0}\\
&=((a\rhd b)^{0}\circ a\circ b)^{0}=(a\circ b)^{0}=a^{0}\circ b^{0}=a^{0}+b^{0}=(a\rightharpoonup b)^{0}.
\end{aligned}
\end{equation*}
This shows that (B8) holds.  By (\ref{x3}), we have
$$(a\rightharpoonup b)\circ (a\leftharpoonup b)=(a^{0}+(a\rhd b))\circ ((a\rhd b)^{-1}\circ a\circ b)$$$$=a^{0}\circ (a\rhd b)\circ (a\rhd b)^{-1}\circ a\circ b
=(a\rhd b)^{0}\circ a\circ b=a\circ b.$$ This shows that (B7) holds. (B9) follows from the following equation $$a\circ (a^{0}\rightharpoonup b)=a\circ (a^{0}+(a^{0}\rhd b))=a\circ (a^{0}\circ b)=a\circ b,$$ and (B1)
follows from the fact that $a^{0}\rightharpoonup a=a^{00}+(a^{0}\rhd a)=a^{0}+a=a.$ Moreover,
\begin{eqnarray*}
&&a_{1}\rightharpoonup (a_{2}\rightharpoonup b)=a_{1}^{0}+(a_{1}\rhd (a_{2}\rightharpoonup b))=a_{1}^{0}+(a_{1}\rhd (a_{2}^{0}+(a_{2}\rhd b)))\\
&=&a_{1}^{0}+(a_{1}\rhd a_{2}^{0})+(a_{1}\rhd(a_{2}\rhd b))\\
&=&a_{1}^{0}+a_{2}^{0}+(a_{1}\circ a_{2})\rhd b  \quad\quad (\mbox{by  (\ref{x4}) and  Theorem \ref{q3} (2)})\\
&=&a_{1}^{0}\circ a_{2}^{0}+(a_{1}\circ a_{2})\rhd b=(a_{1}\circ a_{2})^{0}+(a_{1}\circ a_{2})\rhd b =(a_{1}\circ a_{2})\rightharpoonup b.
\end{eqnarray*}
Thus (B2) holds, and (B3) follows from the fact that
\begin{equation*}
\begin{aligned}
&(a_{1}\leftharpoonup (a_{2}\rightharpoonup b))\circ (a_{2}\leftharpoonup b)
=(a_{1}\rhd (a_{2}\rightharpoonup b))^{-1}\circ a_{1}\circ (a_{2}\rightharpoonup b)\circ (a_{2}\leftharpoonup b)\\
&=(a_{1}\rhd (a_{2}\rightharpoonup b))^{-1}\circ a_{1}^{0} \circ a_{1}\circ a_{2}\circ b  \quad \quad (\mbox{by  (B7)})\\
&=(a_{1}^{0}\circ (a_{1}\rhd (a_{2}\rightharpoonup b)))^{-1}\circ a_{1}\circ a_{2}\circ b \\
&=(a_{1}^{0}+ (a_{1}\rhd (a_{2}\rightharpoonup b)))^{-1}\circ a_{1}\circ a_{2}\circ b \\
&=(a_{1}\rightharpoonup (a_{2}\rightharpoonup b))^{-1}\circ a_{1}\circ a_{2}\circ b \\
&=((a_{1}\circ a_{2})\rightharpoonup b)^{-1}\circ a_{1}\circ a_{2}\circ b  \quad \quad (\mbox{by  (B2)})\\
&=((a_{1}\circ a_{2})^{0}+((a_{1}\circ a_{2})\rhd b))^{-1}\circ a_{1}\circ a_{2}\circ b \\
&=((a_{1}\circ a_{2})^{0}\circ ((a_{1}\circ a_{2})\rhd b))^{-1}\circ a_{1}\circ a_{2}\circ b \\
&=((a_{1}\circ a_{2})\rhd b)^{-1}\circ (a_{1}\circ a_{2})^{0}\circ a_{1}\circ a_{2}\circ b \\
&=((a_{1}\circ a_{2})\rhd b)^{-1}\circ a_{1}\circ a_{2}\circ b=(a_{1}\circ a_{2})\leftharpoonup b.
\end{aligned}
\end{equation*}
By Lemma \ref{q2}, we have $a\leftharpoonup a^{0}=(a\rhd a^{0})^{-1}\circ a \circ a^{0}=a^{0}\circ a\circ a^{0}=a,$ which gives (B4).
Observe that
\begin{equation}\label{x6}
\begin{aligned}
&(a\rhd b_{1})\circ (((a\rhd b_{1})^{-1}\circ a\circ b_{1})\rhd b_{2})\\
&=(a\rhd b_{1})+(a\rhd b_{1})\rhd (((a\rhd b_{1})^{-1}\circ a\circ b_{1})\rhd b_{2})\\
&=(a\rhd b_{1})+(((a\rhd b_{1})\circ (a\rhd b_{1})^{-1}\circ a\circ b_{1})\rhd b_{2}) \quad (\mbox{by Theorem \ref{q3} (2)}) \\
&=(a\rhd b_{1})+(((a\rhd b_{1})^{0}\circ a\circ b_{1})\rhd b_{2})\\
&=(a\rhd b_{1})+((a\circ b_{1})\rhd b_{2}) \quad\quad(\mbox{by (\ref{x3})})\\
&=(a\rhd b_{1})+(a\rhd( b_{1}\rhd b_{2})) \quad\quad(\mbox{by Theorem \ref{q3} (2)})\\
&=a\rhd (b_{1}+(b_{1}\rhd b_{2}))=a\rhd(b_{1}\circ b_{2}),
\end{aligned}
\end{equation}
it follows that
\begin{equation*}
\begin{aligned}
&(a\leftharpoonup b_{1})\leftharpoonup b_{2}=((a\rhd b_{1})^{-1}\circ a\circ b_{1})\leftharpoonup b_{2}\\
&=(((a\rhd b_{1})^{-1}\circ a\circ b_{1})\rhd b_{2})^{-1}\circ (a\rhd b_{1})^{-1}\circ a\circ b_{1}\circ b_{2}\\
&=((a\rhd b_{1})\circ (((a\rhd b_{1})^{-1}\circ a\circ b_{1})\rhd b_{2}))^{-1}\circ a\circ b_{1}\circ b_{2},\\
&=(a\rhd (b_{1}\circ b_{2}))^{-1}\circ a\circ b_{1}\circ b_{2}=a\leftharpoonup (b_{1}\circ b_{2}).
\end{aligned}
\end{equation*}
Thus (B5) is true.  (B6) follows from the fact that
\begin{equation*}
\begin{aligned}
&(a\rightharpoonup b_{1})\circ ((a\leftharpoonup b_{1})\rightharpoonup b_{2})
=(a\rightharpoonup b_{1})\circ ((a\leftharpoonup b_{1})^{0}+(a\leftharpoonup b_{1})\rhd b_{2})\\
&=(a\rightharpoonup b_{1})\circ (a\leftharpoonup b_{1})^{0}\circ (a\leftharpoonup b_{1})\rhd b_{2})\\
&=(a\rightharpoonup b_{1})\circ (a\leftharpoonup b_{1})\rhd b_{2})  \quad\quad (\mbox{by  (B8)})\\
&=(a^{0}+(a\rhd b_{1}))\circ(((a\rhd b_{1})^{-1}\circ a\circ b_{1})\rhd b_{2})\\
&=a^{0}\circ (a\rhd b_{1})\circ(((a\rhd b_{1})^{-1}\circ a\circ b_{1})\rhd b_{2})\\
&=a^{0}\circ (a\rhd( b_{1}\circ b_{2}) \quad\quad(\mbox{by   (\ref{x6})})\\
&=a^{0}+a\rhd(b_{1}\circ b_{2})=a\rightharpoonup (b_{1}\circ b_{2}).
\end{aligned}
\end{equation*}
By the above statements, $(T,\circ, R_{T})$ forms a braided Clifford semigroup, and $R_T$ is a solution of the Yang-Baxter equation on the set $T$
by Theorem \ref{postjie}.

If $\psi:(T,+,\rhd)\rightarrow (S,+,\rhd)$ is a post Clifford semigroup homomorphism, then by Theorem \ref{q3} (2), we have $\psi(a\circ b)=\psi(a)\circ \psi(b)$.  Moreover,
\begin{equation*}
\begin{aligned}
&(\psi\times\psi)R_{T}(a,b)=(\psi\times\psi)(a^{0}+a\rhd b,(a\rhd b)^{-1}\circ a\circ b)\\
&=(\psi(a^{0}+a\rhd b),\psi((a\rhd b)^{-1}\circ a\circ b))\\
&=(\psi(a)^{0}+\psi(a)\rhd \psi(b),(\psi(a)\rhd \psi(b))^{-1}\circ \psi(a)\circ \psi(b))\\
&=R_{S}(\psi(a),\psi(b))=R_{S}(\psi\times\psi)(a,b).
\end{aligned}
\end{equation*}
Thus $\psi$ is also a braided Clifford semigroup homomorphism.
\end{proof}

\begin{lemma}\label{x9}
Let $(T,\circ,R_{T})$ be a braided Clifford semigroup with $$R_{T}:T\times T\rightarrow T\times T,(a,b)\mapsto (a\rightharpoonup b,a\leftharpoonup b).$$ Define two binary operations $``+"$ and $``\rhd"$ as follows:
$$a+b=a\circ (a^{-1}\rightharpoonup b) \quad and \quad a\rhd b={a}\rightharpoonup{b}$$
for all $a,b\in T$. Then $(T,+,\rhd)$ forms a strong post Clifford semigroup and $-a=a\rightharpoonup a^{-1}$ for all $a\in S$. If $\psi:(T,\circ,R_{T})\rightarrow (S,\circ,R_{S})$ is a braided Clifford semigroup homomorphism, then $\psi$ is a post Clifford semigroup homomorphism.
\end{lemma}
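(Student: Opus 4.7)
The strategy is to extract one key computational identity and use it to drive the whole verification. Applying the definition of $+$ together with (B2) and (B9) gives
\[
a+(a\rightharpoonup c)=a\circ(a^{-1}\rightharpoonup(a\rightharpoonup c))=a\circ(a^{0}\rightharpoonup c)=a\circ c
\]
for all $a,c\in T$. Specialising $c=a^{-1}$ yields one half of the inverse law, $a+(a\rightharpoonup a^{-1})=a\circ a^{-1}=a^{0}$; specialising $c=b^{0}$ together with centrality of $b^{0}$ in the Clifford semigroup $(T,\circ)$ already gives (P3) in the form $a+(a\rhd b^{0})=a\circ b^{0}=b^{0}\circ a=b^{0}+(b^{0}\rhd a)$. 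The other half, $(-a)+a=a^{0}$, comes from combining (B7) with Lemma \ref{x3}(1) to see that $a\rightharpoonup a^{-1}$ and $a\leftharpoonup a^{-1}$ are mutual $\circ$-inverses inside the group $H_{a^{0}}$; (B6) and Lemma \ref{x3}(4) then give $(-a)+a=(a\rightharpoonup a^{-1})\circ((a\leftharpoonup a^{-1})\rightharpoonup a)=a\rightharpoonup(a^{-1}\circ a)=a\rightharpoonup a^{0}=a^{0}$.

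For associativity of $+$ I would expand both $(a+b)+c$ and $a+(b+c)$ by the definition, apply (B6) to the inner term $a^{-1}\rightharpoonup(b\circ(b^{-1}\rightharpoonup c))$, and collapse the resulting nested arrows with (B2); what remains is precisely Lemma \ref{x3}(3) with $a^{-1}$ in place of $a$. A short separate check gives $E(T,+)=\{a^{0}\mid a\in T\}=E(T,\circ)$: any $+$-idempotent $e$ forces $e^{-1}\rightharpoonup e=e^{0}$ (working inside $H_{e^{0}}$), so (B2) yields $e\rightharpoonup e^{0}=e$, which Lemma \ref{x3}(4) pins to $e^{0}$. Combined with the collapse $a^{0}\rightharpoonup b=a^{0}\circ b$ (obtained by pairing Lemma \ref{x3}(2) with (B9)), this shows $e+f=e\circ f$ for $+$-idempotents, so they commute by centrality in $(T,\circ)$, and Lemma \ref{Clifford} completes the Clifford structure on $(T,+)$.

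The post axioms then fall out cleanly. (P2) is literally (B2); (P5) follows from $a^{0}+(a\rightharpoonup b)=a^{0}\circ((a^{0}\circ a)\rightharpoonup b)=a^{0}\circ(a\rightharpoonup b)=a\rightharpoonup b$ via the collapse and Lemma \ref{x3}(2); (P3) was settled above. For (P1) one expands both sides using the key identity and (B6), (B2), reducing to the equality $(a\leftharpoonup b)\circ b^{-1}=(a\rightharpoonup b)^{-1}\circ a$ inside $H_{a^{0}\circ b^{0}}$ (the stray idempotent $b^{0}$ is absorbed because $(a\rightharpoonup b)^{-1}\circ a$ already has $\circ$-idempotent $a^{0}\circ b^{0}$), which is an immediate rearrangement of (B7). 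For (P4), the collapse gives $a^{0}\rightharpoonup b=b$ for $b\in H_{a}$, so (B2) makes $a^{-1}\rightharpoonup(-)$ a two-sided inverse of $a\rightharpoonup(-)$ on $H_{a}$, and (P1) promotes this to a group automorphism. The morphism claim is immediate, since a braided Clifford semigroup homomorphism preserves $\circ$, $\rightharpoonup$, and $\circ$-inverses, and hence both $a+b=a\circ(a^{-1}\rightharpoonup b)$ and $a\rhd b=a\rightharpoonup b$. The main obstacle is associativity of $+$, which genuinely relies on Lemma \ref{x3}(3) (itself a nontrivial consequence of (B2), (B7), (B8)); the Clifford verification is the secondary delicate point, because one must repeatedly trade $\circ$-idempotents for their $\rightharpoonup$-images via the collapse $a^{0}\rightharpoonup b=a^{0}\circ b$.
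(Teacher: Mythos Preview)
Your proof is correct and matches the paper's approach: the same axioms and the same items of Lemma \ref{x3} are used at each step (in particular Lemma \ref{x3}(3) for associativity of $+$ and for (P1), (B6) to unfold products, (B9) and (B2) for the sub-adjacent relation), with your key identity $a+(a\rightharpoonup c)=a\circ c$ and collapse $a^{0}\rightharpoonup b=a^{0}\circ b$ being precisely the inline computations the paper repeats in its verifications of (P2), (P3), (P5) and the idempotent analysis. One minor point: after showing that $+$-idempotents commute with one another you invoke Lemma \ref{Clifford}, but item (3) there asks for idempotents to be \emph{central} in $(T,+)$; what you actually have is regularity plus commuting idempotents (Lemma \ref{inverse}, giving an inverse semigroup) together with your already-established $a+(-a)=(-a)+a=a^{0}$, which is the Clifford condition directly.
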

\begin{proof}
Let $a,b,c\in T$. Then
\begin{equation*}
\begin{aligned}
&(a+b)+c=a\circ (a^{-1}\rightharpoonup b)+c=a\circ (a^{-1}\rightharpoonup b)\circ((a\circ (a^{-1}\rightharpoonup b))^{-1}\rightharpoonup c)\\
&=a\circ (a^{-1}\rightharpoonup b)\circ(((a^{-1}\rightharpoonup b)^{-1}\circ a^{-1})\rightharpoonup c)\\
&=a\circ (a^{-1}\rightharpoonup b)\circ((a^{-1}\rightharpoonup b)^{-1}\rightharpoonup (a^{-1}\rightharpoonup c)) \quad (\mbox{by  (B2)})\\
&=a\circ (a^{-1}\rightharpoonup b)\circ((a^{-1}\leftharpoonup b)\rightharpoonup (b^{-1}\rightharpoonup c))  \quad ((\mbox{by Lemma \ref{x3} (3)})\\
&=a\circ (a^{-1}\rightharpoonup (b\circ (b^{-1}\rightharpoonup c))) \quad (\mbox{by  (B6)})\\
&=a\circ (a^{-1}\rightharpoonup (b+c))=a+(b+c).
\end{aligned}
\end{equation*}
This shows that $(T, +)$ is a semigroup. Observe that
\begin{equation}\label{xx}
\begin{aligned}
a+(a\rightharpoonup a^{-1})&=a\circ (a^{-1}\rightharpoonup (a\rightharpoonup a^{-1}))=a\circ ((a^{-1}\circ a)\rightharpoonup a^{-1}) \quad (\mbox{by (B2)})\\
&=a\circ (a^{0}\rightharpoonup a^{-1})=a\circ a^{-1}=a^{0}. \quad (\mbox{by (B9)})\\
\end{aligned}
\end{equation}
By Lemma \ref{x3}(1), we have $(a\rightharpoonup a^{-1})^{0}=(a\leftharpoonup a^{-1})^{0}=a^{0}\circ(a^{-1})^{0}=a^{0}$, and so
\begin{equation}\label{x7}
(a\rightharpoonup a^{-1})^{-1}=(a\leftharpoonup a^{-1}).
\end{equation}
Since  $a+(a\rightharpoonup a^{-1})+a=a^{0}+a=a^{0}\circ ((a^{0})^{-1}\rightharpoonup a)=a^{0}\circ(a^{0}\rightharpoonup a)=a^{0}\circ a=a$ and
\begin{equation*}
\begin{aligned}
&(a\rightharpoonup a^{-1})+a+(a\rightharpoonup a^{-1})=(a\rightharpoonup a^{-1})+a^{0}=(a\rightharpoonup a^{-1})\circ ((a\rightharpoonup a^{-1})^{-1}\rightharpoonup a^{0})\\
&=(a\rightharpoonup a^{-1})\circ ((a\leftharpoonup a^{-1})\rightharpoonup a^{0})=a\rightharpoonup(a^{-1}\circ a^{0}) =a\rightharpoonup a^{-1}, \quad (\mbox{by (B6)})
\end{aligned}
\end{equation*}
it follows that $(T,+)$ is regular. If $a\in E(T,+)$, then $a=a+a=a\circ(a^{-1}\rightharpoonup a)$, and so
$$a^{0}=a^{-1}\circ a=a^{-1}\circ a\circ(a^{-1}\rightharpoonup a)=a^{0}\circ(a^{-1}\rightharpoonup a)=(a^{-1})^{0}\circ(a^{-1}\rightharpoonup a)=a^{-1}\rightharpoonup a.$$
By Lemma \ref{x3} (2), (B1), (B2) and Lemma \ref{x3} (4),
$$a=a^{0}\rightharpoonup a=(a\circ a^{-1})\rightharpoonup a=a\rightharpoonup(a^{-1}\rightharpoonup a)=a\rightharpoonup a^{0}= a^{0}.$$ 
Now let $a,b\in E(T,+)$. Then $a=a^{0}$ and $b=b^{0}$. This implies that
$$a+b=a^{0}+b^{0}=a^{0}\circ(a^{0}\rightharpoonup b^{0})=a^{0}\circ (a^{00}\rightharpoonup b^{0})=a^{0}\circ b^{0}.$$
Dually, $b+a=b^{0}\circ a^{0}$. This yields that $a+b=a^{0}\circ b^{0}=b^{0}\circ a^{0}=b+a$. Thus $(T,+)$ is an inverse semigroup. Since
\begin{equation*}
\begin{aligned}
&(a\rightharpoonup a^{-1})+a=(a\rightharpoonup a^{-1})\circ ((a\rightharpoonup a^{-1})^{-1}\rightharpoonup a)\\
&=(a\rightharpoonup a^{-1})\circ ((a\leftharpoonup a^{-1})\rightharpoonup a) \quad (\mbox{by  (\ref{x7})})  \\
&=a\rightharpoonup (a^{-1}\circ a)=a\rightharpoonup a^{0}=a^{0}, \quad (\mbox{by (B6) and  Lemma \ref{x3} (4)})
\end{aligned}
\end{equation*}
we have $a+(a\rightharpoonup a^{-1})=(a\rightharpoonup a^{-1})+a$ by (\ref{xx}). So $(T,+)$ is a Clifford semigroup.
On the other hand,
\begin{equation*}
\begin{aligned}
&a\rhd(b+c)=a\rightharpoonup(b\circ (b^{-1}\rightharpoonup c))
=(a\rightharpoonup b)\circ ((a\leftharpoonup b)\rightharpoonup(b^{-1}\rightharpoonup c))  \quad (\mbox{by (B6)})\\
&=(a\rightharpoonup b)\circ ((a\rightharpoonup b)^{-1}\rightharpoonup (a\rightharpoonup c)) \quad (\mbox{by Lemma \ref{x3} (3)})\\
&=(a\rightharpoonup b)+(a\rightharpoonup c)=(a\rhd b)+(a\rhd c).
\end{aligned}
\end{equation*}
This gives (P1). (P2) and (P3) follow  from the equations
\begin{equation*}
\begin{aligned}
&(a+(a\rhd b))\rhd c=(a+(a\rightharpoonup b))\rightharpoonup c=(a\circ (a^{-1}\rightharpoonup (a\rightharpoonup b)))\rightharpoonup c\\
&=(a\circ((a^{-1}\circ a)\rightharpoonup b))\rightharpoonup c=(a\circ(a^{0}\rightharpoonup b))\rightharpoonup c \quad (\mbox{by  (B2)})\\
&=(a\circ b)\rightharpoonup c=a\rightharpoonup(b\rightharpoonup c)=a\rhd(b\rhd c)  \quad (\mbox{by  (B9)  and   (B2)})\\
\end{aligned}
\end{equation*}
and
\begin{equation*}
\begin{aligned}
&a+(a\rhd b^{0})=a+(a\rightharpoonup b^{0})=a\circ(a^{-1}\rightharpoonup (a\rightharpoonup b^{0}))
=a\circ((a^{-1}\circ a)\rightharpoonup b^{0}) \quad (\mbox{by (B2)})\\
&=a\circ(a^{0}\rightharpoonup b^{0})=a\circ b^{0}=b^{0}\circ a=b^{0}\circ(b^{00}\rightharpoonup a) \quad (\mbox{by  (B9)})\\
&=b^{0}\circ(b^{0}\rightharpoonup a)=b^{0}\circ(b^{0}\rightharpoonup (b^{0}\rightharpoonup a)) \quad (\mbox{by  (B2)})\\
&=b^{0}+(b^{0}\rightharpoonup a)=b^{0}+(b^{0}\rhd a).
\end{aligned}
\end{equation*}
To see (P4), we first oberve that $H_{a}=\{x\in T\mid x^{0}=a^{0}\}$ and $H_{a}=H_{a^{-1}}$. Let $L_{a}^{\rhd}:T\rightarrow T,b\mapsto a\rhd b$ and $b\in H_{a}, c\in T$. Then $b^{0}=a^{0}$. By Lemma \ref{x3} (1), we have
$$(L_{a}^{\rhd}(b))^{0}=(a\rhd b)^{0}=(a\rightharpoonup b)^{0}=a^{0}\circ b^{0}=a^{0}\circ a^{0}=a^{0}.$$
So $L_{a}^{\rhd}(b)\in H_{a}$.  Similarly, we can obtain $L_{a^{-1}}^{\rhd}(b)\in H_{a}$. By (B2) and (B1), we have
$$L_{a}^{\rhd}L_{a^{-1}}^{\rhd}(b)=a\rhd(a^{-1}\rhd b)=a\rightharpoonup (a^{-1}\rightharpoonup b)=(a\circ a^{-1})\rightharpoonup b=a^{0}\rightharpoonup b=b^{0}\rightharpoonup b=b,$$
$$L_{a^{-1}}^{\rhd}L_{a}^{\rhd}(b)=a^{-1}\rhd(a\rhd b)=a^{-1}\rightharpoonup (a\rightharpoonup b)=(a^{-1}\circ a)\rightharpoonup b=a^{0}\rightharpoonup b=b^{0}\rightharpoonup b=b.$$
Thus $(L_{a}^{\rhd}|_{H_{a}})^{-1}=L_{a^{-1}}^{\rhd}|_{H_{a}}$. By (P1),  it follows that $L_{a}^{\rhd}|_{H_{a}}\in {\rm Aut}(H_{a},+)$.
Moreover,
\begin{equation*}
\begin{aligned}
&a^{0}+(a\rhd b)=a^{0}\circ((a^{0})^{-1}\rightharpoonup(a\rhd b))=a^{0}\circ(a^{0}\rightharpoonup(a\rightharpoonup b))\\
&=a^{0}\circ((a^{0}\circ a)\rightharpoonup b)=a^{0}\circ(a\rightharpoonup b)=a\rightharpoonup b=a\rhd b.
\end{aligned}
\end{equation*}
Therefore, $(T,+,\rhd)$ forms a strong post Clifford semigroup.
If $\psi:(T,\circ,R_{T})\rightarrow (S,\circ,R_{S})$ is a braided Clifford semigroup homomorphism, then $\psi(a\circ b)=\psi(a)\circ \psi(b)$ and $(\psi\times\psi)R_{T}=R_{S}(\psi\times\psi)$. For all $a,b\in T$, we have
$$(\psi\times\psi)R_{T}(a,b)=(\psi\times\psi)(a\rightharpoonup b,a\leftharpoonup b)=(\psi(a\rightharpoonup b),\psi(a\leftharpoonup b)),$$
$$R_{S}(\psi\times\psi)(a,b)=R_{S}(\psi(a),\psi(b))=(\psi(a)\rightharpoonup \psi(b),\psi(a)\leftharpoonup \psi(b)).$$
This implies that
$$\psi(a\rhd b)=\psi(a\rightharpoonup b)=\psi(a)\rightharpoonup \psi(b)=\psi(a)\rhd \psi(b),$$
$$\psi(a+b)=\psi(a\circ(a^{-1}\rightharpoonup b))=\psi(a)\circ(\psi(a)^{-1}\rightharpoonup \psi(b))=\psi(a)\circ \psi(b).$$
Therefore $\psi$ is a post Clifford semigroup homomorphism.
\end{proof}

\begin{theorem}
The category $\mathbb{SPCS}$ of strong post Clifford semigroups together with post Clifford semigroup homomorphisms  is isomorphic to the category $\mathbb{BCS}$ of braided Clifford semigroups together with  braided Clifford semigroup homomorphisms.
\end{theorem}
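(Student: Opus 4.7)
My plan is to build the desired categorical isomorphism directly out of the object-level constructions already set up in Lemmas \ref{x8} and \ref{x9}, using the identity map on the underlying set for both morphism-level components. Concretely, I define
$\mathcal{F}\colon \mathbb{SPCS}\to\mathbb{BCS}$ on objects by $\mathcal{F}(T,+,\rhd)=(T,\circ,R_T)$ with $\circ$ the sub-adjacent operation and $R_T$ the map from Lemma \ref{x8}, and on morphisms by $\mathcal{F}(\psi)=\psi$; and dually $\mathcal{G}\colon \mathbb{BCS}\to\mathbb{SPCS}$ with $\mathcal{G}(T,\circ,R_T)=(T,+,\rhd)$ as in Lemma \ref{x9} and $\mathcal{G}(\psi)=\psi$. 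Functoriality is automatic since composition of underlying maps is preserved, and the fact that morphisms translate correctly between the two categories is exactly the last parts of Lemmas \ref{x8} and \ref{x9}.

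For the equality $\mathcal{F}\mathcal{G}=\mathrm{id}_{\mathbb{BCS}}$, starting from $(T,\circ,R_T)$ with $a\rightharpoonup b$ and $a\leftharpoonup b$, the map $\mathcal{G}$ produces $a+b=a\circ(a^{-1}\rightharpoonup b)$ and $a\rhd b=a\rightharpoonup b$. Applying $\mathcal{F}$ yields a new Clifford product $a\circ' b=a+(a\rhd b)=a\circ(a^{-1}\rightharpoonup(a\rightharpoonup b))=a\circ((a^{-1}\circ a)\rightharpoonup b)=a\circ(a^0\rightharpoonup b)$, which collapses to $a\circ b$ by (B9); a new right arrow $a\rightharpoonup' b = a^0+(a\rhd b)=a^0\circ(a\rightharpoonup b)=a\rightharpoonup b$ by Lemma \ref{x3}(2); and a new left arrow $a\leftharpoonup' b=(a\rhd b)^{-1}\circ' a\circ' b=(a\rightharpoonup b)^{-1}\circ a\circ b$, which equals $(a\rightharpoonup b)^0\circ(a\leftharpoonup b)=(a\leftharpoonup b)^0\circ(a\leftharpoonup b)=a\leftharpoonup b$ by (B7) and (B8). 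So the recovered braided structure coincides with the original.

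For the reverse equality $\mathcal{G}\mathcal{F}=\mathrm{id}_{\mathbb{SPCS}}$, starting from strong $(T,+,\rhd)$ and running it through $\mathcal{F}$ then $\mathcal{G}$, the new operation $a\rhd' b=a\rightharpoonup b=a^0+(a\rhd b)$ equals $a\rhd b$ immediately by (P5), and the new addition is
\[
a+' b=a\circ(a^{-1}\rightharpoonup b)=a\circ(a^{-1}\rhd b)=a+\bigl(a\rhd(a^{-1}\rhd b)\bigr)=a+\bigl((a\circ a^{-1})\rhd b\bigr)=a+(a^0\rhd b)
\]
by (P2), where $\circ$ is sub-adjacent and $a^{-1}=(L^{\rhd}_a|_{H_a})^{-1}(-a)$.

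The main technical point, and the one step that needs a short separate argument, is identifying $a+(a^0\rhd b)$ with $a+b$. For this I plan to first prove the auxiliary identity $e\rhd b=e+b$ for every $e\in E(T,+)$: using Lemma \ref{q2} together with (P5) applied at $a=e=e^0$ gives $e\rhd b = e^0+(e\rhd b)=e+(e\rhd b)=e\circ b$, and then Lemma \ref{wang5} for the dual weak left brace $(T,+,\circ)$ provided by Lemma \ref{q4} yields $e\circ b=e+b$. Taking $e=a^0$ gives $a^0\rhd b=a^0+b$, hence $a+'b=a+(a^0+b)=a+b$. With both compositions identified with the identity functor, $\mathcal{F}$ and $\mathcal{G}$ are inverse isomorphisms of categories. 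The only mildly subtle step is the $e\rhd b=e+b$ identity; once that is in hand the rest is a mechanical bookkeeping argument.
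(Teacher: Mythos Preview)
Your proof is correct and follows essentially the same strategy as the paper: define the two functors via Lemmas \ref{x8} and \ref{x9}, act as the identity on morphisms, and verify that both composites are the identity by recomputing the operations. The only noticeable difference is in the verification of $a+'b=a+b$: the paper does a longer direct manipulation through (P3), whereas your auxiliary identity $e\rhd b=e+b$ (via (P5) and Lemma \ref{wang5} applied to the dual weak left brace of Lemma \ref{q4}) gives a slightly cleaner route to $a+(a^{0}\rhd b)=a+b$.
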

\begin{proof}
Define $\mathcal{Y}: \mathbb{SPCS} \rightarrow \mathbb{BCS}$ as follows: For any $(T,+,\rhd), (S,+,\rhd)\in {\rm obj}(\mathbb{SPCS})$ and $\psi \in {\rm Hom}((T,+,\rhd), (S,+,\rhd))$, let
$\mathcal{Y}(T,+,\rhd)=(T, \circ, R_{T})$ and $\mathcal{Y}(\psi)=\psi$. On the other hand,  define $\mathcal{P}: \mathbb{BCS}\rightarrow \mathbb{SPCS}$ as follows: For any $(T,\circ,R_{T}), (S,\circ,R_{S})\in {\rm obj}(\mathbb{BCS})$ and $\psi\in {\rm Hom}((T,\circ,R_{T}), (S,\circ,R_{S}))$, let
$\mathcal{P}(T,\circ,R_{T})=(T,+,\rhd)$ and $\mathcal{P}(\psi)=\psi$.
By Lemmas \ref{x8} and \ref{x9}, it is easy to see that $\mathcal{Y}$ and $\mathcal{P}$ are functors.

Now let $(T,+,\rhd)$ be a post Clifford semigroup and denote $\mathcal{Y}(T,+,\rhd)=(T,\circ, R_{T})$ and $\mathcal{PY}(T,+,\rhd)=(T,+',\rhd')$. Let $a,b\in T$. By Lemmas \ref{x8} and \ref{x9}, we have
$a\rhd'b=a\rightharpoonup b=a^{0}+a\rhd b=a\rhd b$ and
\begin{equation*}
\begin{aligned}
&a+' b=a\circ(a^{-1}\rightharpoonup b)=a\circ a^{0}\circ (a^{-1}\rightharpoonup b)=a\circ(a^{0}+(a^{-1}\rightharpoonup b))\\
&=a\circ((a^{-1})^{0}+(a^{-1}\rhd b))=a\circ(a^{-1}\rhd b)=a+a\rhd(a^{-1}\rhd b)\\
&=a+(a\circ a^{-1})\rhd b=a+a^{0}\rhd b=a+a^{0}+(a^{0}\rhd b)=a+b+(b\rhd a^{0}) \quad (\mbox{by  (P3)})\\
&=a+b+b^{0}+(b\rhd a)^{0}=a+b+(b+b\rhd a)^{0}=a+b+(b\circ a)^{0}\\
&=a+b+b^{0}+a^{0}=a+b.
\end{aligned}
\end{equation*}
This implies that $\rhd'=\rhd$ and $+'=+$. Thus $\mathcal{PY}={\rm id}_{\mathbb{SPCS}}$.

On the other hand, let $(T, \circ, R_{T})$ be a braided Clifford semigroup and  $\mathcal{P}(T,\circ,R_{T})=(T,+,\rhd)$ and $\mathcal{YP}(T,\circ,R_{T})=(T,\circ',R_{T}')$.  Let $a,b\in T$. By Lemmas \ref{x8} and \ref{x9}, (B2),(B9) and Lemma \ref{x3},
\begin{equation}\label{wei1}
 a\circ'b=a+(a\rhd b)=a+(a\rightharpoonup b)=a\circ(a^{-1}\rightharpoonup (a\rightharpoonup b))=a\circ(a^{0}\rightharpoonup b)=a\circ b.
\end{equation}
\begin{equation}\label{wei2}
\begin{aligned}
&R_{T}'(a,b)=(a^{0}+(a\rhd b),(a\rhd b)^{-1}\circ' a\circ' b)=(a^{0}+(a\rhd b),(a\rhd b)^{-1}\circ a\circ b)\\
&=(a^{0}+(a\rightharpoonup b),(a\rightharpoonup b)^{-1}\circ a\circ b)\\
&=(a^{0}\circ(a\rightharpoonup b), (a\rightharpoonup b)^{-1}\circ(a\rightharpoonup b)\circ(a\leftharpoonup b))\\
&=(a\rightharpoonup b,(a\rightharpoonup b)^{0}\circ(a\leftharpoonup b))=(a\rightharpoonup b,a\leftharpoonup b)=R_{T}(a,b). \quad (\mbox{by (B8)})
\end{aligned}
\end{equation}
This yields that $\circ'=\circ$ and $R_{T}'=R_{T}$.  Thus $\mathcal{YP}={\rm id}_{\mathbb{BCS}}$.
\end{proof}
In view of Lemma \ref{x8} and (\ref{wei1}) and (\ref{wei2}), we have the following result.
\begin{coro}Let $(T,\circ,R_{T})$ be a braided Clifford semigroup. Then $R_T$ is a solution of the Yang-Baxter equation on $T$.
\end{coro}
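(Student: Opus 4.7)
The plan is to use the isomorphism of categories $\mathbb{SPCS} \cong \mathbb{BCS}$ established just above, together with the fact that Lemma \ref{x8} already provides a Yang--Baxter solution on the level of strong post Clifford semigroups. In other words, I transport the braided structure through the functor $\mathcal{P}$ to a post Clifford structure, invoke Lemma \ref{x8} to get a solution there, and then transport back via $\mathcal{Y}$ using the identities (\ref{wei1}) and (\ref{wei2}) to conclude.

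Concretely, starting from $(T,\circ,R_T)\in\mathrm{Obj}(\mathbb{BCS})$, I would first apply $\mathcal{P}$ to obtain the strong post Clifford semigroup $\mathcal{P}(T,\circ,R_T)=(T,+,\rhd)$, where $a+b=a\circ(a^{-1}\rightharpoonup b)$ and $a\rhd b=a\rightharpoonup b$ as in Lemma \ref{x9}. By Lemma \ref{x8}, the associated map
\[
R'_T:T\times T\rightarrow T\times T,\quad (a,b)\mapsto\bigl(a^{0}+(a\rhd b),\ (a\rhd b)^{-1}\circ' a\circ' b\bigr),
\]
where $\circ'$ is the sub-adjacent Clifford operation built from $(T,+,\rhd)$, is a set-theoretical solution of the Yang--Baxter equation.

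Next I would identify $R'_T$ with the original $R_T$. This is exactly the content of the computations (\ref{wei1}) and (\ref{wei2}): (\ref{wei1}) shows that $\circ'=\circ$, and (\ref{wei2}) shows that, componentwise,
\[
R'_T(a,b)=(a\rightharpoonup b,\ a\leftharpoonup b)=R_T(a,b)
\]
for all $a,b\in T$, using (B2), (B8), (B9) and Lemma \ref{x3} as in the verification of $\mathcal{Y}\mathcal{P}=\mathrm{id}_{\mathbb{BCS}}$. Since $R'_T=R_T$ and $R'_T$ satisfies the braid relation $(R'_T\times\mathrm{id}_T)(\mathrm{id}_T\times R'_T)(R'_T\times\mathrm{id}_T)=(\mathrm{id}_T\times R'_T)(R'_T\times\mathrm{id}_T)(\mathrm{id}_T\times R'_T)$, the map $R_T$ itself is a solution of the Yang--Baxter equation, as required.

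There is essentially no obstacle here, since all of the technical work has already been done: Lemma \ref{x8} produces the solution on the post Clifford side, and the equalities (\ref{wei1}) and (\ref{wei2}) have been verified in the proof of the preceding theorem. The only thing to be careful about is quoting these pieces in the right order; the proof therefore amounts to little more than the remark stated before the corollary.
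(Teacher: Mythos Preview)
Your proposal is correct and follows exactly the paper's own argument: the paper's justification is the single sentence ``In view of Lemma \ref{x8} and (\ref{wei1}) and (\ref{wei2})'' preceding the corollary, which is precisely the transport-through-$\mathcal{P}$-and-back argument you spell out. There is nothing to add.
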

\noindent {\bf Acknowledgment:}   The authors express  their profound gratitude to Professor Li Guo at Rutgers University for his encouragement and help. The paper is supported partially  by the Nature Science Foundations of  China (12271442, 11661082).

\end{document}